\numberwithin{equation}{section}
\newcounter{TmpEnumi}
\def\today{\number\day\space\ifcase\month\or   January\or February\or
   March\or April\or May\or June\or   July\or August\or September\or
   October\or November\or December\fi\   \number\year}
\theoremstyle{definition}
\newtheorem{thm}{Theorem}[section]
\newtheorem{lem}[thm]{Lemma}
\newtheorem{prp}[thm]{Proposition}
\newtheorem{dfn}[thm]{Definition}
\newtheorem{cor}[thm]{Corollary}
\newtheorem{rmk}[thm]{Remark}
\newtheorem{ntn}[thm]{Notation}
\newtheorem{exa}[thm]{Example}
\newcommand{\beq}{\begin{equation}}
\newcommand{\eeq}{\end{equation}}
\newcommand{\beqr}{\begin{eqnarray*}}
\newcommand{\eeqr}{\end{eqnarray*}}
\newcommand{\bal}{\begin{align*}}
\newcommand{\eal}{\end{align*}}
\newcommand{\bei}{\begin{itemize}}
\newcommand{\eei}{\end{itemize}}
\newcommand{\limi}[1]{\lim_{{#1} \to \infty}}
\newcommand{\af}{\alpha}
\newcommand{\bt}{\beta}
\newcommand{\gm}{\gamma}
\newcommand{\dt}{\delta}
\newcommand{\ep}{\varepsilon}
\newcommand{\zt}{\zeta}
\newcommand{\et}{\eta}
\newcommand{\ch}{\chi}
\newcommand{\io}{\iota}
\newcommand{\ld}{\lambda}
\newcommand{\sm}{\sigma}
\newcommand{\ph}{\varphi}
\newcommand{\ps}{\psi}
\newcommand{\rh}{\rho}
\newcommand{\om}{\omega}
\newcommand{\ta}{\tau}
\newcommand{\Dt}{\Delta}
\newcommand{\Q}{{\mathbb{Q}}}
\newcommand{\Z}{{\mathbb{Z}}}
\newcommand{\R}{{\mathbb{R}}}
\newcommand{\C}{{\mathbb{C}}}
\newcommand{\N}{{\mathbb{Z}}_{> 0}}
\newcommand{\Nz}{{\mathbb{Z}}_{\geq 0}}
\newcommand{\id}{{\mathrm{id}}}
\newcommand{\diag}{{\mathrm{diag}}}
\newcommand{\supp}{{\mathrm{supp}}}
\newcommand{\spn}{{\mathrm{span}}}
\newcommand{\OA}[1]{{\mathcal{O}}_{#1}}
\newcommand{\andeqn}{\,\,\,\,\,\, {\mbox{and}} \,\,\,\,\,\,}
\newcommand{\ts}[1]{{\textstyle{#1}}}
\newcommand{\ds}[1]{{\displaystyle{#1}}}
\newcommand{\ssum}[1]{{\ts{ {\ds{\sum}}_{#1} }}}
\newcommand{\sssum}[2]{{\ts{ {\ds{\sum}}_{#1}^{#2} }}}
\newcommand{\wolog}{without loss of generality}
\newcommand{\Wolog}{Without loss of generality}
\newcommand{\tfae}{the following are equivalent}
\newcommand{\ifo}{if and only if}
\newcommand{\wrt}{with respect to}
\newcommand{\ca}{C*-algebra}
\newcommand{\hm}{homomorphism}
\newcommand{\ct}{continuous}
\newcommand{\cfn}{continuous function}
\newcommand{\rpn}{representation}
\newcommand{\cog}{contractive on generators}
\newcommand{\fis}{forward isometric}
\newcommand{\sfi}{strongly forward isometric}
\newcommand{\sga}{$\sm$-algebra}
\newcommand{\mb}{measurable}
\newcommand{\msp}{measure space}
\newcommand{\sft}{$\sm$-finite}
\newcommand{\sfm}{$\sm$-finite measure space}
\newcommand{\shm}{$\sm$-\hm}
\newcommand{\mst}{measurable set transformation}
\newcommand{\XBM}{(X, {\mathcal{B}}, \mu)}
\newcommand{\YCN}{(Y, {\mathcal{C}}, \nu)}
\newcommand{\cB}{{\mathcal{B}}}
\newcommand{\cC}{{\mathcal{C}}}
\newcommand{\LLp}{L (L^p (X, \mu))}
\newcommand{\OP}[2]{{\mathcal{O}}_{#1}^{#2}}
\newcommand{\MP}[2]{M_{#1}^{#2}}
\renewcommand{\S}{\subset}
\newcommand{\ov}{\overline}
\newcommand{\SM}{\setminus}
\newcommand{\I}{\infty}
\newcommand{\E}{\varnothing}
\title[Cuntz algebras on $L^p$~spaces]{Analogs
    of Cuntz algebras on $L^p$~spaces}
\author{N.~Christopher Phillips}
\date{19~January 2012}
\address{Department of Mathematics, University  of Oregon,
       Eugene OR 97403-1222, USA,
    and
       Research Institute for Mathematical Sciences,
  Kyoto University,
  Kitashirakawa-Oiwakecho, Sakyo-ku, Kyoto
  606-8502, Japan.}
\email[]{ncp@darkwing.uoregon.edu}
\subjclass[2000]{Primary 46H05, 47L10;
 Secondary 46H35.}
\thanks{This material is based upon work supported by the
  US National Science Foundation under Grants
  DMS-0701076 and DMS-1101742.
  It was also partially supported by the Centre de Recerca
  Matem\`{a}tica (Barcelona) through a research visit conducted
  during 2011.}
\begin{document}

\begin{abstract}
For $d = 2, 3, \ldots$ and $p \in [1, \infty),$
we define a class of representations $\rho$
of the Leavitt algebra $L_d$ on spaces of the form $L^p (X, \mu),$
which we call the spatial representations.
We prove that for fixed $d$ and~$p,$
the Banach algebra ${{\mathcal{O}}_{d}^{p}} = {\overline{\rho (L_d)}}$
is the same for all spatial representations~$\rho.$
When $p = 2,$
we recover the usual Cuntz algebra~${\mathcal{O}}_{d}.$
We give a number of equivalent conditions for a
representation to be spatial.
We show that for distinct $p_1, p_2 \in [1, \I)$
and arbitrary $d_1, d_2 \in \{ 2, 3, \ldots \},$
there is no nonzero continuous homomorphism
from ${\mathcal{O}}_{d_1}^{p_1}$ to ${\mathcal{O}}_{d_2}^{p_2}.$
\end{abstract}

\maketitle

\indent
The algebras that we call the Leavitt algebras $L_d$
(see Definition~\ref{D:Leavitt}) are a special case of algebras
introduced in characteristic~$2$ by Leavitt~\cite{Lv},
and generalized to arbitrary ground fields in~\cite{Lv2}.
The Cuntz algebra $\OA{d},$
introduced in~\cite{Cu1},
can be defined as the norm closure of the range of
a *-representation of $L_d$ on a Hilbert space.
(Cuntz
did not define $\OA{d}$ this way.)
The Cuntz algebras have turned out to be one of the most
fundamental families of examples of \ca s.
Leavitt algebras were long obscure,
but they have recently attracted renewed attention.

In this paper we study the analogs of Cuntz algebras
on $L^p$~spaces.
That is, we consider the norm closure of the range of
a representation of $L_d$ on a space of the form $L^p (X, \mu).$
It turns out that there is a rich theory of such algebras,
of which we exhibit the beginning in this paper.

Our main results
are as follows.
There are many possible $L^p$~analogs of Cuntz algebras
(although we mostly do not know for sure that they really are
essentially different),
but there is a natural class of such algebras,
namely those that come from what we call spatial \rpn s
(Definition~\ref{D:SpatialRep}(\ref{D:SpatialRep-Sp})).
Spatial \rpn s are those for which the standard generators
form an $L^p$~analog of what is called a row isometry
in multivariable operator theory on Hilbert space.
(The survey article~\cite{Dv}
emphasizes the more general row contractions on Hilbert spaces,
but row isometries also play a significant role.
See especially Section~6.2 of~\cite{Dv}.)
We give a number of
rather different equivalent conditions for a \rpn\  to be
spatial---further evidence that this is a natural class.
For fixed $p \in [1, \I) \SM \{ 2 \},$
we prove a uniqueness theorem:
the Banach algebras coming from any two spatial \rpn s
are isometrically isomorphic via an isomorphism which
sends the standard generators to the standard generators.
We call the Banach algebra obtained this way~$\OP{d}{p}.$
The usual Cuntz algebra is~$\OP{d}{2}.$
We further obtain a strong dependence on~$p.$
Specifically, for $p_1 \neq p_2$
and any $d_1, d_2 \in \{ 2, 3, \ldots \},$
there is no nonzero \ct\  \hm\  %
from $\OP{d_1}{p_1}$ to~$\OP{d_2}{p_2}.$

Some of our results are valid,
with the same proofs,
for the infinite Leavitt algebra~$L_{\infty}$
(which gives $L^p$~analogs of~$\OA{\infty}$)
and for the Cohn algebras
(which give $L^p$~analogs of Cuntz's algebras~$E_d$).
In such cases, we include the corresponding results.
However, in many cases, the results,
or at least the proofs,
must be modified.
We do not go in that direction in this paper.

The methods here have little in common with \ca\  methods.
Indeed, the results on spatial \rpn s have no analog for \ca s,
and the result on nonexistence of nonzero \ct\  \hm s
does not make sense if one only considers \ca s.
Uniqueness of~$\OP{d}{p}$ is of course true when $p = 2,$
but, as far as we can tell, our proof for $p \neq 2$
does not work when $p = 2.$

This paper is organized as follows.
In Section~\ref{Sec:LCAlgs},
we define Leavitt algebras
and Cohn algebras,
and give some basic facts about them which will be needed in the rest
of the paper.
In Section~\ref{Sec:Reps},
we discuss representations on Banach spaces.
We define several natural conditions on \rpn s
(weaker than being spatial).
We describe ways to get new \rpn s from old ones,
some of which work in general
and some of which are special to Leavitt and Cohn algebras.
Section~\ref{Sec:ExsOrReps} contains a large collection
of examples of \rpn s on $L^p$~spaces.

In Section~\ref{Sec:SPI} we develop the theory
of (semi)spatial partial isometries on $L^p$~spaces
associated to \sfm s.
The results here are the basic technical tools
needed to prove our main results.
Roughly speaking,
a semispatial partial isometry from $L^p (X, \mu)$
to $L^p (Y, \nu)$ comes from a map from a subset of~$Y$
to a subset of~$X.$
Lamperti's Theorem~\cite{Lp},
which plays a key role,
asserts that for $p \in (0, \I) \SM \{ 2 \},$
every isometry from $L^p (X, \mu)$
to $L^p (Y, \nu)$ is semispatial.

It is unfortunately not really true that
semispatial partial isometries from $L^p (X, \mu)$
to $L^p (Y, \nu)$ come from point maps.
Instead, they come from suitable \hm s
of \sga s.
For our theory of spatial partial isometries,
we need a much more extensive theory of these
than we have been able to find in the literature.
In Section~\ref{Sec:Lp1} we recall some standard facts
about Boolean $\sm$-algebras,
and in Section~\ref{Sec:MST} we discuss the maps on
functions and measures induced by a suitable \hm\  of
the Boolean $\sm$-algebras of measurable sets mod null sets.

Sections \ref{Sec:SpatialReps}, \ref{Sec:SpatialIsSame},
and~\ref{Sec:NonIso}
contain our main results.
In Section~\ref{Sec:SpatialReps},
we give equivalent conditions for \rpn s of Leavitt algebras
on $L^p$~spaces
to be spatial.
Along the way, we define spatial \rpn s
of the algebra~$M_d$ of $d \times d$ matrices,
and we give a number of equivalent conditions
for a \rpn\  of~$M_d$ to be spatial.
In Section~\ref{Sec:SpatialIsSame},
we prove that and two spatial \rpn s of the
Leavitt algebra~$L_d$ on $L^p$~spaces
give the same norm on~$L_d,$
and thus lead to isometrically isomorphic Banach algebras.
In Section~\ref{Sec:NonIso},
we prove the nonexistence of nonzero \ct\  \hm s
between the resulting algebras for different values of~$p.$

In~\cite{Ph6}, we will show that~$\OP{d}{p}$
is an amenable purely infinite simple Banach algebra,
and in~\cite{Ph7},
we will show that its topological K-theory is the same
as for the ordinary Cuntz algebra~$\OA{d}.$
The methods in these papers are much closer to \ca\  methods.

Scalars will always be~$\C.$
Much of what we do also works for real scalars.
We use complex scalars for our proof of the equivalence
of several of the conditions in Theorem~\ref{T:SpatialRepsMd}
for a \rpn\  of~$M_d$ to be spatial.
We do not know whether complex scalars are really necessary.

We have tried to make this paper accessible to operator algebraists
who are not familiar with operators on spaces of the form
$L^p (X, \mu).$

We are grateful to
Joe Diestel,
Ilijas Farah,
Coenraad Labuschagne,
and
Volker Runde
for useful email discussions and for providing references.
We are especially grateful to Bill Johnson
for extensive discussions about Banach spaces,
and to Guillermo Corti\~{n}as
and Mar\'{\i}a Eugenia Rodr\'{\i}guez,
who carefully read an early draft
and whose comments led to numerous
corrections and improvements.
Some of this work was carried out during a visit to
the Instituto Superior T\'{e}cnico,
Universidade T\'{e}cnica de Lisboa,
and during an extended research visit to
the Centre de Recerca Matem\`{a}tica (Barcelona).
I also thank the Research Institute for Mathematical Sciences
of Kyoto University for its support through a visiting professorship.
I am grateful to all these institutions for their hospitality.

\section{Leavitt and Cohn algebras}\label{Sec:LCAlgs}

\indent
In this section,
we define Leavitt algebras and some of their relatives.
We describe a grading,
a linear involution,
and a conjugate linear involution.
We give some useful computational lemmas.

\begin{dfn}\label{D:Leavitt}
Let $d \in \{ 2, 3, 4, \ldots \}.$
We define the {\emph{Leavitt algebra}} $L_d$
to be the universal complex associative algebra on
generators $s_1, s_2, \ldots, s_d, t_1, t_2, \ldots, t_d$
satisfying the relations
\begin{equation}\label{Eq:Leavitt1}
t_j s_j = 1
\,\,\,\,\,\,\,\,\,\,\,\,
{\mbox{for $j \in \{ 1, 2, \ldots, d \},$}}
\end{equation}
\begin{equation}\label{Eq:Leavitt2}
t_j s_k = 0
\,\,\,\,\,\,\,\,\,\,\,\,
{\mbox{for $j, k \in \{ 1, 2, \ldots, d \}$ with $j \neq k,$}}
\end{equation}
and
\begin{equation}\label{Eq:Leavitt3}
\sum_{j = 1}^d s_j t_j = 1.
\end{equation}
\end{dfn}

These algebras
were introduced in Section~3 of~\cite{Lv}
(except that the base field there is $\Z / 2 \Z$),
and they are simple
(Theorem~2 of~\cite{Lv2}, with an arbitrary choice of the field).

\begin{dfn}\label{D:Cohn}
Let $d \in \{ 2, 3, 4, \ldots \}.$
We define the {\emph{Cohn algebra}} $C_d$
to be the universal complex associative algebra on
generators $s_1, s_2, \ldots, s_d, t_1, t_2, \ldots, t_d$
satisfying the relations
(\ref{Eq:Leavitt1}) and~(\ref{Eq:Leavitt2})
(but not~(\ref{Eq:Leavitt3})).
\end{dfn}

These algebras are a special case of algebras
introduced at the beginning of Section~5 of~\cite{Coh}.
What we have called $C_d$ is called $U_{1, d}$ in~\cite{Coh},
and also in~\cite{Coh} the field is allowed to be arbitrary.
Our notation, and the name ``Cohn algebra'',
follow Definition~1.1 of~\cite{AF},
except that we specifically take the field to be~$\C$
and suppress it in the notation.

\begin{dfn}\label{D:LInfty}
Let $d \in \{ 2, 3, 4, \ldots \}.$
We define the {\emph{(infinite) Leavitt algebra}} $L_{\I}$
to be the universal complex associative algebra on
generators $s_1, s_2, \ldots, t_1, t_2, \ldots$
satisfying the relations
\begin{equation}\label{Eq:ILeavitt1}
t_j s_j = 1
\,\,\,\,\,\,\,\,\,\,\,\,
{\mbox{for $j \in \N$}}
\end{equation}
and
\begin{equation}\label{Eq:ILeavitt2}
t_j s_k = 0
\,\,\,\,\,\,\,\,\,\,\,\,
{\mbox{for $j, k \in \N$ with $j \neq k.$}}
\end{equation}
When it is necessary to distinguish the generators of $L_{\I}$
from those of $L_d$ and~$C_d,$
we denote them by
$s_1^{(\I)}, s_2^{(\I)}, \ldots,
   t_1^{(\I)}, t_2^{(\I)}, \ldots.$
\end{dfn}

This algebra is simple, by Example 3.1(ii) of~\cite{AA2}.

\begin{rmk}\label{R-GraphAlg}
The algebras $L_d,$ $C_d,$ and $L_{\I}$ are all examples of
Leavitt path algebras.
For $L_d$ see Example 1.4(iii) of~\cite{AA1},
for $C_d$ see Section~1.5 of~\cite{AAS},
and for $L_{\I}$ see Example 3.1(ii) of~\cite{AA2}.
(Warning:
There are two possible conventions for the choice of the direction
of the arrows in the graph, and both are in common use.)
\end{rmk}

\begin{lem}\label{L-CdLdPlus1}
Let $C_d$ be as in Definition~\ref{D:Cohn}
and let $L_d$ be as in Definition~\ref{D:Leavitt},
with the generators named as there
(using the same names in both kinds of algebras).
For $d_1, d_2 \in \{ 2, 3, 4, \ldots, \I \}$ with $d_1 < d_2,$
there is a unique \hm\  $\io_{d_1, d_2} \colon C_{d_1} \to L_{d_2}$
such that $\io_{d_1, d_2} (s_j) = s_j$
and $\io_{d_1, d_2} (t_j) = t_j$ for $j \in \{ 1, 2, \ldots, d_1 \}.$
Moreover, $\io_{d_1, d_2}$~is injective.
\end{lem}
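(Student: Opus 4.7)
Existence and uniqueness of $\iota_{d_1, d_2}$ follow immediately from the universal property of $C_{d_1}$: the first $d_1$ pairs of Leavitt generators $s_1, \ldots, s_{d_1}, t_1, \ldots, t_{d_1}$ of $L_{d_2}$ satisfy (\ref{Eq:Leavitt1}) and (\ref{Eq:Leavitt2}), since those are a subset of the full Leavitt relations.

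For injectivity, the plan is first to establish that $C_{d_1}$ has as vector space basis the set of ``standard monomials'' $s_\alpha t_\beta$, where $\alpha$ and $\beta$ range over finite (possibly empty) multi-indices from $\{1, \ldots, d_1\}$. Spanning follows by iteratively using (\ref{Eq:Leavitt1}) and (\ref{Eq:Leavitt2}) to push every $t_j$ to the right in any word. Linear independence follows from the Fock representation of $C_{d_1}$ on the vector space with basis indexed by finite words from $\{1, \ldots, d_1\}$, where $s_j$ prepends $j$ and $t_j$ strips a leading $j$ (else annihilates); the standard monomials then act as distinct matrix units.

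To prove $\iota := \iota_{d_1, d_2}$ is injective, consider the ``infinite-word'' representation $\pi$ of $L_{d_2}$ on the vector space $V$ with basis $\{e_\omega : \omega \in \{1, \ldots, d_2\}^{\N}\}$, where $s_j$ prepends $j$ and $t_j$ removes a leading $j$ (else annihilates). All three Leavitt relations hold, and $\pi$ is faithful by simplicity of $L_{d_2}$. Given $x = \sum_{(\alpha, \beta) \in S} c_{\alpha, \beta} s_\alpha t_\beta \in C_{d_1}$ in the standard basis with $\iota(x) = 0$, I would prove each $c_{\alpha_0, \beta_0} = 0$ by induction on $|\alpha_0| + |\beta_0|$. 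Apply $\pi(\iota(x))$ to $e_\omega$ with $\omega = \beta_0 \eta$, where $\eta = (d_1 + 1, d_1 + 1, \ldots)$ (valid since $d_1 < d_2$). Since every $\beta$ in $S$ has letters in $\{1, \ldots, d_1\}$ while $\eta$ uses only letters $> d_1$, a pair $(\alpha, \beta) \in S$ contributes only when $\beta$ is a prefix of $\beta_0$, and then contributes $c_{\alpha, \beta} \, e_{\alpha \gamma(\beta) \eta}$, where $\gamma(\beta)$ is the suffix of $\beta_0$ of length $|\beta_0| - |\beta|$. Extract the coefficient of $e_{\alpha_0 \eta}$: the contributing pairs satisfy $\alpha \gamma(\beta) = \alpha_0$, and a direct count gives $|\alpha| + |\beta| = |\alpha_0| + |\beta_0| - 2(|\beta_0| - |\beta|)$, which is strictly less than $|\alpha_0| + |\beta_0|$ whenever $\beta$ is a proper prefix of $\beta_0$. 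By the inductive hypothesis those terms vanish, so only $(\alpha_0, \beta_0)$ itself contributes and $c_{\alpha_0, \beta_0} = 0$.

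The main obstacle is the ``collapse'' in the infinite-word representation, where several pairs $(\alpha, \beta) \in S$ can send $e_\omega$ to the same basis vector $e_{\alpha_0 \eta}$. The key device is the tail $\eta$ formed from the ``new'' letter $d_1 + 1$, available only because $d_1 < d_2$; this forces all collapsing pairs to have strictly smaller total degree $|\alpha| + |\beta|$, so the induction closes up. Without the gap $d_1 < d_2$ the argument genuinely fails, and indeed the map $C_d \to L_d$ has a nonzero kernel, namely the ideal generated by $1 - \sum_{j=1}^{d} s_j t_j$.
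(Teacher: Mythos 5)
Your proof is correct, and it takes a genuinely different route from the paper's. The paper deliberately avoids a purely algebraic argument: it proves injectivity by fitting $\io_{d_1,d_2}$ into a commutative square with C*-algebras, using Tomforde's uniqueness theorem to see that $C_{d_1}$ embeds in the extended Cuntz algebra $E_{d_1}$, together with the well-known injectivity of $E_{d_1} \to \OA{d_2}$; it explicitly says it ``presumes'' a purely algebraic proof exists. You have supplied that algebraic proof: spanning of $C_{d_1}$ by the monomials $s_{\alpha} t_{\beta}$, a faithful infinite-word representation of $L_{d_2}$ (faithful by simplicity, which the paper records for both finite and infinite $d_2$), and an induction on $|\alpha_0| + |\beta_0|$ in which the tail $\eta$ in the fresh letter $d_1 + 1$ rigidly determines where $\alpha\gamma(\beta)$ ends, so that all collapsing terms have strictly smaller degree. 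The trade-off is that the paper's argument is two lines but imports nontrivial external machinery, whereas yours is self-contained and elementary but longer. Two small remarks, neither affecting correctness. First, your Fock-representation step is dispensable: for injectivity of $\io_{d_1,d_2}$ you only need that the monomials \emph{span} $C_{d_1}$; their linear independence in $C_{d_1}$ follows a posteriori from the linear independence of their images, which is exactly what your main computation establishes. Second, the phrase ``act as distinct matrix units'' is loose---the operators $s_{\alpha} t_{\beta}$ on the finite-word space are not rank one, and mere distinctness of operators would not yield linear independence; the standard repair is to evaluate a vanishing linear combination on $e_{\beta_0}$ with $\beta_0$ of minimal length among the second indices carrying nonzero coefficients.
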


\begin{proof}
Existence and uniqueness of~$\io_{d_1, d_2}$ are immediate from
the definitions of the algebras as universal algebras on
generators and relations.

We prove injectivity.
We presume that there is a purely algebraic proof,
but one can easily see this by comparing with the \ca s,
following Remark~\ref{R:HCuntz} below.
Let $E_{d_1}$ be the extended Cuntz algebra,
as in Remark~\ref{R:HCuntz}.
There is a commutative diagram
\[
\begin{CD}
C_{d_1} @>{\io_{d_1, d_2}}>> L_{d_2}   \\
@VVV                         @VVV            \\
E_{d_1} @>>>                 \OA{d_2}.
\end{CD}
\]
The left vertical map is injective
by Theorem~7.3 of~\cite{Tm} and Remark~\ref{R-GraphAlg},
and the bottom horizontal map is well known to be injective.
Therefore $\io_{d_1, d_2}$ is injective.
\end{proof}

\begin{lem}\label{L:Invol}
Let $A$ be any of $L_d$ (Definition~\ref{D:Leavitt}),
$C_d$ (Definition~\ref{D:Cohn}),
or~$L_{\I}$ (Definition~\ref{D:LInfty}).
\begin{enumerate}
\item\label{L:Invol-Star}
There exists a unique conjugate linear antimultiplicative
involution $a \mapsto a^*$ on~$A$
such that $s_j^* = t_j$ and $t_j^* = s_j$ for all~$j.$
\item\label{L:Invol-Prime}
There exists a unique linear antimultiplicative
involution $a \mapsto a'$ on~$A$
such that $s_j' = t_j$ and $t_j' = s_j$ for all~$j.$
\end{enumerate}
\end{lem}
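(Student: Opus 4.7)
The plan is to construct both involutions using the universal property of $A$ as a complex associative algebra on the given generators and relations, and then verify uniqueness by a second appeal to the universal property.

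For part~(\ref{L:Invol-Star}), I would form the conjugate opposite algebra $\ov{A}^{\mathrm{op}}$ (same underlying additive group as~$A,$ scalar action twisted by complex conjugation, and reversed product). A conjugate linear antimultiplicative map $A\to A$ is exactly the same datum as an algebra \hm\ $A\to\ov{A}^{\mathrm{op}}.$ By the universal property of~$A,$ to produce such a \hm\ it suffices to assign elements of~$\ov{A}^{\mathrm{op}}$ to each generator and check that the defining relations (\ref{Eq:Leavitt1})--(\ref{Eq:Leavitt3}) (or their analogs for~$C_d$ and~$L_{\I}$) hold in~$\ov{A}^{\mathrm{op}}.$ Set $s_j\mapsto t_j$ and $t_j\mapsto s_j.$ Then the image of the left hand side of~(\ref{Eq:Leavitt1}) in~$\ov{A}^{\mathrm{op}}$ is $s_j\cdot_{\mathrm{op}} t_j = t_j s_j = 1;$ the image of the left hand side of~(\ref{Eq:Leavitt2}) is $s_k\cdot_{\mathrm{op}} t_j = t_j s_k = 0$ (using~(\ref{Eq:Leavitt2}) again, with the roles of $j$ and~$k$ swapped); and the image of the left hand side of~(\ref{Eq:Leavitt3}) is $\sum_j t_j\cdot_{\mathrm{op}} s_j = \sum_j s_j t_j = 1.$ So the relations hold, and we obtain a map $a\mapsto a^*$ with the required properties.

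To see that $a\mapsto a^*$ is an involution, note that $a\mapsto (a^*)^*$ is linear and multiplicative (a composition of two conjugate linear antimultiplicative maps) and fixes every generator. The identity map of~$A$ also has these properties and fixes the generators, so by the uniqueness part of the universal property, the two maps coincide. This simultaneously gives uniqueness of~$*$: any other conjugate linear antimultiplicative map with $s_j^*=t_j$ and $t_j^*=s_j$ agrees with ours on generators and hence, by the same universal property argument (now applied to the opposite conjugate algebra), on all of~$A.$

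Part~(\ref{L:Invol-Prime}) is proved by the identical argument, replacing $\ov{A}^{\mathrm{op}}$ by the ordinary opposite algebra $A^{\mathrm{op}}$: a linear antimultiplicative map $A\to A$ is the same as an algebra \hm\ $A\to A^{\mathrm{op}},$ and the verification of the relations under $s_j\mapsto t_j,$ $t_j\mapsto s_j$ is word-for-word the same as above. There is no real obstacle here; the only thing to be careful about is that the relations, which are not symmetric in $s_j$ and~$t_j,$ genuinely do get exchanged correctly once one reverses multiplication---the key point being that reversing the product in $t_j s_k=0$ produces $s_k t_j,$ but applying the generator swap before reversal leads back to an instance of~(\ref{Eq:Leavitt2}). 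The same verification works unchanged for the Cohn algebra~$C_d$ (which omits~(\ref{Eq:Leavitt3})) and for~$L_{\I}$ (where the sums and finite index conditions are absent).
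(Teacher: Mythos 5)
Your proposal is correct and is essentially the argument the paper gives: the paper's proof cites explicit graph-algebra formulas but also sketches exactly this route, obtaining both involutions as homomorphisms from $A$ to its opposite algebra (respectively, the conjugate opposite algebra) via the universal property, with uniqueness and the involutive property following from uniqueness of maps fixing the generators. Your write-up simply carries out the relation-checking in detail.
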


The properties of $a \mapsto a^*$ are just the algebraic
properties of the adjoint of a complex matrix:
\[
(a + b)^* = a^* + b^*,
\,\,\,\,\,\,
(\ld a)^* = {\ov{\ld}} a^*,
\,\,\,\,\,\,
(a b)^* = b^* a^*,
\andeqn
(a^*)^* = a
\]
for all $a, b \in A$ and $\ld \in \C.$
The properties of $a \mapsto a'$ are the same,
except that it is linear: $(\ld a)' = \ld a'$
for all $a \in A$ and $\ld \in \C.$

\begin{proof}[Proof of Lemma~\ref{L:Invol}]
See Remark~3.4 of~\cite{Tm},
where explicit formulas, valid for any graph algebra,
are given, and Remark~\ref{R-GraphAlg}.
Both parts may also be easily obtained using the universal
properties of algebras on generators and relations:
$a \mapsto {\overline{a}}$ is a \hm\  from $A$ to its opposite
algebra,
and $a \mapsto a^*$ is the composition of
$a \mapsto {\overline{a}}$ with a \hm\  from $A$ to its
complex conjugate algebra.
\end{proof}

One can get Lemma~\ref{L:Invol}(\ref{L:Invol-Star})
by using the fact
(Theorem~7.3 of~\cite{Tm} and Remark~\ref{R-GraphAlg})
that there are injective maps from $L_d,$ $C_d,$ and~$L_{\I}$
to \ca s
which preserve the intended involution.
(For $L_d$ and $L_{\I},$
injectivity is automatic because the algebras are simple.
See Remark~\ref{R:HCuntz} for definitions of *-representations
on Hilbert spaces.)

\begin{prp}\label{P:ZGrading}
Let $A$ be any of $L_d$ (Definition~\ref{D:Leavitt}),
$C_d$ (Definition~\ref{D:Cohn}),
or~$L_{\I}$ (Definition~\ref{D:LInfty}).
Then there is a unique $\Z$-grading on~$A$ determined
by
\[
\deg (s_j) = 1
\andeqn
\deg (t_j) = - 1
\]
for all~$j.$
\end{prp}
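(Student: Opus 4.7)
The plan is to construct the grading by descending it from the free algebra, using the fact that the defining relations are homogeneous. Specifically, let $F$ denote the free unital associative $\C$-algebra on the generator symbols $s_j, t_j$ (for $j \in \{ 1, \ldots, d \}$ in the $L_d$ and $C_d$ cases, or $j \in \N$ in the $L_\I$ case). Declaring $\deg(s_j) = 1$ and $\deg(t_j) = - 1$ and extending by $\deg(x_{i_1} x_{i_2} \cdots x_{i_n}) = \sum_k \deg (x_{i_k})$ on monomials (with $\deg (1) = 0$) turns $F$ into a $\Z$-graded algebra $F = \bigoplus_{n \in \Z} F_n$.

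The key step is then to observe that each relator is homogeneous of degree zero: $t_j s_j - 1 \in F_0$, $t_j s_k \in F_0$ for $j \neq k,$ and in the case of $L_d,$ $\sum_{j=1}^d s_j t_j - 1 \in F_0$. Since these relators generate the defining two-sided ideal $I \subset F,$ and an ideal generated by homogeneous elements is a graded ideal (that is, $I = \bigoplus_n (I \cap F_n)$), the quotient $A = F / I$ inherits a $\Z$-grading whose homogeneous components are $A_n = F_n / (I \cap F_n)$. By construction this grading satisfies $\deg(s_j) = 1$ and $\deg(t_j) = -1.$

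For uniqueness, any $\Z$-grading on $A$ is determined by the decomposition of a generating set into homogeneous components; since $A$ is generated by the $s_j$ and $t_j$ and their degrees are prescribed, there is at most one such grading.

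I expect no serious obstacle: the argument is purely formal and uniform across $L_d,$ $C_d,$ and $L_\I,$ differing only in the number of generators and the presence or absence of relation~(\ref{Eq:Leavitt3}). The only check that takes a line is the standard fact that a two-sided ideal of a graded algebra generated by homogeneous elements is itself graded, which follows by expanding any element of $I$ as a finite sum of products $a r b$ with $r$ a relator and decomposing $a$ and $b$ into homogeneous components.
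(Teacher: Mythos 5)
Your proof is correct, and it is the standard argument the paper has in mind: the paper simply declares the proof easy and cites the graded structure of Leavitt path algebras in Tomforde's paper, where exactly this observation---that the defining relators are homogeneous of degree zero, so the ideal they generate is graded and the quotient inherits the grading---is used. No changes needed.
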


\begin{proof}
The proof is easy.
(See after Definition~3.12 in~\cite{Tm}.)
\end{proof}

We will need some of the finer algebraic structure of~$L_d,$
and associated notation.
We roughly follow the beginning of Section~1 of~\cite{Cu1},
starting with 1.1 of~\cite{Cu1}.

\begin{ntn}\label{N:Words}
Let $d \in \{ 2, 3, 4, \ldots, \I \},$
and let $n \in \Nz.$
For $d < \I,$ we define $W_n^d = \{1, 2, \ldots, d \}^n,$
and we define $W_n^{\I} = (\N)^n.$
Thus, $W_n^d$ is
the set of all sequences
$\af = \big( \af (1), \af (2), \ldots, \af (n) \big)$
with $\af (l) \in \{1, 2, \ldots, d \}$
(or $\af (l) \in \N$ if $d = \I$) for $l = 1, 2, \ldots, n.$
We set
\[
W_{\I}^d = \coprod_{n = 0}^{\I} W_n^d.
\]
We call the elements of $W_{\I}^d$ {\emph{words}}
(on $\{1, 2, \ldots, d \}$ or $\N$ as appropriate).
If $\af \in W_{\I}^d,$ the {\emph{length}} of~$\af,$
written $l (\af),$
is the unique number $n \in \Nz$ such that $\af \in W_n^d.$
Note that there is a unique word of length zero,
namely the empty word,
which we write as~$\E.$
For $\af \in W_m^d$ and $\bt \in W_n^d,$
we denote by $\af \bt$ the concatenation,
a word in $W_{m + n}^d.$
\end{ntn}

\begin{ntn}\label{N:WordsInGens}
Let $A$ be any of $L_d$ (Definition~\ref{D:Leavitt}),
$C_d$ (Definition~\ref{D:Cohn}),
or~$L_{\I}$ (Definition~\ref{D:LInfty}).
Let $n \in \Nz,$ and
let $\af = \big( \af (1), \af (2), \ldots, \af (n) \big) \in W_n^d.$
If $n \geq 1,$ we define $s_{\af}, t_{\af} \in A$ by
\[
s_{\af} = s_{\af (1)} s_{\af (2)}
     \cdots s_{\af (n - 1)} s_{\af (n)}
\andeqn
t_{\af} = t_{\af (n)} t_{\af (n - 1)}
     \cdots t_{\af (2)} t_{\af (1)}.
\]
We take $s_{\E} = t_{\E} = 1.$
\end{ntn}

For emphasis:
in the definition of $t_{\af},$
we take the generators $t_{\af (l)}$
{\emph{in reverse order}}.
We do this to get convenient formulas
in Lemma~\ref{L:PropOfWords}(\ref{L:PropOfWords-3}).
In particular, when working with Cuntz algebras,
one simply uses $s_j^*$ in place of~$t_j,$
and we want to have $s_{\af}^* = t_{\af}.$

\begin{lem}\label{L:PropOfWords}
Let the notation be as in Notation~\ref{N:Words}
and Notation~\ref{N:WordsInGens}.
\begin{enumerate}
\item\label{L:PropOfWords-4}
Let $\af, \bt \in W_{\I}^d.$
Then $s_{\af \bt} = s_{\af} s_{\bt}$
and $t_{\af \bt} = t_{\bt} t_{\af}.$
\item\label{L:PropOfWords-1}
In the $\Z$-grading on~$A$ of Proposition~\ref{P:ZGrading},
we have $\deg (s_{\af}) = l (\af)$ and $\deg (t_{\af}) = - l (\af)$
for all $\af \in W_{\I}^d.$
\item\label{L:PropOfWords-3}
Let $\af \in W_{\I}^d.$
Then the involutions of Lemma~\ref{L:Invol} satisfy
$s_{\af}' = s_{\af}^* = t_{\af}$
and $t_{\af}' = t_{\af}^* = s_{\af}.$
\item\label{L:PropOfWords-2}
Let
\[
a_1, a_2, \ldots, a_n
 \in \{ s_1, s_2, \ldots \} \cup \{ t_1, t_2, \ldots \}.
\]
Suppose $a_1 a_2 \cdots a_n \neq 0.$
Then there exist unique $\af, \bt \in W_{\I}^d$
such that $a_1 a_2 \cdots a_n = s_{\af} t_{\bt}.$
\item\label{L:PropOfWords-X0}
Let $\af, \bt \in W_{\I}^d$ satisfy $l (\af) = l (\bt).$
Then $t_{\bt} s_{\af} = 1$ if $\af = \bt,$
and $t_{\bt} s_{\af} = 0$ otherwise.
\end{enumerate}
\end{lem}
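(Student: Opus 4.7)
The plan is to handle the parts in the order (1), (2), (3), (5), (4), reserving the real work for the last item. Part~(1) will follow directly from Notation~\ref{N:WordsInGens}: the forward listing in~$s$ gives $s_{\af \bt} = s_\af s_\bt$ immediately, while the reversed listing in~$t$ turns the concatenation into $t_{\af \bt} = t_{\bt (n)} \cdots t_{\bt (1)} t_{\af (m)} \cdots t_{\af (1)} = t_\bt t_\af$. Part~(2) is then immediate from (1), the multiplicativity of~$\deg$, and $\deg (s_j) = 1$, $\deg (t_j) = -1$ from Proposition~\ref{P:ZGrading}. Part~(3) will unpack the antimultiplicative involutions of Lemma~\ref{L:Invol}: for example, $s_\af^* = s_{\af (n)}^* \cdots s_{\af (1)}^* = t_{\af (n)} \cdots t_{\af (1)} = t_\af$, with the other three equalities analogous. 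For part~(5) I would induct on $n = l (\af) = l (\bt)$, the base $n = 0$ being $t_\E s_\E = 1$; for the step, the innermost factor in $t_{\bt (n)} \cdots t_{\bt (1)} s_{\af (1)} \cdots s_{\af (n)}$ is $t_{\bt (1)} s_{\af (1)}$, which is $0$ if $\af (1) \neq \bt (1)$ (by~\eqref{Eq:Leavitt2} or~\eqref{Eq:ILeavitt2}) and $1$ otherwise (by~\eqref{Eq:Leavitt1} or~\eqref{Eq:ILeavitt1}), so the whole product either vanishes or reduces to $t_{\bt'} s_{\af'}$ for the length-$(n-1)$ tails.

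For existence in part~(4), the plan is to reduce the given nonzero product $a_1 \cdots a_n$ using the same Leavitt relations. Any adjacent pair $a_i a_{i+1}$ of the form $t_j s_k$ equals $0$ when $j \neq k$ (contradicting the nonzero-ness hypothesis) or $1$ when $j = k$ (and can simply be deleted). Each deletion strictly shortens the word, so the procedure terminates at a word with no $t$ immediately preceding an $s$, necessarily of the form $s_{j_1} \cdots s_{j_m} t_{l_1} \cdots t_{l_p}$. This is $s_\af t_\bt$ for $\af = (j_1, \ldots, j_m)$ and $\bt = (l_p, \ldots, l_1)$, the reversal accounting for the indexing convention in Notation~\ref{N:WordsInGens}.

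Uniqueness in~(4) is where I expect the main obstacle: the relations provide no way to simplify a single $s_j t_j$ (only their sum in~$L_d$ via~\eqref{Eq:Leavitt3}), so a purely internal manipulation would seem to require developing a normal-form theory for Leavitt and Cohn algebras from scratch. I would instead pass to a Cuntz-type C*-algebra, where the desired normal form is classical. Concretely, I would embed $L_d$ into $\OA{d}$ (injective because $L_d$ is simple), sending $s_j$ to a Cuntz isometry and $t_j$ to its adjoint; use the analogous embedding $L_\I \hookrightarrow \OA{\I}$; and for $C_d$ use the composition $C_d \hookrightarrow L_{d+1} \hookrightarrow \OA{d+1}$ coming from Lemma~\ref{L-CdLdPlus1}. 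Under any of these embeddings $s_\af t_\bt$ maps to $s_\af s_\bt^*$, and the standard linear independence of the family $\{ s_\af s_\bt^* \}$ in a Cuntz algebra forces $(\af, \bt) = (\af', \bt')$ whenever $s_\af s_\bt^* = s_{\af'} s_{\bt'}^*$ is nonzero, which gives uniqueness back in each of $L_d$, $C_d$, and~$L_\I$.
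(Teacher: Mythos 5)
Your handling of parts (\ref{L:PropOfWords-4}), (\ref{L:PropOfWords-1}), (\ref{L:PropOfWords-3}), and (\ref{L:PropOfWords-X0}), and the existence half of part~(\ref{L:PropOfWords-2}), is correct and is essentially what the paper intends (the paper declares these parts obvious and cites Lemma~1.3 of~\cite{Cu1} for part~(\ref{L:PropOfWords-2})). The gap is in your uniqueness argument. The family $\{ s_{\af} s_{\bt}^* \}$ is \emph{not} linearly independent in $\OA{d}$ when $d$ is finite: relation~(\ref{Eq:Leavitt3}) gives $\sum_{j = 1}^{d} s_j s_j^* = 1 = s_{\E} s_{\E}^*,$ and more generally $s_{\af} s_{\bt}^* = \sum_{j = 1}^{d} s_{\af j} s_{\bt j}^*,$ so there are nontrivial linear dependences in every degree. (Linear independence of the monomials $s_{\af} s_{\bt}^*$ does hold in the extended Cuntz algebra $E_d$ and in $\OA{\I},$ so your route is sound for $C_d$ and $L_{\I}$; it breaks down precisely for $L_d$ with $d < \I,$ which is the main case.)

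What you actually need is the weaker assertion that $s_{\af} t_{\bt} = s_{\af'} t_{\bt'} \neq 0$ forces $(\af, \bt) = (\af', \bt')$ --- pairwise distinctness, not linear independence --- and this requires its own argument. It is exactly the uniqueness clause of Lemma~1.3 of~\cite{Cu1}, which is the reference the paper uses; alternatively it can be proved directly from the relations. Assuming \wolog\ $l (\af) \leq l (\af'),$ multiply on the left by $t_{\af}$ and use parts (\ref{L:PropOfWords-4}) and~(\ref{L:PropOfWords-X0}) to conclude that $\af' = \af \gm$ for some word~$\gm$; comparing degrees gives $l (\bt') - l (\bt) = l (\gm) \geq 0,$ so a symmetric argument on the right gives $\bt' = \bt \gm'$ with $l (\gm') = l (\gm)$; cancelling then reduces everything to showing that $s_{\gm} t_{\gm} \neq 1$ when $\gm \neq \E,$ which follows from~(\ref{Eq:Leavitt2}) together with the existence of a nonzero representation (Remark~\ref{R:HCuntz}). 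Either fix is short, but as written the step ``linear independence forces $(\af, \bt) = (\af', \bt')$'' rests on a false premise.
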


\begin{proof}
Parts (\ref{L:PropOfWords-4}), (\ref{L:PropOfWords-1}),
(\ref{L:PropOfWords-3}),
and~(\ref{L:PropOfWords-X0}) are obvious.
(Part~(\ref{L:PropOfWords-X0}) is also in Lemma 1.2(b) of~\cite{Cu1}.)

Using Part~(\ref{L:PropOfWords-3}),
we see that
Part~(\ref{L:PropOfWords-2}) is Lemma~1.3 of~\cite{Cu1}.
\end{proof}

\begin{lem}\label{L:mSum}
Let $d \in \{ 2, 3, 4, \ldots \},$
let $L_d$ be as in Definition~\ref{D:Leavitt}, and let $m \in \Nz.$
Then the collection $( s_{\af} t_{\bt} )_{\af, \bt \in W_{m}^d}$
is a system of matrix units for a unital subalgebra of $L_d$
isomorphic to $M_{d^m}.$
That is,
identifying $M_{d^m}$ with the linear maps on a vector space
with basis $W_{m}^d,$
with matrix units $e_{\af, \bt}$ for $\af, \bt \in W_{m}^d,$
there is a unique \hm\  %
$\ph_m \colon M_{d^m} \to L_d$
such that $\ph_m (e_{\af, \bt}) = s_{\af} t_{\bt}.$
\end{lem}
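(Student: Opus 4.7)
The plan is to verify that the family $(s_\alpha t_\beta)_{\alpha,\beta \in W_m^d}$ satisfies the defining relations of a system of matrix units for $M_{d^m}$, which immediately yields existence and uniqueness of the homomorphism $\ph_m$, and then to invoke simplicity of $M_{d^m}$ to upgrade $\ph_m$ to an isomorphism onto its range.

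First I would establish the two algebraic identities that characterize matrix units. For the multiplication rule, I compute
\[
(s_\af t_\bt)(s_\gm t_\dt) = s_\af (t_\bt s_\gm) t_\dt,
\]
and since $l(\bt) = l(\gm) = m$, Lemma~\ref{L:PropOfWords}(\ref{L:PropOfWords-X0}) gives $t_\bt s_\gm = 1$ when $\bt = \gm$ and $0$ otherwise. Hence $(s_\af t_\bt)(s_\gm t_\dt)$ equals $s_\af t_\dt$ if $\bt = \gm$ and $0$ if $\bt \neq \gm$, which is exactly the matrix unit multiplication rule. For the partition-of-unity relation $\sum_{\af \in W_m^d} s_\af t_\af = 1$, I proceed by induction on $m$: the base case $m=0$ is trivial ($s_\E t_\E = 1$), and the inductive step follows from
\[
\sum_{\af \in W_m^d} s_\af t_\af
  = \sum_{\af \in W_{m-1}^d} s_\af \Bigl( \sum_{j=1}^d s_j t_j \Bigr) t_\af
  = \sum_{\af \in W_{m-1}^d} s_\af t_\af,
\]
using relation~(\ref{Eq:Leavitt3}) from Definition~\ref{D:Leavitt}.

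Next, realize $M_{d^m}$ concretely as $\spn_{\C}\{ e_{\af,\bt} : \af, \bt \in W_m^d \}$ with the standard matrix unit relations. Because $\{ e_{\af,\bt} \}$ is a basis of $M_{d^m}$, there is a unique linear map $\ph_m$ sending $e_{\af,\bt} \mapsto s_\af t_\bt$; the computation above shows it is multiplicative, and the partition-of-unity identity shows it sends $1 \mapsto 1$. This gives both existence and uniqueness of the homomorphism.

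Finally, to conclude that $\ph_m$ is injective (so that its range is a copy of $M_{d^m}$), I use that $M_{d^m}$ is a simple algebra, so the two-sided ideal $\ker \ph_m$ is either $\{0\}$ or all of $M_{d^m}$; since $\ph_m(1) = 1 \neq 0$ in $L_d$, the kernel must be trivial. The main obstacle is really just the verification of the multiplication rule, which is entirely handled by Lemma~\ref{L:PropOfWords}(\ref{L:PropOfWords-X0}); everything else is routine.
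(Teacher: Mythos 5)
Your proof is correct and follows essentially the same route as the paper: the multiplication rule comes from Lemma~\ref{L:PropOfWords}(\ref{L:PropOfWords-X0}), and the identity $\sum_{\af \in W_m^d} s_{\af} t_{\af} = 1$ is proved by the same induction (you start at $m = 0$ rather than $m = 1$, which is harmless). Your explicit injectivity argument via simplicity of $M_{d^m}$ (using $1 \neq 0$ in $L_d$) is a small, correct addition that the paper leaves implicit.
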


\begin{proof}
We prove that $\sum_{\af \in W_{m}^d} s_{\af} t_{\af} = 1,$
by induction on~$m.$
The case $m = 1$ is relation~(\ref{Eq:Leavitt3})
in Definition~\ref{D:Leavitt}.
Assuming the result holds for~$m,$
use this and the case $m = 1$ at the last step to get
\[
\sum_{\af \in W_{m + 1}^d} s_{\af} t_{\af}
  = \sum_{\bt \in W_{m}^d} \sum_{j = 1}^d s_{\bt} s_j t_j t_{\bt}
  = \sum_{\bt \in W_{m}^d} s_{\bt}
        \left( \sssum{j = 1}{d}  s_j t_j \right) t_{\bt}
  = 1.
\]

The statement of the lemma now follows from
Lemma~\ref{L:PropOfWords}(\ref{L:PropOfWords-X0}),
or is Proposition~1.4 of~\cite{Cu1}.
\end{proof}

\begin{lem}\label{L:SameLength}
Let $d \in \{ 2, 3, 4, \ldots \},$
let $m \in \N,$
and let $a_1, a_2, \ldots, a_m \in L_d.$
Then there exist~$n \in \Nz,$
a finite set $F \subset W_{\I}^d,$
and numbers $\ld_{k, \af, \bt} \in \C$
for $k = 1, 2, \ldots, m,$ $\af \in F,$ and $\bt \in W_n^d,$
such that
\begin{equation}\label{Eq:SameLength}
a_k = \sum_{\af \in F} \sum_{\bt \in W_n^d}
         \ld_{k, \af, \bt} s_{\af} t_{\bt}
\end{equation}
for $k = 1, 2, \ldots, m.$
\end{lem}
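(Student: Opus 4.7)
The plan is to first express each $a_k$ as a finite linear combination of monomials $s_\af t_\bt$ (whose $t$-parts may have different lengths), and then pad every $t$-part to a common length $n$ by inserting a resolution of the identity of the form $1 = \sum_{\gm \in W_j^d} s_\gm t_\gm$.

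For the first step, since $L_d$ is generated as an algebra by $s_1, \ldots, s_d, t_1, \ldots, t_d$, each $a_k$ is a finite $\C$-linear combination of products of these generators. By Lemma~\ref{L:PropOfWords}(\ref{L:PropOfWords-2}), every nonzero such product has the form $s_\af t_\bt$ for a unique pair $(\af, \bt) \in W_{\I}^d \times W_{\I}^d$. Collecting these, I can write
$$a_k = \sum_{i \in I_k} \mu_{k, i}\, s_{\af_{k, i}} t_{\bt_{k, i}}$$
for some finite index set $I_k$, scalars $\mu_{k, i} \in \C$, and words $\af_{k, i}, \bt_{k, i} \in W_{\I}^d$.

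For the second step, let $n = \max \{ l (\bt_{k, i}) : 1 \leq k \leq m,\ i \in I_k \}$. Given any term $s_\af t_\bt$ appearing above with $l (\bt) = n_\bt \leq n$, set $j = n - n_\bt$. Lemma~\ref{L:mSum} yields $\sum_{\gm \in W_j^d} s_\gm t_\gm = 1$, so
$$s_\af t_\bt \;=\; s_\af \Bigl( \sum_{\gm \in W_j^d} s_\gm t_\gm \Bigr) t_\bt \;=\; \sum_{\gm \in W_j^d} s_{\af \gm}\, t_{\bt \gm},$$
where the second equality uses Lemma~\ref{L:PropOfWords}(\ref{L:PropOfWords-4}): $s_\af s_\gm = s_{\af \gm}$, and $t_\gm t_\bt = t_{\bt \gm}$ (the subscripts of the $t$'s concatenate in \emph{reverse} order). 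Each $t$-index on the right now has length exactly~$n$.

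Substituting this padded form everywhere, letting $F$ be the (finite) set of all words $\af_{k, i} \gm$ that appear as $k$, $i$, and $\gm$ vary, and collecting like terms to read off the scalars $\ld_{k, \af, \bt}$, produces the desired expression~(\ref{Eq:SameLength}). The argument is essentially bookkeeping; the sole algebraic ingredient beyond Lemma~\ref{L:PropOfWords} is the identity $\sum_{\gm \in W_j^d} s_\gm t_\gm = 1$ from Lemma~\ref{L:mSum}, and the only place where one must be attentive is the reverse-order convention for $t$-words. I do not anticipate any real obstacle.
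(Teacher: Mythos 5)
Your proof is correct and follows essentially the same route as the paper's: reduce to monomials $s_{\af} t_{\bt}$ via Lemma~\ref{L:PropOfWords}(\ref{L:PropOfWords-2}), then pad each $t$-part to the common length $n$ by inserting $1 = \sum_{\gm \in W_j^d} s_{\gm} t_{\gm}$ from Lemma~\ref{L:mSum} and using the concatenation rules of Lemma~\ref{L:PropOfWords}(\ref{L:PropOfWords-4}). You also correctly handle the reverse-order convention $t_{\gm} t_{\bt} = t_{\bt \gm}$, which is the one point where a sign-of-concatenation slip could occur.
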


\begin{proof}
Since $a_1, a_2, \ldots, a_m$ are linear combinations of products
of the $s_j$ and~$t_j,$
it suffices to prove the statement when $a_1, a_2, \ldots, a_m$
are products of the $s_j$ and~$t_j.$
By Lemma~\ref{L:PropOfWords}(\ref{L:PropOfWords-2}),
we may assume $a_k = s_{\af_k} t_{\bt_k}$
with $\af_k, \bt_k \in W^d_{\I}.$
Set
\[
n = \max \big( l (\bt_1), \, l (\bt_2), \, \ldots, \, l (\bt_m) \big).
\]
For $k = 1, 2, \ldots, m,$ set $l_k = n - l (\bt_k).$
Take
\[
F = \bigcup_{k = 1}^m \big\{ \af_k \af \colon \af \in W_{l_k}^d \big\}.
\]
Lemma~\ref{L:mSum} and
Lemma~\ref{L:PropOfWords}(\ref{L:PropOfWords-4}) imply
\[
a_k = s_{\af_k} t_{\bt_k}
    = \sum_{\af \in W_{l_k}^d} s_{\af_k} s_{\af} t_{\af} t_{\bt_k}
    = \sum_{\af \in W_{l_k}^d} s_{\af_k \af} t_{\bt_k \af}.
\]
This expression has the form in~(\ref{Eq:SameLength}).
\end{proof}

\begin{dfn}\label{N-LComb}
Let $A$ be any of $L_d,$
$C_d,$
or~$L_{\I}.$
Let $\ld = (\ld_1, \ld_2, \ldots, \ld_d) \in \C^d.$
(For $A = L_{\I},$ take $d = \I,$
take $\C^d = \bigoplus_{j = 1}^{\I} \C,$
and take $\ld = (\ld_1, \ld_2, \ldots ).$)
Define $s_{\ld}, t_{\ld} \in A$ by
\[
s_{\ld} = \sum_{j = 1}^d \ld_j s_j
\andeqn
t_{\ld} = \sum_{j = 1}^d \ld_j t_j.
\]
\end{dfn}

In principle, this notation conflicts with Notation~\ref{N:WordsInGens},
but no confusion should arise.

\begin{lem}\label{L-LCombProd}
Let the notation be as in Definition~\ref{N-LComb}.
Let $\ld, \gm \in \C^d.$
Then
\[
t_{\ld} s_{\gm}
 = \left( \ssum{j} \ld_j \gm_j \right) \cdot 1.
\]
\end{lem}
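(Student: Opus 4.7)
The plan is simply to expand $t_\lambda s_\gamma$ by bilinearity and then apply the defining Leavitt/Cohn relations~(\ref{Eq:Leavitt1}) and~(\ref{Eq:Leavitt2}) term by term. There is really no subtlety beyond keeping track of indices, so this will be a short direct calculation rather than an argument requiring an idea.

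Concretely, I would first write
\[
t_\lambda s_\gamma
 = \left( \sum_{j = 1}^d \ld_j t_j \right)
   \left( \sum_{k = 1}^d \gm_k s_k \right)
 = \sum_{j = 1}^d \sum_{k = 1}^d \ld_j \gm_k \, t_j s_k.
\]
Then I would apply the defining relations: (\ref{Eq:Leavitt1}) gives $t_j s_j = 1$ for every $j,$ and (\ref{Eq:Leavitt2}) gives $t_j s_k = 0$ whenever $j \neq k.$ Hence only the diagonal terms survive, and the double sum collapses to $\sum_{j=1}^d \ld_j \gm_j \cdot 1,$ which is the desired identity. The same argument applies verbatim to $C_d,$ since only (\ref{Eq:Leavitt1}) and~(\ref{Eq:Leavitt2}) are used, not~(\ref{Eq:Leavitt3}).

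For the case $A = L_\I,$ I would note that Definition~\ref{N-LComb} takes $\lambda$ and $\gamma$ in $\bigoplus_{j = 1}^\I \C,$ so only finitely many $\lambda_j$ and $\gamma_k$ are nonzero. Thus $s_\lambda$ and $t_\gamma$ are actually finite sums, and the same finite calculation with relations (\ref{Eq:ILeavitt1}) and~(\ref{Eq:ILeavitt2}) yields the conclusion. The only thing to watch is that $\sum_j \lambda_j \gamma_j$ is interpreted as a finite sum, which it is by the finite-support assumption. There is no real obstacle in the proof; the lemma is essentially an orthonormality statement for the $s_j,t_j$ pairing that is built into the Leavitt/Cohn relations.
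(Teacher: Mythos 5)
Your proof is correct and matches the paper's argument, which simply observes that the identity is immediate from the relations $t_j s_k = 1$ for $j = k$ and $t_j s_k = 0$ for $j \neq k.$ Your extra care with the finite-support convention for $L_{\I}$ is fine but not needed beyond what the paper already builds into Definition~\ref{N-LComb}.
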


\begin{proof}
This is immediate from the relations $t_j s_k = 1$ for $j = k$
and $t_j s_k = 0$ for $j \neq k.$
\end{proof}

\section{Representations on Banach spaces}\label{Sec:Reps}

\indent
In this section, we discuss \rpn s of
Leavitt and Cohn algebras on Banach spaces.
Much of what we say makes sense
for \rpn s on general Banach spaces,
but some only works for \rpn s on
spaces of the form $L^p (X, \mu).$
Some of the constructions work for general algebras,
but some are special to \rpn s
of Leavitt and Cohn algebras.
Some of what we do is intended primarily to
establish notation and conventions.
(For example, \rpn s are required to be unital,
and isomorphisms are required to be surjective.)

All Banach spaces in this article will be over~$\C.$

\begin{ntn}\label{N:LE}
Let $E$ and $F$ be Banach spaces.
We denote by $L (E, F)$ the Banach space of all bounded linear
operators from $E$ to~$F,$
and by $K (E, F) \subset L (E, F)$
the closed subspace of all compact linear
operators from $E$ to~$F.$
When $E = F,$
we get the Banach algebra $L (E)$
and the closed ideal $K (E) \subset L (E).$
\end{ntn}

The following definition summarizes terminology for Banach spaces
that we use.
We will need both isometries and isomorphisms of Banach spaces.

\begin{dfn}\label{D:BSpTerm}
If $E$ and $F$ are Banach spaces,
we say that $a \in L (E, F)$ is an {\emph{isomorphism}}
if $a$ is bijective.
(By the Open Mapping Theorem, $a^{-1}$ is also bounded.)
If an isomorphism exists, we say $E$ and $F$ are {\emph{isomorphic}}.

We say that $a \in L (E, F)$ is an  {\emph{isometry}}
if $\| a \xi \| = \| \xi \|$ for every $\xi \in E.$
(We do not require that $a$ be surjective.)
If there is a surjective isometry from $E$ to~$F,$
we say that $E$ and $F$ are {\emph{isometrically isomorphic}}.

If $A$ and $B$ are Banach algebras,
we say that a \hm\  $\ph \colon A \to B$ is an {\emph{isomorphism}}
if it is \ct\  and bijective.
(By the Open Mapping Theorem, $\ph^{-1}$ is also \ct.)
If an isomorphism exists, we say $A$ and $B$ are {\emph{isomorphic}}.
If in addition $\ph$ is isometric,
we call it an {\emph{isometric isomorphism}}.
If such a map exists, we say $A$ and $B$
are {\emph{isometrically isomorphic}}.
\end{dfn}

For emphasis (because some authors do not use this convention):
{\emph{isomorphisms are required to be surjective}}.

The following notation for duals
is intended to avoid conflict with the notation for adjoints.

\begin{ntn}\label{N:Dual}
Let $E$ be a Banach space.
We denote by $E'$ its dual Banach space $L (E, \C),$
consisting of all bounded linear functionals on~$E.$
If $F$ is another Banach space and $a \in L (E, F),$
we denote by $a'$ the element of $L (F', E')$
defined by $a' (\om) (\xi) = \om (a \xi)$
for $\xi \in E$ and $\om \in F'.$
\end{ntn}

We will also need some notation for specific spaces.

\begin{ntn}\label{N:FDP}
For any set~$S,$ we give $l^p (S)$ the usual
meaning (using counting measure on~$S$),
and we take (as usual) $l^p = l^p (\N).$
For $d \in \N$ and $p \in [1, \I],$
we let $l^p_d = l^p \big( \{1, 2, \ldots, d \} \big).$
We further let $\MP{d}{p} = L \big( l_d^p \big)$
with the usual operator norm,
and we algebraically identify $\MP{d}{p}$ with the algebra $M_d$ of
$d \times d$ complex matrices in the standard way.
\end{ntn}

We warn of a notational conflict.
Many articles on Banach spaces use $L_p (X, \mu)$
rather than $L^p (X, \mu),$
and use $l_p^d$ for what we call~$l_d^p.$
Our convention is chosen
to avoid conflict with the standard notation
for the Leavitt algebra $L_d$ of Definition~\ref{D:Leavitt}.

For fixed~$d,$ the norms on the various $\MP{d}{p}$
are of course equivalent, but they are not equal.
The following example illustrates this.

\begin{exa}\label{E:NormRankOne}
Let $d \in \N.$
Let $\et, \mu \in \C^d,$
and let $\om_{\et} \colon \C^d \to \C$
be the linear functional
$\om_{\et} (\xi) = \sum_{j = 1}^d \et_j \xi_j.$
Let $a \in M_d$ be the rank one operator given by
$a \xi = \om_{\et} (\xi) \mu.$
Let $p, q \in [1, \I]$ satisfy $\frac{1}{p} + \frac{1}{q} = 1,$
and regard~$a$ as an element of $\MP{d}{p}.$
Then one can calculate that $\| a \| = \| \mu \|_p \| \et \|_q.$
\end{exa}

The following terminology and related observation
will be used many times.

\begin{dfn}\label{D:LpSupp}
Let $\XBM$ be a \msp\  and let $p \in [1, \I].$
For a function $\xi \in L^p (X, \mu)$
(or, more generally,
any measurable function on~$X$)
and a subset $E \subset X,$
we will say that $\xi$ is {\emph{supported}} in~$E$
if $\xi (x) = 0$ for almost all $x \in X \SM E.$
If $I$ is a countable set,
and $(\xi_i)_{i \in I}$ is a family of elements of $L^p (X, \mu)$
or measurable functions on~$X,$
we say that the $\xi_i$
have {\emph{disjoint supports}} if there are
disjoint subsets $E_i \subset X$ such that $\xi_i$ is supported in $E_i$
for all $i \in I.$
\end{dfn}

\begin{rmk}\label{R:LpSuppNorm}
Let $\XBM$ be a \msp, let $p \in [1, \I),$
let $I$ be a countable set,
and let $\xi_i \in L^p (X, \mu),$
for $i \in I,$
have disjoint supports.
Then
\[
\left\| \ssum{i \in I} \xi_i \right\|_p^p
  = \sum_{i \in I} \| \xi_i \|_p^p.
\]
\end{rmk}

\begin{dfn}\label{D:Repn}
Let $A$ be any unital complex algebra.
Let $E$ be a nonzero Banach space.
A {\emph{representation}} of $A$ on~$E$
is a unital algebra \hm\  from $A$ to $L (E).$
\end{dfn}

We do not say anything about continuity.
We will mostly be interested in \rpn s of $L_d,$ $C_d,$ and~$L_{\I},$
for which we do not use a topology on the algebra,
or of $M_d,$ for which all \rpn s are \ct\  by
finite dimensionality.

\begin{rmk}\label{R:HCuntz}
The well known representations of $L_d,$ $C_d,$ and~$L_{\I}$
are those on a Hilbert space~$H.$
Choose any $d$ isometries $w_1, w_2, \ldots, w_d \in L (H)$
(or, for the case of $L_{\I},$
isometries $w_1, w_2, \ldots \in L (H)$)
with orthogonal ranges.
Then we obtain a \rpn\  $\rh \colon C_d \to L (H)$
or $\rh \colon L_{\I} \to L (H)$
by setting $\rh (s_j) = w_j$ and $\rh (t_j) = w_j^*$ for all~$j.$
If $d < \I$ and $\sum_{j = 1}^d w_j w_j^* = 1,$
we get a \rpn\  of~$L_d.$
These \rpn s are even *-\rpn s:
making $A$ a *-algebra as in
Lemma~\ref{L:Invol}(\ref{L:Invol-Star}),
we have $\rh (a^*) = \rh (a)^*$ for all $a \in A.$

The closures ${\ov{\rh (A)}}$
do not depend on the choice of~$\rh$
(in case $A = C_d,$ provided
$\rh \left( 1 - \sum_{j = 1}^d s_j s_j^* \right) \neq 0$),
and are the usual Cuntz algebra $\OA{d}$
when $A = L_d$ (including the case $d = \I$),
and the extended Cuntz algebras $E_d$ when $A = C_d.$
See Theorem~1.12 of~\cite{Cu1} for $L_d$ and~$L_{\I},$
and see Lemma~3.1 of~\cite{Cu2} for~$C_d.$
\end{rmk}

Further examples of representations of $L_d,$ $C_d,$ and~$L_{\I}$
will be given in Section~\ref{Sec:ExsOrReps}.

Representations of $L_d,$ $C_d,$ and $L_{\I}$ have a
kind of rigidity property.
It is stronger for $L_d$ than for the others:
a \rpn\  is determined by
the images of the~$s_j$ or by the images of the~$t_j.$

\begin{lem}\label{L:AUniq}
Let $A$ be any of $L_d$ (Definition~\ref{D:Leavitt}),
$C_d$ (Definition~\ref{D:Cohn}),
or~$L_{\I}$ (Definition~\ref{D:LInfty}).
Let $B$ be a unital algebra over~$\C,$
and let $\ph, \ps \colon L_d \to B$ be unital \hm s
such that for all $j$ we have $\ph (s_j) = \ps (s_j)$
and $\ph (s_j t_j) = \ps (s_j t_j).$
Then $\ph = \ps.$
The same conclusion holds if we replace $\ph (s_j) = \ps (s_j)$
with $\ph (t_j) = \ps (t_j).$
\end{lem}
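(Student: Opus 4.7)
The plan is to reduce the problem to showing $\ph$ and $\ps$ agree on the generating set $\{s_j, t_j : j\}$ of $A$, and then exploit the defining relation $t_j s_j = 1$ together with the shared value of $\ph(s_j t_j) = \ps(s_j t_j)$ to swap the unknown image with the known one.

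Concretely, for the first assertion, I set $S_j = \ph(s_j) = \ps(s_j)$, $P_j = \ph(s_j t_j) = \ps(s_j t_j)$, and $T_j^{(1)} = \ph(t_j)$, $T_j^{(2)} = \ps(t_j)$. Since both $\ph$ and $\ps$ are unital and $t_j s_j = 1$ in $A$, I have $T_j^{(i)} S_j = 1$ for $i = 1, 2$. Factoring $P_j$ gives $P_j = \ph(s_j)\ph(t_j) = S_j T_j^{(1)}$ and likewise $P_j = S_j T_j^{(2)}$.

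The key step is to observe the identity $t_j = t_j \cdot s_j t_j$ in $A$, which holds because $t_j s_j t_j = (t_j s_j) t_j = t_j$. Applying $\ps$ to this identity and then using the expression $P_j = S_j T_j^{(1)}$ gives
\[
T_j^{(2)} \;=\; T_j^{(2)} P_j \;=\; T_j^{(2)} S_j T_j^{(1)} \;=\; (T_j^{(2)} S_j) T_j^{(1)} \;=\; 1 \cdot T_j^{(1)} \;=\; T_j^{(1)},
\]
so $\ph(t_j) = \ps(t_j)$ for every $j$. Combined with the hypothesis $\ph(s_j) = \ps(s_j)$, this shows $\ph$ and $\ps$ agree on a generating set, hence $\ph = \ps$.

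The second assertion (replacing the hypothesis $\ph(s_j) = \ps(s_j)$ with $\ph(t_j) = \ps(t_j)$) is completely symmetric: one uses the identity $s_j = s_j t_j \cdot s_j$, which again holds because $t_j s_j = 1$, and multiplies on the \emph{left} by $P_j = \ph(s_j) \ph(t_j) = \ps(s_j) \ph(t_j)$ before collapsing via $\ph(t_j) S_j^{(\cdot)} = \ph(t_j s_j) = 1$. I do not anticipate any real obstacle here; the only non-obvious point is to recognize $t_j = t_j (s_j t_j)$ as the right identity to apply, which turns the shared idempotent $P_j$ into a bridge between $\ph(t_j)$ and $\ps(t_j)$. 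Nothing about the argument distinguishes $L_d$, $C_d$, or $L_\infty$, since only the relation $t_j s_j = 1$ (present in all three) is used.
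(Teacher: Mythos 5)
Your proof is correct and is essentially the paper's own argument: both rest on the identity $t_js_jt_j = t_j$ (resp.\ $s_jt_js_j = s_j$), the shared value of $\ph(s_jt_j)=\ps(s_jt_j)$ as a bridge, and the relation $t_js_j=1$ to collapse the product. The only cosmetic difference is the direction of the chain of equalities (you start from $\ps(t_j)$ and arrive at $\ph(t_j)$, the paper goes the other way), and, as you note, only relation (\ref{Eq:Leavitt1}) is used, so the argument applies uniformly to $L_d$, $C_d$, and $L_\I$.
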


\begin{proof}
Assume $\ph (s_j) = \ps (s_j)$ for all~$j.$
Using the relations $t_j s_j t_j = t_j$ at the first step
and $\ph (t_j) \ph (s_j) = 1$ at the last step,
we calculate:
\[
\ph (t_j)
  = \ph (t_j) \ph (s_j t_j)
  = \ph (t_j) \ps (s_j t_j)
  = \ph (t_j) \ps (s_j) \ps (t_j)
  = \ph (t_j) \ph (s_j) \ps (t_j)
  = \ps (t_j).
\]
The first statement follows.
If instead $\ph (t_j) = \ps (t_j)$ for all~$j,$
similar reasoning (using $s_j t_j s_j = s_j$) gives
\begin{align*}
\ph (s_j)
& = \ph (s_j t_j) \ph (s_j)
    \\
& = \ps (s_j t_j) \ph (s_j)
  = \ps (s_j) \ps (t_j) \ph (s_j)
  = \ps (s_j) \ph (t_j) \ph (s_j)
  = \ps (s_j).
\end{align*}
This completes the proof.
\end{proof}

\begin{lem}\label{L:LdUniq}
Let $d \in \{ 2, 3, 4, \ldots \},$
let $B$ be a unital algebra over~$\C,$
and let $\ph, \ps \colon L_d \to B$ be unital \hm s
such that $\ph (s_j) = \ps (s_j)$ for $j \in \{ 1, 2, \ldots, d \}.$
Then $\ph = \ps.$
The same conclusion holds if we replace $\ph (s_j) = \ps (s_j)$
with $\ph (t_j) = \ps (t_j).$
\end{lem}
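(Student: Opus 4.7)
The plan is to exploit the one relation available in $L_d$ that is not available in $C_d$, namely $\sum_{j=1}^d s_j t_j = 1$ (relation~(\ref{Eq:Leavitt3})). In $C_d$ this relation is absent, which is why Lemma~\ref{L:AUniq} required the extra hypothesis $\ph(s_j t_j) = \ps(s_j t_j)$; for $L_d$ that hypothesis is redundant, and we should be able to recover it (or go further and recover $\ph(t_j)=\ps(t_j)$ directly).

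First, suppose $\ph(s_j) = \ps(s_j)$ for all $j$. Applying $\ph$ and $\ps$ to~(\ref{Eq:Leavitt3}) and subtracting gives
\[
\sum_{j=1}^d \ph(s_j)\bigl(\ph(t_j) - \ps(t_j)\bigr) \;=\; 0.
\]
To isolate the $k$th term, I multiply on the left by $\ps(t_k)$ (not $\ph(t_k)$, which is the unknown). Since $\ph(s_j) = \ps(s_j)$ and $\ps$ is a \hm\ satisfying the relations $t_k s_j = \delta_{kj}\cdot 1$, we get $\ps(t_k)\ph(s_j) = \ps(t_k s_j) = \delta_{kj}\cdot 1$, so the sum collapses to $\ph(t_k) - \ps(t_k) = 0$. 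Thus $\ph(t_j) = \ps(t_j)$ for all $j$. Since the $s_j$ and $t_j$ together generate $L_d,$ this forces $\ph = \ps$ (or alternatively one now applies Lemma~\ref{L:AUniq}).

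The second case is completely symmetric: if $\ph(t_j) = \ps(t_j)$ for all $j$, the same computation with~(\ref{Eq:Leavitt3}) gives
\[
\sum_{j=1}^d \bigl(\ph(s_j) - \ps(s_j)\bigr)\ph(t_j) \;=\; 0,
\]
and now multiplying on the right by $\ps(s_k)$ and using $t_j s_k = \delta_{jk}\cdot 1$ yields $\ph(s_k) = \ps(s_k)$ for each $k$. There is no real obstacle here; the only thing to be careful about is to multiply by the \emph{known} operator ($\ps(t_k)$ or $\ps(s_k)$) rather than the unknown one, so that the relations $t_k s_j = \delta_{kj}$ can be applied inside the map whose values we already control.
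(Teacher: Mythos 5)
Your proof is correct, but it takes a genuinely shorter route than the paper. The paper first reduces to Lemma~\ref{L:AUniq} by proving $\ph (s_j t_j) = \ps (s_j t_j)$: it sets $e_j = \ph (s_j t_j)$ and $f_j = \ps (s_j t_j)$ and runs two separate computations with these idempotents (using $\sum_k e_k = \sum_k f_k = 1$ and the orthogonality $e_j e_k = f_j f_k = 0$ for $j \neq k$) to conclude $f_j e_j = f_j$ and $f_j e_j = e_j$, hence $e_j = f_j$; only then does Lemma~\ref{L:AUniq} finish the job with a further computation. You instead subtract the images of relation~(\ref{Eq:Leavitt3}) under $\ph$ and~$\ps$ and collapse the resulting sum $\sum_j \ph (s_j) \bigl( \ph (t_j) - \ps (t_j) \bigr) = 0$ by left-multiplying by the known element $\ps (t_k),$ using $\ps (t_k) \ph (s_j) = \ps (t_k s_j) = \dt_{k j} \cdot 1$ to read off $\ph (t_k) = \ps (t_k)$ directly; the second case is the mirror image. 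This bypasses Lemma~\ref{L:AUniq} entirely and determines the images of the $t_j$ (rather than merely of the $s_j t_j$) in one step. What the paper's longer route buys is a cleanly isolated intermediate statement (Lemma~\ref{L:AUniq}) that also applies to $C_d$ and $L_{\I},$ where relation~(\ref{Eq:Leavitt3}) is unavailable and your telescoping argument cannot start; for $L_d$ itself your argument is the more economical one, and your closing observation about multiplying by the \emph{known} operator is exactly the right point of care.
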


\begin{proof}
For $j \in \{ 1, 2, \ldots, d \},$
define idempotents $e_j, f_j \in B$ by
$e_j = \ph (s_j t_j)$ and $f_j = \ps (s_j t_j).$
By Lemma~\ref{L:AUniq}, it suffices to show that
$e_j = f_j$ for all~$j.$

First assume that $\ph (s_j) = \ps (s_j)$ for all~$j.$
Using this statement at the first and third steps,
$t_j s_k = 0$ for $j \neq k$ at the second step,
and $\sum_{k = 1}^d e_k = 1$ at the last step,
we have
\begin{equation}\label{Eq:LdUniq-1}
f_j e_j
  = \ps (s_j t_j s_j) \ph (t_j)
  = \sum_{k = 1}^d \ps (s_j t_j s_k) \ph (t_k)
  = \sum_{k = 1}^d f_j e_k
  = f_j.
\end{equation}
If now $j \neq k,$
then
\[
f_k e_j = f_k e_k e_j = 0.
\]
This equation, together with $\sum_{k = 1}^d f_k = 1,$
gives
\begin{equation}\label{Eq:LdUniq-2}
e_j = \sum_{k = 1}^d f_k e_j = f_j e_j.
\end{equation}
The proof is completed
by combining (\ref{Eq:LdUniq-1}) and~(\ref{Eq:LdUniq-2}).

Now assume that $\ph (t_j) = \ps (t_j)$ for all~$j.$
With similar justifications, we get
\[
f_j e_j
  = \ps (s_j) \ph (t_j s_j t_j)
  = \sum_{k = 1}^d \ps (s_k) \ph (t_k s_j t_j)
  = \sum_{k = 1}^d f_k e_j
  = e_j.
\]
So for $j \neq k$ we have
$f_j e_k = f_j f_k e_k = 0.$
Combining these results gives
$f_j = \sum_{k = 1}^d f_j e_k = f_j e_j,$
whence $e_j = f_j.$
\end{proof}

The analog of Lemma~\ref{L:LdUniq}
for $L_{\I}$ and $C_d$ is false.
See Example~\ref{E:01RepID}.

The following definition gives several natural conditions to
ask of a representation of $L_d,$ $C_d,$ or~$L_{\I}$
on a Banach space~$E.$
The condition in~(\ref{D:Repn-SFI})
is motivated by the following property
of a *-\rpn\  $\rh$ of $L_d$ or $C_d$ on a Hilbert space~$H$
(as in Remark~\ref{R:HCuntz}):
for $\ld_1, \ld_2, \ldots, \ld_d \in \C$
and $\xi \in H,$
we have
\begin{equation}\label{Eq:2Std}
\left\| \rh \left( \sssum{j = 1}{d} \ld_j s_j \right) \xi \right\|
  = \| (\ld_1, \ld_2, \ldots, \ld_d) \|_2 \| \xi \|.
\end{equation}
In Definition~\ref{D:SpatialRep}
and Definition~\ref{D:StdRep},
we will see further conditions on \rpn s
which are natural when $E = L^p (X, \mu).$

\begin{dfn}\label{D:KindsOfReps1}
Let $A$ be any of $L_d$ (Definition~\ref{D:Leavitt}),
$C_d$ (Definition~\ref{D:Cohn})
or~$L_{\I}$ (Definition~\ref{D:LInfty}).
Let $E$ be a nonzero Banach space,
and let $\rh \colon A \to L (E)$ be a representation.
\begin{enumerate}
\item\label{D:Repn-CG}
We say that $\rh$ is
{\emph{contractive on generators}} if for every~$j,$
we have $\| \rh (s_j) \| \leq 1$ and $\| \rh (t_j) \| \leq 1.$
\item\label{D:Repn-FI}
We say that $\rh$ is
{\emph{forward isometric}} if $\rh (s_j)$ is an isometry for every~$j.$
\item\label{D:Repn-SFI}
We say that $\rh$ is
{\emph{strongly forward isometric}} if
$\rh$ is forward isometric and
(following Definition~\ref{N-LComb})
for every $\ld \in \C^d,$
the element $\rh (s_{\ld})$ is a scalar multiple
of an isometry.
\end{enumerate}
\end{dfn}

\begin{rmk}\label{R-cogImpfis}
A \rpn\  which is \cog\  is clearly \fis.
\end{rmk}

A representation of $L_d$ which is contractive on generators
need not be strongly forward isometric.
See Example~\ref{E-LdDblSkew} below.
We will see in Example~\ref{E:01RepID} below
that a strongly forward isometric \rpn\  of $L_{\I}$
need not be contractive on generators.
We do not know whether this can happen for $L_d$
with $d$ finite.

We now describe several ways to make new \rpn s from old ones.
The first two (direct sums and tensoring with the identity
on some other Banach space)
work for representations of general algebras.
They also work for more general choices of norms on the
direct sum and tensor product than we consider here.
For simplicity, we restrict to specific choices
which are suitable for representations
on spaces of the form $L^p (X, \mu).$

\begin{lem}\label{L-pSum}
Let $A$ be a unital complex algebra,
and let $p \in [1, \I].$
Let $n \in \N,$
and for $l = 1, 2, \ldots, n$ let $(X_l, \cB_l, \mu_l)$
be a \sfm\  and
let $\rh_l \colon A \to L (L^p (X_l, \mu_l))$ be a \rpn.
Equip $E = \bigoplus_{l = 1}^n L^p (X_l, \mu_l)$
with the norm
\[
\| (\xi_1, \xi_2, \ldots, \xi_n ) \|
 = \left( \sssum{l = 1}{n} \| \xi_l \|_p^p \right)^{1 / p}.
\]
Then there is a unique \rpn\  $\rh \colon A \to L (E)$
such that
\[
\rh (a) (\xi_1, \xi_2, \ldots, \xi_n )
        = \big( \rh_1 (a) \xi_1, \, \rh_2 (a) \xi_2,
             \, \ldots, \, \rh_n (a) \xi_n \big)
\]
for $a \in L_d$ and $\xi_l \in L^p (X_l, \mu_l)$
for $l = 1, 2, \ldots, n.$
If $A$ is any of $L_d,$ $C_d,$ or~$L_{\I},$
and each $\rh_l$ is \cog\  or \fis,
then so is~$\rh.$
\end{lem}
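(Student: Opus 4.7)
The proof is essentially a routine verification that diagonal operators behave well on an $L^p$-direct sum, so the plan is largely a matter of bookkeeping. First I would note that $E$, equipped with the given norm, is a Banach space (in fact it is isometrically isomorphic to $L^p$ of the disjoint union measure space $\coprod_{l=1}^n (X_l, \cB_l, \mu_l)$, which is \sft\ because each summand is). The formula
\[
\rh(a)(\xi_1, \ldots, \xi_n) = \big( \rh_1(a) \xi_1, \, \ldots, \, \rh_n(a) \xi_n \big)
\]
then clearly defines a linear operator on~$E$, and for $p \in [1, \I)$ boundedness follows from the estimate
\[
\| \rh(a) (\xi_1, \ldots, \xi_n) \|^p
  = \ssum{l = 1}{n} \| \rh_l(a) \xi_l \|_p^p
  \leq \max_{1 \leq l \leq n} \| \rh_l(a) \|^p \cdot \ssum{l = 1}{n} \| \xi_l \|_p^p,
\]
with the obvious modification (using $\max$ rather than a $p$-sum on the left) in the case $p = \I$. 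Hence $\| \rh(a) \| \leq \max_l \| \rh_l(a) \|$.

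Next, $a \mapsto \rh(a)$ is a unital algebra \hm\ by componentwise verification: the identity element of $A$ maps to the identity of each $L(L^p(X_l, \mu_l))$, addition and scalar multiplication of operators are computed componentwise, and $\rh(ab) = \rh(a) \rh(b)$ reduces to the equality $\rh_l(ab) = \rh_l(a) \rh_l(b)$ in each coordinate. Uniqueness is automatic from the defining formula.

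For the final clause, suppose $A$ is one of $L_d$, $C_d$, $L_{\I}$, and each $\rh_l$ is \fis. Fix a generator~$s_j$. For any $(\xi_1, \ldots, \xi_n) \in E$,
\[
\| \rh(s_j)(\xi_1, \ldots, \xi_n) \|^p
  = \ssum{l = 1}{n} \| \rh_l(s_j) \xi_l \|_p^p
  = \ssum{l = 1}{n} \| \xi_l \|_p^p
  = \| (\xi_1, \ldots, \xi_n) \|^p,
\]
so $\rh(s_j)$ is an isometry (and similarly for $p = \I$, using the max-norm). If instead each $\rh_l$ is \cog, the same computation with $\leq$ in place of equality, applied to each generator $s_j$ and $t_j$, yields the corresponding norm bound for $\rh$.

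There is no real obstacle; the only point requiring any care is the bifurcation between finite $p$ and $p = \I$, where the $p$-sum is replaced by a maximum, but both computations proceed identically in spirit. Everything else is immediate from the definitions.
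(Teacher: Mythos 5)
Your proof is correct and is exactly the routine verification the paper has in mind: its entire proof of this lemma is the single sentence ``This is immediate.'' Your componentwise norm estimates and the handling of the $p=\I$ case are all fine, so nothing further is needed.
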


\begin{proof}
This is immediate.
\end{proof}

\begin{rmk}\label{R-pSumRmk}
The norm used in Lemma~\ref{L-pSum}
identifies $E$ with $L^p \left( \coprod_{l = 1}^n X_l \right),$
using the obvious measure.
We write this space as
\[
L^p (X_1, \mu_1) \oplus_p L^p (X_2, \mu_2)
   \oplus_p \cdots \oplus_p L^p (X_n, \mu_n).
\]
We write the \rpn~$\rh$ as
\[
\rh = \rh_1 \oplus_p \rh_2 \oplus_p \cdots \oplus_p \rh_n,
\]
and call it
the {\emph{$p$-direct sum}} of $\rh_1, \rh_2, \ldots, \rh_n.$
\end{rmk}

Example~\ref{E-LdDblSkew} below shows that if
$\rh_1, \rh_2, \ldots, \rh_n$ are \sfi,
it does not follow that
$\rh$ is \sfi.

One can form a $p$-direct sum $\bigoplus_{i \in I} \rh_i$
over an infinite index set~$I$ provided
$\sup_{i \in I} \| \rh_i (a) \| < \I$ for all $a \in A.$

We now consider tensoring
with the identity on some other Banach space.
This requires the theory of tensor products of Banach spaces
and of operators on them.
We will consider only a very special case.

Fix $p \in [1, \I).$
We need a tensor product, defined on pairs of Banach
spaces both of the form $L^p (X, \mu)$
for \sft\  measures~$\mu,$
for which one has a canonical isometric identification
\[
L^p (X, \mu) \otimes L^p (Y, \nu) = L^p (X \times Y, \, \mu \times \nu).
\]
(One can't reasonably expect something like this for $p = \I.$)
The tensor product
\[
L^p (X, \mu) {\widetilde{\otimes}}_{\Dt_p} L^p (Y, \nu)
\]
described in Chapter~7 of~\cite{DF} will serve our purpose.
To simplify the notation,
we simply write $L^p (X, \mu) \otimes_p L^p (Y, \nu).$

We note that there is a more general construction,
the $M$-norm of~\cite{Ch},
defined before~(4) on page~3 of~\cite{Ch}.
This norm is defined for the tensor product of a Banach lattice
(this includes all spaces $L^p (X, \mu)$ for all~$p$)
and a Banach space.
Theorem~3.2(1) of~\cite{Ch} shows that whenever
$p \in [1, \I)$ and $\XBM$ is a finite measure space,
then the completion of $E \otimes_{\mathrm{alg}} L^p (X, \mu)$
in this norm is isometrically isomorphic to the space of
$L^p$~functions on~$X$ with values in~$E.$
In particular, regardless of the value of~$p,$
this norm gives the properties in Theorem~\ref{T-LpTP}.

\begin{thm}\label{T-LpTP}
Let $\XBM$ and $\YCN$ be \sfm s.
Let $p \in [1, \I).$
Write $L^p (X, \mu) \otimes_p L^p (Y, \nu)$
for the Banach space completed tensor product
$L^p (X, \mu) {\widetilde{\otimes}}_{\Dt_p} L^p (Y, \nu)$
defined in~7.1 of~\cite{DF}.
Then there is a unique isometric isomorphism
\[
L^p (X, \mu) \otimes_p L^p (Y, \nu)
  \cong L^p (X \times Y, \, \mu \times \nu)
\]
which identifies,
for every $\xi \in L^p (X, \mu)$ and $\et \in L^p (Y, \nu),$
the element $\xi \otimes \et$ with the function
$(x, y) \mapsto \xi (x) \et (y)$ on $X \times Y.$
Moreover:
\begin{enumerate}
\item\label{T-LpTP-0}
Under the identification above,
the linear span of all $\xi \otimes \et,$
for $\xi \in L^p (X, \mu)$ and $\et \in L^p (Y, \nu),$
is dense in $ L^p (X \times Y, \, \mu \times \nu).$
\item\label{T-LpTP-1}
$\| \xi \otimes \et \|_p = \| \xi \|_p \| \et \|_p$
for all $\xi \in L^p (X, \mu)$ and $\et \in L^p (Y, \nu).$
\item\label{T-LpTP-2}
The tensor product $\otimes_p$ is commutative and associative.
\item\label{T-LpTP-3a}
Let
\[
(X_1, \cB_1, \mu_1), \,\,\,\,\,\,
(X_2, \cB_2, \mu_2), \,\,\,\,\,\,
(Y_1, \cC_1, \nu_1),
\andeqn
(Y_2, \cC_2, \nu_2)
\]
be \sfm s.
Let
\[
a \in L \big( L^p (X_1, \mu_1), \, L^p (X_2, \mu_2) \big)
\andeqn
b \in L \big( L^p (Y_1, \nu_1), \, L^p (Y_2, \nu_2) \big).
\]
Then there exists a unique
\[
c \in L \big( L^p (X_1 \times Y_1, \, \mu_1 \times \nu_1),
          \, L^p (X_2 \times Y_2, \, \mu_2 \times \nu_2) \big)
\]
such that, making the identification above,
$c (\xi \otimes \et) = a \xi \otimes b \et$
for all $\xi \in L^p (X_1, \mu_1)$ and $\et \in L^p (Y_1, \nu_1).$
We call this operator $a \otimes b.$
\item\label{T-LpTP-3b}
The operator $a \otimes b$ of~(\ref{T-LpTP-3a})
satisfies $\| a \otimes b \| = \| a \| \cdot \| b \|.$
\item\label{T-LpTP-4}
The tensor product of operators defined in~(\ref{T-LpTP-3a})
is associative, bilinear, and satisfies (when the domains
are appropriate)
$(a_1 \otimes b_1) (a_2 \otimes b_2) = a_1 a_2 \otimes b_1 b_2.$
\end{enumerate}
\end{thm}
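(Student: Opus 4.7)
The plan is to first establish the isometric identification with $L^p(X \times Y, \mu \times \nu)$, and then deduce the remaining assertions from it. The bilinear map $(\xi, \eta) \mapsto [(x,y) \mapsto \xi(x)\eta(y)]$ from $L^p(X,\mu) \times L^p(Y,\nu)$ to $L^p(X \times Y, \mu \times \nu)$ is well defined, and Fubini's theorem (together with $p < \I$) immediately gives $\|\xi \cdot \eta\|_p = \|\xi\|_p \|\eta\|_p$, which is~(\ref{T-LpTP-1}). By the universal property of the algebraic tensor product, this map induces a linear map $\Phi$ from $L^p(X,\mu) \otimes_{\mathrm{alg}} L^p(Y,\nu)$ into $L^p(X \times Y, \mu \times \nu)$. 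For~(\ref{T-LpTP-0}), finite linear combinations of indicator functions $\chi_{A \times B}$ with $\mu(A), \nu(B) < \I$ are dense in $L^p(X \times Y, \mu \times \nu)$ by $\sm$-finiteness, and each $\chi_{A \times B}$ equals $\Phi(\chi_A \otimes \chi_B)$.

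To upgrade $\Phi$ to an isometric isomorphism of the completion $L^p(X,\mu) \otimes_p L^p(Y,\nu)$ onto $L^p(X \times Y, \mu \times \nu)$, one must verify that the $\Dt_p$ tensor norm of~7.1 of~\cite{DF} agrees on the algebraic tensor product with the norm inherited from $L^p(X \times Y, \mu \times \nu)$ via~$\Phi$. This is the content of the reasonable cross-norm computation in Chapter~7 of~\cite{DF}; alternatively, Theorem~3.2(1) of~\cite{Ch} identifies the completion with a Bochner space of $L^p$-functions valued in an $L^p$-space, which coincides isometrically with $L^p(X \times Y, \mu \times \nu)$ by another application of Fubini.

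For the operator statements~(\ref{T-LpTP-3a}) and~(\ref{T-LpTP-3b}), I would define $a \otimes b$ first on the algebraic tensor product by $(a \otimes b)(\xi \otimes \et) = a\xi \otimes b\et$ and then extend by continuity. The main step is the upper bound $\|a \otimes b\| \leq \|a\| \cdot \|b\|$; I would obtain it by factoring $a \otimes b = (a \otimes \id)(\id \otimes b)$. Under the identification from~(\ref{T-LpTP-0}), one views $L^p(X_i \times Y_j, \mu_i \times \nu_j)$ as the vector-valued $L^p$-space $L^p(X_i, \mu_i; \, L^p(Y_j, \nu_j))$, and then $\id \otimes b$ acts pointwise as~$b$ on each fiber, giving norm at most $\|b\|$; the analogous argument applies to $a \otimes \id$. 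The matching lower bound $\|a \otimes b\| \geq \|a\| \cdot \|b\|$ follows by evaluating on elementary tensors $\xi \otimes \et$ with $\xi, \et$ nearly norming $a$ and~$b$, using part~(\ref{T-LpTP-1}).

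The remaining parts are then routine. Commutativity and associativity in~(\ref{T-LpTP-2}), and the algebraic properties in~(\ref{T-LpTP-4}), follow from the canonical measure-preserving identifications of the underlying product spaces together with extension by bilinearity and density from elementary tensors. The main obstacle is the isometric identification in the first sentence: a fully self-contained proof would require a detailed comparison of the $\Dt_p$ norm with the natural norm coming from~$\Phi$, so in practice the cleanest route is to cite~\cite{DF} or~\cite{Ch} for that step and then derive everything else from the explicit realization on $L^p(X \times Y, \mu \times \nu)$.
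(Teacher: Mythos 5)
Your proposal is correct, but it distributes the work differently from the paper. The paper's proof is almost entirely by citation: the isometric identification is 7.2 of~\cite{DF}, part~(\ref{T-LpTP-1}) is 7.1 of~\cite{DF}, part~(\ref{T-LpTP-0}) is read off from the definition of a completed tensor product (the algebraic tensor product is dense in its completion, so the identification does the rest), and parts (\ref{T-LpTP-3a}) and~(\ref{T-LpTP-3b}) are quoted as a special case of 7.9 of~\cite{DF} or of Theorem~1.1 of~\cite{FIP}; only part~(\ref{T-LpTP-4}) is argued, by checking on elementary tensors. You instead give direct arguments for (\ref{T-LpTP-0}) (density of spans of $\ch_{A \times B}$), (\ref{T-LpTP-1}) (Tonelli), and the operator bounds, reserving the citation for the single genuinely unavoidable step, namely matching the $\Dt_p$ norm of~\cite{DF} with the concrete norm on $L^p (X \times Y, \, \mu \times \nu)$ --- unavoidable because the statement is about that specific tensor norm. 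Your factorization $a \otimes b = (a \otimes \id)(\id \otimes b)$ with the fiberwise estimate on the vector-valued $L^p$~space is sound and is essentially the mechanism behind the cited result of~\cite{FIP}; one small point worth making explicit is that the fiberwise bound should be verified on finite sums of elementary tensors (where the $L^p (Y_1, \nu_1)$-valued function takes values in a finite-dimensional subspace, so no strong-measurability issues for possibly nonseparable $L^p (Y_1, \nu_1)$ arise) and then extended by density, which is exactly how you set it up. The trade-off is the usual one: the paper's version is shorter and leans on the literature, while yours is more self-contained and makes visible why the cross-norm and operator-norm identities hold.
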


\begin{proof}
The identification of
$L^p (X, \mu) {\widetilde{\otimes}}_{\Dt_p} L^p (Y, \nu)$
is in 7.2 of~\cite{DF}.
Part~(\ref{T-LpTP-0}) is part of the definition of a tensor product
of Banach spaces.
Part~(\ref{T-LpTP-1}) is in 7.1 of~\cite{DF}.
Part~(\ref{T-LpTP-2}) follows from the corresponding properties
of products of measure spaces.
Parts (\ref{T-LpTP-3a}) and~(\ref{T-LpTP-3b})
are a special case of 7.9 of~\cite{DF},
or of Theorem~1.1 of~\cite{FIP}.
Part~(\ref{T-LpTP-4}) follows from part~(\ref{T-LpTP-3a})
and part~(\ref{T-LpTP-0}) by examining what happens on
elements of the form $\xi \otimes \et.$
\end{proof}

In fact, the statements about tensor products
of operators in
Theorem~\ref{T-LpTP}(\ref{T-LpTP-3a})
and Theorem~\ref{T-LpTP}(\ref{T-LpTP-3b})
are valid in considerably greater generality;
see, for example, Theorem~1.1 of~\cite{FIP}.
We need a slightly more general statement
in the proof of the following lemma.

\begin{lem}\label{L-TPRep}
Let $A$ be a unital complex algebra,
let $p \in [1, \I),$
let $\XBM$ and $\YCN$ be \sfm s,
and let $\rh \colon A \to \LLp$ be a \rpn.
Then there is a unique \rpn\  %
$\rh \otimes_p 1 \colon A \to L^p (X \times Y, \, \mu \times \nu)$
such that, following Theorem~\ref{T-LpTP}(\ref{T-LpTP-3a}),
we have
$(\rh \otimes_p 1) (a) = \rh (a) \otimes 1$ for all $a \in A.$
This \rpn\  satisfies $\| (\rh \otimes_p 1) (a) \| = \| \rh (a) \|$
for all $a \in A.$
If $A$ is any of $L_d,$ $C_d,$ or~$L_{\I},$
and $\rh$ is any of \cog, \fis, or \sfi,
then so is $\rh \otimes_p 1.$
\end{lem}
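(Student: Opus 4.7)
The plan is to define $(\rh \otimes_p 1)(a) := \rh(a) \otimes 1$ for $a \in A,$ where $1 \in L(L^p(Y,\nu))$ is the identity operator, using the tensor product of operators from Theorem~\ref{T-LpTP}(\ref{T-LpTP-3a}) with $X_1 = X_2 = X,$ $Y_1 = Y_2 = Y,$ and $b = 1.$ Uniqueness of each $(\rh \otimes_p 1)(a)$ as a bounded operator is immediate from the density of the linear span of simple tensors (Theorem~\ref{T-LpTP}(\ref{T-LpTP-0})). The assignment $a \mapsto \rh(a) \otimes 1$ is a unital algebra \hm\ by the bilinearity and multiplicativity in Theorem~\ref{T-LpTP}(\ref{T-LpTP-4}): one has $\rh(ab) \otimes 1 = (\rh(a) \otimes 1)(\rh(b) \otimes 1),$ and $\rh(1) \otimes 1$ acts as the identity on all simple tensors and hence on all of $L^p(X \times Y, \mu \times \nu).$ The norm identity $\|(\rh \otimes_p 1)(a)\| = \|\rh(a)\|$ then follows from Theorem~\ref{T-LpTP}(\ref{T-LpTP-3b}), since $\|1\| = 1.$

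The key remaining claim is that whenever $a \in \LLp$ is an isometry, so is $a \otimes 1$ on $L^p(X \times Y, \mu \times \nu)$; this is not directly stated in Theorem~\ref{T-LpTP}, which only asserts the operator-norm identity $\|a \otimes b\| = \|a\| \|b\|.$ I would prove this isometry claim via the Fubini-type identification: for $\zeta \in L^p(X \times Y, \mu \times \nu),$ define $\zeta_y \in L^p(X, \mu)$ by $\zeta_y(x) = \zeta(x, y),$ valid for almost every~$y,$ so that $\|\zeta\|_p^p = \int_Y \|\zeta_y\|_p^p\, d\nu(y).$ Checking agreement on simple tensors and using continuity on their dense linear span, one verifies that $(a \otimes 1)\zeta$ is represented by $(x, y) \mapsto (a \zeta_y)(x).$ Then
\[
\| (a \otimes 1)\zeta \|_p^p
  = \int_Y \|a\zeta_y\|_p^p\, d\nu(y)
  = \int_Y \|\zeta_y\|_p^p\, d\nu(y)
  = \|\zeta\|_p^p,
\]
so $a \otimes 1$ is an isometry.

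Given this claim, the three preservation properties follow immediately. For \cog, the norm identity gives $\|(\rh \otimes_p 1)(s_j)\| = \|\rh(s_j)\| \leq 1$ and similarly for $t_j.$ For \fis, each $\rh(s_j)$ is an isometry, so $(\rh \otimes_p 1)(s_j) = \rh(s_j) \otimes 1$ is also an isometry. For \sfi, given $\ld \in \C^d,$ write $\rh(s_{\ld}) = c u$ with $u$ an isometry; then $(\rh \otimes_p 1)(s_{\ld}) = c(u \otimes 1)$ is a scalar multiple of the isometry $u \otimes 1.$ The main obstacle is verifying the Fubini-based description of $a \otimes 1,$ a standard but not entirely trivial refinement of the abstract construction in Theorem~\ref{T-LpTP}, needed to promote the operator-norm equality to an actual isometry statement.
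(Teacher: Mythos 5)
Your proposal is correct, and it matches the paper's proof for everything except the one step you correctly identify as the crux: showing that $s \otimes 1$ is an isometry when $s$ is. There you take a genuinely different route. The paper argues operator-theoretically: it lets $E = \mathrm{ran}(s),$ takes the inverse $t \colon E \to L^p(X,\mu)$ of the corestriction of~$s$ (so $\|t\| = 1$), invokes Theorem~1.1 of~\cite{FIP} (or, as the paper remarks afterward, Lamperti's Theorem) to get $\|t \otimes 1\| = 1$ on the closed span of $E \otimes L^p(Y,\nu),$ and then squeezes: $\|\zeta\| = \|(t\otimes 1)(s\otimes 1)\zeta\| \leq \|(s\otimes 1)\zeta\| \leq \|\zeta\|.$ You instead use the identification $L^p(X \times Y, \mu\times\nu) \cong L^p\bigl(Y; L^p(X,\mu)\bigr)$ and show that $s \otimes 1$ acts fibrewise as $\zeta_y \mapsto s\zeta_y,$ so that the isometry identity follows by integrating $\|s\zeta_y\|_p^p = \|\zeta_y\|_p^p$ over~$Y.$ Your approach is more self-contained (it needs only Fubini and the density of simple tensors, not the external norm computation for operators defined on subspaces) and is consonant with the vector-valued description the paper itself cites from~\cite{Ch}; its only cost is the measurability bookkeeping you flag, which is handled in the standard way by passing to a subsequence of finite sums of elementary tensors converging almost everywhere in the fibre variable. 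The paper's approach is shorter given that Theorem~1.1 of~\cite{FIP} is already in its toolkit, but it leans on that citation for the nontrivial fact that $t \otimes 1$ is contractive on the subspace~$F.$ Both arguments are sound.
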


\begin{proof}
Existence of $\rh \otimes_p 1$
and $\| (\rh \otimes_p 1) (a) \| = \| \rh (a) \|$
follow from parts
(\ref{T-LpTP-3a}), (\ref{T-LpTP-3b}), and~(\ref{T-LpTP-4})
of Theorem~\ref{T-LpTP}.
If $A$ is one of $L_d,$ $C_d,$ or~$L_{\I}$
and $\rh$ is \cog,
then the norm equation implies that
$\rh \otimes_p 1$ is also \cog.

To prove that $\rh \otimes_p 1$ is \fis\  or \sfi\  when
$\rh$ is,
it suffices to prove that if $s \in \LLp$ is isometric
(not necessarily surjective),
then so is
$s \otimes 1 \in L \big( L^p (X \times Y, \, \mu \times \nu) \big).$
Let $E \subset L^p (X, \mu)$ be the range of~$s.$
Let $t \colon E \to L^p (X, \mu)$ be the inverse of
the corestriction of~$s.$
Then $\| t \| = 1.$
Let $F \subset L^p (X \times Y, \, \mu \times \nu)$ be the closed
linear span of all $\xi \otimes \et$ with $\xi \in E$
and $\et \in L^p (Y, \nu).$
Then Theorem~1.1 of~\cite{FIP}
implies that
$t \otimes 1 \colon F \to L^p (X \times Y, \, \mu \times \nu)$
is defined and satisfies $\| t \otimes 1 \| = 1.$
Since $\| s \otimes 1 \| = 1$ and
$(t \otimes 1) (s \otimes 1) = 1,$
this implies that $s \otimes 1$ is isometric.
\end{proof}

In the proof of Lemma~\ref{L-TPRep},
we could also have used Lamperti's Theorem
(Theorem~\ref{T:Lamperti} below)
instead of Theorem~1.1 of~\cite{FIP}
to show that $\| t \otimes 1 \| = 1.$

Finally, we present constructions of new representations
that are special to the kinds of algebras we consider.
They will play important technical roles.

\begin{lem}\label{L:MultLd}
Let $d \in \{ 2, 3, 4, \ldots \},$
let $E$ be a nonzero Banach space,
and let $\rh \colon L_d \to L (E)$ be a representation.
Let $u \in L (E)$ be invertible.
Then there is a unique \rpn\  $\rh^u \colon L^d \to L (E)$
such that for $j = 1, 2, \ldots, d$ we have
\[
\rh^u (s_j) = u \rh (s_j)
\andeqn
\rh^u (t_j) = \rh (t_j) u^{-1}.
\]
Assume further that $u$ is isometric.
If $\rh$ is \cog, \fis, or \sfi\  %
(Definition~\ref{D:KindsOfReps1}),
then so is $\rh^u.$
\end{lem}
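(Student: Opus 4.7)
The plan is to show existence and uniqueness of $\rh^u$ by verifying that the proposed assignments $s_j \mapsto u \rh (s_j)$ and $t_j \mapsto \rh (t_j) u^{-1}$ satisfy the defining relations (\ref{Eq:Leavitt1}), (\ref{Eq:Leavitt2}), and~(\ref{Eq:Leavitt3}) of Definition~\ref{D:Leavitt}, and then invoke the universal property of $L_d.$ For relations (\ref{Eq:Leavitt1}) and~(\ref{Eq:Leavitt2}), the middle factors $u^{-1} u$ collapse, giving $\rh (t_j) \rh (s_k) = \rh (t_j s_k),$ which equals $1$ if $j = k$ and $0$ otherwise. For relation~(\ref{Eq:Leavitt3}),
\[
\sssum{j = 1}{d} u \rh (s_j) \rh (t_j) u^{-1}
  = u \rh \left( \sssum{j = 1}{d} s_j t_j \right) u^{-1}
  = u \cdot 1 \cdot u^{-1}
  = 1.
\]
Uniqueness of the \rpn\ follows from the fact that $s_1, \ldots, s_d, t_1, \ldots, t_d$ generate $L_d$ as a unital algebra.

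For the second part, I would note first that an invertible isometry $u$ satisfies $\| u \| = 1$ and, since $\| u^{-1} \xi \| = \| u u^{-1} \xi \| = \| \xi \|$ for all $\xi \in E,$ also $\| u^{-1} \| = 1.$ The \cog\ case is then immediate from submultiplicativity: $\| \rh^u (s_j) \| \leq \| u \| \cdot \| \rh (s_j) \| \leq 1,$ and similarly $\| \rh^u (t_j) \| \leq 1.$ The \fis\ case follows because $\rh^u (s_j) = u \rh (s_j)$ is a composition of isometries and hence isometric.

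For the \sfi\ case, using Definition~\ref{N-LComb} and linearity, we have
\[
\rh^u (s_{\ld})
  = \sssum{j = 1}{d} \ld_j u \rh (s_j)
  = u \rh (s_{\ld})
\]
for every $\ld \in \C^d.$ Since $\rh$ is \sfi, $\rh (s_{\ld})$ is a scalar multiple of an isometry, so $u \rh (s_{\ld})$ is as well (the scalar passes through, and the composition of two isometries is an isometry); combined with the \fis\ property already established, this gives that $\rh^u$ is \sfi.

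There is no real obstacle here: the content is the brief calculation $u^{-1} u = 1$ together with the obvious behavior of norms under composition with an invertible isometry. The only mild subtlety is remembering that ``invertible isometry'' forces $u^{-1}$ to be isometric as well, which is what makes the verification of relation~(\ref{Eq:Leavitt1}) and the norm estimates on $\rh^u (t_j)$ work symmetrically.
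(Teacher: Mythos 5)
Your proposal is correct and follows essentially the same route as the paper: verify the three Leavitt relations for the proposed images of the generators (with the $u^{-1}u$ cancellation and the conjugation computation for relation~(\ref{Eq:Leavitt3})), invoke the universal property, and observe that the norm conditions pass through composition with the invertible isometry $u$ and its inverse. The paper simply states that the second part ``follows directly from the definitions,'' so your explicit treatment of the \cog, \fis, and \sfi\ cases is just a fuller writing-out of the same argument.
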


\begin{proof}
For the first part, we check the relations
(\ref{Eq:Leavitt1}), (\ref{Eq:Leavitt2}), and~(\ref{Eq:Leavitt3})
in Definition~\ref{D:Leavitt}.
For $j \in \{ 1, 2, \ldots, d \}$
we have
\[
[\rh (t_j) u^{-1}] [u \rh (s_j)] = \rh (t_j) \rh (s_j) = 1,
\]
for distinct $j, k \in \{ 1, 2, \ldots, d \}$
we have
\[
[\rh (t_j) u^{-1}] [u \rh (s_k)] = \rh (t_j) \rh (s_k) = 0,
\]
and we have
\[
\sum_{j = 1}^d [u \rh (s_j)] [\rh (t_j) u^{-1}]
  = u \left( \sssum{j = 1}{d} s_j t_j \right) u^{-1}
  = u \cdot 1 \cdot u^{-1}
  = 1.
\]

The second part follows directly from the definitions of the
conditions on the \rpn.
\end{proof}

For \rpn s of $C_d$ and~$L_{\I},$
we only need one sided invertibility.

\begin{lem}\label{L:MultCd}
Let $d \in \{ 2, 3, 4, \ldots \},$
let $E$ be a nonzero Banach space,
and let $\rh \colon C_d \to L (E)$ be a representation.
Let $u, v \in L (E)$ satisfy
$v u = 1.$
Then there is a unique \rpn\  $\rh^{u, v} \colon C^d \to L (E)$
such that for $j = 1, 2, \ldots, d$ we have
\[
\rh^{u, v} (s_j) = u \rh (s_j)
\andeqn
\rh^{u, v} (t_j) = \rh (t_j) v.
\]
If $u$ is an isometry
and $\rh$ is \fis\   or \sfi,
then so is $\rh^{u, v}.$
If $\| u \|, \, \| v \| \leq 1$ and $\rh$ is \cog,
then so is $\rh^{u, v}.$
\end{lem}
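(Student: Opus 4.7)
The plan is to verify that the prescribed images of the generators satisfy the defining relations of $C_d$, and then to transfer each of the three conditions on $\rho$ to $\rho^{u,v}$ by routine computations.

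First I would check the two relations~(\ref{Eq:Leavitt1}) and~(\ref{Eq:Leavitt2}) from Definition~\ref{D:Cohn}. For any $j \in \{1, 2, \ldots, d\}$,
\[
\rh^{u,v}(t_j)\,\rh^{u,v}(s_j) = \rh(t_j)\, v u\, \rh(s_j) = \rh(t_j)\rh(s_j) = 1,
\]
where the hypothesis $vu = 1$ is exactly what is needed in the middle. Similarly, for distinct $j, k$,
\[
\rh^{u,v}(t_j)\,\rh^{u,v}(s_k) = \rh(t_j)\, v u\, \rh(s_k) = \rh(t_j)\rh(s_k) = 0.
\]
Since $C_d$ is the universal unital algebra on $s_1, \ldots, s_d, t_1, \ldots, t_d$ modulo these two families of relations, the universal property produces a (unique) unital homomorphism $\rh^{u,v} \colon C_d \to L(E)$ sending $s_j \mapsto u\rh(s_j)$ and $t_j \mapsto \rh(t_j) v$. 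This is where the contrast with Lemma~\ref{L:MultLd} lies: the relation $\sum_j s_j t_j = 1$ is not required for~$C_d$, so we do not need $uv = 1$ and hence can get by with the one-sided inverse $vu = 1$.

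Next I would handle the preservation of the three conditions from Definition~\ref{D:KindsOfReps1}. For \cog: if $\|u\|, \|v\| \leq 1$ and $\rh$ is \cog, then $\|\rh^{u,v}(s_j)\| \leq \|u\|\,\|\rh(s_j)\| \leq 1$ and $\|\rh^{u,v}(t_j)\| \leq \|\rh(t_j)\|\,\|v\| \leq 1$. For \fis: if $u$ is an isometry and $\rh(s_j)$ is an isometry, then $\rh^{u,v}(s_j) = u\rh(s_j)$ is a composition of isometries, hence an isometry. For \sfi: using Definition~\ref{N-LComb} and linearity, one sees that for every $\ld \in \C^d$,
\[
\rh^{u,v}(s_{\ld}) = \sum_{j=1}^{d} \ld_j\, u\rh(s_j) = u\,\rh(s_{\ld}),
\]
so if $\rh(s_{\ld})$ is a scalar multiple of an isometry and $u$ is isometric, then so is $\rh^{u,v}(s_{\ld})$.

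There is no substantive obstacle here; the lemma is essentially an algebraic manipulation, and the only small subtlety is noticing that the assignment $\rh^{u,v}(t_j) = \rh(t_j) v$ (rather than $v \rh(t_j)$) is what makes the cancellation $vu = 1$ appear in the middle of the product $\rh^{u,v}(t_j)\rh^{u,v}(s_k)$.
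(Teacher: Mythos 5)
Your proof is correct and follows essentially the same route as the paper, which simply observes that the argument of Lemma~\ref{L:MultLd} carries over verbatim for relations (\ref{Eq:Leavitt1}) and~(\ref{Eq:Leavitt2}), with the one-sided inverse $v u = 1$ sufficing precisely because relation~(\ref{Eq:Leavitt3}) is absent from~$C_d.$ Your explicit verification of the three preservation statements matches what the paper leaves as a direct consequence of the definitions.
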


\begin{proof}
The proof is essentially the same as that of Lemma~\ref{L:MultLd},
using the relations
(\ref{Eq:Leavitt1}) and (\ref{Eq:Leavitt2})
in Definition~\ref{D:Leavitt}
(see Definition~\ref{D:Cohn}).
Since no relation involves $s_j t_j,$
we do not need to have $u v = 1.$
\end{proof}

\begin{lem}\label{L:MultLI}
Let $\rh \colon L_{\I} \to L (E)$ be a representation.
Let $u, v \in L (E)$ satisfy
$v u = 1.$
Then there is a unique \rpn\  $\rh^{u, v} \colon L_{\I} \to L (E)$
such that for $j = 1, 2, \ldots$ we have
\[
\rh^{u, v} (s_j) = u \rh (s_j)
\andeqn
\rh^{u, v} (t_j) = \rh (t_j) v.
\]
If $u$ is an isometry
and $\rh$ is \fis\   or \sfi,
then so is $\rh^{u, v}.$
If $\| u \|, \, \| v \| \leq 1$ and $\rh$ is \cog,
then so is $\rh^{u, v}.$
\end{lem}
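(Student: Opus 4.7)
The plan is to prove this exactly as in the proof of Lemma~\ref{L:MultCd}, observing that the defining relations of $L_{\I}$ in Definition~\ref{D:LInfty} are precisely (\ref{Eq:ILeavitt1}) and (\ref{Eq:ILeavitt2}), which are the same as relations (\ref{Eq:Leavitt1}) and~(\ref{Eq:Leavitt2}) for $C_d$ with the index set $\{1, 2, \ldots, d\}$ replaced by~$\N.$ In particular, there is no relation of the form~(\ref{Eq:Leavitt3}), so we never need to form a sum $\sum_j s_j t_j$ and thus only the one-sided identity $vu = 1$ is required (the situation where we would need $uv = 1$ does not arise).

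First I would verify the two families of relations on the proposed generators $u\rh(s_j)$ and $\rh(t_j) v.$ For each $j \in \N,$ using $vu = 1,$
\[
[\rh(t_j) v][u \rh(s_j)] = \rh(t_j) \rh(s_j) = 1,
\]
and for distinct $j, k \in \N,$
\[
[\rh(t_j) v][u \rh(s_k)] = \rh(t_j) \rh(s_k) = 0.
\]
The universal property of $L_{\I}$ (Definition~\ref{D:LInfty}) then gives the unique unital \hm\ $\rh^{u, v} \colon L_{\I} \to L(E)$ with $\rh^{u, v}(s_j) = u\rh(s_j)$ and $\rh^{u, v}(t_j) = \rh(t_j) v$ for all $j \in \N.$

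For the preservation statements, the arguments are immediate from the definitions in Definition~\ref{D:KindsOfReps1}. If $\| u \|, \| v \| \leq 1$ and $\rh$ is \cog, then $\| \rh^{u, v}(s_j) \| \leq \| u \| \cdot \| \rh(s_j) \| \leq 1$ and similarly for $\rh^{u, v}(t_j).$ If $u$ is an isometry and $\rh$ is \fis, then $\rh^{u, v}(s_j) = u \rh(s_j)$ is a composition of isometries and hence an isometry. If in addition $\rh$ is \sfi, then for any $\ld \in \bigoplus_{j = 1}^{\I} \C$ with finite support we have $\rh^{u, v}(s_{\ld}) = u \rh(s_{\ld}),$ which is $u$ composed with a scalar multiple of an isometry, hence itself a scalar multiple of an isometry.

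There is no real obstacle here: the only thing to notice is that the argument used for $C_d$ in Lemma~\ref{L:MultCd} transfers verbatim because the defining relations of $L_{\I}$ are a subset of those of $L_d$ (and a countably infinite version of those of $C_d$), and no relation involves products $s_j t_j$ in a way requiring invertibility of~$u$ on the other side.
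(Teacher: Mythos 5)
Your proof is correct and follows exactly the route the paper takes: the paper's proof of this lemma simply says it is the same as that of Lemma~\ref{L:MultCd} (which in turn adapts Lemma~\ref{L:MultLd}), using relations (\ref{Eq:ILeavitt1}) and~(\ref{Eq:ILeavitt2}) and observing that no relation involves $\sum_j s_j t_j,$ so only $v u = 1$ is needed. Your explicit verification of the relations and of the preservation statements fills in precisely the details the paper leaves implicit.
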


\begin{proof}
The proof is the same as that of Lemma~\ref{L:MultCd},
using the relations
(\ref{Eq:ILeavitt1}) and (\ref{Eq:ILeavitt2})
in Definition~\ref{D:LInfty}.
\end{proof}

Examples \ref{E:01RepI2} and~\ref{E:01RepID}
illustrate this construction.

\begin{lem}\label{L:TransposeRep}
Let $A$ be any of $L_d$ (Definition~\ref{D:Leavitt}),
$C_d$ (Definition~\ref{D:Cohn}),
or~$L_{\I}$ (Definition~\ref{D:LInfty}).
Let $E$ be a nonzero Banach space,
and let $\rh \colon A \to L (E)$ be a representation.
Using Notation~\ref{N:Dual}
and the notation of Lemma~\ref{L:Invol}(\ref{L:Invol-Prime}),
define a function $\rh' \colon A \to L (E')$
by $\rh' (a) = \rh (a')'.$
Then $\rh'$ is a \rpn\  of~$A,$
called the dual of~$\rh.$
\end{lem}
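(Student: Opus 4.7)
The plan is essentially a two-line verification, so I would present it as such rather than building any new machinery. The whole point is that the map $a \mapsto a'$ on $A$ and the map $b \mapsto b'$ on bounded operators are both linear antimultiplicative involutions that preserve the identity, so composing them gives an ordinary unital algebra homomorphism.

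First I would collect the formal properties I need. From Lemma~\ref{L:Invol}(\ref{L:Invol-Prime}) the map $a \mapsto a'$ on~$A$ is linear, sends $1$ to~$1$, is an involution, and satisfies $(ab)' = b' a'$. From Notation~\ref{N:Dual} the map $b \mapsto b'$ from $L(E)$ to $L(E')$ is linear, sends the identity of $E$ to the identity of $E'$, and for composable operators $c, d \in L(E)$ satisfies $(cd)' = d' c'$ (a direct check on functionals: $(cd)'(\om)(\xi) = \om(cd\xi) = c'(\om)(d\xi) = d'(c'(\om))(\xi)$).

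Then I would verify the three homomorphism conditions for $\rh'(a) = \rh(a')'$. For unitality, $\rh'(1) = \rh(1')' = \rh(1)' = \mathrm{id}_E' = \mathrm{id}_{E'}$. For linearity, for $\ld \in \C$ and $a, b \in A$,
\[
\rh'(\ld a + b) = \rh\bigl((\ld a + b)'\bigr)' = \rh(\ld a' + b')' = \bigl(\ld \rh(a') + \rh(b')\bigr)' = \ld \rh'(a) + \rh'(b),
\]
using linearity of both~$'$-maps and of~$\rh$. For multiplicativity,
\[
\rh'(a b) = \rh\bigl((ab)'\bigr)' = \rh(b' a')' = \bigl(\rh(b') \rh(a')\bigr)' = \rh(a')' \, \rh(b')' = \rh'(a) \rh'(b),
\]
where the third equality uses that $\rh$ is a homomorphism and the fourth uses antimultiplicativity of the operator transpose. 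The only subtle point in the whole argument is keeping straight that antimultiplicativity appears twice and the two reversals cancel; I expect no real obstacle.
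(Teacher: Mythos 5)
Your proof is correct and is exactly the argument the paper intends: its own proof consists of the single line ``This follows from Lemma~\ref{L:Invol}(\ref{L:Invol-Prime})'', and your verification simply spells out the routine composition of the two antimultiplicative maps. No issues.
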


\begin{proof}
This follows from Lemma~\ref{L:Invol}(\ref{L:Invol-Prime}).
\end{proof}

\section{Examples of representations}\label{Sec:ExsOrReps}

\indent
In this section, we give a number of examples of representations
of Leavitt algebras on Banach spaces.
These examples illustrate some of the possible behavior
of \rpn s.
We begin with basic examples of \rpn s on~$l^p.$

\begin{exa}\label{E:LdRepOnlp}
Fix $p \in [1, \I].$
We take $l^p = l^p (\N).$
Let $d \in \{ 2, 3, 4, \ldots \}.$
Define functions $f_1, f_2, \ldots, f_d \colon \N \to \N$
by
\[
f_{d, j} (n) = d (n - 1) + j
\]
for $n \in \N.$
The functions $f_j$ are injective
and have disjoint ranges whose union is~$\N.$
For $j = 1, 2, \ldots, d,$
define $v_{d, j}, w_{d, j} \in L (l^p)$ by,
for $\xi = \big( \xi (1), \, \xi (2), \, \ldots \big) \in l^p$
and $n \in \N,$
\[
(v_{d, j} \xi) (n)
 = \begin{cases}
   \xi \big( f_{d, j}^{- 1} (n) \big)
                 & n \in {\mathrm{ran}} ( f_{d, j} )
        \\
   0             & n \not\in {\mathrm{ran}} ( f_{d, j} )
\end{cases}
\,\,\,\,\,\,
\andeqn
\,\,\,\,\,\,
(w_{d, j} \xi) (n)
 = \xi ( f_{d, j} (n) ).
\]
Then $v_{d, j}$ is isometric, $w_{d, j}$ is contractive,
and there is a unique \rpn\  $\rh \colon L_d \to L (l^p)$
such that
\[
\rh (s_j) = v_{d, j}
\andeqn
\rh (t_j) = w_{d, j}
\]
for $j = 1, 2, \ldots, d.$

If we let $\dt_n \in l^p$ be the sequence given by $\dt_n (n) = 1$
and $\dt_n (m) = 0$ for $m \neq n,$
then
\[
v_{d, j} \dt_n = \dt_{f_{d, j} (n)}
\andeqn
w_{d, j} \dt_n = \begin{cases}
   \dt_{f_{d, j}^{-1} (n)} & n \in {\mathrm{ran}} (f_{d, j})
       \\
   0                       & n \not\in {\mathrm{ran}} (f_{d, j})
\end{cases}
\]
for all $n \in \N.$
For $p \in [1, \I),$
these formulas determine $v_{d, j}, w_{d, j} \in L (l^p)$ uniquely.

The \rpn~$\rh$ is clearly
\cog\  (Definition~\ref{D:KindsOfReps1}(\ref{D:Repn-CG})).
One can check directly that $\rh$ is
\sfi\  (Definition~\ref{D:KindsOfReps1}(\ref{D:Repn-SFI})),
with
\[
\left\| \rh \left( \sssum{j = 1}{d} \ld_j s_j \right) \xi \right\|
  = \| (\ld_1, \ld_2, \ldots, \ld_d) \|_p \| \xi \|.
\]
(Or use Lemma~\ref{T-SpatialRepsL} below.)
\end{exa}

\begin{exa}\label{E:CdRepOnlp}
Let the notation be as in Example~\ref{E:LdRepOnlp}.
Then there is a unique \rpn\  $\pi \colon C_d \to L (l^p)$
such that
\[
\pi (s_j) = v_{d + 1, \, j}
\andeqn
\pi (t_j) = w_{d + 1, \, j}
\]
for $j = 1, 2, \ldots, d.$
Since $\sum_{j = 1}^d w_{d + 1, \, j} v_{d + 1, \, j} \neq 1,$
this \rpn\  does not descend to a \rpn\  of~$L_d.$
\end{exa}

\begin{exa}\label{E:LIRepOnlp}
Let $p \in [1, \I].$
We obtain a \rpn\  $\rh$ of $L_{\I}$ on $l^p$
by the same method as in Example~\ref{E:LdRepOnlp},
taking $v_{\I, j}$ and $w_{\I, j}$ to come from the functions
\[
f_{\I, j} (n) = 2^j n - 2^{j - 1}
\]
for $j, n \in \N.$
Again, this \rpn\  is \cog\  and \sfi.
\end{exa}

In Example~\ref{E:LIRepOnlp},
the ranges of the $\rh (s_j)$ span a dense subspace
of~$l^p,$
except when $p = \I.$
The following example shows that this need not be the case.

\begin{exa}\label{E:01RepI2}
Let $p \in [1, \I].$
Let $\rh \colon L_{\I} \to L (l^p)$
be as in Example~\ref{E:LIRepOnlp}.
Then there exists a unique \rpn\  %
$\pi \colon L_{\I} \to L (l^p)$
such that for $\pi (s_j) = \rh (s_{j + 1})$
and $\pi (t_j) = \rh (t_{j + 1})$ for all $j \in \N.$
Like~$\rh,$ this \rpn\  is \cog\  and \sfi.

This example can be obtained from~$\rh$ by the method
of Lemma~\ref{L:MultLI}.
Define $u, v \in L (l^p)$ by
\[
(u \xi) (n) = \begin{cases}
   \xi (n / 2) & {\mbox{$n$ is even}}
        \\
   0           & {\mbox{$n$ is odd}}
\end{cases}
\,\,\,\,\,\,
\andeqn
\,\,\,\,\,\,
(v \xi) (n) = \xi (2 n)
\]
for $\xi \in l^p$ and $n \in \N.$
Using the notation of Example~\ref{E:LdRepOnlp},
for $p \neq \I$ these are determined by
\[
u \dt_n = \dt_{2 n}
\,\,\,\,\,\,
\andeqn
\,\,\,\,\,\,
v \dt_n
  = \begin{cases}
   \dt_{n / 2} & {\mbox{$n$ is even}}
        \\
   0           & {\mbox{$n$ is odd}}.
\end{cases}
\]
Then $\pi = \rh^{u, v}.$
\end{exa}

Let $\rh$ be as in Example~\ref{E:LIRepOnlp},
and let $\pi$ be as in Example~\ref{E:01RepI2}.
We do not know whether
the Banach algebras ${\ov{\rh (L_{\I})}}$ and ${\ov{\pi (L_{\I})}}$
are isometrically isomorphic,
or even whether they are isomorphic.

The \rpn s of Example~\ref{E:LIRepOnlp} and Example~\ref{E:01RepI2}
are both very strongly tied to the structure of
$l^p$ as a space of functions.
(They are spatial in the sense of
Definition~\ref{D:SpatialRep}(\ref{D:SpatialRep-Sp}) below.)
The following example is less regular.

\begin{exa}\label{E:01RepID}
Let $p \in [1, \I].$
Let the notation be as in Example~\ref{E:LIRepOnlp}
and Example~\ref{E:01RepI2}.
Define $y \in L (l^p)$ by
\[
(y \xi) (n)
 = \begin{cases}
   \xi (2 n)            & {\mbox{$n$ is even}}
        \\
   \xi (2 n) + \xi (n)  & {\mbox{$n$ is odd}}.
\end{cases}
\]
Thus, in the notation of Example~\ref{E:LdRepOnlp}, we have
\[
y \dt_n
  = \begin{cases}
   \dt_{n / 2} & {\mbox{$n$ is even}}
        \\
   \dt_n       & {\mbox{$n$ is odd}}.
\end{cases}
\]
We have $y u = 1,$
so, using Lemma~\ref{L:MultLI},
we obtain a \rpn\  $\sm = \rh^{u, y}$ of~$L_{\infty}.$

We have $\sm (s_j) = \pi (s_j)$ for all~$j,$
but $\sm (t_1) \neq \pi (t_1).$
Indeed,
$\pi (t_1) (\dt_1 + \dt_2) = \dt_1,$
but $y (\dt_1 + \dt_2) = 2 \dt_1,$
so $\sm (t_1) (\dt_1 + \dt_2) = 2 \dt_1.$
Thus the analog of Lemma~\ref{L:LdUniq}
for $L_{\I}$ is false.
By restriction, it also fails for~$C_d.$

The \rpn\  $\sm$ is \sfi,
since $\pi$ is.
For $p = 1,$
it is easy to check that $\| y \| \leq 1,$
from which it follows that $\sm$ is \cog.
Now suppose $p \neq 1.$
We saw above that $\sm (t_1) (\dt_1 + \dt_2) = 2 \dt_1.$
Taking $\frac{1}{p} = 0$ when $p = \I,$
we have
\[
\| \sm (t_1) (\dt_1 + \dt_2) \|
  = 2
  > 2^{1 / p}
  = \| \dt_1 + \dt_2 \|.
\]
So $\sm$ is not \cog.
In particular,
a \sfi\  \rpn\  of~$L_{\I}$ need not be \cog.
We do not know whether this can happen for \rpn s of~$L_d$
when $d$ is finite.
\end{exa}

\begin{exa}\label{E:01RepCd}
The restrictions of the \rpn s
of Examples \ref{E:LIRepOnlp}, \ref{E:01RepI2}, and~\ref{E:01RepID}
to the standard copy of $C_d$ in~$L_{\I}$
(see Lemma~\ref{L-CdLdPlus1})
are \rpn s of $C_d$ on~$l^p.$

In particular, \sfi\  does not imply \cog\  for \rpn s of~$C_d.$
\end{exa}

\begin{exa}\label{E-Mult}
Let $\rh \colon L_d \to L (l^p)$
be as in Example~\ref{E:LdRepOnlp}.
One immediately checks that there is
a \rpn\  $\sm \colon L_d \to L (l^p)$
such that for $j = 1, 2, \ldots, d,$
\[
\sm (s_j) = \tfrac{1}{2} \rh (s_j)
\andeqn
\sm (t_j) = 2 \rh (t_j).
\]
Then $\sm$ has the property
(part of the definition of being \sfi,
Definition~\ref{D:KindsOfReps1}(\ref{D:Repn-SFI}))
that for every $\ld \in \C^d,$
the element $\sm (s_{\ld})$ is a scalar multiple
of an isometry.
Moreover, $\| \sm (s_j) \| \leq 1$ for $j = 1, 2, \ldots, d.$
Also, the values of $\sm$ on $s_j t_j \in L_d$ are the same
as for the \sfi\  \rpn~$\rh.$
However, $\sm$ is not \sfi.

The Banach algebras ${\overline{\rh (L_d)}}$
and ${\overline{\sm (L_d)}}$
are isometrically isomorphic--in fact, they are equal.

Taking $u = \tfrac{1}{2},$
and following Lemma~\ref{L:MultLd},
we can write $\sm = \rh^u.$
\end{exa}

\begin{exa}\label{E-SumMult}
Let $\rh \colon L_d \to L (l^p)$
be as in Example~\ref{E:LdRepOnlp},
and let $\sm \colon L_d \to L (l^p)$
be as in Example~\ref{E-Mult}.
Let $\pi = \rh \oplus_p \sm$ be as in Remark~\ref{R-pSumRmk}.
Then $\| \pi (s_j) \| = 1$ for $j = 1, 2, \ldots, d,$
but $\pi$ is not \fis\  and not \cog.

It seems unlikely that
the Banach algebras ${\overline{\rh (L_d)}}$
and ${\overline{\pi (L_d)}}$
are isomorphic.

Taking
$v = \diag \big( 1, \tfrac{1}{2} \big)$
and using the notation of Lemma~\ref{L:MultLd},
we can write $\pi = ( \rh \oplus_p \rh)^v.$
\end{exa}

Example~\ref{E-SumMult}
was obtained using Lemma~\ref{L:MultLd}
with a diagonal scalar matrix.
In the following example,
we instead use a nondiagonalizable scalar matrix.
We do not know how the Banach algebras
obtained as the closures of the ranges differ.

\begin{exa}\label{E-Triang}
Let $\rh \colon L_d \to L (l^p)$
be as in Example~\ref{E:LdRepOnlp}.
Let $\rh \oplus_p \rh$ be as in Remark~\ref{R-pSumRmk}.
Set
\[
w = \left( \begin{matrix}
  1     &  1        \\
  0     &  1
\end{matrix} \right)
  \in L (l^p \oplus_p l^p).
\]
Let $\ta$ be the \rpn\  $\ta = (\rh \oplus_p \rh)^w$
of Lemma~\ref{L:MultLd}.
We have
\[
\ta (s_j) = \left( \begin{matrix}
  \rh (s_j)     &  \rh (s_j)        \\
          0     &  \rh (s_j)
\end{matrix} \right)
\andeqn
\ta (t_j) = \left( \begin{matrix}
  \rh (t_j)     & - \rh (t_j)        \\
          0     &  \rh (t_j)
\end{matrix} \right)
\]
for $j = 1, 2, \ldots, d.$
\end{exa}

We now turn to a different kind of modification
of our basic example,
using automorphisms of $L_d$ rather than Lemma~\ref{L:MultLd}.

\begin{exa}\label{E-LdSkew}
Let $d \in \{ 2, 3, 4, \ldots \}$ and let $p \in [1, \I].$
Set $\om = \exp (2 \pi i / d).$
Let $q$ be the conjugate exponent,
that is,
$\frac{1}{p} + \frac{1}{q} = 1.$
For $k = 1, 2, \ldots, d,$
define elements of $L_d$ by
\[
v_k = d^{- 1 / p} \sum_{j = 1}^d \om^{j k} s_j
\andeqn
w_k = d^{- 1 / q} \sum_{j = 1}^d \om^{- j k} t_j.
\]
(We take $d^{- 1 / p} = 1$ when $p = \I$ and $d^{- 1 / q} = 1$
when $p = 1.$)
It follows from Lemma~\ref{L-LCombProd}
that $w_j v_k = 1$ for $j = k$
and $w_j v_k = 0$ for $j \neq k.$
A computation shows that $\sum_{k = 1}^d v_k w_k = 1.$
Therefore there is an endomorphism $\ph \colon L_d \to L_d$
such that $\ph (s_k) = v_k$ and $\ph (t_k) = w_k$
for $k = 1, 2, \ldots, d.$
A computation shows that
\[
s_k = d^{- 1 / q} \sum_{j = 1}^d \om^{- j k} v_j
\andeqn
t_k = d^{- 1 / p} \sum_{j = 1}^d \om^{j k} w_j.
\]
Therefore $\ph$ is bijective.

It follows that whenever $\rh$ is a \rpn\  of~$L_d$
on a Banach space~$E,$
then $\rh \circ \ph$ is also a \rpn.
Moreover,
${\ov{(\rh \circ \ph) (L_d)}} = {\ov{\rh (L_d)}}.$

Now take $\rh$ to be as in Example~\ref{E:LdRepOnlp}.
We claim that $\rh \circ \ph$ is \cog\  and \sfi.
To prove this, it is convenient to introduce the matrix
\[
u = \big( u_{j, k} \big)_{j, k = 1}^d
  = d^{- 1 / p} \left( \begin{matrix}
  \om       &  \om^2      &  \ldots &  \om^{d - 1} &  1        \\
  \om^2     &  \om^4      &  \ldots &  \om^{d - 2} &  1        \\
  \vdots    &  \vdots     &  \ddots &  \vdots      &  \vdots   \\
\om^{d - 1} & \om^{d - 2} &  \ldots &  \om         &  1        \\
  1         &  1          &  \ldots &  1           &  1
\end{matrix} \right).
\]
We then have
\[
\ph (s_k) = \sum_{j = 1}^d u_{j, k} s_j.
\]
Therefore, for $\ld = (\ld_1, \ld_2, \ldots, \ld_d) \in \C^d,$
and following Definition~\ref{N-LComb},
\[
\ph (s_{\ld})
  = \sum_{k = 1}^d \sum_{j = 1}^d \ld_k u_{j, k} s_j
  = \sum_{j = 1}^d (u \ld)_j s_j
  = s_{u \ld}.
\]
So, for $\ld \in \C^d$ and $\xi \in l^p,$
Example~\ref{E:LdRepOnlp} implies that
\[
\| (\rh \circ \ph) (s_{\ld}) \xi \|_p = \| u \ld \|_p \| \xi \|_p.
\]
In particular, taking $\ld = \dt_j,$ for $p \neq \I$ we get
\[
\| (\rh \circ \ph) (s_{j}) \xi \|_p
  = \| u \dt_j \|_p \| \xi \|_p
  = \left( d^{- 1}
           \sssum{k = 1}{d} \big| \om^{j k} \big|^p \right)^{1/p}
         \| \xi \|_p
  = \| \xi \|_p.
\]
One also checks that $\| u \dt_j \|_p = 1$ when $p = \I.$
We conclude that $\rh \circ \ph$ is \sfi.

It remains to prove that $\rh \circ \ph$ is \cog.
Assume $p \neq 1, \I.$
Let $\xi = (\xi (1), \, \xi (2), \, \ldots ) \in l^p.$
Let $w_{d, j}$ be as in Example~\ref{E:LdRepOnlp}.
Then
\[
\sum_{j = 1}^d \| w_{d, j} \xi \|_p^p
 = \sum_{j = 1}^d \sum_{n = 1}^{\I} \big| \xi (d (n - 1) + j ) \big|^p
 = \sum_{m = 1}^{\I} | \xi (m) |^p
 = \| \xi \|_p^p.
\]
Now, for $k \in \{ 1, 2, \ldots, d \},$
we get,
using H\"{o}lder's inequality at the second step
and $| \om^{- j k} | = 1$ at the third step,
\begin{align*}
\| (\rh \circ \ph) (t_k) \xi \|_p^p
 & = d^{- p / q}
    \sssum{m = 1}{\I}
        \left| \sssum{j = 1}{d} \om^{- j k} (w_{d, j} \xi) (m) \right|^p
     \\
 & \leq d^{- p / q}
    \sssum{m = 1}{\I}
         \left( \sssum{j = 1}{d} | \om^{- j k} |^q \right)^{p / q}
         \left( \sssum{j = 1}{d}
                  \big| (w_{d, j} \xi) (m) \big|^p \right)
      \\
 & = d^{- p / q} d^{p / q} \sum_{j = 1}^d \| w_{d, j} \xi \|_p^p
   = \| \xi \|_p^p.
\end{align*}
Thus $\rh$ is \cog.
Easier calculations show that
$\rh$ is \cog\  when $p = 1$
and $p = \I$ as well.

For $p \in [1, \I) \SM \{ 2 \},$ we claim that,
unlike the \rpn\  $\rh$ of Example~\ref{E:01RepLd},
we have in general
$\| (\rh \circ \ph) (s_{\ld}) \xi \|_p \neq \| \ld \|_p \| \xi \|_p.$
Since we have already seen that
$\| (\rh \circ \ph) (s_{\ld}) \xi \|_p = \| u \ld \|_p \| \xi \|_p,$
it suffices to find some $\ld \in \C^d$
such that $\| u \ld \|_p \neq \| \ld \|_p.$

For an explicit easily checked example,
take $d = 2,$ $p = 3,$ and $\ld = (1, 2).$
Then
\[
\| \ld \|_p^p = 1 + 2^p = 9,
\,\,\,\,\,\,
u \ld = 2^{ - 1/ 3} (1, 3),
\andeqn
\| u \ld \|_p^p = \tfrac{1}{2} (1 + 3^p) = 14.
\]

For arbitrary $p \in [1, \I) \SM \{ 2 \}$ and arbitrary~$d,$
define $\sm \colon \MP{d}{p} \to L \big( l_d^p \big)$
by $\sm (a) = u a u^{-1}$
for $a \in \MP{d}{p}.$
Let $e_{j, k} \in \MP{d}{p}$ be the usual matrix unit.
Then one checks that $\sm (e_{1, 1})$ is not multiplication by
any characteristic function.
This violates condition~(\ref{T:SpatialRepsMd-5})
in Theorem~\ref{T:SpatialRepsMd} below.
Therefore $\sm$ is not isometric.
So $u$ is not isometric.
\end{exa}

In the following examples derived from Example~\ref{E-LdSkew},
we exclude $p = 2$ and $p = \I.$
When $p = 2,$ we do not get new behavior for the closure
of the image of~$L_d.$
We have not checked what happens when $p = \I.$

\begin{exa}\label{E-LdDblSkew}
Let $d \in \{ 2, 3, 4, \ldots \}$
and let $p \in [1, \I) \setminus \{ 2 \}.$
Let $\rh$ be as in Example~\ref{E:LdRepOnlp},
and let $\ph$ and $\rh \circ \ph$ be as in Example~\ref{E-LdSkew}.
Let $\pi = \rh \oplus_p (\rh \circ \ph),$
as in Remark~\ref{R-pSumRmk}.
Then $\pi$ is \fis\  and \cog\  because $\rh$ and $\rh \circ \ph$ are.
However, for $p \neq 2,$
Example~\ref{E-SumMult} and Example~\ref{E:LdRepOnlp}
show that there is $\ld \in \C^d$
and $\xi \in l^p$ such that
$\| \rh (s_{\ld} ) \xi \| \neq \| (\rh \circ \ph) (s_{\ld} ) \xi \|.$
Therefore $\pi (s_{\ld} )$ is not a scalar multiple of an isometry,
so $\pi$ is not \sfi.

We do not know whether the Banach algebras
${\ov{\rh (L_d)}}$ and ${\ov{\pi (L_d)}}$ are isomorphic.
\end{exa}

There are more complicated versions of Example~\ref{E-LdSkew}
which also give \rpn s
which are \sfi\  %
and \cog.
For example, one might split the generators into families
and treat each family separately
in the manner of Example~\ref{E-LdSkew}.
We give three special cases which are easy to write down
and which we want for specific purposes.

\begin{exa}\label{E-L3PartSkew}
Let $d = 3,$ let $p \in [1, \I) \setminus \{ 2 \},$
and let $\rh$ be as in Example~\ref{E:LdRepOnlp}
with this choice of~$d.$
There is a unique automorphism $\ps$ of $L_3$
such that
\[
\ps (s_1) = s_1,
\,\,\,\,\,\,
\ps (s_2) = 2^{- 1 / p} (s_2 + s_3),
\,\,\,\,\,\,
\ps (s_3) = 2^{- 1 / p} (s_2 - s_3),
\]
and
\[
\ps (t_1) = t_1,
\,\,\,\,\,\,
\ps (t_2) = 2^{- 1 / q} (t_2 + t_3),
\,\,\,\,\,\,
\ps (t_3) = 2^{- 1 / q} (t_2 - t_3).
\]
Then $\rh \circ \ps$
is \sfi,
with
\[
\| (\rh \circ \ps) (s_{\ld}) \xi \|_p
  = \big( | \ld_1 |^p + \tfrac{1}{2} | \ld_2 + \ld_3 |^p
           + \tfrac{1}{2} | \ld_2 - \ld_3 |^p \big)^{1/p}
                     \| \xi \|_p,
\]
and \cog.
Moreover, $\sm = \rh \oplus_p (\rh \circ \ps),$
as in Remark~\ref{R-pSumRmk},
is \fis\  and \cog,
but not \sfi.

Letting $\pi$ be as in Example~\ref{E-LdDblSkew} with $d = 3,$
we do not know whether the Banach algebras
${\ov{\sm (L_3)}}$ and ${\ov{\pi (L_3)}}$ are isomorphic.
We do note that ${\ov{\pi (L_3)}}$ has an isometric
automorphism which cyclically permutes the elements $\pi (s_j),$
but there is no isometric
automorphism of ${\ov{\sm (L_3)}}$ which does this.
Possibly there isn't even any continuous
automorphism of ${\ov{\sm (L_3)}}$ which does this.
\end{exa}

\begin{exa}\label{E-LdGpSkew}
Let $p \in [1, \I) \setminus \{ 2 \}.$
Let $d_0, n \in \{ 2, 3, \ldots \}.$
Set $d = n d_0.$
In $L_d,$ call the standard generators $s_{j, m}$ and $t_{j, m}$
for $j = 1, 2, \ldots, d_0$ and $m = 1, 2, \ldots, n.$
By reasoning similar to that of Example~\ref{E-LdSkew},
there is an automorphism $\af$ of $L_d$
such that, for $j = 1, 2, \ldots, d_0$ and $l = 1, 2, \ldots, n,$
we have
\begin{equation}\label{Eq:GpSkew}
\af (s_{k, m}) = d^{- 1 / p} \sum_{j = 1}^d \om^{j k} s_{j, m}
\andeqn
\af (t_{k, m}) = d^{- 1 / q} \sum_{j = 1}^d \om^{- j k} t_{j, m}.
\end{equation}
Take $\rh$ to be as in Example~\ref{E:LdRepOnlp}.
Then $\rh \circ \af$ is a \rpn\  of $L_d$ which is \sfi\  and \cog,
and for which one has
${\ov{(\rh \circ \af) (L_d)}} = {\ov{\rh (L_d)}}$
but, in general,
$\| (\rh \circ \af) (s_{\ld}) \xi \|_p \neq \| \ld \|_p \| \xi \|_p.$

We presume that $\rh \oplus_p (\rh \circ \af)$ is essentially
different from both $\rh$ and
the \rpn\  $\rh \oplus_p (\rh \circ \ph)$ of Example~\ref{E-LdDblSkew}.
However, we do not have a proof of anything like this.
\end{exa}

\begin{exa}\label{E-CdOpenSkew}
Let the notation be as in Example~\ref{E-LdGpSkew}.
Define a \hm\  $\bt \colon C_{d_0} \to L_d$ by
$\bt (s_j) = s_{j, 1}$ and $\bt (t_j) = t_{j, 1}$
for $j = 1, 2, \ldots, d_0.$
Then $\rh \circ \bt$ is a \rpn\  of $C_{d_0}$ which is \sfi\  and \cog,
and for which one has
$\| (\rh \circ \bt) (s_{\ld}) \xi \|_p = \| \ld \|_p \| \xi \|_p$
for $\ld \in \C^{d_0}$ and $\xi \in l^p.$
\end{exa}

The next example is the analog of Example~\ref{E-LdGpSkew}
for $L_{\I}.$

\begin{exa}\label{E-LInSkew}
Let $p \in [1, \I) \setminus \{ 2 \}$
and let $d_0 \in \{ 2, 3, \ldots \}.$
In $L_{\I},$ call the standard generators $s_{j, m}$ and $t_{j, m}$
for $j = 1, 2, \ldots, d_0$ and $m \in \N.$
Then there is an automorphism $\af$ of $L_{\I}$
defined by the formula~(\ref{Eq:GpSkew}),
but now for $j = 1, 2, \ldots, d_0$ and $m \in \N.$

Take $\rh$ to be as in Example~\ref{E:LIRepOnlp}.
Then $\rh \circ \af$ is a \rpn\  of $L_{\I}$ which is \sfi\  and \cog,
and for which one has
${\ov{(\rh \circ \af) (L_{\I})}} = {\ov{\rh (L_{\I})}}$
but, in general,
$\| (\rh \circ \af) (s_{\ld}) \xi \|_p \neq \| \ld \|_p \| \xi \|_p.$

We can then form the direct sum \rpn\  %
$\pi = \rh \oplus_p (\rh \circ \af)$
as in Remark~\ref{R-pSumRmk}.
We do not know whether ${\ov{\pi (L_{\I}) }}$
is isomorphic to ${\ov{\rh (L_{\I}) }}$
as a Banach algebra.
\end{exa}

\begin{exa}\label{E-LIOpenSkew}
Let the notation be as in Example~\ref{E-LInSkew}.
Define a \hm\  $\bt$ from $L_{\I}$
(with conventionally named generators)
to $L_{\I}$
(with generators named as in Example~\ref{E-LInSkew})
by $\bt (s_j) = s_{j, 1}$ and $\bt (t_j) = t_{j, 1}$
for $j = 1, 2, \ldots, d_0.$
Then $\rh \circ \bt$ is a \rpn\  of $L_{\I}$ which is \sfi\  and \cog,
and for which one has
$\| (\rh \circ \bt) (s_{\ld}) \xi \|_p = \| \ld \|_p \| \xi \|_p$
for $\ld \in \C^{\I}$ and $\xi \in l^p.$
We do not know whether ${\ov{(\rh \circ \bt) (L_{\I}) }}$
is isomorphic to ${\ov{\rh (L_{\I}) }}$
as a Banach algebra.
\end{exa}

\begin{rmk}\label{R-Fourier}
Example~\ref{E-LdSkew}
is based on the Fourier transform from functions on $\Z_d$
to functions on $\Z_d.$
We have not investigated the possibility of
using other finite abelian groups.
\end{rmk}

We finish this section with basic examples
of \rpn s on $L^p ([0, 1]).$

\begin{exa}\label{E:01RepLd}
Let $d \in \{ 2, 3, 4, \ldots \}$ and let $p \in [1, \I].$
For $j = 1, 2, \ldots, d,$ define a function
\[
g_j \colon [0, 1]
   \to \left[ \frac{j - 1}{d}, \, \frac{j}{d} \right]
\]
by
$g_j (x) = d^{-1} (j + x - 1).$
Then we claim that there is a unique \rpn\  %
$\rh \colon L_{d} \to L (L^p ([0, 1]))$
such that for $j = 1, 2, \ldots, d,$
$x \in [0, 1],$ and $\xi \in L^p ([0, 1])$
we have
\[
\big( \rh (s_j) \xi \big) (x)
   = \begin{cases}
     d^{1/p} \xi (g_j^{-1} (x))   &
            x \in \left[ \frac{j - 1}{d}, \, \frac{j}{d} \right]
               \\
     0   &  {\mbox{otherwise}}
    \end{cases}
\]
and
\[
\big( \rh (t_j) \xi \big) (x)
   = d^{- 1 / p} \xi (g_j (x)).
\]

The proof of the claim is a straightforward verification
that the proposed operators $\rh (s_j)$ and $\rh (t_j)$
satisfy the defining relations for $L_{d}$ in
Definition~\ref{D:Leavitt}.

It is easy to check that $\rh$ is \cog\  %
and is \fis.
In fact, $\rh$ is \sfi.
This follows from general theory.
(See Lemma~\ref{L-CnsqOfSp} below.)
But it is also easily checked,
using Remark~\ref{R:LpSuppNorm},
that for $\ld \in \C^d$ and $\xi \in L^p ([0, 1]),$
we have $\| \rh (s_{\ld}) \xi \|_p = \| \ld \|_p \| \xi \|_p.$
\end{exa}

\begin{exa}\label{E:01RepI1}
Let $p \in [1, \I].$
We give an example of a \rpn\  of $L_{\I}$ on $L^p ([0, 1]).$
In the following, if $p = \I$ then expressions with $p$
in the denominator are taken to be zero.

For $j \in \N,$ define a function
\[
f_j \colon [0, 1]
   \to \left[ \frac{1}{2^j}, \, \frac{1}{2^{j - 1}} \right]
\]
by
$ f_j (x) = 2^{- j} (1 + x).$
Then we claim that there is a unique \rpn\  %
$\rh \colon L_{\I} \to L (L^p ([0, 1]))$
such that for $j \in \N,$ $x \in [0, 1],$ and $\xi \in L^p ([0, 1])$
we have
\[
\big( \rh (s_j) \xi \big) (x)
   = \begin{cases}
     2^{j/p} \xi \big( f_j^{-1} (x) \big)   &
            x \in \left[ \frac{1}{2^j}, \, \frac{1}{2^{j - 1}} \right]
               \\
     0   &  {\mbox{otherwise}}
    \end{cases}
\]
and
\[
\big( \rh (t_j) \xi \big) (x)
   = 2^{- j / p} \xi (f_j (x)).
\]
The proof of the claim is a straightforward verification
that the proposed operators $\rh (s_j)$ and $\rh (t_j)$
satisfy the defining relations for $L_{\I}$ in
Definition~\ref{D:LInfty}.

One can check that $\rh$ is \cog\  and \fis.
(We will see in Lemma~\ref{L-CnsqOfSp} below
that it is in fact \sfi.)
\end{exa}

\section{Boolean $\sigma$-algebras}\label{Sec:Lp1}

\indent
In this section and the next,
we describe the background for the characterization,
due to Lamperti,
of isometries on $L^p (X, \mu).$
Parts of this material can be found in
Section~1 of Chapter~X of~\cite{Db}
and in Lamperti's paper~\cite{Lp},
but these references contain only enough to state and
prove the characterization theorem,
not enough to make serious use of it.
A somewhat more systematic presentation can be found in
Chapter~15 of~\cite{Ry},
especially Section~2,
and we use the terminology from there,
but we need more than is there,
and the form in which it is presented there makes citation
of specific results difficult.

For this section,
on abstract Boolean $\sm$-algebras,
we follow~\cite{Kp}
and just state the basic results.
(However, we use notation more suggestive of
unions and intersections than the notation of~\cite{Kp}:
our $E \vee F$ is $E + F$ there,
our $E \wedge F$ is $E \cdot F$ there,
and our $E'$ is $- E$ there.)

\begin{dfn}[Definition~1.1 of~\cite{Kp}]\label{D:BooleanAlg}
A {\emph{Boolean algebra}} is a set $\cB$ with two
commutative associative
binary operations $(E, F) \mapsto E \vee F$
and $(E, F) \mapsto E \wedge F,$
a unary operation $E \mapsto E',$
and distinguished elements $0$ and~$1,$
satisfying the following for all $E, F, G \in \cB$:
\begin{enumerate}
\item\label{D:BooleanAlg-01}
$E \vee (E \wedge F) = E$ and $E \wedge (E \vee F) = E.$
\item\label{D:BooleanAlg-03}
$E \wedge (F \vee G) = (E \wedge F) \vee (E \wedge G)$
and
$E \vee (F \wedge G) = (E \vee F) \wedge (E \vee G).$
\item\label{D:BooleanAlg-05}
$E \vee E' = 1$ and $E \wedge E' = 0.$
\end{enumerate}
\end{dfn}

Subalgebras and \hm s of Boolean algebras
have the obvious meanings.
(See Definitions 1.7 and~1.3 of~\cite{Kp}.)

\begin{exa}\label{R:Boolean}
The standard example is the power set ${\mathcal{P}} (X)$ of a set~$X,$
with
\[
E \vee F = E \cup F,
\,\,\,\,\,\,
E \wedge F = E \cap F,
\,\,\,\,\,\,
E' = X \SM E,
\,\,\,\,\,\,
0 = \E.
\andeqn
1 = X.
\]
\end{exa}

We therefore refer to the operations in a Boolean algebra as
union, intersection, and complementation.

Proofs that the axioms imply the other expected properties
can be found in Section~1.5 of~\cite{Kp}.
However, for the purposes of finite algebraic manipulations,
it is easier to rely on the following theorem,
which implies that all finite identities which hold among sets
also hold in any Boolean algebra.

\begin{thm}[Theorem~2.1 of~\cite{Kp}]\label{T-BoolRep}
Let $\cB$ be a Boolean algebra.
Then there exists a set $X$ and an isomorphism of $\cB$
with a Boolean subalgebra of ${\mathcal{P}} (X).$
\end{thm}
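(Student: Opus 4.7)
The plan is to prove this via Stone's representation theorem: construct $X$ as the set of ultrafilters on $\cB$ and map each element of $\cB$ to the set of ultrafilters containing it.

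First I would fix the basic definitions I need. A \emph{filter} on $\cB$ is a nonempty subset $U \subset \cB$ that is closed under finite intersections, upward closed under the partial order $E \leq F \iff E \wedge F = E$, and does not contain $0$. An \emph{ultrafilter} is a maximal filter; equivalently (and this equivalence needs a short verification using Definition~\ref{D:BooleanAlg}), a filter $U$ is an ultrafilter if and only if for every $E \in \cB$, exactly one of $E$ and $E'$ lies in $U$. Let $X$ denote the set of all ultrafilters on $\cB$, and define $\Phi \colon \cB \to {\mathcal{P}}(X)$ by
\[
\Phi(E) = \{ U \in X : E \in U \}.
\]

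Next I would check that $\Phi$ is a Boolean algebra homomorphism. Using the ``exactly one of $E, E'$'' characterization of ultrafilters, one verifies directly that $\Phi(E') = X \setminus \Phi(E)$. The inclusion $\Phi(E \wedge F) \subset \Phi(E) \cap \Phi(F)$ is immediate from upward closure, and the reverse inclusion is immediate from closure under finite intersections. The identity $\Phi(E \vee F) = \Phi(E) \cup \Phi(F)$ then follows by de Morgan applied to the previous two identities. Finally $\Phi(0) = \E$ since no filter contains $0$, and $\Phi(1) = X$ since every filter contains $1$.

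The main obstacle, and the only step where anything nontrivial happens, is injectivity. For this I need the following key lemma: for every nonzero $E \in \cB$ there exists an ultrafilter on $\cB$ containing $E$. The plan is to apply Zorn's lemma to the set of all filters containing $E$, ordered by inclusion; the principal filter $\{F \in \cB : E \leq F\}$ shows this set is nonempty, and the union of a chain of filters is a filter (the fact that $0$ does not appear in the union uses $E \neq 0$ and upward closure), so a maximal element exists. Granted this lemma, injectivity of $\Phi$ follows: if $E \neq F$, then at least one of $E \wedge F'$ and $E' \wedge F$ is nonzero in $\cB$ (otherwise $E \leq F$ and $F \leq E$ force $E = F$); without loss of generality $E \wedge F' \neq 0$, so the lemma supplies an ultrafilter $U$ containing $E \wedge F'$, hence containing $E$ but not $F$, and thus $U \in \Phi(E) \SM \Phi(F)$.

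Combining the homomorphism property with injectivity, $\Phi$ is a Boolean algebra isomorphism onto its image, which is a Boolean subalgebra of ${\mathcal{P}}(X)$. The only non-constructive ingredient is Zorn's lemma in the existence of ultrafilters step; everything else is a routine manipulation of the axioms in Definition~\ref{D:BooleanAlg}.
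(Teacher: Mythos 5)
Your proof is correct and is the standard Stone representation argument via ultrafilters, which is exactly the proof behind the paper's citation to Theorem~2.1 of~\cite{Kp}; the paper itself gives no proof, only the reference. The only point to watch is that the paper later \emph{deduces} antisymmetry of $\leq$ from this theorem while you \emph{use} antisymmetry in the injectivity step, but there is no circularity, since $E \wedge F = E$ and $F \wedge E = F$ give $E = F$ directly from commutativity of $\wedge.$
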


\begin{dfn}\label{D:BooleanOrder}
Let $\cB$ be a Boolean algebra,
and let $E, F \in \cB.$
We define $E \leq F$ to mean $E \wedge F = E.$
We say that $E$ and $F$ are {\emph{disjoint}}
if $E \wedge F = 0.$
We define
the {\emph{symmetric difference}} of $E$ and $F$ to be
\[
E \bigtriangleup F = (E \wedge F') \vee (E' \wedge F).
\]
\end{dfn}

In ${\mathcal{P}} (X),$
disjointness means that
the intersection is empty,
$E \leq F$ means $E \subset F,$
and symmetric difference has its usual meaning.
Thus, by Theorem~\ref{T-BoolRep},
the relation of Definition~\ref{D:BooleanOrder}
is a partial order on~$\cB,$
in which $0$ is the least element and $1$ is the greatest element.
In particular, if $E \leq F$ and $F \leq E,$ then $E = F.$

\begin{dfn}[Section~15.2 of~\cite{Ry}]\label{D:BSMA}
A {\emph{Boolean $\sm$-algebra}} is a Boolean algebra~$\cB$
in which whenever $E_1, E_2, \ldots \in \cB,$
then there is a least element $E \in \cB$
such that $E_n \leq E$ for all~$n \in \N.$
\end{dfn}

In Definition~1.28 of~\cite{Kp},
a Boolean $\sm$-algebra is called a $\sm$-complete Boolean algebra.
(It is also required that greatest lower bounds
of countable collections exist,
but this follows from Definition~\ref{D:BSMA}
by complementation.)

\begin{dfn}\label{D:CtblUnionNtn}
Let $\cB$ be a Boolean $\sm$-algebra.
The element $E$ in Definition~\ref{D:CtblUnionNtn}
is denoted $\bigvee_{n = 1}^{\I} E_n.$
We call it the {\emph{union}} of the~$E_n.$
We further define
$\bigwedge_{n = 1}^{\I} E_n
  = \left( \bigvee_{n = 1}^{\I} E_n' \right)',$
and call it the {\emph{intersection}} of the~$E_n.$
\end{dfn}

The operations $\bigvee_{n = 1}^{\I} E_n$
and $\bigwedge_{n = 1}^{\I} E_n$
behave as expected.
(This is not proved in~\cite{Ry},
but is proved in~\cite{Kp}.)

\begin{lem}\label{L:PropInfUnion}
Let $\cB$ be a Boolean $\sm$-algebra.
\begin{enumerate}
\item\label{L:PropInfUnion-0}
Let $E_1, E_2, \ldots \in \cB.$
Then
\[
\left( \bigvee_{n = 1}^{\I} E_n \right)'
  = \bigwedge_{n = 1}^{\I} E_n'
\andeqn
\left( \bigwedge_{n = 1}^{\I} E_n \right)'
  = \bigvee_{n = 1}^{\I} E_n'.
\]
\item\label{L:PropInfUnion-1}
Let $E_1, E_2, \ldots, F \in \cB.$
Then
\[
F \vee \bigvee_{n = 1}^{\I} E_n = \bigvee_{n = 1}^{\I} (F \vee E_n)
\andeqn
F \wedge \bigvee_{n = 1}^{\I} E_n = \bigvee_{n = 1}^{\I} (F \wedge E_n),
\]
and
\[
F \vee \bigwedge_{n = 1}^{\I} E_n = \bigwedge_{n = 1}^{\I} (F \vee E_n)
\andeqn
F \wedge \bigwedge_{n = 1}^{\I} E_n
 = \bigwedge_{n = 1}^{\I} (F \wedge E_n).
\]
\item\label{L:PropInfUnion-2}
Let $E_{m, n} \in \cB$ for $m, n \in \N.$
Then
\[
\bigvee_{m = 1}^{\I} \bigvee_{n = 1}^{\I} E_{m, n}
  = \bigvee_{m = 1}^{\I} \bigvee_{n = 1}^{\I} E_{n, m}
\andeqn
\bigwedge_{m = 1}^{\I} \bigwedge_{n = 1}^{\I} E_{m, n}
  = \bigwedge_{m = 1}^{\I} \bigwedge_{n = 1}^{\I} E_{n, m}.
\]
\end{enumerate}
\end{lem}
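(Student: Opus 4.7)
The three parts are all consequences of the universal property defining $\bigvee_{n=1}^{\infty} E_n$ as the least upper bound under $\leq$, together with the fact (from Theorem~\ref{T-BoolRep}) that any finite identity valid for subsets of a set holds in any Boolean algebra. So I would begin by recording, once and for all, that in any Boolean algebra the operations $\vee$, $\wedge$, and $(\cdot)'$ satisfy the usual associativity, absorption, distributivity, De Morgan, and involution laws on finitely many elements, and that $\leq$ is a partial order for which $E \leq F$ is equivalent to $E' \vee F = 1$ (equivalently, $F' \leq E'$).

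For part~(\ref{L:PropInfUnion-0}), the second equation is immediate from the definition of $\bigwedge$ in Definition~\ref{D:CtblUnionNtn}. Applying that definition to the sequence $E_1', E_2', \ldots$ and using $(E_n')' = E_n$ gives the first equation directly.

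For part~(\ref{L:PropInfUnion-1}), I would treat the union identities first. Set $G = \bigvee_{n} E_n$. Since $E_n \leq G$, the finite identities give $F \vee E_n \leq F \vee G$ and $F \wedge E_n \leq F \wedge G$, so $F \vee G$ and $F \wedge G$ are upper bounds for the two relevant sequences. For minimality in the first case, if $H$ is any upper bound for $\{F \vee E_n\}$, then $F \leq H$ and $E_n \leq H$ for every $n$, hence $G \leq H$ and so $F \vee G \leq H$. For the more interesting second case, if $H$ is an upper bound for $\{F \wedge E_n\}$, then for each $n$ the finite distributivity yields $E_n = (E_n \wedge F) \vee (E_n \wedge F') \leq H \vee F'$; therefore $G \leq H \vee F'$, and finite distributivity gives $F \wedge G \leq F \wedge (H \vee F') = F \wedge H \leq H$. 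The two intersection identities then follow from the union identities by applying complementation and part~(\ref{L:PropInfUnion-0}).

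For part~(\ref{L:PropInfUnion-2}), write $G = \bigvee_{m} \bigvee_{n} E_{m,n}$ and $H = \bigvee_{m} \bigvee_{n} E_{n,m}$. For any fixed $a,b$, the element $E_{a,b}$ lies below $\bigvee_{n} E_{a,n} \leq G$, so it is an upper bound property that gives $E_{a,b} \leq G$; by the symmetric reading of the indices, the same is true with $H$ in place of $G$. Hence each double union dominates every $E_{a,b}$, and by applying the universal property twice (first in $n$, then in $m$) each dominates the other, giving $G = H$. The intersection version follows by De Morgan, i.e.\ by applying the union version to the complements $E_{m,n}'$ and using part~(\ref{L:PropInfUnion-0}). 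There is no genuine obstacle; the only place one must be a little careful is in the $F \wedge \bigvee E_n$ case of~(\ref{L:PropInfUnion-1}), where the ``missing half'' needs the trick of bounding $E_n$ by $H \vee F'$ rather than trying to compare $F \wedge G$ to $H$ directly.
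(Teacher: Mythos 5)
Your proof is correct, but it takes a genuinely different route from the paper: the paper disposes of this lemma by citing Lemma~1.33 of~\cite{Kp} (and its duals), whereas you give a complete self-contained argument from the definition of the countable join as a least upper bound, the finite Boolean identities (available via Theorem~\ref{T-BoolRep}), and De~Morgan duality through part~(\ref{L:PropInfUnion-0}). Each step checks out: part~(\ref{L:PropInfUnion-0}) really is immediate from Definition~\ref{D:CtblUnionNtn} plus $(E')' = E$; in part~(\ref{L:PropInfUnion-1}) you correctly isolate the only nontrivial inequality, namely $F \wedge \bigvee_n E_n \leq \bigvee_n (F \wedge E_n)$, and your bound $E_n = (E_n \wedge F) \vee (E_n \wedge F') \leq H \vee F'$ followed by $F \wedge (H \vee F') = F \wedge H \leq H$ is exactly the right trick (the reverse inequality and both $\vee$-identities are formal monotonicity, and the $\bigwedge$ versions follow by complementation); and in part~(\ref{L:PropInfUnion-2}) the two applications of the universal property give the mutual domination $G \leq H$ and $H \leq G$. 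What your approach buys is independence from the external reference and an explicit demonstration that the only genuinely order-theoretic input is the least-upper-bound property; what the paper's citation buys is brevity and consistency with its stated policy of outsourcing the abstract Boolean-algebra background to~\cite{Kp}.
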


\begin{proof}
These statements all follow easily from the various parts
of Lemma 1.33 of~\cite{Kp},
or from their duals (stated afterwards).
\end{proof}

\begin{exa}\label{E:SGAIsBoolean}
Let $X$ be a set.
Then a \sga\  of subsets of~$X$ is a Boolean \sga.
\end{exa}

We introduce $\sm$-homomorphisms and $\sm$-ideals.
In~\cite{Kp},
they are called $\sm$-complete homomorphisms
(Definition~5.1 of~\cite{Kp})
and $\sm$-complete ideals
(Definition~5.19 of~\cite{Kp}).

\begin{dfn}\label{D:shm}
Let $\cB$ and $\cC$ be Boolean \sga s.
A {\emph{$\sm$-homomorphism}} from $\cB$ to $\cC$
is a function $S \colon \cB \to \cC$ which is a
\hm\  of Boolean algebras in the obvious sense,
and moreover such that
whenever $E_1, E_2, \ldots \in \cB$
then
\[
S \left( \bigvee_{n = 1}^{\I} E_n \right)
 = \bigvee_{n = 1}^{\I} S (E_n).
\]
\end{dfn}

\begin{lem}\label{L:ShmProp}
The \shm s of Definition~\ref{D:shm} have the following properties.
\begin{enumerate}
\item\label{L:ShmProp-1}
The composition of two \shm s is a \shm.
\item\label{L:ShmProp-2}
A \shm\  preserves order and disjointness
(as in Definition~\ref{D:BooleanOrder}).
\item\label{L:ShmProp-3}
A \shm\  $S$ is injective \ifo\  $S (E) = 0$ implies $E = 0.$
\end{enumerate}
\end{lem}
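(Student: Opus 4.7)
The plan is to verify each of the three properties in turn, relying on the fact that a \shm\ is by definition a Boolean algebra \hm, so it preserves $\vee,$ $\wedge,$ $',$ $0,$ and~$1,$ and additionally commutes with countable unions; by Definition~\ref{D:CtblUnionNtn} it then commutes with countable intersections as well (apply the definition to the complements and use that $S$ preserves complementation).

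For part~(\ref{L:ShmProp-1}), if $S \colon \cB \to \cC$ and $T \colon \cC \to \cD$ are \shm s, then $T \circ S$ is clearly a Boolean \hm, and for any sequence $(E_n)$ in~$\cB,$
\[
(T \circ S)\left( \bigvee_{n = 1}^{\I} E_n \right)
  = T\left( \bigvee_{n = 1}^{\I} S (E_n) \right)
  = \bigvee_{n = 1}^{\I} T ( S (E_n) ),
\]
using the \shm\ property of $S$ and then of~$T.$

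For part~(\ref{L:ShmProp-2}), if $E \leq F$ then $E \wedge F = E$ by Definition~\ref{D:BooleanOrder}, and applying $S$ gives $S (E) \wedge S (F) = S (E),$ hence $S (E) \leq S (F).$ Disjointness is the special case $E \wedge F = 0,$ together with $S (0) = 0.$

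For part~(\ref{L:ShmProp-3}), the forward direction is immediate since $S (0) = 0.$ For the converse, suppose $S (E) = 0$ forces $E = 0,$ and let $S (E) = S (F).$ Because $S$ preserves $\vee,$ $\wedge,$ and~$',$ it commutes with symmetric differences, so $S (E \bigtriangleup F) = S (E) \bigtriangleup S (F) = 0,$ which by hypothesis gives $E \bigtriangleup F = 0.$ I would then close the argument by observing that $E \bigtriangleup F = 0$ implies $E = F$: from the decomposition $E = (E \wedge F) \vee (E \wedge F')$ and $E \wedge F' \leq E \bigtriangleup F = 0$ we get $E \leq F,$ and symmetrically $F \leq E,$ so antisymmetry of $\leq$ (noted after Definition~\ref{D:BooleanOrder}) yields $E = F.$ The only place that even looks delicate is this last step identifying $E \bigtriangleup F = 0$ with $E = F,$ but it is a purely finite manipulation of Boolean operations and hence also follows directly from Theorem~\ref{T-BoolRep} by checking the corresponding statement in ${\mathcal{P}} (X).$
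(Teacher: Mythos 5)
Your proof is correct and follows the same route the paper indicates: parts (1) and (2) are routine verifications, and the nontrivial direction of part (3) is handled exactly as the paper suggests, by applying $S$ to the symmetric difference $E \bigtriangleup F$ and using the hypothesis to conclude $E \bigtriangleup F = 0,$ hence $E = F.$ The paper merely sketches this (citing Lemma~5.3 of~\cite{Kp}); you have supplied the same argument in full detail.
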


\begin{proof}
The first part is obvious,
and the second is easy.
The nontrivial direction of the third part
is proved by considering symmetric differences.
(See Lemma~5.3 of~\cite{Kp}.)
\end{proof}

\begin{dfn}\label{D:Ideal}
Let $\cB$ be a Boolean $\sigma$-algebra.
A {\emph{$\sigma$-ideal}} in ${\mathcal{B}}$ is a subset
${\mathcal{N}} \subset {\mathcal{B}}$ which is closed under
countable unions,
such that $0 \in {\mathcal{N}},$
and such that whenever $E \in {\mathcal{B}}$
and $F \in {\mathcal{N}}$ satisfy $E \subset F,$
then $E \in {\mathcal{N}}.$
\end{dfn}

The standard example is as follows.

\begin{exa}\label{E:Null}
Let $X$ be a set,
and let ${\mathcal{B}}$ be a $\sigma$-algebra on~$X.$
Let $\mu$ be a measure with domain~${\mathcal{B}}.$
Then
\[
{\mathcal{N}} (\mu) = \{ E \in {\mathcal{B}} \colon \mu (E) = 0 \}
\]
is a $\sigma$-ideal in ${\mathcal{B}}.$
\end{exa}

\begin{dfn}\label{D:Quot}
Let $\cB$ be a Boolean \sga.
If ${\mathcal{N}} \subset {\mathcal{B}}$ is a $\sigma$-ideal,
we define
$E \sim F$ to mean $E \bigtriangleup F \in {\mathcal{N}}.$
We define ${\mathcal{B}} / {\mathcal{N}}$ to be the
quotient set of ${\mathcal{B}}$ by~$\sim.$
If $E \in {\mathcal{B}},$ we write $[E]$ for its
image in ${\mathcal{B}} / {\mathcal{N}}.$
\end{dfn}

\begin{lem}\label{L:Quot}
Let the notation be as in Definition~\ref{D:Quot}.
Then $\sim$ is an equivalence relation,
the obvious induced operations on ${\mathcal{B}} / {\mathcal{N}}$
are well defined and
make ${\mathcal{B}} / {\mathcal{N}}$ a Boolean \sga,
and $E \mapsto [E]$ is a surjective \shm.
\end{lem}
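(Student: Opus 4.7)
The plan is to proceed in three stages: (i) verify $\sim$ is an equivalence relation; (ii) show all the Boolean operations, including countable unions, descend to well defined operations on $\cB/\cN$; (iii) verify the axioms of a Boolean \sga\ on the quotient and the homomorphism properties of $E \mapsto [E].$ By Theorem~\ref{T-BoolRep}, any identity of Boolean operations that holds in power sets holds in every Boolean algebra, so throughout I may compute with $\vee, \wedge, {}'$ as if they were $\cup, \cap, \SM.$

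For step~(i), reflexivity is $E \bigtriangleup E = 0 \in \cN,$ symmetry is immediate from the symmetry of Definition~\ref{D:BooleanOrder}, and transitivity reduces to the identity
\[
E \bigtriangleup G \leq (E \bigtriangleup F) \vee (F \bigtriangleup G),
\]
after which the axioms of a $\sm$-ideal (closure under finite unions and passage to smaller elements, which follow from Definition~\ref{D:Ideal}) imply $E \bigtriangleup G \in \cN.$ For step~(ii), the key inequalities to verify (all in the Boolean algebra~$\cB,$ and all reducible to set-theoretic identities via Theorem~\ref{T-BoolRep}) are
\[
(E_1 \vee E_2) \bigtriangleup (F_1 \vee F_2)
 \leq (E_1 \bigtriangleup F_1) \vee (E_2 \bigtriangleup F_2),
\]
\[
(E_1 \wedge E_2) \bigtriangleup (F_1 \wedge F_2)
 \leq (E_1 \bigtriangleup F_1) \vee (E_2 \bigtriangleup F_2),
\andeqn
E' \bigtriangleup F' = E \bigtriangleup F.
\]
These, together with the $\sm$-ideal axiom, show that $\vee, \wedge,$ and~${}'$ descend to $\cB/\cN.$ The only nontrivial point is the countable case: I must check
\[
\left( \ssum{n} E_n \right) \bigtriangleup \left( \ssum{n} F_n \right)
 \leq \bigvee_{n = 1}^{\I} (E_n \bigtriangleup F_n),
\]
where $\Sigma$ means $\bigvee_{n=1}^{\I}.$ This reduces to the observation that if $x$ lies in the left side of the displayed symmetric difference (under a representation as in Theorem~\ref{T-BoolRep}), then $x$ lies in some $E_n$ but no $F_m,$ or vice versa, and in either case $x \in E_n \bigtriangleup F_n$ for the appropriate~$n.$ Combined with the $\sm$-ideal property (closure under countable unions, Definition~\ref{D:Ideal}), this shows that $(E_n) \mapsto \left[ \bigvee_n E_n \right]$ depends only on the equivalence classes $[E_n],$ and hence countable unions descend to $\cB/\cN.$

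For step~(iii), I then define $[E] \vee [F] = [E \vee F],$ $[E] \wedge [F] = [E \wedge F],$ $[E]' = [E'],$ $0 = [0],$ $1 = [1],$ and $\bigvee_n [E_n] = \left[ \bigvee_n E_n \right].$ The Boolean algebra axioms in Definition~\ref{D:BooleanAlg} are identities in $\cB,$ so they pass to the quotient. To see that $\cB/\cN$ is a Boolean \sga\ in the sense of Definition~\ref{D:BSMA}, I must check that the proposed $\bigvee_n [E_n]$ really is a least upper bound for the $[E_n]$ in the induced order of Definition~\ref{D:BooleanOrder}: it is an upper bound because $E_n \leq \bigvee_n E_n$ in~$\cB,$ and if $[F]$ is any upper bound, i.e.\ $[E_n \wedge F] = [E_n]$ for all~$n,$ then $E_n \wedge F \sim E_n,$ whence by Lemma~\ref{L:PropInfUnion}(\ref{L:PropInfUnion-1}) and the already established well-definedness of countable unions, $\left[ F \wedge \bigvee_n E_n \right] = \left[ \bigvee_n (F \wedge E_n) \right] = \left[ \bigvee_n E_n \right].$ Finally, that $E \mapsto [E]$ is a surjective \shm\ is built into the definitions of the operations above, and Definition~\ref{D:shm} is satisfied by construction.

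The main obstacle, as is usual with quotient constructions, is not any single step but the careful bookkeeping of the countable case: making sure that the inequality controlling symmetric differences of countable unions really does reduce to a finite, set-theoretic fact via Theorem~\ref{T-BoolRep}, so that the $\sm$-ideal axiom can be applied. Once that inequality is in hand, the rest of the proof is a routine compilation from Lemma~\ref{L:PropInfUnion} and the Boolean algebra identities.
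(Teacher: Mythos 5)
The paper disposes of this lemma entirely by citation (Section~15.2 of the Royden reference and Lemma~5.22 of the Koppelberg reference), so your self-contained argument is a genuinely different and more informative route. Its architecture---equivalence relation via the triangle inequality for symmetric differences, descent of the operations via symmetric-difference estimates, then verification of the least-upper-bound property in the quotient---is the standard one, and the finite-operation parts are all correct: those inequalities are finite identities, so Theorem~\ref{T-BoolRep} legitimately reduces them to computations with sets.

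There is, however, one step whose justification fails as written, and it is exactly the step you single out as the crux. You invoke Theorem~\ref{T-BoolRep} to verify
\[
\left( \bigvee_{n = 1}^{\I} E_n \right) \bigtriangleup
\left( \bigvee_{n = 1}^{\I} F_n \right)
 \leq \bigvee_{n = 1}^{\I} (E_n \bigtriangleup F_n)
\]
by arguing pointwise about an element $x$ of the left-hand side. But the representation of Theorem~\ref{T-BoolRep} is only an isomorphism of Boolean algebras: it preserves the finite operations, and the paper is explicit that it licenses only \emph{finite} identities. It need not carry the least upper bound $\bigvee_n E_n$ to the set-theoretic union of the images of the $E_n,$ and in general no representation doing so exists (the measure algebra of $[0,1]$ is a Boolean $\sm$-algebra that is not isomorphic, with countable suprema preserved, to any $\sm$-field of sets). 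So ``$x$ lies in some $E_n$ but in no $F_m$'' is not a legitimate reading of membership in the left-hand side. The inequality itself is true, and the gap is repaired by Lemma~\ref{L:PropInfUnion}: parts (\ref{L:PropInfUnion-0}) and~(\ref{L:PropInfUnion-1}) give
\[
\left( \bigvee_{n} E_n \right) \wedge \left( \bigvee_{n} F_n \right)'
 = \bigvee_{n} \left( E_n \wedge \bigwedge_{m} F_m' \right)
 \leq \bigvee_{n} ( E_n \wedge F_n' )
 \leq \bigvee_{n} ( E_n \bigtriangleup F_n ),
\]
and symmetrically for the other half of the symmetric difference; then the $\sm$-ideal axioms of Definition~\ref{D:Ideal} finish the well-definedness of countable unions in ${\mathcal{B}} / {\mathcal{N}}.$ (Your step~(iii) already, and correctly, uses Lemma~\ref{L:PropInfUnion} rather than a pointwise picture; the same discipline is needed here.) With that substitution the proof is complete.
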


\begin{proof}
This is outlined in Section~15.2 of~\cite{Ry},
and is contained in
Lemma~5.22 of~\cite{Kp} and the remark after its proof.
\end{proof}

We finish this section with a lemma which will be needed later.

\begin{lem}\label{L:InvImDisj}
Let $\cB$ be a Boolean \sga,
and let ${\mathcal{N}} \subset {\mathcal{B}}$ be a $\sigma$-ideal.
Suppose $E_1, E_2, \ldots \in {\mathcal{B}} / {\mathcal{N}}$
are pairwise disjoint (Definition~\ref{D:BooleanOrder}).
Then there exist disjoint elements
$E_1^{(0)}, E_2^{(0)}, \ldots \in {\mathcal{B}}$
such that $\big[ E_n^{(0)} \big] = E_n$ for all $n \in \N.$
\end{lem}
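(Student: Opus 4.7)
The plan is to start with arbitrary lifts and then shrink them by removing a single null ``bad set'' that contains all the pairwise overlaps. Concretely, I would first choose, for each $n \in \N,$ any representative $F_n \in \cB$ with $[F_n] = E_n;$ this is possible because the quotient map $E \mapsto [E]$ is surjective by Lemma~\ref{L:Quot}.

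Next I would identify the obstruction to disjointness. For $m \neq n,$
\[
[F_m \wedge F_n] = [F_m] \wedge [F_n] = E_m \wedge E_n = 0
\]
since the quotient map is a \shm\ (Lemma~\ref{L:Quot}) and the $E_n$ are pairwise disjoint. The kernel description of $\sim$ in Definition~\ref{D:Quot} gives $F_m \wedge F_n \sim 0,$ equivalently $F_m \wedge F_n \in \cN.$ Therefore I set
\[
N = \bigvee_{\substack{m, n \in \N \\ m \neq n}} (F_m \wedge F_n),
\]
which is a countable union of elements of~$\cN,$ hence lies in~$\cN$ by Definition~\ref{D:Ideal}. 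Finally, I define
\[
E_n^{(0)} = F_n \wedge N'
\]
for each $n \in \N.$

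Two checks remain. For the class computation, $[N'] = [N]' = 0' = 1$ in $\cB/\cN$ (since $N \in \cN$), so $\big[E_n^{(0)}\big] = [F_n] \wedge 1 = E_n,$ as required. For pairwise disjointness, if $m \neq n$ then $E_m^{(0)} \wedge E_n^{(0)} \leq F_m \wedge F_n \leq N,$ while also $E_m^{(0)} \leq N',$ so $E_m^{(0)} \wedge E_n^{(0)} \leq N \wedge N' = 0,$ giving $E_m^{(0)} \wedge E_n^{(0)} = 0.$

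There is no substantive obstacle; the only points requiring any care are the correct use of Lemma~\ref{L:PropInfUnion}(\ref{L:PropInfUnion-0}) to pass between complementation and countable unions when computing $[N'],$ and the verification that $F_m \wedge F_n \in \cN$ rather than merely $[F_m \wedge F_n] = 0,$ for which one uses that $\cN$ is precisely the equivalence class of~$0$ under~$\sim.$
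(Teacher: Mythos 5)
Your proof is correct, and it takes a genuinely different route from the paper's. The paper first proves a two-element version (given disjoint classes $E, F,$ lift $E$ arbitrarily to $E_0,$ lift $F$ to some $K,$ and replace $K$ by $K \wedge E_0'$), and then, for each ordered pair $m \neq n,$ chooses disjoint representatives $S_{m,n}, T_{m,n}$ of $E_m, E_n$ and sets $E_n^{(0)} = \bigwedge_{k \neq n} (S_{n,k} \wedge T_{k,n}),$ verifying the class via the fact that the quotient map preserves countable intersections. You instead take arbitrary lifts $F_n,$ observe that each overlap $F_m \wedge F_n$ lies in ${\mathcal{N}}$ (since its class is $0$ and $A \bigtriangleup 0 = A$), collect all overlaps into a single element $N \in {\mathcal{N}}$ using closure of the $\sigma$-ideal under countable unions, and subtract $N'$'s complement once. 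Both arguments use the same ingredients (the quotient map being a \shm\ and the $\sigma$-ideal axioms), but yours isolates the obstruction in one null set and is somewhat shorter and more transparent; the paper's version localizes the corrections pair by pair. All the steps you flag as needing care ($F_m \wedge F_n \in {\mathcal{N}}$ rather than merely having class zero, and $[N'] = 1$) are handled correctly.
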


\begin{proof}
We first show that if $E, F \in {\mathcal{B}} / {\mathcal{N}}$
are disjoint,
then there exist disjoint $E_0, F_0 \in \cB$
such that $[E_0] = E$ and $F_0 = F.$
Choose any $E_0 \in \cB$ such that $[E_0] = E$ and
any $K \in \cB$ such that $[K] = F.$
Then $[K \wedge E_0] = 0,$
so we can take $F_0 = K \wedge E_0'.$

Now we prove the statement.
For $m \neq n$ choose disjoint elements
$S_{m, n}, T_{m, n} \in \cB$ such that
$[S_{m, n}] = E_m$ and $[T_{m, n}] = E_n.$
Then take
\[
E_n^{(0)}
 = \bigwedge_{k \in \N \SM \{ n \} } (S_{n, k} \wedge T_{k, n}).
\]
This completes the proof.
\end{proof}

\section{Measurable set transformations}\label{Sec:MST}

In this section,
we consider \msp s
$\XBM$ and $\YCN,$
and a suitable $\sm$-homomorphism
$S \colon
 {\mathcal{B}} / {\mathcal{N}} (\mu)
  \to {\mathcal{C}} / {\mathcal{N}} (\nu).$
We describe how to use $S$ to produce maps on
measurable functions mod equality almost everywhere
and on measures.
The idea is not new;
it can be found in~\cite{Lp}
and (for functions, under stronger hypotheses)
in Chapter~X of~\cite{Db}.
However, we need much more than can be found in these references.

First,
we give some definitions and notation which will be
frequently used later.

\begin{dfn}\label{D:L0}
Let $(X, {\mathcal{B}}, \mu)$ be a measure space.
We denote by $L^0 (X, \mu)$ the vector space of all
complex valued measurable functions on~$X,$
mod the functions which vanish almost everywhere.
We follow the usual convention of treating elements
of $L^0 (X, \mu)$ as functions when convenient.
If $E \in {\mathcal{B}},$
we denote by $\ch_E$ the characteristic function of~$E,$
which is a well defined element of~$L^0 (X, \mu).$
\end{dfn}

\begin{dfn}\label{D:SupportQuot}
Let $\XBM$ be a \msp,
and let ${\mathcal{N}} (\mu)$ be as in Example~\ref{E:Null}.
For $\xi \in L^0 (X, \mu),$
we define the {\emph{support}} of $\xi$
to be the element of ${\mathcal{B}} / {\mathcal{N}} (\mu)$
given as follows.
Choose any actual function $\xi_0 \colon X \to \C$
whose class in $L^0 (X, \mu)$ is~$\xi,$
and set
\[
\supp (\xi)
  = \big[ \big\{ x \in X \colon \xi_0 (x) \neq 0 \big\} \big].
\]

Further, for $E \in {\mathcal{B}} / {\mathcal{N}} (\mu),$
define $\ch_E \in L^0 (X, \mu)$ to be the class of $\ch_{E_0}$
for any $E_0 \in \cB$ with $[E_0] = E.$
\end{dfn}

Definition~\ref{D:SupportQuot} is a natural kind of strengthening
of Definition~\ref{D:LpSupp}.

\begin{rmk}\label{R:SuppWellDef}
In Definition~\ref{D:SupportQuot},
it is easy to check that $\supp (\xi)$ and $\ch_E$
are well defined.
Clearly $\xi$ is supported in~$E,$
in the sense of Definition~\ref{D:LpSupp},
\ifo\  $[E] \geq \supp (\xi).$
\end{rmk}

The following definition and constructions based on it are adapted from
the beginning of Section~1 in Chapter~X of~\cite{Db}.

\begin{dfn}\label{D:SetTrans}
Let $(X, {\mathcal{B}}, \mu)$ and $(Y, {\mathcal{C}}, \nu)$
be measure spaces,
and let ${\mathcal{N}} (\mu)$ and ${\mathcal{N}} (\nu)$
be as in Example~\ref{E:Null}.
A {\emph{measurable set transformation}} from
$(X, {\mathcal{B}}, \mu)$ to $(Y, {\mathcal{C}}, \nu)$
is a \shm\  (in the sense of Definition~\ref{D:shm})
$S \colon {\mathcal{B}} / {\mathcal{N}} (\mu)
            \to {\mathcal{C}} / {\mathcal{N}} (\nu).$
By abuse of notation, we write
$S \colon \XBM \to \YCN.$
For $E \in \cB,$
we also write, by abuse of notation,
$S (E)$ for some choice of $F \in \cC$ such that $[F] = S ([E]).$
Injectivity and surjectivity always refer to properties of the map
$S \colon {\mathcal{B}} / {\mathcal{N}} (\mu)
            \to {\mathcal{C}} / {\mathcal{N}} (\nu).$
We denote by ${\mathrm{ran}}_Y (S)$
the collection of all subsets $F \in {\mathcal{C}}$ such that
$[F] = S ([E])$ for some $E \in {\mathcal{B}}.$
\end{dfn}

In~\cite{Db},
a set transformation is taken to be a multivalued map
from ${\mathcal{B}}$ to~${\mathcal{C}},$
with possible values differing only up to a set of measure zero,
and which preserves the appropriate set operations,
that is, which defines a \shm\  from
${\mathcal{B}} / {\mathcal{N}} (\mu)$
to ${\mathcal{C}} / {\mathcal{N}} (\nu).$
Also, $(X, {\mathcal{B}}, \mu) = (Y, {\mathcal{C}}, \nu),$
and the map is required to be measure preserving.
In our situation, if $S$ is injective,
then at least the type of transformation in~\cite{Db}
sends sets of nonzero measure to sets of nonzero measure.

At the beginning of Section~3 of~\cite{Lp},
what we call an injective \shm\  %
is called a regular set isomorphism.
The definition specifies a map of sets modulo null sets,
although without formally defining an appropriate domain.
It omits the requirement (implicit above) that $S (X) = Y,$
but this is easily restored by replacing $Y$ by $S (X).$
We do not use the term ``regular set isomorphism''
because such maps need not be surjective,
and we need to consider cases in which they are not.
Also, we are more careful with the formalism because
we need to use such maps systematically.

\begin{lem}\label{L:RanIsSgAlg}
Let the notation be as in Definition~\ref{D:SetTrans}.
Then ${\mathrm{ran}}_Y (S)$ is a sub-$\sm$-algebra of~${\mathcal{B}}.$
\end{lem}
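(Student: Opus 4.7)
The plan is to verify the three $\sigma$-algebra axioms directly, using only that $S$ is a Boolean $\sigma$-homomorphism on the quotient algebras. (I read the statement as asserting that $\mathrm{ran}_Y(S)$ is a sub-$\sigma$-algebra of $\mathcal{C}$, since by construction $\mathrm{ran}_Y(S) \subset \mathcal{C}$.) The key conceptual observation is that $\mathrm{ran}_Y(S)$ is precisely the preimage, under the quotient $\sigma$-homomorphism $q \colon \mathcal{C} \to \mathcal{C}/\mathcal{N}(\nu)$ of Lemma~\ref{L:Quot}, of the image set $S(\mathcal{B}/\mathcal{N}(\mu))$ inside $\mathcal{C}/\mathcal{N}(\nu)$. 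So the lemma will follow from two generalities: (i) the image of a Boolean $\sigma$-homomorphism is a Boolean sub-$\sigma$-algebra of the codomain, and (ii) the preimage under a Boolean $\sigma$-homomorphism of a Boolean sub-$\sigma$-algebra is a Boolean sub-$\sigma$-algebra.

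Concretely, I would check the three closure properties one at a time. For the trivial elements, since $S$ is a unital Boolean homomorphism we have $S([\varnothing]) = [\varnothing]$ and $S([X]) = [Y]$, so $\varnothing, Y \in \mathrm{ran}_Y(S)$. For closure under complements, suppose $F \in \mathrm{ran}_Y(S)$ with $[F] = S([E])$ for some $E \in \mathcal{B}$; then using that $S$ preserves complementation,
\[
[Y \setminus F] \;=\; [F]' \;=\; S([E])' \;=\; S([E]') \;=\; S([X \setminus E]),
\]
so $Y \setminus F \in \mathrm{ran}_Y(S)$. For closure under countable unions, take $F_1, F_2, \ldots \in \mathrm{ran}_Y(S)$ with $[F_n] = S([E_n])$ for some $E_n \in \mathcal{B}$; then using that both $q$ and $S$ preserve countable joins (Lemma~\ref{L:Quot} and Definition~\ref{D:shm}),
\[
\Bigl[\,\ts{\bigcup_{n = 1}^{\I}} F_n\Bigr] \;=\; \bigvee_{n=1}^{\I} [F_n] \;=\; \bigvee_{n=1}^{\I} S([E_n]) \;=\; S\Bigl(\bigvee_{n=1}^{\I} [E_n]\Bigr) \;=\; S\Bigl(\Bigl[\,\ts{\bigcup_{n = 1}^{\I}} E_n\Bigr]\Bigr),
\]
so $\bigcup_{n=1}^{\I} F_n \in \mathrm{ran}_Y(S)$.

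There is no genuinely hard step; the only subtlety is the bookkeeping between honest subsets of $Y$ and their classes modulo $\mathcal{N}(\nu)$. The point that makes this benign is that $\mathrm{ran}_Y(S)$ is by definition saturated with respect to $\mathcal{N}(\nu)$ — if $F \in \mathrm{ran}_Y(S)$ and $F \bigtriangleup F' \in \mathcal{N}(\nu)$, then $[F'] = [F] = S([E])$, so $F' \in \mathrm{ran}_Y(S)$ — so each step on the quotient side lifts unambiguously to a statement about subsets of $Y$. Once this is noted, the display above for the countable union case, together with the two shorter displays for the complement and the trivial elements, completes the verification.
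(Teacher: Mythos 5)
Your proof is correct; the paper omits the argument as easy, and your direct verification of the three closure axioms (using that $S$ and the quotient map of Lemma~\ref{L:Quot} are Boolean $\sigma$-homomorphisms, together with the saturation observation that handles the passage between sets and their classes) is exactly the intended argument. You are also right to read the statement as asserting that ${\mathrm{ran}}_Y (S)$ is a sub-$\sigma$-algebra of~${\mathcal{C}}$ rather than of~${\mathcal{B}}$: by Definition~\ref{D:SetTrans} its elements are subsets of~$Y,$ so the ``${\mathcal{B}}$'' in the statement is a typo.
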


\begin{proof}
The proof is easy, and is omitted.
\end{proof}

\begin{prp}\label{P:SStar}
Let $(X, {\mathcal{B}}, \mu)$ and $(Y, {\mathcal{C}}, \nu)$
be measure spaces.
Let
$S \colon (X, {\mathcal{B}}, \mu) \to (Y, {\mathcal{C}}, \nu)$
be a \mst\  %
(Definition~\ref{D:SetTrans}).
Then (following the notation of Definition~\ref{D:L0})
there is a unique linear map
$S_* \colon L^0 (X, \mu) \to L^0 (Y, \nu)$
such that:
\begin{enumerate}
\item\label{P:SStar-1}
$S_* (\ch_E) = \ch_{S (E)}$
for all $E \in {\mathcal{B}} / {\mathcal{N}} (\mu).$
\item\label{P:SStar-2}
Whenever $(\xi_n)_{n \in \N}$ is a sequence of measurable
functions on~$X$ which converges pointwise almost everywhere~$[\mu]$
to~$\xi,$
then $S_* (\xi_n) \to S_* (\xi)$ pointwise almost everywhere~$[\nu].$
\setcounter{TmpEnumi}{\value{enumi}}
\end{enumerate}
Moreover:
\begin{enumerate}
\setcounter{enumi}{\value{TmpEnumi}}
\item\label{P:SStar-2x}
Let $n \in \N,$
let $f \colon \C^n \to \C$ be \ct,
and let $\xi_1, \xi_2, \ldots, \xi_n \in L^0 (X, \mu).$
Set
$\et (x) = f \big( \xi_1 (x), \, \xi_2 (x), \, \ldots, \xi_n (x) \big)$
for $x \in X.$
Then
\[
S_* (\et) (y)
 = f \big( S_* (\xi_1) (y), \, S_* (\xi_2) (y), \, \ldots,
      \, S_* (\xi_n) (y) \big)
\]
for almost all $y \in Y.$
(In particular, $S_*$ preserves products and preserves
arbitrary positive powers of the absolute value of a function.)
\item\label{P:SStar-3}
The range of $S_*$ is
$L^0 \big( Y, \, \nu |_{{\mathrm{ran}} (S)} \big).$
\item\label{P:SStar-4}
$S_*$ is injective \ifo\  $S$ is injective.
\item\label{P:SStar-5}
If $\xi \in L^0 (X, \mu)$ and $B \subset \C$ is a Borel set,
then $S ([\xi^{-1} (B)]) = \big[ S_* (\xi)^{-1} (B) \big].$
\item\label{P:SStar-6}
Let $(Z, {\mathcal{D}}, \ld)$ be another measure space,
and let $T$ be a \mst\  from
$(Y, {\mathcal{C}}, \nu)$
to $(Z, {\mathcal{D}}, \ld).$
Then $(T \circ S)_* = T_* \circ S_*.$
\end{enumerate}
\end{prp}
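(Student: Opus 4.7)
The plan is to build $S_*$ in stages: first on characteristic functions (where condition~(\ref{P:SStar-1}) forces its value), then by linearity on simple functions, then by monotone pointwise limit on nonnegative real-valued elements of $L^0 (X, \mu),$ and finally on complex $L^0 (X, \mu)$ by decomposition into real and imaginary, positive and negative parts. On a simple function $\xi = \sum_{i=1}^n \alpha_i \ch_{E_i}$ with pairwise disjoint $E_i,$ I set $S_* (\xi) = \sum_{i=1}^n \alpha_i \ch_{S(E_i)}.$ Well-definedness (independence of the representation as a simple function) reduces to the case of two decompositions refinable to a common partition, and this works because $S$ is a Boolean homomorphism preserving disjointness (Lemma~\ref{L:ShmProp}(\ref{L:ShmProp-2})). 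Linearity, multiplicativity, and the level-set identity $\{S_* (\xi) > r\} = S (\{\xi > r\})$ for simple $\xi \geq 0$ and $r > 0$ then follow by direct computation.

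For nonnegative $\xi \in L^0 (X, \mu),$ I take the standard dyadic simple approximation $\xi_n \nearrow \xi$ and note that $S_* (\xi_n)$ is pointwise increasing almost everywhere. I define $S_* (\xi)$ as the pointwise almost everywhere limit. Using $\sigma$-completeness of $S$ (Definition~\ref{D:shm}) together with $\{\xi_n > r\} \nearrow \{\xi > r\},$ the level-set identity extends to the form $\{S_* (\xi) > r\} = S (\{\xi > r\})$ for all nonnegative $\xi$ and $r > 0.$ Finiteness almost everywhere then follows from
\[
\{S_* (\xi) = \I\}
  = \bigwedge_{n = 1}^{\I} S(\{\xi > n\})
  = S\left( \bigwedge_{n = 1}^{\I} \{\xi > n\} \right)
  = 0,
\]
using Lemma~\ref{L:PropInfUnion}(\ref{L:PropInfUnion-0}) and the fact that $\xi < \I$ almost everywhere. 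Decomposition into real and imaginary, positive and negative parts completes the construction.

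The main obstacle, and the point where $\sigma$-completeness of $S$ is essential, is establishing property~(\ref{P:SStar-2}): pointwise almost everywhere convergence $\xi_n \to \xi$ must imply pointwise almost everywhere convergence $S_* (\xi_n) \to S_* (\xi).$ The plan is to reduce (using linearity and the absolute value formula from~(\ref{P:SStar-2x})) to the case $\xi_n \geq 0$ with $\xi_n \to 0$ pointwise almost everywhere, and then to observe that for each $r > 0,$
\[
\{\limsup_n S_* (\xi_n) > r\}
  = \bigwedge_{m = 1}^{\I} \bigvee_{n \geq m} S(\{\xi_n > r\})
  = S\left( \bigwedge_{m = 1}^{\I} \bigvee_{n \geq m} \{\xi_n > r\} \right)
  = 0,
\]
the last equality being the hypothesis that $\xi_n \to 0$ almost everywhere. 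Uniqueness then follows because~(\ref{P:SStar-1}) fixes $S_*$ on simple functions and~(\ref{P:SStar-2}) pins it down on all of $L^0 (X, \mu)$ by dyadic approximation.

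The remaining properties are consequences of this framework. Part~(\ref{P:SStar-2x}) holds on simple functions by elementary combinatorics ($S$ preserves finite partitions and disjointness), and extends to $L^0$ by continuity of $f$ together with~(\ref{P:SStar-2}). Part~(\ref{P:SStar-5}) is immediate on simple functions from the construction, and extends to arbitrary Borel $B$ by a monotone class argument applied to preimages. Part~(\ref{P:SStar-3}) uses Lemma~\ref{L:InvImDisj} to lift disjoint decompositions in $\mathrm{ran}_Y (S)$ to disjoint decompositions in $\cB,$ handling simple functions in $L^0 \big( Y, \, \nu |_{{\mathrm{ran}} (S)} \big)$ first and general ones by monotone approximation. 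Part~(\ref{P:SStar-4}) is immediate in one direction from $S_* (\ch_E) = \ch_{S(E)},$ and in the other direction follows by testing $S$-injectivity against the level sets $\{|\xi| > 1/n\}$ for $n \in \N.$ Finally, part~(\ref{P:SStar-6}) is clear on characteristic functions (where $(T \circ S)(E) = T(S(E))$ by definition) and extends to all of $L^0 (X, \mu)$ by uniqueness.
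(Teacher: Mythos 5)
Your construction is genuinely different from the paper's. The paper follows Doob: for a real function $\xi$ it forms the level sets $E_{\af} = \{ \xi < \af \}$ for $\af \in \Q,$ pushes each forward by~$S,$ repairs a null set, and reassembles a function from the resulting family via Lemma~\ref{L-OrdSets}; essentially all of the work there goes into showing the output is independent of the choices of representatives $F_{\af}^{(0)}$ and of the null set~$D.$ You instead use the standard monotone-approximation scheme: characteristic functions, then simple functions, then nonnegative functions by increasing dyadic limits, then the four-part decomposition. This is arguably cleaner — by working with classes in $L^0$ throughout you never have to confront the representative-choice bookkeeping — and your level-set identity $\{ S_* (\xi) > r \} = S ( \{ \xi > r \} )$ plays exactly the role that the paper's relation~(\ref{Eq:223St}) plays, with the same $\limsup/\liminf$ argument for part~(\ref{P:SStar-2}). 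The price is that linearity and the consistency of the successive extensions must be checked by hand (routine), and that part~(\ref{P:SStar-3}) needs a small extra step you did not mention: the simple functions $\mu_n$ with $S_* (\mu_n) = \ld_n$ produced by lifting disjoint decompositions need not increase with~$n$ when $S$ is not injective, so one must replace $\mu_{n}$ by $\max (\mu_1, \ldots, \mu_n)$ (legitimate since $S_*$ commutes with $\max$ on simple functions) before passing to the limit.

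The one genuine flaw is a circular dependency in your ordering of (\ref{P:SStar-2}) and~(\ref{P:SStar-2x}). You reduce (\ref{P:SStar-2}) to the case $\xi_n \geq 0,$ $\xi_n \to 0,$ ``using linearity and the absolute value formula from~(\ref{P:SStar-2x})'' — and the function $\xi_n - \xi$ to which you would apply $S_* (| \cdot |) = | S_* (\cdot) |$ is a general element of $L^0 (X, \mu),$ not a simple one. But you then prove (\ref{P:SStar-2x}) for general elements of $L^0$ ``by continuity of $f$ together with~(\ref{P:SStar-2}).'' As written, neither statement is available when the other is first needed. The repair is easy and you should make it explicit: prove~(\ref{P:SStar-2}) without absolute values. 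Reduce to real sequences by taking real and imaginary parts (which commute with $S_*$ by your definition), reduce to $\zt_n := \xi_n - \xi \to 0$ by linearity, and then control both tails directly, showing that $\bigwedge_{m} \bigvee_{n \geq m} S ( \{ \zt_n > r \} )$ and $\bigwedge_{m} \bigvee_{n \geq m} S ( \{ \zt_n < -r \} )$ are null for every $r > 0.$ This requires the level-set identity for signed (not merely nonnegative) real functions, which follows from your nonnegative case together with the fact that $S_* (\zt^+)$ and $S_* (\zt^-)$ have essentially disjoint supports (because $\zt^+$ and $\zt^-$ do and $S$ preserves disjointness). With (\ref{P:SStar-2}) secured this way, your derivation of~(\ref{P:SStar-2x}) goes through, and the rest of the proposal is sound.
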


For the proof, we use the following well known lemma.

\begin{lem}\label{L-OrdSets}
Let $\XBM$ be a measure space,
and let $(E_{\af})_{\af \in \Q}$ be a family of measurable sets
such that $E_{\af} \subset E_{\bt}$ whenever $\af \leq \bt,$
and such that
\[
\bigcap_{\af \in \Q} E_{\af} = \E
\andeqn
\bigcup_{\af \in \Q} E_{\af} = X.
\]
Then there exists a unique measurable function
$\xi \colon X \to \R$ such that for all $\af \in \Q$
we have
\[
\{ x \in X \colon \xi (x) < \af \}
 \subset E_{\af}
 \subset \{ x \in X \colon \xi (x) \leq \af \}.
\]
Moreover, for $\af \in \R$ we have
\[
\{ x \in X \colon \xi (x) < \af \}
 = \bigcup_{\bt \in \Q \cap (- \I, \af)} E_{\bt}
\]
and
\[
\{ x \in X \colon \xi (x) > \af \}
 = \bigcup_{\bt \in \Q \cap (\af, \I)} (X \SM E_{\bt}).
\]
\end{lem}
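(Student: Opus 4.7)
The plan is to define $\xi$ explicitly by
\[
\xi (x) = \inf \{ \af \in \Q \colon x \in E_{\af} \},
\]
verify that it is real valued, check the two inclusions, derive the ``moreover'' formulas, and finally argue uniqueness.

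First, I would check that $\xi (x) \in \R$ for every $x \in X.$ Since $\bigcup_{\af \in \Q} E_{\af} = X,$ there exists some $\af \in \Q$ with $x \in E_{\af},$ so the set in the definition is nonempty and $\xi (x) < \I.$ For the lower bound, if the set were unbounded below, then by the monotonicity $E_{\af} \S E_{\bt}$ for $\af \leq \bt$ we would have $x \in E_{\af}$ for all $\af \in \Q,$ contradicting $\bigcap_{\af \in \Q} E_{\af} = \E.$ Hence $\xi (x) > - \I.$

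Next I would verify the two inclusions. If $\xi (x) < \af,$ the definition of infimum produces a rational $\bt < \af$ with $x \in E_{\bt},$ and monotonicity gives $x \in E_{\af};$ this proves $\{ \xi < \af \} \S E_{\af}.$ Conversely, if $x \in E_{\af},$ then $\af$ itself lies in the set over which we take the infimum, so $\xi (x) \leq \af;$ this proves $E_{\af} \S \{ \xi \leq \af \}.$ The two displayed ``moreover'' identities then follow by the same argument together with the density of $\Q$ in $\R$: for the first, $\xi (x) < \af$ iff there is a rational $\bt$ with $\xi (x) \leq \bt < \af$ iff $x \in E_{\bt}$ for some rational $\bt < \af;$ for the second, $\xi (x) > \af$ iff there is a rational $\bt$ with $\af < \bt \leq \xi (x),$ and by the contrapositive of $\{ \xi < \bt \} \S E_{\bt}$ this is equivalent to the existence of a rational $\bt > \af$ with $x \notin E_{\bt}.$ Measurability of $\xi$ is then immediate, since the first identity writes $\{ \xi < \af \}$ as a countable union of elements of $\cB.$

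For uniqueness, suppose $\xi_1$ and $\xi_2$ both satisfy the bracketing inclusions and fix $x \in X.$ Setting $c = \xi_1 (x),$ for every rational $\af > c$ the inclusion $\{ \xi_1 < \af \} \S E_{\af}$ forces $x \in E_{\af},$ hence $\xi_2 (x) \leq \af;$ taking the infimum over such $\af$ gives $\xi_2 (x) \leq c.$ For every rational $\af < c$ the inclusion $E_{\af} \S \{ \xi_1 \leq \af \}$ forces $x \notin E_{\af},$ and then $E_{\af} \S \{ \xi_2 \leq \af \}$ together with monotonicity forces $\xi_2 (x) \geq \af;$ letting $\af \nearrow c$ through rationals gives $\xi_2 (x) \geq c.$ So $\xi_1 = \xi_2.$

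There is no serious obstacle here; the only mild subtlety is the verification that the infimum defining $\xi$ is bounded below, which uses the hypothesis $\bigcap_{\af \in \Q} E_{\af} = \E$ in an essential way through the monotonicity of the family.
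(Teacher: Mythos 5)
Your construction is correct and essentially complete, but it is a genuinely different route from the paper's: the paper disposes of the first part in one line by citing Lemma~9 of Chapter~11 of Royden's \emph{Real Analysis} (which is precisely this ``build a measurable function from a monotone family of sets'' device, familiar from proofs of the Radon--Nikodym theorem) and asserts that the two displayed identities ``follow easily.'' What your version buys is self-containedness: the explicit formula $\xi(x) = \inf\{\af \in \Q \colon x \in E_{\af}\}$, the verification that the hypotheses $\bigcap_{\af} E_{\af} = \E$ and $\bigcup_{\af} E_{\af} = X$ are exactly what make $\xi$ finite, and a direct uniqueness argument, none of which the reader gets from the citation. One small slip worth fixing: in the uniqueness argument, from $x \notin E_{\af}$ you conclude $\xi_2(x) \geq \af$ by citing $E_{\af} \S \{\xi_2 \leq \af\}$ ``together with monotonicity,'' but that inclusion points the wrong way; the step you actually need is the contrapositive of the \emph{other} inclusion, $\{\xi_2 < \af\} \S E_{\af}$, which gives $x \notin E_{\af} \Rightarrow \xi_2(x) \geq \af$ immediately. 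With that one-word correction the proof is sound.
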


\begin{proof}
The first part of the statement is Lemma~9 in Chapter~11 of~\cite{Ry}.
The last two equations follow easily.
\end{proof}

\begin{proof}[Proof of Proposition~\ref{P:SStar}]
We prove uniqueness.
Let $T_1, T_2 \colon L^0 (X, \mu) \to L^0 (Y, \nu)$
be linear and
satisfy (\ref{P:SStar-1}) and~(\ref{P:SStar-2}).
It follows from linearity and~(\ref{P:SStar-1})
that whenever $\xi \in L^0 (X, \mu)$ is a simple function,
we have $T_1 (\xi) = T_2 (\xi)$ almost everywhere~$[\nu].$
Since every measurable function is a pointwise limit of
simple functions,
this conclusion in fact holds for all $\xi \in L^0 (X, \mu).$

We now prove existence.
We follow the construction on page 454 of~\cite{Db},
which is done under stronger hypotheses.
The verification of linearity will be done as
a special case of~(\ref{P:SStar-2x}).

First let $\xi \in L^0 (X, \mu)$ be real valued.
For $\af \in \Q$ define
\begin{equation}\label{Eq:EA}
E_{\af} = \{ x \in X \colon \xi (x) < \af \}.
\end{equation}
Then the sets $E_{\af}$ satisfy the hypotheses of
Lemma~\ref{L-OrdSets}.
For each $\af \in \Q,$
choose a set $F_{\af}^{(0)} \in \cC$
such that
\begin{equation}\label{Eq:FAZ}
\big[ F_{\af}^{(0)} \big] = S ( [E_{\af}] ).
\end{equation}
Set
\begin{equation}\label{Eq:223D}
D_0 = \left( \bigcup_{\af < \bt}
         F_{\bt}^{(0)} \cap \big( Y \SM F_{\af}^{(0)} \big) \right)
       \cup \left( \bigcap_{\af} F_{\af}^{(0)} \right)
       \cup \left( Y \SM \bigcup_{\af} F_{\af}^{(0)} \right).
\end{equation}
Then $\nu (D_0) = 0$
and the sets $F_{\af}^{(0)} \cap (Y \SM D_0) \subset Y$ satisfy
the the hypotheses of
Lemma~\ref{L-OrdSets}
with $\big( Y \SM D_0, \, \cC |_{Y \SM D_0}, \, \nu |_{Y \SM D_0} \big)$
in place of~$\XBM.$

Choose any set $D \in \cC$ such that $D_0 \subset D$
and $\nu (D) = 0.$
For $\af \in \Q$ define
\begin{equation}\label{Eq:FA}
F_{\af} =  \begin{cases}
   F_{\af}^{(0)} \cup D & \af > 0
       \\
   F_{\af}^{(0)} \cap (Y \SM D) & \af \leq 0.
\end{cases}
\end{equation}
Then the sets $F_{\af} \subset Y$ satisfy
the the hypotheses of
Lemma~\ref{L-OrdSets}
with $\YCN$
in place of~$\XBM.$
So Lemma~\ref{L-OrdSets}
provides a measurable function $\et \colon Y \to \R$
such that for all $\af \in \Q,$ we have
\begin{equation}\label{Eq:223St}
\{ y \in Y \colon \et (y) < \af \}
 \subset F_{\af}
 \subset \{ y \in Y \colon \et (y) \leq \af \}.
\end{equation}
We define $S_* (\xi) = \et.$

We claim that, up to equality almost everywhere~$[\nu],$
the function $\et$ does not depend on the choices made
in its construction.
First, if we replace $D$ with~${\widetilde{D}},$
and call the new function~${\widetilde{\et}},$
then ${\widetilde{\et}} = \et$ off $D \cup {\widetilde{D}},$
and $\nu \big( D \cup {\widetilde{D}} \big) = 0.$
Now suppose that $F_{\af}^{(0)}$ is replaced by some other set
${\widetilde{F}}_{\af}^{(0)} \in \cC$
such that $\big[ {\widetilde{F}}_{\af}^{(0)} \big] = S ( [E_{\af}] ).$
Then
$\nu \big( F_{\af}^{(0)}
       \bigtriangleup {\widetilde{F}}_{\af}^{(0)} \big)
     = 0$
for all $\af \in \Q.$
Let $D_0$ be as in~(\ref{Eq:223D}),
and define ${\widetilde{D}}_0$ analogously,
using ${\widetilde{F}}_{\af}^{(0)}$ in place of $F_{\af}^{(0)}.$
Let $\et$ and ${\widetilde{\et}}$
be the functions resulting from our construction,
with the choice
\[
D = {\widetilde{D}}
 = D_0 \cup {\widetilde{D}}_0
    \cup \bigcup_{\af \in \Q} \big( F_{\af}^{(0)}
                   \bigtriangleup {\widetilde{F}}_{\af}^{(0)} \big).
\]
We have
\[
F_{\af}^{(0)} \cap (Y \SM D)
 = {\widetilde{F}}_{\af}^{(0)} \cap (Y \SM D)
\]
for all $\af \in \Q,$
so also
\[
F_{\af}^{(0)} \cup D = {\widetilde{F}}_{\af}^{(0)} \cup D
\]
for all $\af \in \Q.$
It follows that $\et = {\widetilde{\et}}.$
This completes the proof of the claim.

We next claim that if $\xi = \sum_{k = 1}^n \gm_k \ch_{B_k},$
with $B_1, B_2, \ldots, B_n \in \cB,$
then (using the abuse of notation from Definition~\ref{D:SetTrans})
we have
\[
S_* (\xi) = \sum_{k = 1}^n \gm_k \ch_{S (B_k)}.
\]
To prove this, first suppose that
$B_1, B_2, \ldots, B_n$ are disjoint.
Using Lemma~\ref{L:InvImDisj},
choose disjoint sets $C_1, C_2, \ldots, C_n \in \cC$
such that $[C_k] = S ([B_k])$ for $k = 1, 2, \ldots, n.$
For $\af \in \Q$ choose
$F_{\af}^{(0)} = \bigcup_{\gm_k < \af} C_k.$
Then the set $D_0$ in the construction of~$\et$ is empty.
Taking $D = \E$ gives
\[
S_* (\xi) = \sum_{k = 1}^n \gm_k \ch_{C_k},
\]
as desired.
The general case is easily reduced to the disjoint case by
using instead of the sets $B_k$ all possible nonempty intersections
$E_1 \cap E_2 \cap \cdots \cap E_n$
in which $E_k$ is either $B_k$ or $X \SM B_k.$
This proves the claim.

It follows immediately that (\ref{P:SStar-2x})~holds
when $\xi_1, \xi_2, \ldots, \xi_n$ are real simple functions
and $f$ is a \cfn\  from $\R^n$ to~$\R.$

We now prove~(\ref{P:SStar-2}).
Let $(\xi_n)_{n \in \N}$ be a sequence of real measurable functions
on~$X,$
and suppose that $\xi_n (x) \to \xi (x)$ almost everywhere~$[\mu].$
Changing the $\xi_n$ and $\xi$ on a set of measure zero,
we may assume that $\xi_n (x) \to \xi (x)$ for all $x \in X.$
For $n \in \N$ and $\af \in \Q,$
let $E_{n, \af},$ $F_{n, \af}^{(0)},$ and $D_n^{(0)}$
be the sets of (\ref{Eq:EA}), (\ref{Eq:FAZ}), and~(\ref{Eq:223D})
for the construction of $S_* (\xi_n),$
and let $E_{\af},$ $F_{\af}^{(0)},$ and $D_0$
be the corresponding sets for~$\xi.$
Let $\af, \bt \in \Q$ satisfy $\af > \bt.$
Since $\xi (x) < \af$ implies $\limsup_{n \to \I} \xi_n (x) < \af$
and $\xi (x) \geq \af$ implies $\liminf_{n \to \I} \xi_n (x) > \bt,$
we get
\[
E_{\af} \subset \bigcup_{n = 1}^{\I} \bigcap_{m = n}^{\I} E_{m, \af}
\andeqn
X \SM E_{\af}
  \subset \bigcup_{n = 1}^{\I} \bigcap_{m = n}^{\I} (X \SM E_{m, \bt}).
\]
Set
\[
B_{\af} = F_{\af}^{(0)} \cap \left( Y \SM \bigcup_{n = 1}^{\I}
          \bigcap_{m = n}^{\I} F_{m, \af}^{(0)} \right)
\]
and
\[
C_{\af, \bt} = \big( Y \SM F_{\af}^{(0)} \big)
    \cap \left( Y \SM \bigcup_{n = 1}^{\I}
          \bigcap_{m = n}^{\I}
               \big( Y \SM F_{m, \bt}^{(0)} \big) \right).
\]
Since $S$ is a $\sm$-\hm,
$\nu (B_{\af}) = 0$ and $\nu ( C_{\af, \bt} ) = 0.$

Set
\[
D = D_0 \cup \left( \bigcup_{n = 1}^{\I} D_n^{(0)} \right)
        \cup \left( \bigcup_{\af \in \Q} B_{\af} \right)
        \cup \left( \bigcup_{\af > \bt} C_{\af, \bt} \right).
\]
Then $\nu (D) = 0.$
Using this choice of~$D,$
define sets $F_{n, \af}$ as in~(\ref{Eq:FA}) for $\xi_n$
and $F_{\af}$ for~$\xi,$
and let $\et_n$ and $\et$ be the representatives the construction
gives for $S_* (\xi_n)$ and $S_* (\xi).$

Let $y \in Y \SM D.$
We claim that $\et_n (y) \to \et (y).$
This will complete the proof of~(\ref{P:SStar-2}) for real functions.

To prove the claim, let $\ep > 0,$
and choose $\af, \bt \in \Q$ such that
\[
\et (y) - \ep < \bt < \et (y) < \af < \et (y) + \ep.
\]
Then $y \in F_{\af}$ by~(\ref{Eq:223St}).
Since $y \not\in B_{\af},$
we have
\[
y \in \bigcup_{n = 1}^{\I} \bigcap_{m = n}^{\I} F_{m, \af},
\]
so~(\ref{Eq:223St}) for $\et_n$ implies that there is $n_1 \in \N$
such that for all $m \geq n_1$ we have $\et_m (y) \leq \af.$
Then also $\et_m (y) < \et (y) + \ep.$
Furthermore,
(\ref{Eq:223St})~implies $y \not\in F_{\bt},$
and $y \not\in C_{\af, \bt},$
so
\[
y \in \bigcup_{n = 1}^{\I} \bigcap_{m = n}^{\I} ( Y \SM F_{m, \bt} ).
\]
Using~(\ref{Eq:223St}) for $\et_n,$
we get $n_2 \in \N$ such that for all $m \geq n_2$
we have $\et_m (y) \geq \bt > \et (y) - \ep.$
We have thus shown that $\et_n (y) \to \et (y),$ as desired.

It now follows that
(\ref{P:SStar-2x})~holds
whenever $\xi_1, \xi_2, \ldots, \xi_n$ are real measurable functions
and $f$ is a \cfn\  from $\R^n$ to~$\R,$
because $\xi_1, \xi_2, \ldots, \xi_n$
are pointwise limits of real simple functions.

We define $S_*$ on complex functions~$\xi$
by $S_* (\xi)
 = S_* ( {\mathrm{Re}} (\xi)) + i S_* ( {\mathrm{Im}} (\xi)).$
It is easy to check that $S_*$
satisfies (\ref{P:SStar-1}) and~(\ref{P:SStar-2}).
For~(\ref{P:SStar-2x}),
first let $f \colon \C^n \to \R$ be \ct.
Define
$g \colon \R^{2 n} \to \R$ by
\[
g (s_1, t_1, s_2, t_2, \ldots, s_n, t_n)
  = f (s_1 + i t_1, \, s_2 + i t_2, \, \ldots, \, s_n + i t_n)
\]
for $s_1, t_1, s_2, t_2, \ldots, s_n, t_n \in \R.$
Set
$\et (x) = f \big( \xi_1 (x), \, \xi_2 (x), \, \ldots, \xi_n (x) \big)$
for $x \in X.$
Then one checks that
\begin{align*}
S_* (\et) (y)
& = g \big( S_* ({\mathrm{Re}} (\xi_1)) (y),
     \, S_* ({\mathrm{Im}} (\xi_1)) (y),
     \, \ldots,
     \, S_* ({\mathrm{Re}} (\xi_n)) (y),
     \, S_* ({\mathrm{Im}} (\xi_n)) (y) \big)
         \\
& = f \big( S_* (\xi_1) (y), \, S_* (\xi_2) (y), \, \ldots,
      \, S_* (\xi_n) (y) \big)
\end{align*}
for $y \in Y,$
as desired.
The extension to \cfn s $f \colon \C^n \to \C$
is now easy.

We now prove~(\ref{P:SStar-3}).
It suffices to show that a real valued measurable function
$\ld \in L^0 \big( Y, \, \nu |_{{\mathrm{ran}} (S)} \big)$
is in the range of~$S_*.$
For $\af \in \Q,$
define
\[
G_{\af} = \{ y \in Y \colon \ld (y) < \af \}
        \in {\mathrm{ran}} (S).
\]
Choose $E_{\af}^{(0)} \in \cB$ such that
$S \big( \big[ E_{\af}^{(0)} \big] \big) = [G_{\af}].$
Define
\[
D = \left( X \SM \bigcup_{\af \in \Q} E_{\af}^{(0)} \right)
     \cup \left( \bigcap_{\af \in \Q} E_{\af}^{(0)} \right),
\]
and set
\[
E_{\af} = \begin{cases}
   D \cup \bigcup_{\bt < \af} E_{\bt}^{(0)} & \af > 0
       \\
   (X \SM D) \cap \bigcup_{\bt < \af} E_{\bt}^{(0)}
                                            & \af \leq 0.
\end{cases}
\]
These sets satisfy the hypotheses of
Lemma~\ref{L-OrdSets}.
Let $\xi$ be the function obtained from Lemma~\ref{L-OrdSets}.
One easily checks that
$E_{\af} = \bigcup_{\bt < \af} E_{\bt}$
for all $\af \in \Q,$
so the second part of Lemma~\ref{L-OrdSets}
implies that
$E_{\af} = \{ x \in X \colon \xi (x) < \af \}$ for all $\af \in \Q.$

Since $S$ is a $\sm$-\hm\  %
and $G_{\af} = \bigcup_{\bt < \af} G_{\bt}$ for all $\af \leq \bt,$
one easily checks that $S ([E_{\af}]) = [G_{\af}]$
for all $\af \in \Q.$
In the construction of $S (\xi)$ at the beginning of the proof,
we may therefore take $F_{\af}^{(0)} = G_{\af}$ for all $\af \in \Q,$
giving $D_0 = \E,$
and then we may take $D = \E.$
Thus $F_{\af} = G_{\af}$ for all $\af \in \Q.$
So $\et = S_* (\xi)$ satisfies~(\ref{Eq:223St})
for all $\af \in \Q.$
Such a function is unique by Lemma~\ref{L-OrdSets},
and $\ld$ is such a function, so $S_* (\xi) = \ld.$
This completes the proof of~(\ref{P:SStar-3}).

We next prove~(\ref{P:SStar-5}).
Suppose first that $\xi$ is real valued.
For $\af \in \Q,$
let
\[
E_{\af} = \{ x \in X \colon \xi (x) < \af \}
\]
(as in~(\ref{Eq:EA})).
Choose $G_{\af} \S Y$ such that $[G_{\af}] = S ([E_{\af}])$
(as in~(\ref{Eq:FAZ}),
where the set is called~$F_{\af}^{(0)}$).
Set
\[
H = \left( Y \SM \bigcup_{\af \in \Q} G_{\af} \right)
      \cup \left( \bigcap_{\af \in \Q} G_{\af} \right),
\]
and set
\[
F_{\af}^{(0)}
 = \begin{cases}
   H \cup \bigcup_{\bt < \af} G_{\bt} & \af > 0
       \\
   (Y \SM H) \cap \bigcup_{\bt < \af} G_{\bt}
                                            & \af \leq 0.
\end{cases}
\]
Since $E_{\af} = \bigcup_{\bt < \af} E_{\bt}$ for all $\af \in \Q$
and $S$ is a $\sm$-\hm,
we have $\big[ F_{\af}^{(0)} \big] = S ([E_{\af}])$
for all $\af \in \Q.$
Therefore we may use the sets $F_{\af}^{(0)}$
in the
construction of $S_* (\xi).$
This gives $D_0 = \E.$
Take $D = \E.$
So $F_{\af} = F_{\af}^{(0)}.$
One easily checks that
$F_{\af}^{(0)} = \bigcup_{\bt < \af} F_{\bt}^{(0)}$
for all $\af \in \Q.$
The second part of Lemma~\ref{L-OrdSets}
therefore implies that
$F_{\af} = \{ y \in Y \colon S_* (\xi) (y) < \af \}$
for all $\af \in \Q.$
We have verified that
\[
S \big( \big[ \{ x \in X \colon \xi (x) < \af \} \big] \big)
  = \big[ \{ y \in Y \colon S_* (\xi) (y) < \af \} \big]
\]
for all $\af \in \Q.$
Since the collection $\big\{ (- \I, \af) \colon \af \in \Q \big\}$
generates the Borel $\sm$-algebra
and $S$ is a $\sm$-\hm,
we have~(\ref{P:SStar-5}) for real valued $\xi$ and all
Borel subsets of~$\R.$

By considering real and imaginary parts separately,
one sees that~(\ref{P:SStar-5}) holds for complex $\xi$
whenever $B$ is the product of two Borel subsets of~$\R.$
Such products generate the Borel subsets of~$\C,$
so~(\ref{P:SStar-5}) holds for arbitrary~$B.$

For~(\ref{P:SStar-4}),
first suppose that $S$ is not injective.
Then there is $E \in \cB$ such that $\mu (E) \neq 0$
but $S ([E]) = [ \E ].$
So $\ch_E \neq 0$ but $S_* (\ch_E) = 0.$

On the other hand,
suppose $S_*$ is not injective.
Then there is a nonzero $\xi \in L^0 (X, \mu)$
such that $S_* (\xi) = 0.$
By considering the positive or negative part of the real or imaginary
part of~$\xi,$
we may assume that $\xi$ is nonnegative.
Since $\xi \neq 0,$
there is $\ep > 0$ such that the set
$E = \{ x \in X \colon \xi (x) > \ep \}$
satisfies $\mu (E) \neq 0.$
Using part~(\ref{P:SStar-5}) at the first step, we get
\[
[S (E)] = \big[ \{ y \in Y \colon S_* (\xi) > \ep \} \big]
        = [ \E ].
\]
Therefore $S$ is not injective.

Part~(\ref{P:SStar-6})
follows from uniqueness,
since $T_* \circ S_*$ and $(T \circ S)_*$ are both linear
and satisfy (\ref{P:SStar-1}) and~(\ref{P:SStar-2}).
\end{proof}

\begin{cor}\label{C-SurjS}
Let the notation be as in Proposition~\ref{P:SStar},
and assume in addition that $\mu$ and $\nu$ are \sft.
Then \tfae:
\begin{enumerate}
\item\label{C-SurjS-1}
$S$ is surjective.
\item\label{C-SurjS-2x}
$S_* \colon L^0 (X, \mu) \to L^0 (Y, \nu)$ is surjective.
\item\label{C-SurjS-2}
The range $S_* \big( L^0 (X, \mu) \big)$
contains $\ch_F$ for every $F \in \cC$
such that $\nu (F) < \I.$
\item\label{C-SurjS-3}
For every $F \in \cC$ there are disjoint sets
$E_1, E_2, \ldots \in \cB$ with finite measure such that,
with $E = \bigcup_{n = 1}^{\I} E_n,$
we have $S_* (\ch_E) = \ch_F.$
\end{enumerate}
\end{cor}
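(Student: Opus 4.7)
The plan is to prove the cycle (\ref{C-SurjS-3}) $\Rightarrow$ (\ref{C-SurjS-1}) $\Rightarrow$ (\ref{C-SurjS-2x}) $\Rightarrow$ (\ref{C-SurjS-2}) $\Rightarrow$ (\ref{C-SurjS-3}). Three of the four implications are short consequences of Proposition~\ref{P:SStar}, and essentially all of the work is concentrated in (\ref{C-SurjS-2}) $\Rightarrow$ (\ref{C-SurjS-3}).

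For (\ref{C-SurjS-3}) $\Rightarrow$ (\ref{C-SurjS-1}), given $F \in \cC,$ the hypothesis produces $E \in \cB$ with $S_* (\ch_E) = \ch_F.$ But Proposition~\ref{P:SStar}(\ref{P:SStar-1}) also gives $S_* (\ch_E) = \ch_{S ([E])},$ forcing $S ([E]) = [F],$ so $S$ is surjective. For (\ref{C-SurjS-1}) $\Rightarrow$ (\ref{C-SurjS-2x}), surjectivity of $S$ means that every $F \in \cC$ lies in $\mathrm{ran}_Y (S),$ so $\mathrm{ran}_Y (S) = \cC$ and hence $\nu \big|_{\mathrm{ran}_Y (S)} = \nu.$ Proposition~\ref{P:SStar}(\ref{P:SStar-3}) then identifies the range of $S_*$ with all of $L^0 (Y, \nu).$ The implication (\ref{C-SurjS-2x}) $\Rightarrow$ (\ref{C-SurjS-2}) is trivial since $\ch_F \in L^0 (Y, \nu)$ whenever $\nu (F) < \I.$

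The main step is (\ref{C-SurjS-2}) $\Rightarrow$ (\ref{C-SurjS-3}). Fix $F \in \cC.$ Using $\sm$-finiteness of~$\nu,$ write $Y$ as a disjoint countable union $Y = \coprod_{n \in \N} Y_n$ with $\nu (Y_n) < \I,$ and set $F_n = F \cap Y_n.$ By~(\ref{C-SurjS-2}) there exist $\xi_n \in L^0 (X, \mu)$ with $S_* (\xi_n) = \ch_{F_n}.$ Applying Proposition~\ref{P:SStar}(\ref{P:SStar-5}) to the Borel set $\{ 1 \} \subset \C,$ the set $A_n = \xi_n^{-1} (\{ 1 \}) \in \cB$ satisfies $S ([A_n]) = [F_n],$ whence $S_* (\ch_{A_n}) = \ch_{F_n}$ by part~(\ref{P:SStar-1}). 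To disjointify in~$\cB,$ set $A'_n = A_n \SM \bigcup_{k < n} A_k;$ since the $F_n$ are already disjoint, a short Boolean computation using that $S$ is a \shm\  yields $S ([A'_n]) = S ([A_n]) \wedge \bigwedge_{k < n} S ([A_k])' = [F_n].$

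It remains to force finite measure. Use $\sm$-finiteness of~$\mu$ to write $X = \coprod_m X_m$ with $\mu (X_m) < \I,$ and re-enumerate the countable collection $\{ A'_n \cap X_m \}_{n, m \in \N}$ as a disjoint family $(E_k)_{k \in \N}$ of finite-measure sets. Then $E := \bigcup_k E_k = \bigcup_n A'_n,$ so $[E] = \bigvee_n [A'_n]$ and hence $S ([E]) = \bigvee_n S ([A'_n]) = \bigvee_n [F_n] = [F].$ Proposition~\ref{P:SStar}(\ref{P:SStar-1}) then delivers $S_* (\ch_E) = \ch_{S ([E])} = \ch_F,$ closing the cycle. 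The only real obstacle I anticipate is tracking the $\sm$-additivity of $S$ cleanly through the two refinement steps so as to preserve the identity $S_* (\ch_E) = \ch_{S ([E])}$ in the limit; this is routine given Proposition~\ref{P:SStar}, but the bookkeeping requires care.
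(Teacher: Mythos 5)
Your proof is correct and takes essentially the same route as the paper: the same cycle of implications, with Proposition~\ref{P:SStar} parts (\ref{P:SStar-1}), (\ref{P:SStar-3}), and~(\ref{P:SStar-5}) doing the work in the same places. The only difference is cosmetic: in (\ref{C-SurjS-2})~$\Rightarrow$~(\ref{C-SurjS-3}) you decompose $F$ into finite-measure pieces and carry out the Boolean bookkeeping explicitly, where the paper compresses this into a one-line reduction to the case $\nu (F) < \I$ via Proposition~\ref{P:SStar}(\ref{P:SStar-2}).
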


\begin{proof}
That (\ref{C-SurjS-1}) implies~(\ref{C-SurjS-2x})
is Proposition~\ref{P:SStar}(\ref{P:SStar-3}).
That (\ref{C-SurjS-2x}) implies~(\ref{C-SurjS-2})
is trivial.

We prove that (\ref{C-SurjS-2}) implies~(\ref{C-SurjS-3}).
Since $\nu$ is \sft\  and $S_*$ preserves
pointwise almost everywhere limits
(Proposition~\ref{P:SStar}(\ref{P:SStar-2})),
it suffices to consider sets $F$ with $\nu (F) < \I.$
Choose $\xi \in L^0 (X, \mu)$ such that $S_* (\xi) = \ch_F.$
Set $E = \{ x \in X \colon \xi (x) = 1 \}.$
Then $S ([E]) = [F]$ by Proposition~\ref{P:SStar}(\ref{P:SStar-5}).
Now use \sft ness of $\mu$ to write $E = \coprod_{n = 1}^{\I}$
with $\mu (E_n) < \I$ for all~$n.$

Finally, we prove that (\ref{C-SurjS-3}) implies~(\ref{C-SurjS-1}).
Let $F \in \cC.$
Then~(\ref{C-SurjS-3}) implies that there is $\xi \in L^0 (X, \mu)$
such that $S_* (\xi) = \ch_F.$
Set $E = \{ x \in X \colon \xi (x) = 1 \}.$
Then $E \in \cB$ and
$S ([E]) = [F]$ by Proposition~\ref{P:SStar}(\ref{P:SStar-5}).
\end{proof}

\begin{lem}\label{L:SUpStarMu}
Let the notation be as in Definition~\ref{D:SetTrans}.
Let $\ld$ be a measure on ${\mathcal{C}}$ such that
$\ld << \nu.$
Then there exists a unique measure $S^* (\ld)$
on $\cB$
such that whenever $E \in {\mathcal{B}}$ and
$F \in {\mathcal{C}}$ satisfy $[F] = S ([E]),$
then $S^* (\ld) (E) = \ld (F).$
Moreover:
\begin{enumerate}
\item\label{L:SUpStarMu-1}
$S^* (\ld) << \mu.$
\item\label{L:SUpStarMu-1a}
If $\sm$ is another measure on ${\mathcal{C}}$ and $\sm << \ld,$
then $S^* (\sm) << S^* (\ld).$
\item\label{L:SUpStarMu-2}
If $S^* (\ld)$ is $\sm$-finite, then
$\ld$ is $\sigma$-finite.
\item\label{L:SUpStarMu-3}
For every nonnegative function $\xi \in L^0 (X, \mu),$
and for every function $\xi \in L^1 (X, \, S^* (\ld) ),$
we have
\[
\int_X \xi \, d (S^* (\ld)) = \int_Y S_* (\xi) \, d \ld.
\]
\item\label{L:SUpStarMu-4x}
If $\XBM = \YCN$ and $S$ is the identity map,
then $S^* (\ld) = \ld.$
\item\label{L:SUpStarMu-4}
Let $(Z, {\mathcal{D}}, \rh)$ be another measure space,
and let $T$ be a \mst\  from
$(Y, {\mathcal{C}}, \nu)$
to $(Z, {\mathcal{D}}, \rh).$
Suppose $\sm$ is a measure on ${\mathcal{D}}$ such that
$\sm << \rh.$
Then $(T \circ S)^* (\sm) = S^* ( T^* (\sm)).$
\end{enumerate}
\end{lem}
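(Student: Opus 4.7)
The plan is to define $S^*(\ld)(E) = \ld(F)$ for any $F \in \cC$ with $[F] = S([E])$, and first to check this is well defined: two such representatives of $S([E])$ differ by a $\nu$-null set, hence by a $\ld$-null set since $\ld \ll \nu$. Uniqueness of $S^*(\ld)$ as a measure with the asserted property is immediate. To see $S^*(\ld)$ is actually a measure, I would verify countable additivity on a disjoint sequence $E_1, E_2, \ldots \in \cB$: the classes $S([E_n])$ are pairwise disjoint by Lemma~\ref{L:ShmProp}(\ref{L:ShmProp-2}), so Lemma~\ref{L:InvImDisj} supplies genuinely disjoint representatives $F_n$ with $[F_n] = S([E_n])$. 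Because $S$ is a $\sm$-homomorphism, $S \bigl( \bigl[ \bigcup_n E_n \bigr] \bigr) = \bigl[ \bigcup_n F_n \bigr]$, so countable additivity of $\ld$ gives the result. Parts (\ref{L:SUpStarMu-1}) and (\ref{L:SUpStarMu-1a}) then follow at once: if $\mu(E) = 0$ then $S([E]) = 0$, and any representative has $\nu$-measure zero, hence $\ld$-measure zero (respectively $\sm$-measure zero if $\sm \ll \ld$). Part (\ref{L:SUpStarMu-4x}) is trivial.

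For part (\ref{L:SUpStarMu-2}), suppose $S^*(\ld)$ is $\sm$-finite and write $X$ as a countable union of $E_n \in \cB$ with $S^*(\ld)(E_n) < \I$. Since $S$ is a Boolean homomorphism it preserves the top element, so $S([X]) = [Y]$, and then $\bigvee_n S([E_n]) = S([X]) = [Y]$ by the $\sm$-homomorphism property. Picking representatives $F_n$ with $[F_n] = S([E_n])$, the set $Y \SM \bigcup_n F_n$ is $\nu$-null hence $\ld$-null, while $\ld(F_n) = S^*(\ld)(E_n) < \I$; this exhibits $\ld$ as $\sm$-finite. Part (\ref{L:SUpStarMu-4}) is a direct computation: for $E \in \cB$, choose $F$ with $[F] = S([E])$ and $G$ with $[G] = T([F])$; then $[G] = (T \circ S)([E])$ by Lemma~\ref{L:ShmProp}(\ref{L:ShmProp-1}), so both $(T \circ S)^*(\sm)(E)$ and $S^*(T^*(\sm))(E)$ equal $\sm(G)$.

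For the integral identity (\ref{L:SUpStarMu-3}), I would run the standard measure-theoretic ladder. The formula holds for $\xi = \ch_E$ directly from the definitions, using $S_*(\ch_E) = \ch_{S(E)}$ from Proposition~\ref{P:SStar}(\ref{P:SStar-1}). Linearity extends it to nonnegative simple functions; here the disjoint-representatives argument above, via Lemma~\ref{L:InvImDisj}, is used so that the simple-function representations on the two sides really do match. Monotone convergence then handles arbitrary nonnegative measurable $\xi$: any such $\xi$ is an increasing pointwise limit of simple functions, and Proposition~\ref{P:SStar}(\ref{P:SStar-2}) (together with (\ref{P:SStar-2x}) applied to $f(s, t) = \max(s, t)$) shows $S_*$ preserves pointwise a.e.\ limits and monotonicity, so both integrals converge to the same value. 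Finally, splitting an $L^1(X, S^*(\ld))$ function into positive and negative parts of its real and imaginary parts and using linearity of $S_*$ yields the integrable case. The only step that requires any care is this bookkeeping in (\ref{L:SUpStarMu-3}); the rest is essentially automatic from the abstract properties established in Section~\ref{Sec:Lp1} and Proposition~\ref{P:SStar}.
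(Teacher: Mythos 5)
Your proposal is correct and follows essentially the same route as the paper: well-definedness from $\ld \ll \nu$, countable additivity via Lemma~\ref{L:InvImDisj} and Lemma~\ref{L:ShmProp}(\ref{L:ShmProp-2}), part~(\ref{L:SUpStarMu-2}) by choosing representatives $F_n$ and noting $Y \SM \bigcup_n F_n$ is $\nu$-null, and part~(\ref{L:SUpStarMu-3}) by the standard ladder (characteristic functions, simple functions, monotone convergence using Proposition~\ref{P:SStar}(\ref{P:SStar-2})). The only cosmetic difference is that the paper invokes Proposition~\ref{P:SStar}(\ref{P:SStar-5}) rather than~(\ref{P:SStar-2x}) to see that $S_*$ preserves monotonicity of the approximating sequence.
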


\begin{proof}
Uniqueness of $S^* (\ld)$ is obvious.
For existence,
we have to prove that $S^* (\ld)$ is well defined,
satisfies $S^* (\E) = 0,$
and is countably additive.
The first follows from $\ld << \nu,$
the second is immediate, and the third follows from the first
together with Lemma~\ref{L:InvImDisj}
and Lemma~\ref{L:ShmProp}(\ref{L:ShmProp-2}).

Parts (\ref{L:SUpStarMu-1}), (\ref{L:SUpStarMu-1a}),
(\ref{L:SUpStarMu-4x}),
and~(\ref{L:SUpStarMu-4}) are clear.

To prove~(\ref{L:SUpStarMu-2}), write
$X = \bigcup_{n = 1}^{\I} E_n$
with $S^* (\ld) (E_n) < \I$ for all $n \in \N.$
Choose $F_n \in \cC$ such that $S ([E_n]) = [F_n].$
Then $\ld (F_n) < \I.$
Therefore $F = \bigcup_{n = 1}^{\I} F_n$ is a subset
of~$Y$ on which $\ld$ is \sft.
Also
$[Y \SM F] = [ \E ].$
Since $\ld << \nu,$
this implies that $\ld (Y \SM F) = 0.$
Therefore $\ld$ is \sft.

It remains to verify~(\ref{L:SUpStarMu-3}).
It suffices to do so when $\xi$ is nonnegative.
The result is immediate for simple functions.
Choose a sequence $(\xi_n)_{n \in \N}$
of nonnegative simple functions
such that $\xi_n \to \xi$ pointwise
and $\xi_1 \leq \xi_2 \leq \cdots$ pointwise.
Proposition~\ref{P:SStar}(\ref{P:SStar-2})
implies that
$S_* (\xi_n) \to S_* (\xi)$ pointwise almost everywhere~$[\nu],$
and Proposition~\ref{P:SStar}(\ref{P:SStar-5})
implies that
$S_* (\xi_1) \leq S_* (\xi_2) \leq \cdots$
pointwise almost everywhere~$[\nu].$
Using $\ld << \nu,$ $S^* (\ld) << \mu,$
and the Monotone Convergence Theorem (twice),
we get
\[
\int_X \xi \, d S^* (\ld)
  = \limi{n} \int_X \xi_n \, d S^* (\ld)
  = \limi{n} \int_Y S_* (\xi_n) \, d \ld
  = \int_Y S_* (\xi) \, d \ld.
\]
This completes the proof.
\end{proof}

The converse
to Lemma~\ref{L:SUpStarMu}(\ref{L:SUpStarMu-2}) is false.
Take $X = \Z,$ take $\mu$ to be counting measure,
take $Y = \Z \times \Z,$
take $\nu$ to be counting measure, take $\ld = \nu,$
and take $S (E) = E \times \Z$ for $E \subset X.$

We really need to push measures forwards rather than pull them back.

\begin{dfn}\label{D:SDnStarMu}
Let the notation be as in Definition~\ref{D:SetTrans},
and assume that $S$ is injective.
Let $\ld$ be a measure defined on ${\mathcal{B}}$ which is
absolutely continuous with respect to~$\mu.$
Then we define $S_* (\ld)$ to be the measure
$(S^{-1})^* (\ld)$ on the \sga\  ${\mathrm{ran}} (S).$
\end{dfn}

The measure $S_* (\mu)$ is called $\mu^*$
in the statement of Theorem~3.1 of~\cite{Lp}.

We need some notation.

\begin{ntn}\label{N:RestrSgAlg}
Let $\XBM$ be a \msp,
and let $E \in \cB.$
We denote by $\cB |_E$ the \sga\  on~$E$
consisting of all $F \in \cB$ such that $F \subset E.$
We call it the {\emph{restriction}} of~$\cB.$

If $\ld$ is a measure defined on a \sga~$\cB,$
and $\cC \subset \cB$ is a \sga\  on a set $E \in \cB,$
we write $\ld |_{\cC}$ for the restriction of~$\ld.$
If $\cC = \cB |_E,$ we just write $\ld |_E.$
When no confusion can arise, we often just write~$\ld.$
For example, for a \msp\  $\XBM,$
we often write $L^p (E, \mu)$ rather than $L^p (E, \mu |_E).$
Also, we identify without comment $L^p (E, \mu)$ as the subspace
of $L^p (X, \mu)$ consisting of those functions which
vanish off~$E.$
\end{ntn}

\begin{lem}\label{L:SDnStarMu}
In the situation of Definition~\ref{D:SDnStarMu},
the measures $S_* (\mu)$ and $\nu |_{{\mathrm{ran}} (S)}$
are mutually absolutely \ct.
\end{lem}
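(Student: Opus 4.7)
The plan is to verify both directions of mutual absolute continuity directly from the definition of $S_*(\mu)$, using the injectivity of $S$ as the essential input. By Definition~\ref{D:SDnStarMu} and Lemma~\ref{L:SUpStarMu} applied to the bijective $\sigma$-homomorphism induced by~$S$ between $\cB/\cN(\mu)$ and $\mathrm{ran}(S)/\cN\big(\nu|_{\mathrm{ran}(S)}\big)$, the measure $S_*(\mu)$ is characterized on $\mathrm{ran}(S)$ by the formula $S_*(\mu)(F) = \mu(E)$ whenever $E \in \cB$ satisfies $[F] = S([E]).$ The task reduces to showing that for $F \in \mathrm{ran}(S)$ and any such representative $E,$ we have $\mu(E) = 0$ if and only if $\nu(F) = 0.$

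First I would handle $S_*(\mu) \ll \nu|_{\mathrm{ran}(S)}.$ Given $F \in \mathrm{ran}(S)$ with $\nu(F) = 0$ and $E \in \cB$ with $[F] = S([E]),$ the hypothesis says $[F] = [\varnothing],$ so $S([E]) = [\varnothing];$ the injectivity of $S$ together with Lemma~\ref{L:ShmProp}(\ref{L:ShmProp-3}) forces $[E] = [\varnothing],$ whence $\mu(E) = 0$ and $S_*(\mu)(F) = 0.$

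Next I would handle the reverse direction $\nu|_{\mathrm{ran}(S)} \ll S_*(\mu).$ Given $F \in \mathrm{ran}(S)$ with $S_*(\mu)(F) = 0,$ pick $E \in \cB$ with $[F] = S([E]).$ Then $\mu(E) = 0$ by the defining formula, so $[E] = [\varnothing]$ in $\cB/\cN(\mu),$ and since $S$ is a $\sigma$-homomorphism sending $[\varnothing]$ to $[\varnothing],$ we conclude $[F] = [\varnothing],$ i.e., $\nu(F) = 0.$

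There is no real obstacle here: the proof is essentially an unpacking of definitions, and the only substantive fact used beyond the definition of $S_*(\mu)$ is the characterization of injectivity of a $\sigma$-homomorphism from Lemma~\ref{L:ShmProp}(\ref{L:ShmProp-3}). The only point requiring any care is making sure the characterizing formula for $S_*(\mu)$ is independent of the choice of representative $E \in \cB$ with $[F] = S([E]),$ but this is precisely the content of Lemma~\ref{L:SUpStarMu} already (applied to $S^{-1}$), together with $\mu \ll \mu.$
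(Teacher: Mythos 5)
Your proof is correct. It is a direct, element-level verification that is logically equivalent to the paper's argument but packaged differently. For the direction $S_*(\mu) \ll \nu|_{\mathrm{ran}(S)}$ you and the paper do essentially the same thing: the paper simply cites Lemma~\ref{L:SUpStarMu}(\ref{L:SUpStarMu-1}) applied to $S^{-1}$, and your appeal to injectivity via Lemma~\ref{L:ShmProp}(\ref{L:ShmProp-3}) is exactly what makes $S^{-1}$ a well-defined $\sigma$-homomorphism sending $[\varnothing]$ to $[\varnothing]$, so the two are the same fact in different clothing. The real divergence is in the reverse direction: the paper never touches sets again, but instead chains together already-proved parts of Lemma~\ref{L:SUpStarMu} --- it writes $\nu|_{\mathrm{ran}(S)} = S_*\big(S^*(\nu|_{\mathrm{ran}(S)})\big)$ using parts (\ref{L:SUpStarMu-4x}) and~(\ref{L:SUpStarMu-4}), notes $S^*(\nu|_{\mathrm{ran}(S)}) \ll \mu$ by part~(\ref{L:SUpStarMu-1}), and concludes by the monotonicity of pushforward under absolute continuity in part~(\ref{L:SUpStarMu-1a}) --- whereas you verify directly that $\mu(E)=0$ forces $[F]=S([E])=[\varnothing]$, which needs only that $S$ is well defined on the quotient. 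Your route is more elementary and makes transparent that the second direction does not even use injectivity, only that $S$ maps null sets to null sets; the paper's route buys a proof with no new set-level reasoning at all, reusing the functorial machinery it has already built. Your closing remark about independence of the representative $E$ is the right point to flag, and your resolution (well-definedness of $(S^{-1})^*(\mu)$ from Lemma~\ref{L:SUpStarMu}, using $\mu \ll \mu$) is correct.
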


\begin{proof}
We have $S_* (\mu) << \nu |_{{\mathrm{ran}} (S)}$
by Lemma~\ref{L:SUpStarMu}(\ref{L:SUpStarMu-1}).
Also,
$S_* ( S^* ( \nu |_{{\mathrm{ran}} (S)} ) )
    = \nu |_{{\mathrm{ran}} (S)}$
by Parts (\ref{L:SUpStarMu-4x}) and~(\ref{L:SUpStarMu-4})
of Lemma~\ref{L:SUpStarMu},
and $S^* ( \nu |_{{\mathrm{ran}} (S)} ) << \mu$
by Part~(\ref{L:SUpStarMu-1}) of Lemma~\ref{L:SUpStarMu},
so $\nu |_{{\mathrm{ran}} (S)} << S_* (\mu)$
by Part~(\ref{L:SUpStarMu-1a}) of Lemma~\ref{L:SUpStarMu}.
\end{proof}

We summarize some standard computations.

\begin{cor}\label{C-Computation}
Let $\XBM$ and $\YCN$ be \sft\  measure spaces,
and let $S$ be a bijective \mst\  from $\XBM$ to $\YCN.$
Set
\[
h = \left[ \frac{d (S_* (\mu))}{d \nu} \right]
  \in L^0 (Y, \nu).
\]
Then $h$ has values in $(0, \I)$ almost everywhere~$[\nu],$
and
\[
\left[ \frac{d ( (S^{-1})_* (\nu))}{d \mu} \right]
  = \frac{1}{(S^{-1})_* (h)}
  = (S^{-1})_* \left( \frac{1}{h} \right).
\]

Moreover, whenever $\xi \in L^0 (X, \mu)$ is nonnegative,
or one of the integrals in the following exists
(in which case they all do),
we have
\[
\int_X \xi \, d \mu
 = \int_Y S_* (\xi) \, d (S_* (\mu))
 = \int_Y S_* (\xi) h \, d \nu.
\]
\end{cor}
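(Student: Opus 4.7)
The plan is first to establish positivity and finiteness of $h,$ then to derive the change-of-variables formula in the last display, and finally to use the symmetric roles of $S$ and $S^{-1}$ to identify the Radon--Nikodym derivative of $(S^{-1})_*(\nu).$

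Since $S$ is bijective, Lemma~\ref{L:SDnStarMu} gives that $S_*(\mu)$ and $\nu$ are mutually absolutely continuous on all of $\cC / \cN(\nu),$ so $h = d(S_*(\mu))/d\nu$ exists and $h > 0$ almost everywhere $[\nu]$ (otherwise the set $\{h = 0\}$ would have positive $\nu$-measure but zero $S_*(\mu)$-measure, contradicting $\nu \ll S_*(\mu)$). Finiteness of $h$ almost everywhere follows from $\sm$-finiteness: cover $Y$ by countably many sets $F_n \in \cC$ of finite $\nu$- and finite $S_*(\mu)$-measure (the latter available by Lemma~\ref{L:SUpStarMu}(\ref{L:SUpStarMu-2}) applied to $S^{-1}$), and observe that $\int_{F_n} h \, d\nu = S_*(\mu)(F_n) < \I.$

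For the change-of-variables formula, combine Parts~(\ref{L:SUpStarMu-4x}) and~(\ref{L:SUpStarMu-4}) of Lemma~\ref{L:SUpStarMu} applied to the composition $S^{-1} \circ S = \id$:
\[
\mu = (S^{-1} \circ S)^*(\mu) = S^*\bigl((S^{-1})^*(\mu)\bigr) = S^*(S_*(\mu)),
\]
where the last equality is Definition~\ref{D:SDnStarMu}. Applying Lemma~\ref{L:SUpStarMu}(\ref{L:SUpStarMu-3}) with $\ld = S_*(\mu)$ then yields $\int_X \xi \, d\mu = \int_Y S_*(\xi) \, d(S_*(\mu))$ for nonnegative $\xi,$ and the second equality in the last display is just the definition of the Radon--Nikodym derivative~$h.$ The $L^1$ version follows by splitting into real and imaginary, positive and negative parts, after noting that $|S_*(\xi)| = S_*(|\xi|)$ by Proposition~\ref{P:SStar}(\ref{P:SStar-2x}), so integrability is preserved in either direction.

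For the Radon--Nikodym identity in the second display, apply the previous paragraph with the roles of $\XBM$ and $\YCN$ interchanged (and $S$ replaced by $S^{-1}$): setting $h' = d((S^{-1})_*(\nu))/d\mu,$ we obtain $\int_Y \eta \, d\nu = \int_X (S^{-1})_*(\eta) h' \, d\mu$ for nonnegative~$\eta.$ Substituting $\eta = S_*(\xi) h$ and using $(S^{-1})_* \circ S_* = \id$ (by Proposition~\ref{P:SStar}(\ref{P:SStar-6})) together with product-preservation of $(S^{-1})_*$ from Proposition~\ref{P:SStar}(\ref{P:SStar-2x}) gives
\[
\int_X \xi \, d\mu = \int_Y S_*(\xi) h \, d\nu = \int_X \xi \cdot (S^{-1})_*(h) \cdot h' \, d\mu
\]
for every nonnegative $\xi \in L^0(X, \mu).$ Specialization to $\xi = \ch_E$ with $\mu(E) < \I$ and $\sm$-finiteness of $\mu$ then force $(S^{-1})_*(h) \cdot h' = 1$ almost everywhere $[\mu],$ hence $h' = 1 / (S^{-1})_*(h).$ Finally, applying Proposition~\ref{P:SStar}(\ref{P:SStar-2x}) to $(S^{-1})_*$ with the \cfn\ $t \mapsto 1/t$ on the $\nu$-conull set where $h > 0$ identifies this with $(S^{-1})_*(1/h),$ completing the second display. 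The main obstacle is purely bookkeeping: tracking which maps between $\sm$-algebras, measures, and function spaces compose to the identity, and systematically using the dualities $S^* \circ S_* = \id$ on measures and $(S^{-1})_* \circ S_* = \id$ on $L^0$; once these are in place, the displays follow by routine substitution in Lemma~\ref{L:SUpStarMu}(\ref{L:SUpStarMu-3}).
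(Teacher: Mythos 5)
Your proof is correct and takes essentially the same route as the paper, whose entire proof is the observation that the result follows from Lemma~\ref{L:SUpStarMu}(\ref{L:SUpStarMu-3}) applied to $S$ and to $S^{-1},$ combined with Proposition~\ref{P:SStar}(\ref{P:SStar-2x}) and standard Radon-Nikodym facts; you have simply filled in the details. (One small citation slip: $\sm$-finiteness of $S_* (\mu)$ comes from Lemma~\ref{L:SUpStarMu}(\ref{L:SUpStarMu-2}) applied to $S$ itself, since $S^* (S_* (\mu)) = \mu$ is $\sm$-finite, not from an application to $S^{-1}$; compare Corollary~\ref{C-429RN}(\ref{C-429RN-1}).)
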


\begin{proof}
This follows from Lemma~\ref{L:SUpStarMu}(\ref{L:SUpStarMu-3})
for $S$ and $S^{-1},$
combined with standard properties of Radon-Nikodym derivatives
and Proposition~\ref{P:SStar}(\ref{P:SStar-2x}).
\end{proof}

\begin{cor}\label{C-429RN}
Let $\XBM$ and $\YCN$ be \sft\  measure spaces,
and let $S$ be a bijective \mst\  from $\XBM$ to $\YCN.$
\begin{enumerate}
\item\label{C-429RN-1}
Let $\ld$ be a $\sm$-finite measure on $(X, \cB)$
such that $\ld << \mu.$
Then $S_* (\ld)$ is $\sm$-finite and $S_* (\ld) << \nu.$
\item\label{C-429RN-2}
Let $\ld$ and $\sm$ be $\sm$-finite measures on $(X, \cB)$
such that $\sm,$ $\ld,$ and~$\mu$ are all mutually absolutely
continuous.
Then $S_* (\sm),$ $S_* (\ld),$ and $S_* (\mu)$
are all mutually absolutely
continuous,
and
\[
\left[ \frac{d S_* (\sm)}{d S_* (\ld)} \right]
 = S_* \left( \left[ \frac{d \sm}{d \ld} \right] \right)
\]
almost everywhere $[S_* (\ld)].$
\end{enumerate}
\end{cor}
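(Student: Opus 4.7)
The plan is to handle part~(\ref{C-429RN-1}) by unwinding the definition $S_*(\lambda) = (S^{-1})^*(\lambda)$ from Definition~\ref{D:SDnStarMu} and applying Lemma~\ref{L:SUpStarMu}(\ref{L:SUpStarMu-1a}) to the \mst\ $T = S^{-1}\colon (Y,\cC,\nu)\to(X,\cB,\mu)$: from $\lambda\ll\mu$ one reads off $(S^{-1})^*(\lambda)\ll(S^{-1})^*(\mu)$, where $(S^{-1})^*(\mu)=S_*(\mu)$ by definition. Lemma~\ref{L:SDnStarMu} then gives mutual absolute continuity of $S_*(\mu)$ with $\nu|_{{\mathrm{ran}}(S)}=\nu$, where the last equality uses surjectivity of~$S$. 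For $\sigma$-finiteness of $S_*(\lambda),$ I would choose an exhaustion $X=\bigcup E_n$ with $\lambda(E_n)<\infty,$ pick $F_n\in\cC$ with $[F_n]=S([E_n]),$ observe that $S_*(\lambda)(F_n)=\lambda(E_n)<\infty$ directly from the definition of $S_*(\lambda),$ and note that $\bigvee_n[F_n]=S\bigl(\bigvee_n[E_n]\bigr)=S([X])=[Y]$ since $S$ is a bijective \shm.

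For part~(\ref{C-429RN-2}), mutual absolute continuity of $S_*(\sigma)$, $S_*(\lambda)$, $S_*(\mu)$ follows by applying part~(\ref{C-429RN-1}) to each ordered pair among $\sigma,\lambda,\mu$ (each is absolutely continuous with respect to any other, so each pushforward is absolutely continuous with respect to any other pushforward). The $\sigma$-finiteness guaranteed by part~(\ref{C-429RN-1}) is what makes the Radon-Nikodym derivative $[dS_*(\sigma)/dS_*(\lambda)]$ well defined in the first place.

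For the Radon-Nikodym identity, set $h=[d\sigma/d\lambda]$, so $d\sigma=h\,d\lambda$. The goal is to show that for every nonnegative $\xi\in L^0(Y,\nu),$
\begin{equation*}
\int_Y \xi\,dS_*(\sigma) \;=\; \int_Y \xi\cdot S_*(h)\,dS_*(\lambda),
\end{equation*}
which characterizes $S_*(h)$ as the Radon-Nikodym derivative almost everywhere $[S_*(\lambda)]$. I would apply Lemma~\ref{L:SUpStarMu}(\ref{L:SUpStarMu-3}) to the \mst\ $T=S^{-1}$ (so that $T^*(\sigma)=(S^{-1})^*(\sigma)=S_*(\sigma)$) to rewrite the left hand side as $\int_X (S^{-1})_*(\xi)\,d\sigma=\int_X (S^{-1})_*(\xi)\cdot h\,d\lambda.$ Using $(S^{-1})_*\circ S_*=\mathrm{id}$ from Proposition~\ref{P:SStar}(\ref{P:SStar-6}) together with the multiplicativity of $(S^{-1})_*$ from Proposition~\ref{P:SStar}(\ref{P:SStar-2x}), rewrite $(S^{-1})_*(\xi)\cdot h=(S^{-1})_*\bigl(\xi\cdot S_*(h)\bigr).$ A second application of Lemma~\ref{L:SUpStarMu}(\ref{L:SUpStarMu-3}) to $T=S^{-1}$ with the measure $\lambda$ in place of $\sigma$ transports the integral back to $Y$ and yields the right hand side.

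The main obstacle is purely bookkeeping: one must keep straight three separate operations all denoted with a star (the pushforward $S_*$ on $L^0$ from Proposition~\ref{P:SStar}, the pullback $S^*$ on measures from Lemma~\ref{L:SUpStarMu}, and the pushforward $S_*$ on measures from Definition~\ref{D:SDnStarMu} that is secretly $(S^{-1})^*$), and then verify at each step that the hypotheses of Lemma~\ref{L:SUpStarMu}(\ref{L:SUpStarMu-3}) are satisfied, which is where $\sigma$-finiteness of the measures involved (guaranteed by part~(\ref{C-429RN-1})) is consumed.
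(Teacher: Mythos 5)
Your proof is correct and follows essentially the same route as the paper: both parts reduce to Lemma~\ref{L:SUpStarMu} applied to $S^{-1}$ (for absolute continuity and $\sigma$-finiteness, where you merely inline the proof of Lemma~\ref{L:SUpStarMu}(\ref{L:SUpStarMu-2})), and the Radon--Nikodym identity is verified by the same change-of-variables computation using Lemma~\ref{L:SUpStarMu}(\ref{L:SUpStarMu-3}) together with multiplicativity of the induced map on $L^0$. The only cosmetic difference is that the paper tests against functions of the form $S_*(\xi)$ (via Corollary~\ref{C-SurjS}) and pushes forward with Corollary~\ref{C-Computation}, whereas you test against arbitrary nonnegative $\xi$ on $Y$ and pull back with $(S^{-1})_*$; these are the same computation read in opposite directions.
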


\begin{proof}
For~(\ref{C-429RN-1}),
set $\ta = S_* (\ld).$
Then $\ta = (S^{-1})^* (\ld),$
so $S^* (\ta) = \ld$
by Lemma~\ref{L:SUpStarMu}(\ref{L:SUpStarMu-4}).
Therefore Lemma~\ref{L:SUpStarMu}(\ref{L:SUpStarMu-2})
implies that $\ta$ is \sft,
and it follows from Lemma~\ref{L:SUpStarMu}(\ref{L:SUpStarMu-1}),
applied to $S^{-1},$
that $S_* (\ld) << \mu.$

We prove~(\ref{C-429RN-2}).
Mutual absolute continuity
follows from Lemma~\ref{L:SUpStarMu}(\ref{L:SUpStarMu-1a}),
applied to $S^{-1}.$
In particular,
we can use Corollary~\ref{C-Computation}
with $\sm$ and~$\ld$ in place of~$\mu.$
Next, since the measures are \sft\  by~(\ref{C-429RN-1}),
the required Radon-Nikodym derivatives exist.
We then prove the result by showing that
\[
\int_Y \et \cdot S_* \left( \left[ \frac{d\sm}{d \ld} \right] \right)
    \, d S_* (\ld)
 = \int_Y \et \, d S_* (\sm)
\]
for all nonnegative $\et \in L^0 (Y, \nu).$
By Corollary~\ref{C-SurjS}(\ref{C-SurjS-2x}),
we may assume that $\et = S_* (\xi)$ for some $\xi \in L^0 (X, \mu).$
We may take $\xi$ to be nonnegative
by Proposition~\ref{P:SStar}(\ref{P:SStar-5}).
Using Corollary~\ref{C-Computation}
for $(X, {\mathcal{B}}, \sm)$
and Proposition~\ref{P:SStar}(\ref{P:SStar-2x})
at the first step,
and Corollary~\ref{C-Computation}
for $(X, {\mathcal{B}}, \ld)$
at the third step,
we have
\[
\int_Y S_* (\xi) S_* \left( \left[ \frac{d\sm}{d \ld} \right] \right)
    \, d S_* (\ld)
  = \int_X \xi \left[ \frac{d\sm}{d \ld} \right] \, d \ld
  = \int_X \xi \, d \sm
  = \int_Y S_* (\xi) \, d S_* (\sm).
\]
This completes the proof.
\end{proof}

There is a more general version of
Corollary~\ref{C-429RN}(\ref{C-429RN-2}),
in which we only assume $\sm << \ld << \mu.$
Since it has a more complicated statement
and we don't need it, we omit it.

\section{Spatial partial isometries and Lamperti's
  Theorem}\label{Sec:SPI}

\indent
We will need a systematic theory of isometries and
partial isometries on $L^p$~spaces.
The main result is Lamperti's Theorem~\cite{Lp},
according to which, for $p \in (0, \I) \SM \{ 2 \},$
every isometry between $L^p$~spaces is ``semispatial''
in a sense which we describe below.
The material we need
in order to to make effective use of Lamperti's Theorem
seems not to be in the literature,
so we describe it here.

There are two choices of how to formulate the definitions,
giving different results on $L^2 (X, \mu).$
We adopt the stricter choice.

We will need the following three computations.

\begin{lem}\label{L-PrelimComp}
Let $\XBM$ and $\YCN$ be \msp s,
let $p \in [1, \I],$
let $S$ be an injective measurable set transformation
from $\XBM$ to $\YCN$
such that $\nu |_{ {\mathrm{ran}} (S) }$ is $\sm$-finite,
and let $g$ be a measurable function on~$Y$
such that $| g (y) | = 1$ for almost all $y \in Y.$
\begin{enumerate}
\item\label{L-PrelimComp-0}
The Radon-Nikodym derivative
\[
h =
 \left[ \frac{d S_* (\mu)}{d (\nu |_{ {\mathrm{ran}} (S) }) } \right]
   \in L^0 (Y, \, \nu |_{ {\mathrm{ran}} (S) } )
\]
exists
and satisfies $h (y) \in (0, \I)$ for almost all~$y \in Y$
with respect to $\nu |_{ {\mathrm{ran}} (S) }.$
\item\label{L-PrelimComp-1}
Let $h$ be as in~(\ref{L-PrelimComp-0}).
Let $\xi$ be a measurable function on~$X.$
Define a measurable function $\et$ on~$Y$ by
\[
\et = g h^{1/p} S_* (\xi).
\]
Then $\| \et \|_p = \| \xi \|_p.$
\item\label{L-PrelimComp-2}
Let $\xi$ be a measurable function on~$X,$
let $\et$ be as in~(\ref{L-PrelimComp-1}),
and suppose that $S$ is bijective.
Then for almost all $x \in X,$ we have
\[
\xi = (S^{-1})_* \left( \frac{1}{g} \right)
    \left[ \frac{d (S^{-1})_* (\nu)}{d \mu} \right]^{1/p}
                               (S^{-1})_* (\et).
\]
\end{enumerate}
\end{lem}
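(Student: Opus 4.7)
The plan is to prove the three parts in order; part~(\ref{L-PrelimComp-0}) is a Radon-Nikodym existence assertion, and the other two become short calculations once that is in hand.

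For part~(\ref{L-PrelimComp-0}), I would invoke Lemma~\ref{L:SDnStarMu} to get that $S_*(\mu)$ and $\nu|_{{\mathrm{ran}}(S)}$ are mutually absolutely continuous, where the latter is $\sm$-finite by hypothesis. The Radon-Nikodym theorem then provides a nonnegative measurable density $h$, finite almost everywhere since $S_*(\mu)$ inherits $\sm$-finiteness from $\nu|_{{\mathrm{ran}}(S)}$ via the mutual absolute continuity together with a countable decomposition by sets of finite $\nu$-measure. Positivity of $h$ almost everywhere follows from the reverse direction $\nu|_{{\mathrm{ran}}(S)} << S_*(\mu)$: if $\{h = 0\}$ had positive $\nu$-measure it would have positive $S_*(\mu)$-measure, contradicting the integral representation $S_*(\mu)(E) = \int_E h \, d\nu$.

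For part~(\ref{L-PrelimComp-1}), I would simply compute, noting that $S_*(\xi),$ and hence $\et,$ is supported in ${\mathrm{ran}}(S)$ by Proposition~\ref{P:SStar}(\ref{P:SStar-3}):
\begin{align*}
\| \et \|_p^p
&= \int_Y |g|^p \, h \, |S_*(\xi)|^p \, d\nu
 = \int_Y h \, S_*(|\xi|^p) \, d\nu \\
&= \int_Y S_*(|\xi|^p) \, d S_*(\mu)
 = \int_X |\xi|^p \, d\mu
 = \| \xi \|_p^p.
\end{align*}
The first line uses $|g| = 1$ almost everywhere together with Proposition~\ref{P:SStar}(\ref{P:SStar-2x}) to move the absolute $p$-th power past $S_*$; the second line starts with the defining property of $h,$ then applies Lemma~\ref{L:SUpStarMu}(\ref{L:SUpStarMu-3}), using the identity $S^*(S_*(\mu)) = \mu$ that follows from parts~(\ref{L:SUpStarMu-4x}) and~(\ref{L:SUpStarMu-4}) of that lemma applied to the composition $S^{-1} \circ S.$

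For part~(\ref{L-PrelimComp-2}), with $S$ bijective I would apply $(S^{-1})_*$ to the equation $\et = g h^{1/p} S_*(\xi).$ Proposition~\ref{P:SStar}(\ref{P:SStar-6}) and $S^{-1} \circ S = \id$ give $(S^{-1})_* \circ S_* = \id,$ while Proposition~\ref{P:SStar}(\ref{P:SStar-2x}) lets $(S^{-1})_*$ distribute over the product and over the $1/p$ power. This yields
\[
(S^{-1})_*(\et)
  = (S^{-1})_*(g) \cdot \bigl( (S^{-1})_*(h) \bigr)^{1/p} \cdot \xi,
\]
and dividing (using $(S^{-1})_*(g) \cdot (S^{-1})_*(1/g) = (S^{-1})_*(1) = 1$) delivers the stated formula once I identify $(S^{-1})_*(1/h)$ with $[d (S^{-1})_*(\nu)/d\mu]$ via Corollary~\ref{C-Computation}. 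The only step requiring more than bookkeeping is the $\sm$-finiteness point in part~(\ref{L-PrelimComp-0}); once $h$ is in hand, the rest is a direct application of the transport machinery built up in Proposition~\ref{P:SStar} and Lemma~\ref{L:SUpStarMu}.
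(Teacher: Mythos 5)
Your overall route is the paper's: mutual absolute continuity from Lemma~\ref{L:SDnStarMu}, Radon--Nikodym for part~(\ref{L-PrelimComp-0}), and then the transport identities of Proposition~\ref{P:SStar} and Lemma~\ref{L:SUpStarMu} (packaged in the paper as Corollary~\ref{C-Computation}) for parts (\ref{L-PrelimComp-1}) and~(\ref{L-PrelimComp-2}); those two computations are fine.

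The gap is in your justification that $h$ is finite almost everywhere. You claim that $S_*(\mu)$ ``inherits $\sm$-finiteness from $\nu|_{{\mathrm{ran}}(S)}$ via the mutual absolute continuity together with a countable decomposition by sets of finite $\nu$-measure.'' That inference is false: mutual absolute continuity with a $\sm$-finite measure does not transfer $\sm$-finiteness. For example, the measure $\ld$ defined by $\ld(F) = \I$ when $\nu(F) > 0$ and $\ld(F) = 0$ otherwise is mutually absolutely continuous with $\nu$, yet fails to be $\sm$-finite even when $\nu$ is finite, and its density with respect to $\nu$ is $+\I$ everywhere. Decomposing $Y$ into pieces of finite $\nu$-measure gives no control whatsoever on their $S_*(\mu)$-measure, so your argument does not rule out $h = \I$ on a set of positive measure --- which is exactly the content of part~(\ref{L-PrelimComp-0}). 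The finiteness has to come from the domain side: write $X = \bigcup_{n=1}^{\I} X_n$ with $\mu(X_n) < \I$, choose $Y_n$ with $[Y_n] = S([X_n])$, observe that $S_*(\mu)(Y_n) = \mu(X_n) < \I$ and that $S_*(\mu)$ vanishes off $\bigcup_{n=1}^{\I} Y_n$; hence $S_*(\mu)$ is $\sm$-finite and $h < \I$ almost everywhere. This is how the paper argues, and it uses the $\sm$-finiteness of $\mu$ rather than of $\nu|_{{\mathrm{ran}}(S)}$. (A much smaller point: your computation in part~(\ref{L-PrelimComp-1}) only makes sense for $p < \I$; the case $p = \I$, where $h^{1/p}$ is read as the constant $1$, needs a separate one-line argument, e.g.\ via Proposition~\ref{P:SStar}(\ref{P:SStar-5}).)
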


\begin{rmk}\label{R-NSmf}
Lemma~\ref{L-PrelimComp}(\ref{L-PrelimComp-1})
says that $\xi \mapsto  g h^{1/p} S_* (\xi)$
defines an isometry in $L \big( L^p (X, \mu), \, L^p (Y, nu) \big).$
(This will be made explicit in Lemma~\ref{L-BasicWSPI} below.)
The measure
$\nu |_{ {\mathrm{ran}} (S) }$ need not be $\sm$-finite,
and in this case we do not get
an element of $L \big( L^p (X, \mu), \, L^p (Y, \nu) \big).$
Example: take $X$ to consist of one point~$x,$
take $Y = \N,$
take $\mu$ and $\nu$ to be counting measure,
take $S ( \E) = \E$ and $S (X) = Y,$
and take $g = 1.$
\end{rmk}

\begin{proof}[Proof of Lemma~\ref{L-PrelimComp}]
We prove~(\ref{L-PrelimComp-0}).
The measures $S_* (\mu)$ and $\nu |_{ {\mathrm{ran}} (S) }$
are mutually absolutely \ct\  %
by Lemma~\ref{L:SDnStarMu}.
The measure $\nu |_{ {\mathrm{ran}} (S) }$
is $\sm$-finite by hypothesis.

We claim that $S_* (\mu)$ is $\sm$-finite.
Write $X = \bigcup_{n = 1}^{\I} X_n$ with $\mu (X_n) < \I.$
Choose $Y_n \in \cC$ such that $S ([X_n]) = [Y_n].$
Then $S_* (\mu) (Y_n) = \mu (X_n) < \I.$
Since $\left[ Y \SM \bigcup_{n = 1}^{\I} Y_n \right] = S ([\E]),$
we have $S_* (\mu) \left( Y \SM \bigcup_{n = 1}^{\I} Y_n \right) = 0.$
The claim follows,
as does~(\ref{L-PrelimComp-0}).

For the remaining two parts,
we present the proof for $p \neq \I.$
(The case $p = \I$ is simpler.)
For~(\ref{L-PrelimComp-1}),
we have, using $| g | = 1$ almost everywhere at the first step
and Corollary~\ref{C-Computation}
at the third step,
\[
\| \et \|_p^p
  = \int_Y h | S_* (\xi) |^p \, d \nu
  = \int_Y | S_* (\xi) |^p \, d S_* (\mu)
  = \int_X | \xi |^p \, d \mu
  = \| \xi \|_p^p.
\]
For~(\ref{L-PrelimComp-2}),
use Corollary~\ref{C-Computation} at the first step,
Proposition~\ref{P:SStar}(\ref{P:SStar-2x}) at the second step,
and Proposition~\ref{P:SStar}(\ref{P:SStar-6}) at the third step,
to get, with equalities almost everywhere~$[\mu],$
\begin{align*}
\lefteqn{(S^{-1})_* \left( \frac{1}{g} \right)
    \left[ \frac{d (S^{-1})_* (\nu)}{d \mu} \right]^{1/p}
                               (S^{-1})_* (\et)}
                       \\
& \hspace{2em}
   = (S^{-1})_* \left( \frac{1}{g} \right)
    (S^{-1})_* \left( \frac{1}{h} \right)^{1/p} (S^{-1})_* (\et)
   = (S^{-1})_* (S_* (\xi))
   = \xi.
\end{align*}
This completes the proof.
\end{proof}

\begin{dfn}\label{D-WSpSys}
Let $\XBM$ and $\YCN$ be \sfm s.
A {\emph{semispatial system}} for $\XBM$ and $\YCN$
is a quadruple
$(E, F, S, g)$
in which $E \in \cB,$ in which $F \in \cC,$
in which $S$ is an injective measurable set transformation
from $(E, \cB |_E, \mu |_E)$
to $\big( F, \cC_0 |_F, \nu_F \big)$
such that $\nu |_{ {\mathrm{ran}} (S) }$ is $\sm$-finite,
and in which $g$ is a $\cC$-measurable function on~$F$
such that $| g (y) | = 1$ for almost all $y \in F.$

We say that $(E, F, S, g)$ is a {\emph{spatial system}}
if, in addition, $S$ is bijective.
\end{dfn}

\begin{dfn}\label{D-WSPI}
Let $\XBM$ and $\YCN$ be \sfm s,
and let $p \in [1, \I].$
A linear map $s \in L \big( L^p (X, \mu), \, L^p (Y, \nu) \big)$
is called a {\emph{semispatial partial isometry}} if there
exists a semispatial system $(E, F, S, g)$
such that, for every $\xi \in L^p (X, \mu),$
we have
\[
(s \xi) (y)
 = \begin{cases}
     g (y) \left( \left[ \frac{d S_* (\mu |_E)}{
              d ( \nu |_{{\mathrm{ran}} (S)} )}
                                   \right] (y) \right)^{1/p}
                               S_* (\xi |_E) (y) &   y \in F  \\
     0                                           &   y \not\in F.
    \end{cases}
\]
(When $p = \I,$ we take
\[
\left[
 \frac{d S_* (\mu |_E)}{d (\nu |_{{\mathrm{ran}} (S)})} \right]^{1/p}
\]
to be the constant function~$1.$)

We call $s$ a {\emph{spatial partial isometry}}
if $(E, F, S, g)$ is in fact a spatial system.

We call $(E, F, S, g)$ the {\emph{(semi)spatial system}} of~$s.$
(We will see in Lemma~\ref{L-WPiecesUniq} below
that it is essentially unique.)
We call $E$ and $F$
the {\emph{domain support}} and the {\emph{range support}} of~$s.$
The sub-$\sm$-algebra ${\mathrm{ran}} (S) \subset \cC |_F$
is called the
{\emph{range $\sm$-algebra}},
the measurable set transformation $S$ is called
the {\emph{(semi)spatial realization}} of~$s,$
and $g$ is called the {\emph{phase factor}}.
If $\mu (X \SM E) = 0,$
we call $s$ a {\emph{(semi)spatial isometry}}.
\end{dfn}

\begin{lem}\label{L-BasicWSPI}
Let $\XBM$ and $\YCN$ be \msp s,
let $p \in [1, \I],$
and let $(E, F, S, g)$ be a semispatial system for $\XBM$ and $\YCN.$
\begin{enumerate}
\item\label{L-BasicWSPI-1}
There exists a unique semispatial partial isometry
$s \in L \big( L^p (X, \mu), \, L^p (Y, \nu) \big)$
whose semispatial system is $(E, F, S, g).$
\item\label{L-BasicWSPI-2}
Let $s$ be as in~(\ref{L-BasicWSPI-1}).
Then $\| s \xi \|_p = \| \xi |_E \|_p$ for every $\xi \in L^p (X, \mu),$
and $\| s \| \leq 1.$
\item\label{L-BasicWSPI-3}
Let $s$ be as in~(\ref{L-BasicWSPI-1}).
Then for any set $B \in \cC,$
the range of $s$ is contained in $L^p (Y, \nu |_B)$
\ifo\  $B$ contains $F$ up to a set of measure zero.
\item\label{L-BasicWSPI-5}
Suppose $s$ as in~(\ref{L-BasicWSPI-1}) is a semispatial isometry.
Then $s$ is isometric as a linear map,
that is,
$\| s \xi \|_p = \| \xi \|_p$ for every $\xi \in L^p (X, \mu).$
\end{enumerate}
\end{lem}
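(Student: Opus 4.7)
The plan is to build $s$ directly from the formula in Definition~\ref{D-WSPI} and reduce each assertion to the three-part computation already packaged in Lemma~\ref{L-PrelimComp}. For part~(\ref{L-BasicWSPI-1}), set $h = \bigl[ d S_*(\mu|_E) / d (\nu|_{{\mathrm{ran}}(S)}) \bigr]$, which exists by Lemma~\ref{L-PrelimComp}(\ref{L-PrelimComp-0}), and define $s\xi$ by the displayed formula; the function $S_*(\xi|_E)$ is a well-defined element of $L^0(Y,\nu|_{{\mathrm{ran}}(S)})$ by Proposition~\ref{P:SStar}(\ref{P:SStar-3}). Lemma~\ref{L-PrelimComp}(\ref{L-PrelimComp-1}), applied to the injective \mst~$S$ from $(E,\cB|_E,\mu|_E)$ to $(F,\cC|_F,\nu|_F)$ and to $\xi|_E$, says precisely that the resulting function on $F$ has $L^p$-norm equal to $\|\xi|_E\|_p$; extending by zero across $Y \SM F$ preserves the norm, giving $\|s\xi\|_p = \|\xi|_E\|_p$ and thereby proving~(\ref{L-BasicWSPI-2}) at the same time. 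Linearity of $s$ is inherited from linearity of $\xi \mapsto S_*(\xi|_E)$ via Proposition~\ref{P:SStar}, and uniqueness is immediate because the formula pins down $s\xi$ at almost every point of $Y$. The bound $\|s\| \leq 1$ follows from $\|\xi|_E\|_p \leq \|\xi\|_p$, and~(\ref{L-BasicWSPI-5}) is then immediate: when $\mu(X \SM E) = 0$, one has $\|\xi|_E\|_p = \|\xi\|_p$.

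The one genuinely nontrivial step is part~(\ref{L-BasicWSPI-3}). The forward direction is free, since every $s\xi$ vanishes off $F$. For the converse, assume the range of $s$ lies in $L^p(Y,\nu|_B)$ and aim to show $\nu(F \SM B) = 0$. Using $\sm$-finiteness of $\mu|_E$, choose a partition of $E$ into sets $E_n \in \cB|_E$ with $\mu(E_n) < \I$, and pick representatives $F_n \in \cC|_F$ with $[F_n] = S([E_n])$. Proposition~\ref{P:SStar}(\ref{P:SStar-1}) gives $S_*(\ch_{E_n}) = \ch_{F_n}$, so $s\ch_{E_n}$ equals $g \cdot h^{1/p} \cdot \ch_{F_n}$ on $F$ and zero off $F$. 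Because $|g| = 1$ almost everywhere on $F$ and the zero set $\{h = 0\}$ is ${\mathrm{ran}}(S)$-measurable with $\nu|_{{\mathrm{ran}}(S)}$-measure (hence $\nu$-measure) zero by Lemma~\ref{L-PrelimComp}(\ref{L-PrelimComp-0}), the function $g h^{1/p}$ is nonzero almost everywhere on $F$; hence $s\ch_{E_n}$ is nonzero almost everywhere on $F_n$. The hypothesis then forces $\nu(F_n \SM B) = 0$. Since $S$ is a \shm\ and $[E] = \bigvee_n [E_n]$, we have $[F] = S([E]) = \bigvee_n [F_n]$, so $F$ is contained in $\bigcup_n F_n$ up to a $\nu$-null set, and $F \SM B$ is null, as required.

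The main obstacle, and the only point that needs real care, is the bookkeeping between the three Boolean \sga s at play on $F$: the ambient $\cC|_F$, the range $\sm$-algebra ${\mathrm{ran}}(S)$ (a sub-$\sm$-algebra of $\cC|_F$ by Lemma~\ref{L:RanIsSgAlg}), and their quotients by null sets. In particular, the positivity of $h$ holds only with respect to $\nu|_{{\mathrm{ran}}(S)}$, and the identity $[F] = \bigvee_n [F_n]$ is an equation in the null-quotient of ${\mathrm{ran}}(S)$; both statements transfer to $\nu$-a.e.\ statements on $F$ precisely because ${\mathrm{ran}}(S)$ is a sub-$\sm$-algebra of $\cC|_F$ and $\nu|_{{\mathrm{ran}}(S)}$ is $\sm$-finite. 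No analytic input beyond Lemma~\ref{L-PrelimComp} and Proposition~\ref{P:SStar} is needed.
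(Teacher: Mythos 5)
Your proof is correct and follows essentially the same route as the paper: parts (1), (2), and (5) are read off from Lemma~\ref{L-PrelimComp}, and the converse in part~(3) rests on the observation that $g h^{1/p}$ is nonzero almost everywhere on~$F$. The only (immaterial) difference is that in part~(3) the paper tests with a single strictly positive $\xi \in L^p(E,\mu)$ and invokes Proposition~\ref{P:SStar}(\ref{P:SStar-5}), whereas you test with the indicators of a finite-measure partition of $E$ and take a countable union; both work.
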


\begin{proof}
Lemma~\ref{L-PrelimComp}(\ref{L-PrelimComp-1}),
applied with $E$ in place of $X$ and $F$ in place of~$Y,$
implies existence of~$s$ in~(\ref{L-BasicWSPI-1}).
Uniqueness is obvious.
Part~(\ref{L-BasicWSPI-2}) follows
from Lemma~\ref{L-PrelimComp}(\ref{L-PrelimComp-1}).
Part~(\ref{L-BasicWSPI-5}) is a special case of~(\ref{L-BasicWSPI-2}).

For~(\ref{L-BasicWSPI-3}),
it is obvious that if $B$ contains $F$ up to a set of measure zero,
then ${\mathrm{ran}} (s) \subset L^p (Y, \nu |_B).$
Now suppose ${\mathrm{ran}} (s) \subset L^p (Y, \nu |_B).$
Choose $\xi \in L^p (E, \mu)$
such that $\xi (x) \neq 0$ for all $x \in X.$
It follows from Proposition~\ref{P:SStar}(\ref{P:SStar-5})
that $S_* (\xi |_E)$ is nonzero almost everywhere on~$F,$
from the hypotheses that $g$ is nonzero almost everywhere on~$F,$
and from Lemma~\ref{L-PrelimComp}(\ref{L-PrelimComp-0})
that
\[
\left[ \frac{d S_* (\mu |_E)}{
              d ( \nu |_{{\mathrm{ran}} (S)} )} \right]
\]
is nonzero almost everywhere on~$F.$
Therefore $s \xi$ is nonzero almost everywhere on~$F,$
whence $B$ contains $F$ up to a set of measure zero.
\end{proof}

\begin{lem}\label{L-WPiecesUniq}
Let $\XBM$ and $\YCN$ be \sfm s,
let $p \in [1, \I],$
and let $s \in L \big( L^p (X, \mu), \, L^p (Y, \nu) \big)$
be a semispatial partial isometry.
Then its spatial system is unique up to changes in
the domain and range supports by sets of measure zero
and up to equality almost everywhere~$[\nu]$ for the phase factors.
\end{lem}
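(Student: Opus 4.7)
The plan is to recover each of the four pieces $(E, F, S, g)$ of the semispatial system intrinsically from the operator~$s$, one at a time.

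First I would recover the supports. By Lemma~\ref{L-BasicWSPI}(\ref{L-BasicWSPI-2}), $\|s\xi\|_p = \|\xi|_E\|_p$ for every $\xi \in L^p(X,\mu)$, so $s\xi = 0$ \ifo\  $\xi$ vanishes almost everywhere on~$E$. Consequently $[X \SM E] \in \cB / {\mathcal{N}}(\mu)$ is characterized as the largest class $[N]$ such that $s \chi_{N \cap A} = 0$ for every $A \in \cB$ with $\mu(A) < \infty$, and this characterization depends only on~$s$. For the range support, Lemma~\ref{L-BasicWSPI}(\ref{L-BasicWSPI-3}) says that $[F]$ is the smallest $[B] \in \cC / {\mathcal{N}}(\nu)$ with $\mathrm{ran}(s) \subset L^p(Y, \nu|_B)$, again a property of~$s$ alone.

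Next I would recover the realization~$S$. By $\sigma$-finiteness, write $E = \bigcup_{n = 1}^{\I} E_n$ with $\mu(E_n) < \I$. For measurable $A \subset E_n$, Proposition~\ref{P:SStar}(\ref{P:SStar-1}) gives $S_*(\chi_A) = \chi_{S(A)}$, so the defining formula of Definition~\ref{D-WSPI} becomes
\[
s \chi_A = g \cdot h^{1/p} \cdot \chi_{S(A)}
\]
on~$F$, where $h = \bigl[ d S_*(\mu|_E) / d \nu|_{\mathrm{ran}(S)} \bigr]$. Since $|g| = 1$ almost everywhere on~$F$ and $h > 0$ almost everywhere on~$F$ by Lemma~\ref{L-PrelimComp}(\ref{L-PrelimComp-0}), the class $[\supp(s \chi_A)] \in \cC / {\mathcal{N}}(\nu)$ equals $S([A])$. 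This determines $S$ on $\cB|_{E_n} / {\mathcal{N}}(\mu|_{E_n})$ for each~$n$, and hence on all of $\cB|_E / {\mathcal{N}}(\mu|_E)$ by the $\sigma$-homomorphism property; in particular the range $\sigma$-algebra $\mathrm{ran}(S)$ is determined.

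Finally, once $S$ is fixed, the pushforward $S_*(\mu|_E)$ and its Radon--Nikodym derivative~$h$ are determined up to equality $\nu$-almost everywhere on~$F$. Dividing the displayed identity by the almost everywhere positive factor $h^{1/p}$ yields
\[
g(y) = (s \chi_A)(y) \, h(y)^{-1/p} \quad \text{for almost every } y \in S(A).
\]
Letting $n$ range over~$\N$ with $A = E_n$ recovers $g$ almost everywhere on $F = \bigcup_{n = 1}^{\I} S(E_n)$, uniquely up to $\nu|_F$-almost-everywhere equality, as required. The main obstacle is the bookkeeping: every equation must be interpreted modulo null sets, and both the recovery of~$S$ and of~$g$ require passing to subsets of finite measure via $\sigma$-finiteness. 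No substantive analytic difficulty arises, because the nonvanishing of~$g$, $h$, and $\chi_{S(A)}$ at the relevant points has already been built into Definition~\ref{D-WSpSys} and Lemma~\ref{L-PrelimComp}.
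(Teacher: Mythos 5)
Your proposal is correct and follows essentially the same route as the paper: the supports are pinned down via Lemma~\ref{L-BasicWSPI}(\ref{L-BasicWSPI-2}) and~(\ref{L-BasicWSPI-3}), the realization $S$ is read off from the supports of $s \ch_A$ for finite-measure $A$ using that $g$ and $h$ are nonzero almost everywhere, and $g$ is recovered by dividing out the nonvanishing factor $h^{1/p}$. The only cosmetic difference is that the paper recovers the phase factor using a single everywhere-nonzero $\xi \in L^p (E, \mu)$ rather than the pieces $\ch_{E_n}$.
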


\begin{rmk}\label{R-103-56}
In Lemma~\ref{L-WPiecesUniq},
we identify $\cB |_E / {\mathcal{N}} (\mu |_E)$
with a subset of $\cB / {\mathcal{N}} (\mu).$
This subset does not change if $E$ is modified by a set of measure zero.
We treat $\cC |_F / {\mathcal{N}} (\nu |_F)$ similarly.
With these identifications,
it makes sense to say that the measurable set transformation
component is actually unique.
In particular,
he part about the domain and range supports says they are
uniquely determined as elements of
$\cB / {\mathcal{N}} (\mu)$ and $\cC / {\mathcal{N}} (\nu).$
\end{rmk}

\begin{proof}[Proof of Lemma~\ref{L-WPiecesUniq}]
Suppose that for $j = 1, 2,$
the sets $E_j \subset X$ and $F_j \subset Y$
are measurable,
$S_j$ is an injective measurable set transformation
from $(E_j, \cB |_{E_j}, \mu |_{E_j})$
to $(F_j, \cC |_{F_j}, \nu |_{F_j}),$
and $g_j$ is a measurable function on~$F_j$
with $| g_j (y) | = 1$ for almost all $y \in F_j.$
Let $s_1$ and $s_2$ be the semispatial partial isometries
obtained from Definition~\ref{D-WSPI}.
We have to prove that if $s_1 = s_2,$
then
\[
[E_1] = [E_2],
\,\,\,\,\,\,
[F_1] = [F_2],
\,\,\,\,\,\,
S_1 = S_2,
\andeqn
g_1 = g_2 \,\,\,
{\mbox{almost everywhere~$[\nu]$}}.
\]

It follows from Lemma~\ref{L-BasicWSPI}(\ref{L-BasicWSPI-3})
that $[F_1] = [F_2].$
It is clear from
Lemma~\ref{L-BasicWSPI}(\ref{L-BasicWSPI-2})
that we must have $[E_1] = [E_2].$

For $j = 1, 2,$ set
\[
h_j = \left[ \frac{d (S_j)_* (\mu |_E)}{
              d ( \nu |_{{\mathrm{ran}} (S_j)} )} \right].
\]

Let $B \in \cB |_{E_1}$ satisfy $\mu (B) < \I.$
Then $\ch_B \in L^p (X, \mu).$
{}From the formula for $s_1 (\ch_B),$
and since $h_1$ and $g_1$ are nonzero almost everywhere on~$F,$
it follows that $S_1 ( [B] ) = [C]$ \ifo\  $s_1 (\ch_B)$ is
nonzero almost everywhere on~$C$
and zero almost everywhere on $Y \SM C.$
Of course, the same applies to $s_2$ and~$S_2.$
Since $\mu$ is \sft\   and $S_1$ and $S_2$ are
$\sm$-\hm s,
it follows that $S_1 = S_2,$
and also that $h_1 = h_2.$

It remains to prove that $g_1 = g_2$ almost everywhere~$[\nu].$
Choose $\xi \in L^p (E, \mu)$
such that $\xi (x) \neq 0$ for all $x \in X.$
It follows from Proposition~\ref{P:SStar}(\ref{P:SStar-5})
that $(S_1)_* (\xi |_E)$ is nonzero almost everywhere on~$F_1,$
and from Lemma~\ref{L-PrelimComp}(\ref{L-PrelimComp-0})
that $h_1$ is nonzero almost everywhere on~$F_1.$
{}From $s_1 \xi = s_2 \xi$ almost everywhere,
we therefore get $g_1 = g_2$ almost everywhere on~$F_1,$
as desired.
\end{proof}

\begin{rmk}\label{R-SpatialCF}
Let $\XBM$ and $\YCN$ be \msp s,
let $p \in [1, \I],$
and let $s \in L \big( L^p (X, \mu), \, L^p (Y, \nu) \big)$
be a semispatial partial isometry
with domain support $E \subset X,$
range support $F \subset Y,$
semispatial realization~$S,$
and range $\sm$-algebra~$\cC_0.$
Then $s$ is spatial \ifo\  $\cC_0 = \cC |_F.$
If this is the case,
then $s$ is a spatial isometry \ifo\  $\mu (X \SM E) = 0.$
\end{rmk}

Using the terminology we have introduced, we can now state
Lamperti's Theorem as follows.

\begin{thm}[Lamperti]\label{T:Lamperti}
Let $\XBM$ and $\YCN$ be \sfm s,
let $p \in [1, \I) \SM \{ 2 \},$
and let $s \in L \big( L^p (X, \mu), \, L^p (Y, \nu) \big)$
be an isometric (not necessarily surjective) linear map.
Then $s$ is a semispatial isometry
in the sense of Definition~\ref{D-WSPI}.
\end{thm}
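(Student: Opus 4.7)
The strategy is the classical approach of Lamperti~\cite{Lp}: use the failure of the parallelogram law when $p \neq 2$ to detect disjointness of supports, then build the \mst~$S$ and the phase factor~$g$ from the behavior of~$s$ on characteristic functions. The crucial analytic input is a pointwise Clarkson-type identity: for $p \in [1, \I) \SM \{ 2 \}$ and $a, b \in \C,$
\[
| a + b |^p + | a - b |^p = 2 \big( |a|^p + |b|^p \big)
\]
holds \ifo\  $a b = 0,$ and the corresponding one-sided strict inequality holds whenever $a b \neq 0$ (its direction depending on the sign of $p - 2$). Integrating, $f, g \in L^p (X, \mu)$ have disjoint supports (Definition~\ref{D:LpSupp}) \ifo\
\[
\| f + g \|_p^p + \| f - g \|_p^p = 2 \big( \| f \|_p^p + \| g \|_p^p \big).
\]
Since $s$ is isometric, it preserves this identity, so $s$ sends disjointly supported pairs to disjointly supported pairs.

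For each measurable $A \S X$ of finite measure, I would define $S ([A])$ to be the class in $\cC / {\mathcal{N}} (\nu)$ of the support of $s ( \ch_A ).$ Disjointness-preservation makes~$S$ respect finite disjoint unions, and linearity and continuity of~$s,$ together with $\ch_{\bigsqcup A_n} = \sum_n \ch_{A_n}$ in $L^p$-norm when the union has finite measure, yield countable disjoint additivity. Using a \sft\  exhaustion $X = \bigcup_{n = 1}^{\I} X_n,$ one extends~$S$ to a \shm\  $\cB / {\mathcal{N}} (\mu) \to \cC / {\mathcal{N}} (\nu);$ injectivity is immediate because $\mu (A) > 0$ forces $s ( \ch_A) \neq 0.$ Taking $F = \bigcup_{n = 1}^{\I} \supp (s (\ch_{X_n})),$ a countable union of \sft\  sets, makes $\nu |_{ {\mathrm{ran}} (S) }$ \sft\  as Definition~\ref{D-WSpSys} requires; we set $E = X.$

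To extract the phase factor, observe that for $A \S B$ with $B$ of finite measure, $s ( \ch_B) - s ( \ch_A) = s ( \ch_{B \SM A})$ is supported off $S ([A]),$ so $s ( \ch_B )$ and $s ( \ch_A)$ coincide on $S ([A]).$ This compatibility lets one glue a single measurable function $w$ on~$F$ with $s ( \ch_A ) = w \ch_{S (A)}$ for every~$A$ of finite measure. Isometricity yields
\[
\int_{S (A)} |w|^p \, d \nu = \| s ( \ch_A ) \|_p^p = \mu (A) = S_* (\mu) (S (A))
\]
for every such~$A,$ so by uniqueness of Radon--Nikodym derivatives (using Lemma~\ref{L:SDnStarMu} and \sft ness of $\nu |_{ {\mathrm{ran}} (S) }$) we have $|w|^p = d S_* (\mu) / d \big( \nu |_{ {\mathrm{ran}} (S) } \big)$ almost everywhere on~$F.$ Setting $h = |w|^p$ and $g = w / |w|,$ we have $|g| = 1$ a.e., and $s ( \ch_A) = g h^{1/p} S_* ( \ch_A )$ for every~$A$ of finite measure. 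By linearity on simple functions and $L^p$-continuity of both sides (using Lemma~\ref{L-PrelimComp}(\ref{L-PrelimComp-1}) for the right-hand side), $s \xi = g h^{1/p} S_* (\xi)$ holds for every $\xi \in L^p (X, \mu),$ matching Definition~\ref{D-WSPI} with $E = X$ and realizing $s$ as a semispatial isometry.

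The main obstacle is the Clarkson-type equality condition itself, and promoting it from the pointwise statement to the integrated one: the hypothesis $p \neq 2$ is precisely what makes the pointwise inequality strict when $a b \neq 0,$ and a careful case split between $1 \leq p < 2$ (where the inequality $|a+b|^p + |a-b|^p \leq 2 ( |a|^p + |b|^p )$ holds) and $p > 2$ (where it reverses) is required. A secondary bookkeeping issue is guaranteeing \sft ness of $\nu |_{ {\mathrm{ran}} (S) },$ handled by the exhaustion above; the remaining verifications (that $S$ really is a \shm, that the gluing of~$w$ is consistent across an exhaustion) are routine once disjoint-support preservation is in hand.
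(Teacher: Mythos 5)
Your proposal is correct and follows essentially the same route as the paper, which simply defers to the proof of Theorem~3.1 of Lamperti's paper (noting only that the assumption there that the two measure spaces coincide is never used); your reconstruction of that argument --- the Clarkson-type equality criterion for disjoint supports, the set transformation built from supports of $s (\ch_A),$ and the identification of $|w|^p$ with the Radon--Nikodym derivative --- is exactly Lamperti's proof. No substantive differences to report.
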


\begin{proof}
The proof is the same as that of Theorem~3.1 of~\cite{Lp}.
In~\cite{Lp} it is assumed that
$(X, {\mathcal{B}}, \mu) = (Y, {\mathcal{C}}, \nu),$
but this is never used in the proof.
\end{proof}

\begin{rmk}\label{R-PhaseNotMb}
Let $p \in [1, \I) \SM \{ 2 \},$
and let $s \in L \big( L^p (X, \mu), \, L^p (Y, \nu) \big)$
be an isometry.
Theorem~\ref{T:Lamperti} states that there exists
a semispatial system $(E, F, S, g),$
with $E = X,$ such that the
construction of Definition~\ref{D-WSPI} yields~$s.$
The function~$g$ was only required to be measurable with respect
to $\cC |_F,$
not \wrt\  ${\mathrm{ran}} (S).$
In general, the stronger condition fails.
Take $X$ to consist of one point~$x$
and $Y$ to consist of two points $y_1$ and~$y_2.$
Let $\mu$ and $\nu$ be the counting measures.
Identify $L^p (X, \mu)$ with $\C$ and $L^p (Y, \nu)$ with $\C^2$
in the obvious way.
Define $s$ by $s (\ld) = 2^{- 1 / p} (\ld, \, - \ld).$
The semispatial system must be $(X, Y, S, g),$
with $S (\E) = \E$ and $S (X) = Y,$
and with $g (y_1) = 1$ and $g (y_2) = - 1.$
The function $g$ is then not ${\mathrm{ran}} (S)$-measurable.
\end{rmk}

We are primarily interested in partial isometries which are
spatial rather than merely semispatial.

We will need notation
for multiplication operators on $L^p (X, \mu).$

\begin{ntn}\label{N:MultOps}
Let $\XBM$ be a \msp,
and let $p \in [1, \I].$
We let $m_{X, \mu, p}$
(or, when no confusion should arise, just $m_X$ or $m$)
be the \hm\  $m_{X, \mu, p} \colon L^{\I} (X, \mu) \to \LLp$
defined by
\[
\big( m_{X, \mu, p} (f) \xi \big) (x) = f (x) \xi (x)
\]
for $f \in L^{\infty} (X, \mu),$
$\xi \in L^p (X, \mu)$
and $x \in X.$
\end{ntn}

\begin{lem}\label{L-BasicSPI}
Let $\XBM$ and $\YCN$ be \msp s,
let $p \in [1, \I],$
and let $(E, F, S, g)$ be a spatial system for $\XBM$ and $\YCN$
(in the sense of Definition~\ref{D-WSpSys}).
\begin{enumerate}
\item\label{L-BasicSPI-1}
There exists a unique spatial partial isometry
$s \in L \big( L^p (X, \mu), \, L^p (Y, \nu) \big)$
whose spatial system is $(E, F, S, g).$
\item\label{L-BasicSPI-3}
Let $s$ be as in~(\ref{L-BasicSPI-1}).
Then the range of $s$ is $L^p (F, \nu) \subset L^p (Y, \nu).$
\item\label{L-BasicSPI-4}
Let $s$ be as in~(\ref{L-BasicSPI-1}).
There exists a unique spatial partial isometry
$t \in L \big( L^p (Y, \nu), \, L^p (X, \mu) \big)$
whose semispatial system is
$\big( F, \, E, \, S^{-1}, \, (S^{-1})_* (g)^{-1} \big).$
Moreover, using Notation~\ref{N:MultOps},
we have
$t s = m ( \ch_E)$ and $s t = m (\ch_F).$
\item\label{L-BasicSPI-5}
Let $s$ be as in~(\ref{L-BasicSPI-1})
and let $t$ be as in~(\ref{L-BasicSPI-4}).
Let $u \in L \big( L^p (Y, \nu), \, L^p (X, \mu) \big)$
satisfy $u s = m ( \ch_E )$
and $u |_{L^p (Y \SM F, \, \nu)} = 0.$
Then $u = t.$
\end{enumerate}
\end{lem}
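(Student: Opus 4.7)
The plan is to handle the four parts in order, using earlier results systematically.

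Part~(\ref{L-BasicSPI-1}) is immediate: a spatial system is by definition a semispatial system (with the extra requirement that $S$ be bijective), so Lemma~\ref{L-BasicWSPI}(\ref{L-BasicWSPI-1}) gives existence and uniqueness of~$s$.

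For Part~(\ref{L-BasicSPI-3}), one inclusion is clear since the defining formula makes $s\xi$ vanish off~$F$. For the reverse, given $\eta \in L^p (F, \nu)$, I would set
\[
\xi = (S^{-1})_* (g^{-1}) \cdot
 \left[ \frac{d (S^{-1})_* (\nu |_F)}{d \mu |_E} \right]^{1/p}
 \cdot (S^{-1})_* (\eta),
\]
extended by~$0$ off~$E$. Since $S$ is bijective, Proposition~\ref{P:SStar}(\ref{P:SStar-3}) guarantees $(S^{-1})_*$ makes sense on all of $L^0 (F, \nu |_F)$, and Lemma~\ref{L-PrelimComp}(\ref{L-PrelimComp-2}) applied with $(E,\mu|_E)$ and $(F,\nu|_F)$ in place of $\XBM$ and $\YCN$ shows $\xi \in L^p (E, \mu)$ and $s \xi = \eta$ a.e.~$[\nu]$.

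For Part~(\ref{L-BasicSPI-4}), first check that $(F, E, S^{-1}, (S^{-1})_* (g)^{-1})$ is indeed a spatial system for $\YCN$ and~$\XBM$: bijectivity is clear, $\mu$ is $\sm$-finite, and $|(S^{-1})_* (g^{-1})| = 1$ almost everywhere by Proposition~\ref{P:SStar}(\ref{P:SStar-2x}). Existence and uniqueness of~$t$ then follow from Part~(\ref{L-BasicSPI-1}). For the relation $ts = m(\ch_E)$, plug the formula for $s\xi$ into the formula for~$t$ and simplify. The key cancellation is
\[
\left[ \frac{d (S^{-1})_* (\nu |_F)}{d \mu |_E} \right]^{1/p}
 \cdot (S^{-1})_* \! \left( \left[
    \frac{d S_* (\mu |_E)}{d \nu |_F} \right]^{1/p} \right)
   = 1
\quad \text{a.e.~}[\mu|_E],
\]
which is exactly Corollary~\ref{C-Computation} applied to~$S$ (between $E$ and~$F$), combined with Proposition~\ref{P:SStar}(\ref{P:SStar-2x}) to commute $(S^{-1})_*$ past the $p$-th root. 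The factor $(S^{-1})_* (g^{-1}) \cdot (S^{-1})_* (g) = 1$ by Proposition~\ref{P:SStar}(\ref{P:SStar-2x}), and Proposition~\ref{P:SStar}(\ref{P:SStar-6}) gives $(S^{-1})_* S_* (\xi|_E) = \xi|_E$. The symmetric calculation handles $st = m(\ch_F)$. This computation is the main obstacle: all the bookkeeping of Radon--Nikodym derivatives, phase factors, and $\sm$-ideals must line up correctly, and Corollary~\ref{C-Computation} is exactly what makes them cancel.

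For Part~(\ref{L-BasicSPI-5}), decompose $L^p (Y, \nu) = L^p (F, \nu) \oplus_p L^p (Y \SM F, \nu)$. On the second summand, both $u$ and~$t$ vanish (for $t$, by the defining formula in Definition~\ref{D-WSPI} applied to its spatial system from Part~(\ref{L-BasicSPI-4})). On the first summand, Part~(\ref{L-BasicSPI-3}) identifies $L^p (F, \nu)$ as the range of~$s$, and the relation $st = m(\ch_F)$ together with $ts = m(\ch_E)$ shows $s$ restricts to a bijection $L^p (E, \mu) \to L^p (F, \nu)$ whose inverse is $t|_{L^p (F, \nu)}$. Any $\et \in L^p (F, \nu)$ thus has the form $\et = s \xi$ with $\xi \in L^p (E, \mu)$, and then $u \et = u s \xi = m (\ch_E) \xi = \xi = t \et$.
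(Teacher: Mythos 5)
Your proposal is correct and follows essentially the same route as the paper: part (1) from Lemma~\ref{L-BasicWSPI}(\ref{L-BasicWSPI-1}), the products $ts$ and $st$ from the inversion formula of Lemma~\ref{L-PrelimComp}(\ref{L-PrelimComp-2}) (your explicit Radon--Nikodym cancellation via Corollary~\ref{C-Computation} is exactly the content of that lemma), and part (5) by the same decomposition and the identity $u\et_1 = u s t\et_1 = m(\ch_E) t\et_1$. The only cosmetic difference is that the paper deduces part (3) from $st = m(\ch_F)$ after proving part (4), whereas you construct the preimage of $\et$ explicitly --- which is just $t\et$ written out.
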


\begin{proof}
Part~(\ref{L-BasicSPI-1}) follows
from Lemma~\ref{L-BasicWSPI}(\ref{L-BasicWSPI-1})
and Definition~\ref{D-WSPI}.
The existence of~$t$ in~(\ref{L-BasicSPI-4})
follows from part~(\ref{L-BasicSPI-1}),
and the formulas for $t s$ and $s t$ in~(\ref{L-BasicSPI-4})
follow from Lemma~\ref{L-PrelimComp}(\ref{L-PrelimComp-2}).
Lemma~\ref{L-BasicWSPI}(\ref{L-BasicWSPI-5})
and the formula for $s t$ imply~(\ref{L-BasicSPI-3}).

For~(\ref{L-BasicSPI-5}),
let $\et \in L^p (Y, \nu).$
Write $\et = \et_1 + \et_2$
with $\et_1 \in L^p (F, \nu)$
and $\et_2 \in L^p (Y \SM F, \, \nu).$
We show that $u \et_1 = t \et_1$ and $u \et_2 = t \et_2.$
Clearly $u \et_2$ and $t \et_2$ are both zero.
Also,
using ${\mathrm{ran}} (t) = L^p (E, \mu)$ at the last step,
we have
\[
u \et_1 = u m (\ch_F) \et_1
        = u s t \et_1
        = m (\ch_E) t \et_1
        = t \et_1.
\]
This completes the proof.
\end{proof}

\begin{dfn}\label{D-Reverse}
Let $\XBM$ and $\YCN$ be \msp s,
let $p \in [1, \I],$
and let $s \in L \big( L^p (X, \mu), \, L^p (Y, \nu) \big)$
be a spatial partial isometry.
The spatial partial isometry $t$ of
Lemma~\ref{L-BasicSPI}(\ref{L-BasicSPI-4})
is called the {\emph{reverse}} of~$s.$
\end{dfn}

\begin{rmk}\label{R-RevAdj}
When $p = 2,$ the reverse of~$s$ is of course~$s^*.$
However, we can't define it this way when $p \neq 2.$
\end{rmk}

\begin{lem}\label{L-CndForSp}
Let $\XBM$ and $\YCN$ be \sfm s,
let $p \in [1, \I],$
and let $s \in L \big( L^p (X, \mu), \, L^p (Y, \nu) \big)$
be a semispatial partial isometry with domain support $E \subset X,$
range support $F \subset Y,$
and range $\sm$-algebra~$\cC_0.$
Then \tfae:
\begin{enumerate}
\item\label{L-CndForSp-1}
$s$ is spatial.
\item\label{L-CndForSp-2}
${\mathrm{ran}} (s) = L^p (F, \mu).$
\item\label{L-CndForSp-3}
$\cC_0 = \cC |_F.$
\end{enumerate}
\end{lem}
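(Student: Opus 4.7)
The plan is to prove (\ref{L-CndForSp-1}) $\Rightarrow$ (\ref{L-CndForSp-2}) $\Rightarrow$ (\ref{L-CndForSp-3}) $\Rightarrow$ (\ref{L-CndForSp-1}). The implication (\ref{L-CndForSp-1}) $\Rightarrow$ (\ref{L-CndForSp-2}) will be immediate from Lemma~\ref{L-BasicSPI}(\ref{L-BasicSPI-3}). For (\ref{L-CndForSp-3}) $\Rightarrow$ (\ref{L-CndForSp-1}), I would observe that by Definition~\ref{D-WSPI} the semispatial realization $S$ is a bijective $\sm$-homomorphism from $\cB |_E / \mathcal{N} (\mu |_E)$ onto $\cC_0 / \mathcal{N} (\nu |_F);$ if $\cC_0 = \cC |_F$ modulo null sets, this upgrades $S$ to a bijection onto all of $\cC |_F / \mathcal{N} (\nu |_F),$ which is precisely the condition for $(E, F, S, g)$ to be a spatial system in the sense of Definition~\ref{D-WSpSys} (this observation is also recorded in Remark~\ref{R-SpatialCF}).

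The real work is (\ref{L-CndForSp-2}) $\Rightarrow$ (\ref{L-CndForSp-3}). Assuming $\mathrm{ran} (s) = L^p (F, \nu),$ I would show that every $B \in \cC |_F$ has the form $S ([E_0])$ for some $E_0 \in \cB |_E.$ Using $\sm$-finiteness of $\nu$ together with the fact that $S$ is a $\sm$-homomorphism, I would first reduce to $B$ with $\nu (B) < \I.$ For such $B,$ $\ch_B$ lies in $L^p (F, \nu) = \mathrm{ran} (s),$ so I would pick $\xi \in L^p (X, \mu)$ with $s \xi = \ch_B,$ and replace $\xi$ by $\xi \ch_E$ to arrange that $\xi$ is supported in~$E.$ Writing $h$ for the Radon-Nikodym derivative from Lemma~\ref{L-PrelimComp}(\ref{L-PrelimComp-0}), Definition~\ref{D-WSPI} yields
\[
g \cdot h^{1/p} \cdot S_* (\xi) = \ch_B
\qquad {\mbox{almost everywhere on $F.$}}
\]
Since $|g| = 1$ and $h > 0$ almost everywhere on $F,$ this forces $S_* (\xi)$ to vanish off $B$ and to be nonzero on $B,$ up to null sets. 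Applying Proposition~\ref{P:SStar}(\ref{P:SStar-5}) to $\xi$ with the Borel set $\C \SM \{ 0 \}$ then gives
\[
S \big( \big[ \{ x \in E : \xi (x) \neq 0 \} \big] \big)
 = \big[ \{ y \in F : S_* (\xi) (y) \neq 0 \} \big]
 = [B],
\]
so $E_0 = \{ x \in E : \xi (x) \neq 0 \}$ does the job.

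I do not foresee any serious obstacle. The conceptual point behind (\ref{L-CndForSp-2}) $\Rightarrow$ (\ref{L-CndForSp-3}) is that the phase factor $g$ and the density $h^{1/p}$ are both nonzero almost everywhere on~$F,$ so the support of $s \xi$ coincides with the support of $S_* (\xi)$ up to null sets, and the latter is transported back to $\cB |_E$ by $S$ via Proposition~\ref{P:SStar}(\ref{P:SStar-5}). The convention $h^{1/p} = 1$ when $p = \I$ built into Definition~\ref{D-WSPI} lets the argument run uniformly for all $p \in [1, \I].$
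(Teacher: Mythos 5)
Your proposal is correct and follows essentially the same route as the paper: the same cycle of implications, with (\ref{L-CndForSp-1})$\Rightarrow$(\ref{L-CndForSp-2}) from Lemma~\ref{L-BasicSPI}(\ref{L-BasicSPI-3}), (\ref{L-CndForSp-3})$\Rightarrow$(\ref{L-CndForSp-1}) immediate from the definitions, and the substantive step (\ref{L-CndForSp-2})$\Rightarrow$(\ref{L-CndForSp-3}) resting on the nonvanishing of $g$ and $h^{1/p}$ together with Proposition~\ref{P:SStar}(\ref{P:SStar-5}). The paper merely packages the argument you carry out by hand as a citation of Corollary~\ref{C-SurjS}, whose proof is exactly your computation.
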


\begin{proof}
That (\ref{L-CndForSp-3}) implies~(\ref{L-CndForSp-1})
is clear from the definitions,
and (\ref{L-CndForSp-1}) implies~(\ref{L-CndForSp-2})
by Lemma~\ref{L-BasicSPI}(\ref{L-BasicSPI-3}).
So assume~(\ref{L-CndForSp-2}).
Let $S$ be the semispatial realization of~$s.$
Thus $S$ is an injective measurable set transformation
from $(E, \cB |_{E}, \mu |_{E})$
to $(F, \cC |_{F}, \nu |_{F})$
such that ${\mathrm{ran}} (S_*)$
contains $\ch_B$
for every $B \in \cC |_{F}$ with $\nu (B) < \I.$
It follows from Corollary~\ref{C-SurjS}
that $S$ is surjective, which is~(\ref{L-CndForSp-3}).
\end{proof}

\begin{lem}\label{L-PiIsSp}
Let $p \in [1, \I) \SM \{ 2 \},$
let $\XBM$ and $\YCN$ be \sfm s,
let $E \subset X$ and $F \subset Y$ be measurable subsets,
and let $s \in L \big( L^p (X, \mu), \, L^p (Y, \nu) \big)$
satisfy the following conditions:
\begin{enumerate}
\item\label{L-PiIsSp-1}
The range of $s$ is $L^p (F, \nu) \subset L^p (Y, \nu).$
\item\label{L-PiIsSp-2}
$s |_{L^p (E, \mu)}$ is isometric.
\item\label{L-PiIsSp-3}
$s |_{L^p (X \SM E, \, \mu)} = 0.$
\end{enumerate}
Then $s$ is a spatial partial isometry
in the sense of Definition~\ref{D-WSPI},
and has domain support~$E$ and range support~$F.$
\end{lem}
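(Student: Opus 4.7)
The plan is to apply Lamperti's Theorem to the restriction $s|_{L^p(E, \mu)}$, check that the full operator $s$ is built from the resulting semispatial system, identify the range support as $F$, and then invoke Lemma~\ref{L-CndForSp} to upgrade ``semispatial'' to ``spatial''.

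First, hypothesis~(\ref{L-PiIsSp-2}) asserts that $s|_{L^p(E, \mu)}$ is an isometry from $L^p(E, \mu)$ into $L^p(Y, \nu)$, so by Theorem~\ref{T:Lamperti} it is a semispatial isometry (with domain support $E$) in the sense of Definition~\ref{D-WSPI}. Let $(E, F', S, g)$ denote the resulting semispatial system for $(E, \cB|_E, \mu|_E)$ and $\YCN$, and let $\tilde s \in L \big( L^p(X, \mu), \, L^p(Y, \nu) \big)$ be the semispatial partial isometry associated to $(E, F', S, g)$ viewed as a system for $\XBM$ and $\YCN$, via Lemma~\ref{L-BasicWSPI}(\ref{L-BasicWSPI-1}). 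On $L^p(E, \mu)$ the operator $\tilde s$ agrees with $s$ by construction, while on $L^p(X \SM E, \mu)$ both operators vanish, the latter by hypothesis~(\ref{L-PiIsSp-3}). Hence $s = \tilde s$, so $s$ is a semispatial partial isometry with domain support $E$ and some range support $F'$.

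Next I identify $F'$ with $F$ modulo a null set. Hypothesis~(\ref{L-PiIsSp-1}) gives ${\mathrm{ran}}(s) = L^p(F, \nu)$. In particular, ${\mathrm{ran}}(s) \subset L^p(Y, \nu|_F)$, so Lemma~\ref{L-BasicWSPI}(\ref{L-BasicWSPI-3}) applied with $B = F$ implies $F \supset F'$ up to a null set. Conversely, the same lemma yields ${\mathrm{ran}}(s) \subset L^p(Y, \nu|_{F'})$, and combining this with ${\mathrm{ran}}(s) = L^p(F, \nu)$ forces $F \subset F'$ up to a null set. Thus $[F] = [F']$ in $\cC / \mathcal{N}(\nu)$, and $F$ really is the range support of $s$.

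Finally, since ${\mathrm{ran}}(s) = L^p(F, \nu)$, the equivalence of~(\ref{L-CndForSp-1}) and~(\ref{L-CndForSp-2}) in Lemma~\ref{L-CndForSp} upgrades $s$ from a semispatial to a spatial partial isometry, completing the proof. I expect the only delicate point to be the passage in the first paragraph from a semispatial system on the restricted measure space $(E, \cB|_E, \mu|_E)$ to one on the full space $\XBM$, which requires checking that $\tilde s$ and $s$ coincide on both $L^p(E, \mu)$ and $L^p(X \SM E, \mu)$; the remaining steps are straightforward bookkeeping with Lemma~\ref{L-BasicWSPI} and Lemma~\ref{L-CndForSp}.
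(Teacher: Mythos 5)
Your proof is correct and follows essentially the same route as the paper, whose entire argument is to apply Theorem~\ref{T:Lamperti} with $E$ in place of $X$ and then invoke Lemma~\ref{L-CndForSp}. The extra bookkeeping you supply (extending the semispatial system from $(E, \cB|_E, \mu|_E)$ to $\XBM$ using hypothesis~(\ref{L-PiIsSp-3}), and identifying the range support with $F$ via Lemma~\ref{L-BasicWSPI}(\ref{L-BasicWSPI-3})) is exactly what the paper leaves implicit.
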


\begin{proof}
Apply Theorem~\ref{T:Lamperti} with $E$ in place of~$X,$
to conclude that $s$ is a semispatial partial isometry.
Then $s$ is spatial by Lemma~\ref{L-CndForSp}.
\end{proof}

In the rest of this section,
we describe some operations which give new spatial partial isometries
from old ones.
Most of the statements have analogs for semispatial partial isometries,
but we don't need them and don't prove them.

We begin by showing that the product of two spatial partial isometries
is again a spatial partial isometry.
On a Hilbert space,
the product of two partial isometries
is usually not a partial isometry,
unless the range projection of the second commutes with
the domain projection of the first.
With spatial partial isometries,
this commutation relation is automatic.

\begin{lem}\label{L-CompSPI}
Let $(X_1, \cB_1, \mu_1),$ $(X_2, \cB_2, \mu_2),$ and
$(X_3, \cB_3, \mu_3)$ be \sfm s.
Let $p \in [1, \I].$
Let
\[
s \in L \big( L^p (X_1, \mu_1), \, L^p (X_2, \mu_2) \big)
\andeqn
u \in L \big( L^p (X_2, \mu_2), \, L^p (X_3, \mu_3) \big)
\]
be spatial partial isometries,
with reverses $t$ and~$w,$
and with spatial systems $(E_1, E_2, S, g)$ and $(F_2, F_3, V, h).$
Then $v s$ is a spatial partial isometry.
Its domain support is $E = S^{-1} (E_2 \cap F_2),$
its range support is $F = V (E_2 \cap F_2),$
and its reverse is $t w.$
Its spatial realization is the composite $V_0 \circ S_0$
of the restriction $S_0$ of $S$ to $\cB_1 |_{S^{-1} (E_2 \cap F_2)}$
and the restriction $V_0$ of $V$ to $\cB_2 |_{E_2 \cap F_2}.$
Its phase factor is
$k = (V_0)_* (g |_{E_2 \cap F_2} ) ( h |_{V (E_2 \cap F_2)} ).$
\end{lem}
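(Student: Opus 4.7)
The plan is to compute $(u s) \xi$ directly from Definition~\ref{D-WSPI} for $\xi \in L^p (X_1, \mu_1)$ and recognize the result as the formula associated with the spatial system $(E, F, V_0 \circ S_0, k)$ claimed in the statement. Writing $h_S = \big[ d S_* (\mu_1 |_{E_1}) / d (\mu_2 |_{{\mathrm{ran}} (S)}) \big]$ and $h_V = \big[ d V_* (\mu_2 |_{F_2}) / d (\mu_3 |_{{\mathrm{ran}} (V)}) \big]$, the function $s \xi$ is supported in $E_2$ and equals $g \cdot h_S^{1/p} \cdot S_* (\xi |_{E_1})$ there; then $u (s \xi)$ is supported in $V (F_2)$ and on that set equals $h \cdot h_V^{1/p} \cdot V_* ((s \xi) |_{F_2})$. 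Since $s \xi$ vanishes outside $E_2$, the relevant portion lives on $E_2 \cap F_2$, so $u s \xi$ is supported on $F = V (E_2 \cap F_2)$, as claimed.

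Next I would unpack $V_* ((s \xi) |_{F_2})$ using Proposition~\ref{P:SStar}(\ref{P:SStar-2x}) (multiplicativity of $V_*$ and compatibility with nonnegative real powers) and Proposition~\ref{P:SStar}(\ref{P:SStar-6}) ($V_* \circ S_* = (V \circ S)_*$). With $E = S^{-1} (E_2 \cap F_2)$ and $S_0$, $V_0$ the indicated restrictions, this yields on $F$ that $V_* ((s \xi) |_{F_2}) = (V_0)_* (g |_{E_2 \cap F_2}) \cdot (V_0)_* (h_S^{1/p} |_{E_2 \cap F_2}) \cdot (V_0 \circ S_0)_* (\xi |_E)$. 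Collecting scalars, the prefactor for $(u s) \xi$ on $F$ splits as $\big[ (V_0)_* (g |_{E_2 \cap F_2}) \cdot h |_{V (E_2 \cap F_2)} \big] \cdot \big[ (V_0)_* (h_S^{1/p} |_{E_2 \cap F_2}) \cdot h_V^{1/p} |_{V (E_2 \cap F_2)} \big]$; the first bracket is exactly~$k$, and the crux is to identify the second bracket with $\big[ d (V_0 \circ S_0)_* (\mu_1 |_E) / d (\mu_3 |_{{\mathrm{ran}} (V_0 \circ S_0)}) \big]^{1/p}$. This I would do by applying Corollary~\ref{C-429RN}(\ref{C-429RN-2}) to $V_0$ to get $d (V_0)_* ((S_0)_* (\mu_1 |_E)) / d (V_0)_* (\mu_2 |_{E_2 \cap F_2}) = (V_0)_* (h_S |_{E_2 \cap F_2})$, multiplying by $d (V_0)_* (\mu_2 |_{E_2 \cap F_2}) / d \mu_3 |_{V (E_2 \cap F_2)} = h_V |_{V (E_2 \cap F_2)}$, invoking the chain rule, and commuting the $1/p$-power past $(V_0)_*$ again by Proposition~\ref{P:SStar}(\ref{P:SStar-2x}).

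For the reverse, I would apply Lemma~\ref{L-BasicSPI}(\ref{L-BasicSPI-5}) to the spatial partial isometry $u s$: first, $t w$ vanishes on $L^p (X_3 \SM F, \, \mu_3)$ because for $\xi$ there the spatial formula for $w$ puts the support of $w \xi$ inside $F_2 \SM V^{-1} (F) = F_2 \SM (E_2 \cap F_2)$, which is disjoint from the domain support $E_2$ of~$t$; second, $(t w)(u s) = t (w u) s = t \, m (\ch_{F_2}) \, s = t s \, m (\ch_E) = m (\ch_{E_1}) \, m (\ch_E) = m (\ch_E)$, using $w u = m (\ch_{F_2})$ and $t s = m (\ch_{E_1})$ from Lemma~\ref{L-BasicSPI}(\ref{L-BasicSPI-4}), the relation $m (\ch_{F_2}) s = s \, m (\ch_E)$ (read off from the spatial formula for~$s$ together with $S_0 (E) = E_2 \cap F_2$), and $E \subset E_1$. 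The main obstacle will be the bookkeeping of restrictions and the careful Radon--Nikodym chain-rule identity across the sub-$\sigma$-algebras ${\mathrm{ran}} (S_0)$ and ${\mathrm{ran}} (V_0 \circ S_0)$, which only sit inside the full $\sigma$-algebras on their target sets.
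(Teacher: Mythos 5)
Your proposal is correct and follows essentially the same route as the paper: both identify $us$ with the spatial partial isometry of the system $(E, F, V_0 \circ S_0, k)$ by matching the explicit formulas via Corollary~\ref{C-429RN}(\ref{C-429RN-2}), Proposition~\ref{P:SStar}(\ref{P:SStar-2x}), and the Radon--Nikodym chain rule, and both establish the reverse by the same computation $(tw)(us) = m(\ch_E)$ together with the vanishing condition, then invoke Lemma~\ref{L-BasicSPI}(\ref{L-BasicSPI-5}). The only cosmetic difference is that the paper checks equality separately on the three subspaces $L^p(X_1 \SM E_1, \mu_1),$ $L^p(E_1 \SM E, \mu_1),$ and $L^p(E, \mu_1),$ whereas you compute directly.
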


\begin{proof}
It is immediate that $(E, \, F, \ V_0 \circ S_0, \ k)$
is a spatial system for $(X_1, \cB_1, \mu_1)$
and $(X_3, \cB_3, \mu_3).$
Let $y \in L \big( L^p (X_1, \mu_1), \, L^p (X_3, \mu_3) \big)$
be the corresponding spatial partial isometry.
We claim that $y = v s,$
and it is enough to prove that $y \xi = v s \xi$
for $\xi$ in each of the three spaces
$L^p (X_1 \SM E_1, \, \mu_1),$
$L^p (E_1 \SM E, \, \mu_1),$
and $L^p (E, \mu_1).$
In the first case, $y \xi = 0$ and $s \xi = 0.$
In the second case, $y \xi = 0.$
Also $s \xi \in L^p (X_2 \SM F_2, \, \mu_2),$
so $v s \xi = 0.$

So let $\xi \in L^p (E, \mu_1).$
Then
\[
y \xi
 = (V_0)_* ( g |_{E_2 \cap F_2})
   ( h |_{V (E_2 \cap F_2)})
   \left[
    \frac{d (V_0 \circ S_0)_* (\mu_1 |_E) }{d (\mu_3 |_F)} \right]^{1/p}
   (V_0 \circ S_0)_* (\xi)
\]
and
\[
v s \xi
 = (h |_F)
   \left[
    \frac{d (V_0)_* (\mu_2 |_{E_2 \cap F_2}) }{d (\mu_3 |_F)}
         \right]^{1/p}
   (V_0)_* \left( (g |_{E_2 \cap F_2})
         \left[
    \frac{d (S_0)_* (\mu_1 |_E) }{d (\mu_2 |_{E_2 \cap F_2})}
                     \right]^{1/p}
       (S_0)_* (\xi) \right).
\]
One sees that these are equal by combining
Corollary~\ref{C-429RN} with several applications
of Proposition~\ref{P:SStar}(\ref{P:SStar-2x})
and standard properties of Radon-Nikodym derivatives.
This completes the proof that $y = v s,$
and thus the proof that $v s$ is as claimed.

It remains to identify the reverse.
We use Lemma~\ref{L-BasicSPI}(\ref{L-BasicSPI-5}).
First observe that $w \et = 0$
for $\et \in L^p (X_3 \SM F_3, \, \mu_3)$
and $t w \et = 0$
for $\et \in L^p (F_3 \SM F, \, \mu_3).$
Also,
for $\xi \in L^p (X_1, \mu_1),$
the first paragraph of the proof shows that
$v s \xi = v s m (\ch_E) \xi.$
It is easy to check that
$s m (\ch_E) = m ( \ch_{E_2 \cap F_2} ) s.$
Therefore
\[
t w v s
  = t (w v) s m ( \ch_{E} )
  = t m (\ch_{F_2}) m ( \ch_{E_2 \cap F_2} ) s
  = t s m (\ch_E)
  = m (\ch_E).
\]
This completes the proof.
\end{proof}

\begin{lem}\label{L-SPIdem}
Let $\XBM$ be a \sfm,
let $p \in [1, \I],$
and let $e \in \LLp.$
Then \tfae:
\begin{enumerate}
\item\label{L-SPIdem-1}
$e$ is an idempotent spatial partial isometry.
\item\label{L-SPIdem-2}
$e$ is a spatial partial isometry,
and there is $E \in \cB$ such that the spatial system
of $e$ is $(E, E, \id_{\cB |_E}, \ch_E).$
\item\label{L-SPIdem-3}
There is $E \in \cB$ such that $e = m (\ch_E).$
\end{enumerate}
\end{lem}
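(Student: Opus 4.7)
The plan is to establish the cycle $(3)\Rightarrow(2)\Rightarrow(1)\Rightarrow(3)$, with the last implication carrying essentially all the content.

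First, I would dispatch $(3)\Rightarrow(2)$ by a direct computation. Given $E\in\cB$, I take the quadruple $(E,E,\id_{\cB|_E},\ch_E)$; it is visibly a spatial system in the sense of Definition~\ref{D-WSpSys} (the identity \shm\ is bijective, and $|\ch_E|=1$ almost everywhere on $E$). Plugging into the formula of Definition~\ref{D-WSPI}, the Radon--Nikodym derivative $[d(\id_*(\mu|_E))/d(\mu|_E)]$ is the constant function $1$, and $\id_*(\xi|_E)=\xi|_E$, so the associated spatial partial isometry $s$ acts by $(s\xi)(y)=\xi(y)$ for $y\in E$ and $0$ otherwise, i.e.\ $s=m(\ch_E)$. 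This simultaneously shows $(2)\Rightarrow(3)$.

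Next, $(2)\Rightarrow(1)$ is immediate: having shown under $(2)$ that $e=m(\ch_E)$, we have $e^2=m(\ch_E)m(\ch_E)=m(\ch_E^2)=m(\ch_E)=e$.

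The substantive step is $(1)\Rightarrow(2)$. Assume $e$ is an idempotent spatial partial isometry with spatial system $(E_0,F_0,S,g)$. I will apply Lemma~\ref{L-CompSPI} to $e\circ e$: the composite is a spatial partial isometry with domain support $S^{-1}(E_0\cap F_0)$, range support $S(E_0\cap F_0)$, spatial realization $S\circ S$ restricted to $\cB|_{E_0\cap F_0}$, and phase factor $S_*(g|_{E_0\cap F_0})\cdot(g|_{S(E_0\cap F_0)})$. By the uniqueness assertion of Lemma~\ref{L-WPiecesUniq}, equating these with $(E_0,F_0,S,g)$ yields four identities (modulo null sets). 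The domain support equation $S^{-1}(E_0\cap F_0)=E_0$, combined with the bijectivity of $S\colon \cB|_{E_0}/\mathcal{N}\to\cB|_{F_0}/\mathcal{N}$ and $S^{-1}(F_0)=E_0$, forces $F_0\subset E_0$; symmetrically, the range support equation forces $E_0\subset F_0$, hence $[E_0]=[F_0]$. With this identification, $S$ becomes a bijective \shm\ of $\cB|_{E_0}/\mathcal{N}$ to itself, and the realization equation reads $S\circ S=S$, which together with bijectivity gives $S=\id$. Finally, the phase factor equation collapses to $g^2=g$ almost everywhere on $E_0$; combined with $|g|=1$ a.e.\ on $E_0$, this forces $g=1$ a.e.\ on $E_0$, i.e.\ $g=\ch_{E_0}$. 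Thus the spatial system of $e$ is $(E_0,E_0,\id,\ch_{E_0})$, which is~(2).

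The main obstacle is the bookkeeping in the last paragraph: translating the Lemma~\ref{L-CompSPI} composition formulas into identities in the Boolean $\sm$-algebras $\cB|_{E_0}/\mathcal{N}(\mu|_{E_0})$ and $\cB|_{F_0}/\mathcal{N}(\mu|_{F_0})$ (see Remark~\ref{R-103-56}), and carefully invoking Lemma~\ref{L-WPiecesUniq} so that the four identities (supports, realization, phase) are extracted correctly. Once that is done, each identity reduces to an elementary algebraic consequence.
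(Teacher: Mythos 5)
Your proof is correct, but the substantive implication goes by a genuinely different route than the paper's. You prove $(1)\Rightarrow(2)$ by applying the full composition formula of Lemma~\ref{L-CompSPI} to $e \circ e = e$ and then invoking the uniqueness of the (semi)spatial system (Lemma~\ref{L-WPiecesUniq}, with Remark~\ref{R-103-56}) to equate domain supports, range supports, realizations, and phase factors, unwinding the four resulting identities one by one. The paper instead proves $(1)\Rightarrow(3)$ by a short operator computation: it introduces the reverse $f$ of~$e$, notes that Lemma~\ref{L-CompSPI} forces $f^2 = f$, and then combines the identities $e f = m(\ch_F)$ and $f e = m(\ch_E)$ from Lemma~\ref{L-BasicSPI}(\ref{L-BasicSPI-4}) with idempotency of $e$ and $f$ to get $m(\ch_F) m(\ch_E) = e$ and $m(\ch_E) m(\ch_F) m(\ch_E) = m(\ch_E)$; since the two left hand sides are visibly equal, $e = m(\ch_E)$. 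The paper's argument is shorter and sidesteps the Boolean-algebra bookkeeping entirely, while yours lands directly on condition~(2), exhibiting the spatial system of $e$ explicitly and making clear exactly which components of the composition formula force $E_0 = F_0$, $S = \id$, and $g = 1$. Both arguments are complete; your only real exposure is the bookkeeping you yourself flag, and you carry it out correctly (in particular, you rightly derive $S = \id$ from the realization identity before collapsing the phase-factor identity to $g^2 = g$).
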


\begin{proof}
It is clear that (\ref{L-SPIdem-2}) and~(\ref{L-SPIdem-3})
are equivalent,
and that (\ref{L-SPIdem-2}) implies~(\ref{L-SPIdem-1}).
So assume~(\ref{L-SPIdem-1}).
Let the spatial system
of $e$ be $(E, F, S, g),$
and let $f$ be the reverse of~$e$
(Definition~\ref{D-Reverse}).
Lemma~\ref{L-CompSPI}
implies that $f^2 = f.$
Therefore
\begin{equation}\label{Eq:L-SPIdem-E1}
m (\ch_F) m (\ch_E)
  = (e f) (f e)
  = e f e
  = e m (\ch_E)
  = e.
\end{equation}
Multiplying~(\ref{Eq:L-SPIdem-E1}) on the left
by $m (\ch_E)$ gives
\begin{equation}\label{Eq:L-SPIdem-E2}
m (\ch_E) m (\ch_F) m (\ch_E)
  = m (\ch_E) e
  = (f e) e
  = f e
  = m (\ch_E).
\end{equation}
The left hand sides of (\ref{Eq:L-SPIdem-E1}) and~(\ref{Eq:L-SPIdem-E2})
are clearly equal,
so $e = m (\ch_E).$
This is~(\ref{L-SPIdem-3}).
\end{proof}

We now consider the tensor product of two spatial partial isometries.
We need a particular case of the product of two $\sm$-\hm s,
which we can get from Lamperti's Theorem.
Quite possibly something more general is true,
but we don't need it.

\begin{lem}\label{L-ProdSHM}
Let
$(X_1, \cB_1, \mu_1),$
$(X_2, \cB_2, \mu_2),$
$(Y_1, \cC_1, \nu_1),$
and $(Y_2, \cC_2, \nu_2)$
be \sfm s.
Let
\[
S \colon \cB_1 / {\mathcal{N}} (\mu_1) \to \cB_2 / {\mathcal{N}} (\mu_2)
\andeqn
V \colon \cC_1 / {\mathcal{N}} (\nu_1) \to \cC_2 / {\mathcal{N}} (\nu_2)
\]
be bijective \shm s.
Let $\cB_1 \times \cC_1$ and $\cB_2 \times \cC_2$ be the
product $\sm$-algebras on $X_1 \times Y_1$ and $X_2 \times Y_2,$
or their completions.
Then there is a unique \shm\  %
\[
S \times V \colon
(\cB_1 \times \cC_1) / {\mathcal{N}} (\mu_1 \times \nu_1)
  \to (\cB_2 \times \cC_2) / {\mathcal{N}} (\mu_2 \times \nu_2)
\]
such that, whenever $E_1 \in \cB_1,$ $E_2 \in \cB_2,$
$F_1 \in \cC_1,$ and $F_2 \in \cC_2$
satisfy $S ( [E_1] ) = [E_2]$ and $V ([F_1]) = [F_2],$
we have
\begin{equation}\label{Eq:ProdSHM}
(S \times V) ( [ E_1 \times F_1 ] ) = [ E_2 \times F_2 ].
\end{equation}
\end{lem}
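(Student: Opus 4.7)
The strategy is to obtain $S \times V$ as the spatial realization of the tensor product of two spatial isometries that individually represent $S$ and $V$, with Lamperti's Theorem~\ref{T:Lamperti} providing the existence hook. Fix any $p \in [1, \I) \SM \{ 2 \}$. By Lemma~\ref{L-BasicSPI}(\ref{L-BasicSPI-1}), construct the spatial isometries $s \in L \big( L^p (X_1, \mu_1), \, L^p (X_2, \mu_2) \big)$ with spatial system $(X_1, X_2, S, 1)$ and $v \in L \big( L^p (Y_1, \nu_1), \, L^p (Y_2, \nu_2) \big)$ with spatial system $(Y_1, Y_2, V, 1)$; since $S$ and $V$ are bijective, these are surjective isometries. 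Next I form the operator $w = s \otimes v$ of Theorem~\ref{T-LpTP}(\ref{T-LpTP-3a}). By Theorem~\ref{T-LpTP}(\ref{T-LpTP-3b}) we have $\| w \| = 1$, and Theorem~\ref{T-LpTP}(\ref{T-LpTP-4}) shows that $s^{-1} \otimes v^{-1}$ is a two-sided inverse for $w$, so $w$ is a bijective isometry between $L^p (X_1 \times Y_1, \, \mu_1 \times \nu_1)$ and $L^p (X_2 \times Y_2, \, \mu_2 \times \nu_2)$. Lamperti's Theorem~\ref{T:Lamperti} makes $w$ a semispatial isometry, and, since ${\mathrm{ran}}(w)$ is the full target $L^p$-space, Lemma~\ref{L-CndForSp} upgrades $w$ to a spatial isometry with a bijective \shm\  as its realization. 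I will take $S \times V$ to be this realization~$T$.

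To verify~(\ref{Eq:ProdSHM}), fix $E_1, E_2, F_1, F_2$ with $S([E_1]) = [E_2]$ and $V([F_1]) = [F_2]$, and compute
\[
w (\ch_{E_1 \times F_1}) = s (\ch_{E_1}) \otimes v (\ch_{F_1}).
\]
By the defining formula of a spatial isometry applied to $s$, together with the strict positivity of its Radon--Nikodym factor from Lemma~\ref{L-PrelimComp}(\ref{L-PrelimComp-0}) and the fact that $S_* (\ch_{E_1}) = \ch_{E_2}$, the function $s (\ch_{E_1})$ is nonzero almost everywhere on $E_2$ and vanishes off~$E_2$; analogously, $v (\ch_{F_1})$ is nonzero almost everywhere on $F_2$ and vanishes off~$F_2$. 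Hence $w (\ch_{E_1 \times F_1})$ is nonzero almost everywhere on $E_2 \times F_2$ and vanishes off it. On the other hand, the defining formula for $w$ as a spatial isometry with realization $T$ shows that $w (\ch_{E_1 \times F_1})$ is nonzero almost everywhere on $T ( [E_1 \times F_1] )$ and vanishes off it. Comparing these descriptions of the support yields $T ( [E_1 \times F_1] ) = [E_2 \times F_2]$, which is~(\ref{Eq:ProdSHM}).

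Uniqueness is a monotone-class argument: if $T_1, T_2$ are two \shm s satisfying~(\ref{Eq:ProdSHM}), then
\[
{\mathcal{D}} = \big\{ [B] \in (\cB_1 \times \cC_1) / {\mathcal{N}} (\mu_1 \times \nu_1)
                         \colon T_1 ([B]) = T_2 ([B]) \big\}
\]
is a Boolean sub-$\sm$-algebra of $(\cB_1 \times \cC_1) / {\mathcal{N}} (\mu_1 \times \nu_1)$ containing the class of every measurable rectangle $E_1 \times F_1$; since such rectangles generate $\cB_1 \times \cC_1$ as a $\sm$-algebra and the quotient map is a \shm, $\mathcal{D}$ is the whole quotient. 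The main obstacle is the verification of~(\ref{Eq:ProdSHM}): one must identify the realization of a tensor product isometry on the class of a rectangle, which works cleanly here precisely because the phase factors of $s$ and $v$ were chosen to be identically~$1$ and the Radon--Nikodym weights are almost everywhere strictly positive, so supports are preserved unambiguously.
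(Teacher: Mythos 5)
Your proposal is correct and follows essentially the same route as the paper: both fix $p \in [1,\I) \SM \{2\}$, form the spatial isometries with systems $(X_1, X_2, S, 1)$ and $(Y_1, Y_2, V, 1)$, show $s \otimes v$ is a bijective isometry via Theorem~\ref{T-LpTP}, invoke Lamperti's Theorem (the paper packages this as Lemma~\ref{L-PiIsSp}) to get spatiality, and take $S \times V$ to be the spatial realization. The only point to tighten is your verification of~(\ref{Eq:ProdSHM}): $\ch_{E_1 \times F_1}$ need not lie in $L^p$ when the rectangle has infinite measure, so one should first reduce to finite-measure pieces using $\sm$-finiteness (exactly the ingredient the paper cites) and then pass to the general case via the $\sm$-homomorphism property.
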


\begin{proof}
It does not matter whether we use the product $\sm$-algebras
or their completions,
because the Boolean $\sm$-algebras
\[
(\cB_1 \times \cC_1) / {\mathcal{N}} (\mu_1 \times \nu_1)
\andeqn
(\cB_2 \times \cC_2) / {\mathcal{N}} (\mu_2 \times \nu_2)
\]
are the same with either choice.

Uniqueness of $S \times V$ follows from the fact that the
measurable rectangles generate the product $\sm$-algebra.

We prove existence.
Fix any $p \in [1, \I) \SM \{ 2 \}.$
Let $s$ and $v$ be the spatial isometries
with spatial systems $(X_1, X_2, S, 1)$
and $(Y_1, Y_2, V, 1).$
Then $s$ and $v$ are isometric bijections.
Applying Theorem~\ref{T-LpTP}(\ref{T-LpTP-3b})
to $s$ and~$v,$
and to their inverses,
and applying Theorem~\ref{T-LpTP}(\ref{T-LpTP-4}),
we see that
\[
s \otimes v \colon
 L^p (X_1 \times Y_1, \, \mu_1 \times \nu_1) \to
              L^p (X_2 \times Y_2, \, \mu_2 \times \nu_2)
\]
is an isometric bijection.
Lemma~\ref{L-PiIsSp} implies that it is spatial.
We take $S \times V$ to be its spatial realization.
The relation~(\ref{Eq:ProdSHM}) is easily deduced from
$(s \otimes v) (\xi \otimes \et) = s \xi \otimes v \et,$
Fubini's Theorem,
and \sft ness of all the measures involved.
\end{proof}

In the next lemma,
we exclude $p = \I$ because we use Theorem~\ref{T-LpTP}.

\begin{lem}\label{L-TensorSPI}
Let
$(X_1, \cB_1, \mu_1),$
$(X_2, \cB_2, \mu_2),$
$(Y_1, \cC_1, \nu_1),$
and $(Y_2, \cC_2, \nu_2)$
be \sfm s.
Let $p \in [1, \I).$
Let
\[
s \in L \big( L^p (X_1, \mu_1), \, L^p (X_2, \mu_2) \big)
\andeqn
v \in L \big( L^p (Y_1, \nu_1), \, L^p (Y_2, \nu_2) \big).
\]
be spatial partial isometries,
with reverses $t$ and~$w,$
and with spatial systems $(E_1, E_2, S, g)$ and $(F_2, F_3, V, h).$
Then
\[
s \otimes v
 \in L \big( L^p (X_1 \times Y_1, \, \mu_1 \times \nu_1), \,
              L^p (X_2 \times Y_2, \, \mu_2 \times \nu_2) \big)
\]
(as in Theorem~\ref{T-LpTP})
is a spatial partial isometry.
With $S \times V$ as in Lemma~\ref{L-ProdSHM}
and $g_1 \otimes g_2$ as in Theorem~\ref{T-LpTP},
its spatial system is
\begin{equation}\label{Eq:TensorSPI}
\big( E_1 \times F_1, \, E_2 \times F_2, \, S \times V, \,
    g_1 \otimes g_2 \big),
\end{equation}
and its reverse is $t \otimes w.$
\end{lem}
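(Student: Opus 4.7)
The plan is to construct the candidate spatial partial isometry $y$ coming from the proposed spatial system via Lemma~\ref{L-BasicSPI}(\ref{L-BasicSPI-1}), then prove $y = s \otimes v$ by checking on a dense subspace, and finally identify the reverse by appeal to Lemma~\ref{L-BasicSPI}(\ref{L-BasicSPI-5}). First, I would verify that~(\ref{Eq:TensorSPI}) really is a spatial system for the product measure spaces. The map $S \times V$ is a bijective $\sm$-homomorphism by Lemma~\ref{L-ProdSHM} (restricted in the obvious way to the ideals generated by $E_1 \times F_1$ and $E_2 \times F_2$), and $|g \otimes h| = |g| \cdot |h| = 1$ almost everywhere on $E_2 \times F_2$; $\sm$-finiteness is automatic. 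So Lemma~\ref{L-BasicSPI}(\ref{L-BasicSPI-1}) produces a spatial partial isometry~$y.$

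Next, I would show $y$ and $s \otimes v$ agree on the dense subspace spanned by elementary tensors $\xi \otimes \et$ (Theorem~\ref{T-LpTP}(\ref{T-LpTP-0})). For $\xi$ and $\et$ respectively supported off $E_1$ or off $F_1$, both $y (\xi \otimes \et)$ and $(s \otimes v) (\xi \otimes \et) = s \xi \otimes v \et$ vanish. For $\xi \in L^p (E_1, \mu_1)$ and $\et \in L^p (F_1, \nu_1),$ the formula for $s \otimes v$ gives
\[
(s \otimes v) (\xi \otimes \et)
 = \bigl( g \cdot h_1^{1/p} \cdot S_* (\xi) \bigr)
   \otimes \bigl( h \cdot h_2^{1/p} \cdot V_* (\et) \bigr),
\]
where $h_1$ and $h_2$ are the relevant Radon-Nikodym derivatives for $S$ and~$V$. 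The formula for $y (\xi \otimes \et)$ involves $(g \otimes h),$ the product Radon-Nikodym derivative, and $(S \times V)_* (\xi \otimes \et).$ The two key identifications needed are therefore:
\begin{enumerate}
\item $(S \times V)_* (\xi \otimes \et) = S_* (\xi) \otimes V_* (\et)$ almost everywhere on $E_2 \times F_2,$ and
\item $\displaystyle \left[ \frac{d (S \times V)_* (\mu_1 |_{E_1} \times \nu_1 |_{F_1})}{d ((\mu_2 \times \nu_2) |_{E_2 \times F_2})} \right] = h_1 \otimes h_2$
almost everywhere on $E_2 \times F_2.$
\end{enumerate}
Both can be proved by first checking on characteristic functions of measurable rectangles (where the equations follow directly from~(\ref{Eq:ProdSHM}), Fubini's Theorem, and uniqueness of the Radon-Nikodym derivative), then extending to simple functions by linearity, and finally to measurable functions via Proposition~\ref{P:SStar}(\ref{P:SStar-2}) and the Monotone Convergence Theorem. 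The main obstacle is item~(2): verifying the product formula for the Radon-Nikodym derivative requires computing integrals of rectangles $A_2 \times B_2 \subset E_2 \times F_2$ against $(S \times V)_* (\mu_1 \times \nu_1)$ and comparing with the iterated integral given by Fubini.

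Finally, for the reverse, set $u = t \otimes w.$ By Theorem~\ref{T-LpTP}(\ref{T-LpTP-4}) and the formulas $t s = m (\ch_{E_1})$ and $w v = m (\ch_{F_1})$ from Lemma~\ref{L-BasicSPI}(\ref{L-BasicSPI-4}), we get
\[
u (s \otimes v)
 = (t s) \otimes (w v)
 = m (\ch_{E_1}) \otimes m (\ch_{F_1})
 = m (\ch_{E_1 \times F_1}).
\]
Moreover, $u$ vanishes on $L^p ((X_2 \times Y_2) \SM (E_2 \times F_2), \, \mu_2 \times \nu_2):$ decomposing that space into the three pieces $L^p ((X_2 \SM E_2) \times F_2),$ $L^p (E_2 \times (Y_2 \SM F_2)),$ and $L^p ((X_2 \SM E_2) \times (Y_2 \SM F_2)),$ on elementary tensors in each piece either $t$ or $w$ kills one factor, so $u = 0$ on the dense subspace, hence on the whole space by continuity. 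By Lemma~\ref{L-BasicSPI}(\ref{L-BasicSPI-5}) applied to~$y$ (whose range support is $E_2 \times F_2$ and whose reverse is determined by this characterization), we conclude that the reverse of $s \otimes v$ is $t \otimes w,$ completing the proof.
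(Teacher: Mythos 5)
Your proposal is correct and follows essentially the same route as the paper: build the candidate spatial partial isometry from the proposed spatial system, reduce everything to the two product identities for $(S \times V)_*$ and for the Radon--Nikodym derivative, prove those by checking on measurable rectangles and invoking uniqueness of products of $\sm$-finite measures, and then verify agreement on elementary tensors. The only cosmetic difference is in identifying the reverse: the paper checks $t \otimes w$ against the reverse of $y$ directly on elementary tensors, whereas you invoke the uniqueness characterization of Lemma~\ref{L-BasicSPI}(\ref{L-BasicSPI-5}); both are fine.
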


\begin{proof}
Let $y$ be the spatial partial isometry
with spatial system given by~(\ref{Eq:TensorSPI}),
and let $z$ be its reverse.

It is clear that 
$S_* (\mu_1) \times V_* (\nu_1)$
and $(S \times V)_* (\mu_1 \times \nu_1)$
agree on measurable rectangles.
The measures $S_* (\mu_1)$ and $V_* (\nu_1)$ are $\sm$-finite
by Corollary~\ref{C-429RN}(\ref{C-429RN-1}).
The product of $\sm$-finite measures is uniquely
determined by its values on measurable rectangles.
(In Chapter 12 of~\cite{Ry},
see Theorem~8 and the discussion after Lemma~14.)
Therefore
$S_* (\mu_1) \times V_* (\nu_1) = (S \times V)_* (\mu_1 \times \nu_1).$
Set
\[
k = 
\left[ \frac{d (S \times V)_* (\mu_1 |_{E_1} \times \nu_1 |_{F_1})}{
      d (\mu_2 |_{E_2} \times \nu_2 |_{F_2})} \right]
\andeqn
l = \left[ \frac{d  S_* (\mu_1 |_{E_1})}{d (\mu_2 |_{E_2}) } \right]
   \otimes
    \left[ \frac{d V_* (\nu_1 |_{F_1}) }{d (\nu_2 |_{F_2}) } \right].
\]
Then for every measurable rectangle $R \S E_2 \times F_2,$
we have
\[
\int_R k \, d (\mu_2 |_{E_2} \times \nu_2 |_{F_2})
  = \int_R l \, d (\mu_2 |_{E_2} \times \nu_2 |_{F_2}).
\]
Since
\[
G \mapsto \int_G l \, d (\mu_2 |_{E_2} \times \nu_2 |_{F_2})
\]
is a product of $\sm$-finite measures
on $E_2 \times F_2,$
it follows from uniqueness of product measures
(as above)
that $k = l$
almost everywhere $[\mu_2 |_{E_2} \times \nu_2 |_{F_2}].$
One now checks that $s \otimes v = y$
and $t \otimes w = z$ by showing, in each case,
that they agree on elementary tensors.
\end{proof}

\begin{lem}\label{L-DualSPI}
Let $\XBM$ and $\YCN$ be \sfm s,
let $p \in [1, \I),$
and let $s \in L \big( L^p (X, \mu), \, L^p (Y, \nu) \big)$
be a spatial partial isometry with
spatial system $(E, F, S, g)$ and with reverse~$t.$
Let $q \in (1, \I]$ satisfy $\frac{1}{p} + \frac{1}{q} = 1,$
and for any \sfm\  $(Z, {\mathcal{D}}, \ld),$
identify $L^p (Z, \ld)'$ with $L^q (Z, \ld)$ in the usual way.
Then $s' \in L \big( L^q (Y, \nu), \, L^q (X, \mu) \big)$
is a spatial partial isometry with
spatial system $\big( F, \, E, \, S^{-1}, \, (S^{-1})_* (g) \big)$
and with reverse~$t'.$
\end{lem}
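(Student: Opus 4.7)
The plan is to compute the bilinear pairing $(s'\eta)(\xi) = \eta(s\xi)$ directly from the spatial system of $s$, change variables via $S^{-1}$, and recognize the result as the action of the claimed spatial partial isometry on $\eta$.

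Fix $\xi \in L^p(X,\mu)$ and $\eta \in L^q(Y,\nu)$, the latter identified with an element of $L^p(Y,\nu)'$ via $\zeta \mapsto \int_Y \eta\zeta\,d\nu$. Set
\[
h = \left[\frac{d S_*(\mu|_E)}{d (\nu|_F)}\right]
\qquad \mbox{and} \qquad
\tilde h = \left[\frac{d (S^{-1})_*(\nu|_F)}{d (\mu|_E)}\right].
\]
By Corollary~\ref{C-Computation}, $\tilde h = (S^{-1})_*(1/h)$, equivalently $(S^{-1})_*(h) = 1/\tilde h$, together with the change-of-variables identity $\int_F \phi\,d\nu = \int_E (S^{-1})_*(\phi)\,\tilde h\,d\mu$. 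Since $s\xi$ vanishes off $F$, Definition~\ref{D-WSPI} gives
\[
\eta(s\xi) = \int_F \eta\,g\,h^{1/p}\,S_*(\xi|_E)\,d\nu.
\]

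Apply the change-of-variables identity, then Proposition~\ref{P:SStar}(\ref{P:SStar-2x}) (and $(S^{-1})_* \circ S_* = \id$ on $L^0(E,\mu)$ from Proposition~\ref{P:SStar}(\ref{P:SStar-6})) to push $(S^{-1})_*$ through products and powers:
\[
\eta(s\xi) = \int_E (S^{-1})_*(\eta|_F)\,(S^{-1})_*(g)\,(S^{-1})_*(h)^{1/p}\,\xi|_E\,\tilde h\,d\mu.
\]
Collapsing exponents via $(S^{-1})_*(h)^{1/p}\,\tilde h = \tilde h^{-1/p}\,\tilde h = \tilde h^{1/q}$ and extending the integrand by zero off $E$,
\[
\eta(s\xi) = \int_X \bigl[(S^{-1})_*(g)\,\tilde h^{1/q}\,(S^{-1})_*(\eta|_F)\bigr]\,\xi\,d\mu.
\]
This is precisely the formula of Definition~\ref{D-WSPI} for the action on $\eta$ of the spatial partial isometry in $L(L^q(Y,\nu),L^q(X,\mu))$ with spatial system $(F, E, S^{-1}, (S^{-1})_*(g))$; the putative phase factor has modulus one almost everywhere by Proposition~\ref{P:SStar}(\ref{P:SStar-2x}) applied to $z \mapsto |z|$. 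Since $\xi \in L^p(X,\mu)$ was arbitrary, $s'$ coincides with this spatial partial isometry.

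For the reverse, the same computation applied to $t$ (whose spatial system is $(F, E, S^{-1}, (S^{-1})_*(g^{-1}))$ by Lemma~\ref{L-BasicSPI}(\ref{L-BasicSPI-4})) identifies $t'$ as the spatial partial isometry with spatial system $(E, F, S, g^{-1})$. But Lemma~\ref{L-BasicSPI}(\ref{L-BasicSPI-4}) applied to the $s'$ already identified shows the reverse of $s'$ has exactly this spatial system, so by the uniqueness in Lemma~\ref{L-WPiecesUniq}, $t'$ is the reverse of $s'$. The only real technical hurdle is the Radon--Nikodym and pushforward bookkeeping in the middle step; every other step is a direct substitution using the cited results.
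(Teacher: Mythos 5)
Your proposal is correct and follows essentially the same route as the paper: the paper also identifies $s'$ by pairing $\int_X \xi\cdot u\eta\,d\mu$ against $\int_Y s\xi\cdot\eta\,d\nu$ and using Corollary~\ref{C-Computation} together with the relation $(S^{-1})_*(h)=1/\tilde h$ to match the two integrands, just running the change of variables in the opposite direction. The only (cosmetic) difference is in identifying the reverse: the paper verifies $t's'=m_{Y,\nu,q}(\ch_F)$ and $t'|_{L^q(X\SM E,\,\mu)}=0$ and invokes Lemma~\ref{L-BasicSPI}(\ref{L-BasicSPI-5}), whereas you recompute the spatial system of $t'$ and match it against that of the reverse of $s'$; both are valid one-step finishes.
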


\begin{proof}
To simplify the notation, let
\[
h = \left[ \frac{d S_* (\mu)}{d \nu} \right]
\andeqn
k = \left[ \frac{d (S^{-1})_* (\nu)}{d \mu} \right].
\]
Then $h = S_* (k)^{-1}$ by Corollary~\ref{C-Computation}.
Let $u \in L \big( L^q (Y, \nu), \, L^q (X, \mu) \big)$
be the spatial partial isometry
with the spatial system specified for~$s'.$
Then for $\xi \in L^p (X, \mu)$ and $\et \in L^q (Y, \nu),$
we have,
using Corollary~\ref{C-Computation}
at the second step,
\begin{align*}
\int_X \xi \cdot u \et \, d \mu
& = \int_X \xi (S^{-1})_* (g) k^{1 / q} (S^{-1})_* (\et) \, d \mu
  = \int_Y S_* (\xi) g S_* (k)^{1 / q} \et h \, d \nu
    \\
& = \int_Y S_* (\xi) g h^{- 1 / q} \et h \, d \nu
  = \int_Y S_* (\xi) g h^{1 / p} \et h \, d \nu
  = \int_Y s \xi \cdot \et \, d \nu.
\end{align*}
Thus $s' = u.$

To identify the reverse of~$s',$
we calculate:
\[
t' s' = (s t)'
      = m_{Y, \nu, p} (\ch_F)'
      = m_{Y, \nu, q} (\ch_F)
\]
and
\[
t' m_{X, \mu, q} (\ch_{X \SM E})
 = \big[ m_{X, \mu, p} (\ch_{X \SM E}) t \big]'
 = 0.
\]
Now apply Lemma~\ref{L-BasicSPI}(\ref{L-BasicSPI-5}).
\end{proof}

We finish this section with a lemma on homotopies
that will be needed later.
We do not know whether surjectivity is necessary in
the hypotheses.

\begin{lem}\label{L:LampertiHomotopy}
Let $p \in [1, \I) \SM \{ 2 \}.$
Let $(X, {\mathcal{B}}, \mu)$ and $(Y, {\mathcal{C}}, \nu)$
be $\sm$-finite measure spaces,
and let
$\ld \mapsto s_{\ld} \in L \big( L^p (X, \mu), \, L^p (Y, \nu) \big),$
for $\ld \in [0, 1],$
be a norm \ct\  path of surjective isometries.
Let $S_{\ld}$ be the spatial realization of $s_{\ld}.$
Then $S_{0} = S_{1}.$
\end{lem}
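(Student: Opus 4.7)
The plan is to characterize the spatial realization $S_\lambda$ by a norm-continuous conjugation formula, then exploit the discreteness of characteristic functions in~$L^\infty.$

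First, by Theorem~\ref{T:Lamperti} and Lemma~\ref{L-CndForSp}, surjectivity of each $s_\lambda$ ensures that it is a spatial isometry; write its spatial system as $(X, Y, S_\lambda, g_\lambda).$ Since $s_\lambda$ is a bijective isometry, Lemma~\ref{L-BasicSPI}(\ref{L-BasicSPI-4}) gives that its reverse equals $s_\lambda^{-1},$ and $\| s_\lambda \| = \| s_\lambda^{-1} \| = 1.$ Because $\lambda \mapsto s_\lambda$ is norm continuous and the inverses are uniformly bounded, $\lambda \mapsto s_\lambda^{-1}$ is also norm continuous.

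Next, I claim that for every $E \in \cB$ with $\mu (E) < \I,$
\[
s_\lambda \, m_{X, \mu, p} (\ch_E) \, s_\lambda^{-1}
 = m_{Y, \nu, p} \big( \ch_{S_\lambda (E)} \big).
\]
To verify this, I would apply both sides to $\et = s_\lambda \xi$ for arbitrary $\xi \in L^p (X, \mu).$ The left side becomes $s_\lambda ( \ch_E \xi ),$ which by the explicit formula in Definition~\ref{D-WSPI} equals $g_\lambda h_\lambda^{1/p} S_{\lambda, *} (\ch_E \xi),$ where $h_\lambda = [d S_{\lambda, *} (\mu) / d \nu].$ Proposition~\ref{P:SStar}(\ref{P:SStar-2x}) factors $S_{\lambda, *} (\ch_E \xi) = \ch_{S_\lambda (E)} \cdot S_{\lambda, *} (\xi),$ and so the left side equals $\ch_{S_\lambda (E)} \cdot s_\lambda (\xi) = m_{Y, \nu, p} \big( \ch_{S_\lambda (E)} \big) \et,$ as claimed.

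Consequently, $\lambda \mapsto m_{Y, \nu, p} \big( \ch_{S_\lambda (E)} \big)$ is norm continuous. Since $\YCN$ is $\sm$-finite, $\| m_{Y, \nu, p} (f) \| = \| f \|_\I,$ so $\lambda \mapsto \ch_{S_\lambda (E)}$ is continuous in $L^\I (Y, \nu).$ But $\| \ch_A - \ch_B \|_\I \in \{ 0, 1 \},$ taking the value $0$ \ifo\  $\nu (A \bigtriangleup B) = 0.$ Hence $\lambda \mapsto S_\lambda ([E])$ is locally constant as a map into $\cC / {\mathcal{N}} (\nu),$ and thus constant on~$[0, 1]$ by connectedness. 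By $\sm$-finiteness of $\mu,$ every $E \in \cB$ is a countable disjoint union of finite-measure sets, and since each $S_\lambda$ is a $\sm$-\hm, the equality $S_\lambda ([E]) = S_0 ([E])$ extends to arbitrary $E \in \cB,$ giving $S_0 = S_1.$

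The step most likely to require care is the conjugation identity in the second paragraph; once that is established, the rest is a soft continuity-plus-connectedness argument, and the discreteness observation about $L^\I$-distances between characteristic functions sidesteps the need to track phase factors~$g_\lambda$ or Radon-Nikodym derivatives~$h_\lambda$ through the limit.
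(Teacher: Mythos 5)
Your proof is correct, but it reaches local constancy by a different mechanism than the paper. The paper argues directly on vectors: it shows that any two surjective spatial isometries $v_0, v_1$ with distinct realizations satisfy $\| v_0 - v_1 \| \geq 2^{1/p} \geq 1$, by choosing a finite-measure set $Q$ whose images under the two realizations can be taken disjoint, and applying both operators to $\mu (Q)^{-1/p} \ch_{Q}$; disjointness of supports and Remark~\ref{R:LpSuppNorm} then give the lower bound. You instead conjugate multiplication operators, using the identity $s_{\ld} \, m (\ch_E) \, s_{\ld}^{-1} = m \big( \ch_{S_{\ld} (E)} \big)$ (which is correctly justified via surjectivity of $s_{\ld},$ the explicit formula of Definition~\ref{D-WSPI}, and the product-preserving property in Proposition~\ref{P:SStar}(\ref{P:SStar-2x})), together with the facts that $\ld \mapsto s_{\ld}^{-1}$ is norm continuous because the inverses are uniformly bounded and that $m$ is isometric from $L^{\I} (Y, \nu)$ for $\sm$-finite $\nu.$ Each route has something to recommend it: the paper's argument is more elementary (no conjugation, no appeal to $\| m (f) \| = \| f \|_{\I}$) and yields the sharper separation constant $2^{1/p},$ while yours sidesteps the small set-theoretic reduction needed to produce the set $F$ with $F \subset F_0$ and $F \cap F_1 = \E,$ and packages the discreteness into the transparent statement that distinct characteristic functions are at $L^{\I}$-distance~$1$ (your argument implicitly gives separation constant $\tfrac{1}{2}$). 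Your final reduction from finite-measure sets to all of $\cB$ via $\sm$-finiteness and the $\sm$-homomorphism property is fine, though in fact the conjugation identity already holds for arbitrary $E \in \cB$ since $m (\ch_E)$ is defined for any measurable set.
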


\begin{proof}
We prove
that if $v_0$ and $v_1$ are surjective spatial isometries
whose spatial realizations $V_0$ and $V_1$ are distinct,
then $\| v_0 - v_1 \| \geq 1.$
(It follows that the spatial realization must be constant
along a homotopy.)
The \mst s $V_0$ and $V_1$ are bijective by Lemma~\ref{L-CndForSp}.

Choose a set $E \in \cB$ such that $V_0 ([E]) \neq V_1 ([E]).$
\Wolog\  we may assume that $V_0 ([E])$ and $V_1 ([E])$
have representatives $F_0, F_1 \in \cC$
such that $F_0$ does not contain $F_1$ up to sets of
measure zero.
That is, there is $F \subset Y$ such that
\[
\nu (F) > 0,
\,\,\,\,\,\,
F \subset F_0,
\andeqn
F \cap F_1 = \E.
\]
Since $V_0$ is bijective,
the set $Q = V_0^{-1} (F)$ satisfies $\mu (Q) > 0.$
Since $\mu$ is \sft,
replacing $F$ by a suitable subset
allows us to also assume that $\mu (Q) < \I.$
Correcting by a set of measure zero,
we may further assume that $Q \S E.$

Define
$\xi = \mu (Q)^{-1 / p} \ch_{Q} \in L^p (X, \mu).$
Then $\| \xi \|_p = 1.$
Moreover,
$v_0 \xi$ is supported in $F$ and,
since $Q \subset E,$
the function $v_1 \xi$ is supported in~$F_1.$
Therefore
\[
\| v_0 \xi - v_1 \xi \|_p^p
  = \| v_0 \xi \|_p^p + \| v_1 \xi \|_p^p
  = 2,
\]
so $\| v_0 - v_1 \| \geq 2^{1 / p} \geq 1.$
\end{proof}

\section{Spatial representations}\label{Sec:SpatialReps}

\indent
In this section,
we define and characterize spatial representations on spaces
of the form $L^p (X, \mu),$
first of~$M_d$ and then of~$L_d$ for finite~$d.$
In each case, we give a number of equivalent conditions
for a \rpn\  to be spatial,
some of them quite different from each other.
In particular,
some characterizations are primarily in terms of
how the \rpn\  interacts with~$X,$
while others make sense for a \rpn\  on any Banach space.
We consider $M_d$ first because we use the results about~$M_d$
in the theorem for~$L_d.$

We will see in Theorem~\ref{T:SpatialRepsMd}
that, for fixed~$p,$
any two spatial \rpn s of~$M_d$ determine the same norm on~$M_d,$
and we will see in Theorem~\ref{T:SpatialIsSame}
(in the next section)
that, for for fixed~$p$ and when $d < \I,$
any two spatial \rpn s of~$L_d$ determine the same norm on~$L_d.$

\begin{dfn}\label{D:M_dSpatialRep}
Let $d \in \N,$
and let $(e_{j, k})_{j, k = 1}^d$ be the standard system
of matrix units in~$M_d.$
Let $p \in [1, \I],$
let $\XBM$ be a \sfm,
and let $\rh \colon M_d \to \LLp$ be a \rpn.
(Recall that, by convention, \rpn s are unital.
See Definition~\ref{D:Repn}.)
We say that $\rh$ is {\emph{spatial}} if
$\rh (e_{j, k})$ is a spatial partial isometry,
in the sense of Definition~\ref{D-WSPI},
for $j, k = 1, 2, \ldots, d.$
\end{dfn}

\begin{thm}\label{T:SpatialRepsMd}
Let $d \in \N,$
let $p \in [1, \I) \SM \{ 2 \},$
let $\XBM$ be a \sfm,
and let $\rh \colon M_d \to \LLp$ be a \rpn.
Let $(e_{j, k})_{j, k = 1}^d$ be the standard system
of matrix units in~$M_d.$
Then \tfae:
\begin{enumerate}
\item\label{T:SpatialRepsMd-1}
$\rh$ is spatial.
\item\label{T:SpatialRepsMd-2}
For $j, k = 1, 2, \ldots, d,$
the operator $\rh (e_{j, k})$ is a spatial partial isometry
with reverse $\rh (e_{k, j}).$
\item\label{T:SpatialRepsMd-3}
$\rh$ is isometric as a map from $\MP{d}{p}$
(as in Notation~\ref{N:FDP}) to $\LLp.$
\item\label{T:SpatialRepsMd-4}
$\rh$ is contractive as a map from $\MP{d}{p}$ to $\LLp.$
\item\label{T:SpatialRepsMd-5}
$\| \rh (e_{j, k}) \| \leq 1$ for $j, k = 1, 2, \ldots, d,$
and there exists a measurable partition $X = \coprod_{j = 1}^d X_j$
such that for $j = 1, 2, \ldots, d,$ the matrix unit
$e_{j, j}$ acts (following Notation~\ref{N:MultOps}) as
$\rh (e_{j, j}) = m (\ch_{X_j}).$
\item\label{T:SpatialRepsMd-6}
There exists a measurable partition $X = \coprod_{j = 1}^d X_j$
such that for $j = 1, 2, \ldots, d$ the operator
$\rh (e_{j, 1})$ is a spatial partial isometry
with domain support~$X_1$
and range support~$X_j.$
\item\label{T:SpatialRepsMd-7}
There exists a measurable partition $X = \coprod_{j = 1}^d X_j$
such that for $j, k = 1, 2, \ldots, d$ the operator
$\rh (e_{j, k})$ is zero on $L^p (X \SM X_k, \, \mu)$
and restricts to an isometric isomorphism
from $L^p (X_k, \mu)$ to $L^p (X_j, \mu).$
\item\label{T:SpatialRepsMd-8}
With $\gm$ being counting measure on $N_d = \{ 1, 2, \ldots, d \},$
there exists a \sfm\  $\YCN$ and a bijective isometry
\[
u \colon L^p ( N_d \times Y, \, \gm \times \nu) \to L^p (X, \mu)
\]
such that,
following the notation of Theorem~\ref{T-LpTP}(\ref{T-LpTP-3a}),
for all $a \in M_d$ we have $\rh (a) = u (a \otimes 1) u^{-1}.$
\end{enumerate}
\end{thm}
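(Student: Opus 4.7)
The natural route is: from $(1)$ derive $(2),(5),(6),(7),(8)$ by a single unified analysis of the spatial systems of the $\rh(e_{j,k})$; close up the trivial implications $(2){\Rightarrow}(1)$, $(3){\Rightarrow}(4)$, and $(8){\Rightarrow}(3)$; and devote the main work to the nontrivial direction $(4){\Rightarrow}(1)$.

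For the easy direction, assume $(1)$. Each $\rh(e_{j,j})$ is an idempotent spatial partial isometry, so by Lemma~\ref{L-SPIdem} it equals $m(\ch_{X_j})$ for some $X_j\in\cB$; the relations $e_{j,j}e_{k,k}=\dt_{j,k}e_{j,j}$ together with $\sum_j e_{j,j}=1$ then force $X=\coprod_j X_j$ up to null sets, proving~$(5)$. Applying Lemma~\ref{L-CompSPI} to $\rh(e_{j,k})=\rh(e_{j,j})\rh(e_{j,k})\rh(e_{k,k})$ shows the domain support of $\rh(e_{j,k})$ is $X_k$ and the range support is $X_j$; Lemma~\ref{L-BasicWSPI}(\ref{L-BasicWSPI-2}) then gives $(7)$, and the special case $k=1$ gives $(6)$. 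Lemma~\ref{L-BasicSPI}(\ref{L-BasicSPI-5}), applied to $\rh(e_{k,j})$ using $\rh(e_{k,j})\rh(e_{j,k})=\rh(e_{k,k})=m(\ch_{X_k})$ and the vanishing of $\rh(e_{k,j})$ on $L^p(X\SM X_j,\mu)$, identifies $\rh(e_{k,j})$ as the reverse of $\rh(e_{j,k})$, proving~$(2)$. For~$(8)$, take $(Y,\cC,\nu)=(X_1,\cB|_{X_1},\mu|_{X_1})$, let $u_j\colon L^p(X_1,\mu)\to L^p(X_j,\mu)$ be the isometric isomorphism obtained by restricting $\rh(e_{j,1})$, and set $u(\dt_j\otimes\et)=u_j\et$; this is a bijective isometry by Remark~\ref{R:LpSuppNorm} and $X=\coprod X_j$, and the intertwining identity reduces on matrix units to the algebraic identity $e_{j,k}e_{k,1}=e_{j,1}$. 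Finally $(8){\Rightarrow}(3)$ follows from Theorem~\ref{T-LpTP}(\ref{T-LpTP-3b}) since $\|u(a\otimes 1)u^{-1}\|=\|a\otimes 1\|=\|a\|_{\MP{d}{p}}$, and $(3){\Rightarrow}(4)$ is trivial.

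The main direction $(4){\Rightarrow}(1)$ is where complex scalars and Lamperti enter essentially. For $\te=(\te_1,\ldots,\te_d)\in\mathbb{T}^d$ set $a_\te=\sum_j e^{i\te_j}e_{j,j}\in M_d$; direct computation gives $\|a_\te\|_{\MP{d}{p}}=\|a_{-\te}\|_{\MP{d}{p}}=1$ and $a_\te a_{-\te}=1$, so by~$(4)$ each $\rh(a_\te)$ is a surjective isometry of $L^p(X,\mu)$. The path $\te\mapsto\rh(a_\te)$ is norm continuous with $\rh(a_0)=\id$; by Theorem~\ref{T:Lamperti} combined with Lemma~\ref{L-CndForSp}, each $\rh(a_\te)$ is a spatial isometry, and Lemma~\ref{L:LampertiHomotopy} (which uses $p\neq 2$) then forces its spatial realization to equal $\id_\cB$ for every~$\te$. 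Consequently $\rh(a_\te)=m(g_\te)$ for a measurable unimodular function $g_\te$ on~$X$ depending norm-continuously on~$\te$ in $L^{\I}(X,\mu)$. Averaging against the character $e^{-i\te_j}$ over $\mathbb{T}^d$, with the integral converging in norm, gives
\[
\rh(e_{j,j})=m(h_j),\qquad h_j(x)=\int_{\mathbb{T}^d}e^{-i\te_j}g_\te(x)\,\frac{d\te}{(2\pi)^d},
\]
because $\int_{\mathbb{T}^d}e^{-i\te_j}a_\te\,d\te/(2\pi)^d=e_{j,j}$ in $M_d$ and $\rh$ is continuous. Idempotence of $\rh(e_{j,j})$, pairwise annihilation $\rh(e_{j,j})\rh(e_{k,k})=0$, and $\sum_j\rh(e_{j,j})=1$ translate, by injectivity of $m$ on $L^{\I}(X,\mu)$, into $h_j^2=h_j$ and $\sum_j h_j=1$, so $h_j=\ch_{X_j}$ for a measurable partition $X=\coprod_j X_j$, establishing~$(5)$.

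With the partition in hand, the chain
\[
\|\xi\|_p=\|\rh(e_{k,k})\xi\|_p=\|\rh(e_{k,j})\rh(e_{j,k})\xi\|_p\le\|\rh(e_{j,k})\xi\|_p\le\|\xi\|_p
\]
for $\xi\in L^p(X_k,\mu)$ (the last inequality from~$(4)$) shows $\rh(e_{j,k})|_{L^p(X_k,\mu)}$ is isometric, and by the symmetric calculation with $j,k$ swapped its range is all of $L^p(X_j,\mu)$; meanwhile the identity $e_{j,k}=e_{j,k}e_{k,k}$ makes $\rh(e_{j,k})$ vanish on $L^p(X\SM X_k,\mu)$. Lemma~\ref{L-PiIsSp} then promotes $\rh(e_{j,k})$ to a spatial partial isometry, establishing~$(1)$. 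The principal obstacle is the averaging step in the previous paragraph: it is the single point at which Lamperti's Theorem, Lemma~\ref{L:LampertiHomotopy}, and complex scalars (via the characters of $\mathbb{T}^d$) must all be invoked together, in line with the paper's observation that $p\neq 2$ and complex scalars are both intrinsic to the proof.
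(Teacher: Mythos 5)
Your architecture and your key tools coincide with the paper's proof: the hard direction runs through condition~(\ref{T:SpatialRepsMd-5}), and it is driven by exactly the same combination of Lamperti's Theorem, Lemma~\ref{L-PiIsSp}/Lemma~\ref{L-CndForSp}, and the homotopy rigidity of Lemma~\ref{L:LampertiHomotopy} applied to a norm-continuous circle of invertible isometries in the image of the diagonal subalgebra. The only real divergence is cosmetic: where you take $a_\te = \sum_j e^{i\te_j} e_{j,j}$ for $\te \in \mathbb{T}^d$ and recover $\rh(e_{j,j}) = m(h_j)$ by Fourier averaging (which requires a Bochner integral in $L^{\I}(X,\mu)$ and the fact that $m$ is an isometric embedding, so that continuity of $\te \mapsto \rh(a_\te)$ transfers to $\te \mapsto g_\te$), the paper uses the one-parameter family $t_{j,\zt} = 1 - e_{j,j} + \zt e_{j,j}$ for $\zt$ in the unit circle and simply evaluates at $\zt = -1$, reading off $\rh(e_{j,j}) = m\big(\tfrac{1}{2}(1 - g_{j,-1})\big)$ with no integration. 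Both routes use complex scalars in the same essential way; the paper's is a bit leaner. Your derivations of (\ref{T:SpatialRepsMd-2}) and~(\ref{T:SpatialRepsMd-8}) from~(\ref{T:SpatialRepsMd-1}) also match the paper's constructions (the paper reaches~(\ref{T:SpatialRepsMd-2}) via~(\ref{T:SpatialRepsMd-8}) and Lemma~\ref{L-TensorSPI}, whereas you invoke Lemma~\ref{L-BasicSPI}(\ref{L-BasicSPI-5}) directly, which is fine).

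There is, however, one genuine omission: in your implication graph, condition~(\ref{T:SpatialRepsMd-6}) is only ever a \emph{target}. You derive it from~(\ref{T:SpatialRepsMd-1}) but never show that it implies any other condition, so the eight-fold equivalence is not closed. (A smaller version of the same issue affects (\ref{T:SpatialRepsMd-5}) and~(\ref{T:SpatialRepsMd-7}) as your plan is stated, but your final paragraph in fact proves $(\ref{T:SpatialRepsMd-5}) \Rightarrow (\ref{T:SpatialRepsMd-7}) \Rightarrow (\ref{T:SpatialRepsMd-1})$ once the appeal to~(\ref{T:SpatialRepsMd-4}) in the chain of inequalities is replaced by the bound $\| \rh(e_{j,k}) \| \leq 1$ that is already part of~(\ref{T:SpatialRepsMd-5}); so those two are recoverable as written.) For~(\ref{T:SpatialRepsMd-6}) an actual argument is needed, and the paper supplies it: from~(\ref{T:SpatialRepsMd-6}), Lemma~\ref{L-SPIdem} gives $\rh(e_{1,1}) = m(\ch_{X_1})$; since $\rh(e_{1,j})$ vanishes on $L^p(X_k, \mu)$ for $k \neq j$ (because $e_{1,j} e_{k,1} = 0$) and $\rh(e_{1,j}) \rh(e_{j,1}) = m(\ch_{X_1})$, Lemma~\ref{L-BasicSPI}(\ref{L-BasicSPI-5}) identifies $\rh(e_{1,j})$ as the reverse of $\rh(e_{j,1})$, hence spatial; then Lemma~\ref{L-CompSPI} shows every $\rh(e_{j,k}) = \rh(e_{j,1}) \rh(e_{1,k})$ is a spatial partial isometry, which is~(\ref{T:SpatialRepsMd-1}). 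You should add this step (or an equivalent reduction of~(\ref{T:SpatialRepsMd-6}) to~(\ref{T:SpatialRepsMd-7})) to complete the proof.
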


In the notation of Lemma~\ref{L-TPRep},
Theorem~\ref{T:SpatialRepsMd}(\ref{T:SpatialRepsMd-8})
says that if $\rh_0 \colon M_d \to L \big( l_d^p \big)$
is the standard \rpn,
then $\rh$ is similar,
via an isometry,
to $\rh_0 \otimes_p 1.$

We specifically use complex scalars in the proof
that (\ref{T:SpatialRepsMd-3}) and~(\ref{T:SpatialRepsMd-4})
imply the other conditions.
We don't know whether complex scalars are necessary.

When $p = 2,$
conditions (\ref{T:SpatialRepsMd-1}) and~(\ref{T:SpatialRepsMd-3})
are certainly not equivalent.
We have not investigated what happens when $p = \I,$
but Lamperti's Theorem is not available in this case.

It is essential that the representation be unital.
Many of the conditions of Theorem~\ref{T:SpatialRepsMd}
are never satisfied for nonunital \rpn s,
but (\ref{T:SpatialRepsMd-3}) and~(\ref{T:SpatialRepsMd-4})
do occur.
We show by example that they do not imply that the \rpn\  is spatial.

\begin{exa}\label{E-NonUMd}
We adopt the notation of Example~\ref{E:NormRankOne},
with $p \in [1, \I) \SM \{ 2 \}.$
Set
\[
\xi = 2^{- 1 / p} (1, 1)
\andeqn
\et = 2^{- 1 / q} (1, 1).
\]
(If $p = 1,$ take $\et = (1, 1).$)
Let $e \in \MP{2}{p}$ be the rank one operator called~$a$
in Example~\ref{E:NormRankOne}.
Then
\[
e = \frac{1}{2} \left( \begin{matrix}
  1     &  1        \\
  1     &  1
\end{matrix} \right),\]
which is an idempotent.
Since $\| \xi \|_p = \| \et \|_q = 1,$
we get $\| e \| = 1.$

Now take $(X_0, \cB_0, \mu_0)$ to be any \sfm\  with a spatial
\rpn\  $\rh_0 \colon M_d \to L \big( L^p (X_0, \mu_0) \big).$
Set $X = X_0 \amalg X_0,$
and equip it with the obvious \sga\  $\cB$
and with the measure $\mu$ whose restriction to each copy of~$X_0$
is~$\mu_0.$
Identify $L^p (X, \mu)$ with $l_2^p \otimes_p L^p (X_0, \mu_0)$
as in Theorem~\ref{T-LpTP}.
Define a nonunital \rpn\  $\rh \colon M_d \to \LLp$
by $\rh (a) = e \otimes \rh_0 (a)$
for $a \in M_d.$
Then Theorem~\ref{T-LpTP}(\ref{T-LpTP-3b})
implies that, regarded as a map $\MP{d}{p} \to \LLp,$
the \rpn\  $\rh$ is isometric.
But it is not spatial,
not even in a sense suitable for nonunital \rpn s.
\end{exa}

\begin{proof}[Proof of Theorem~\ref{T:SpatialRepsMd}]
We first prove the equivalence of
(\ref{T:SpatialRepsMd-1}), (\ref{T:SpatialRepsMd-5}),
(\ref{T:SpatialRepsMd-6}), and~(\ref{T:SpatialRepsMd-7}),
beginning with
(\ref{T:SpatialRepsMd-1}) implies~(\ref{T:SpatialRepsMd-5}).

So assume~(\ref{T:SpatialRepsMd-1}).
We have $\| \rh (e_{j, k}) \| \leq 1$ because this is true
for all spatial partial isometries.
For each $j$ and $k$ there is a spatial system
for $\rh (e_{j, k}),$
say $(E_{j, k}, F_{j, k}, S_{j, k}, g_{j, k}).$
Apply Lemma~\ref{L-SPIdem}
to $\rh (e_{j, j}).$
We obtain sets $X_j \subset X$ such that
\[
(E_{j, j}, F_{j, j}, S_{j, j}, g_{j, j})
 = (X_j, X_j, \id_{\cB |_{X_j}}, \ch_{X_j})
\andeqn
\rh (e_{j, j}) = m ( \ch_{X_j} )
\]
for $j = 1, 2, \ldots, d.$
Since $\sum_{j = 1}^d \rh (e_{j, j}) = 1,$
the sets $X_j$ are essentially disjoint
and, up to a set of measure zero, $\bigcup_{j = 1}^d X_j = X.$
Modification by sets of measure zero now gives the rest
of~(\ref{T:SpatialRepsMd-5}).

We next prove
(\ref{T:SpatialRepsMd-5}) implies~(\ref{T:SpatialRepsMd-7}).
We take the partition $X = \coprod_{j = 1}^d X_j$
to be as in~(\ref{T:SpatialRepsMd-5}).
Let $j, k \in \{ 1, 2, \ldots, d \}.$
The equation $\rh (e_{j, k}) \big( 1 - \rh (e_{k, k}) \big) = 0$
translates to $\rh (e_{j, k}) \xi = 0$ for all
$\xi \in L^p (X \SM X_k, \, \mu),$
which is the first part of~(\ref{T:SpatialRepsMd-7}).
For $\xi \in L^p (X_k, \mu),$
use
\[
\| \rh (e_{j, k}) \| \leq 1,
\,\,\,\,\,\,
\| \rh (e_{k, j}) \| \leq 1,
\andeqn
\rh (e_{k, j}) \rh (e_{j, k}) \xi = \rh (e_{k, k}) \xi = \xi
\]
to get $\| \rh (e_{j, k}) \xi \| = \| \xi \|.$
So $\rh (e_{j, k})$ is isometric on $L^p (X_k, \mu).$
Since $\rh (e_{j, k}) \rh (e_{k, j}) \xi = \xi$
for $\xi \in L^p (X_j, \, \mu),$
we have
${\mathrm{ran}} (\rh (e_{j, k})) \supset L^p (X_j, \, \mu),$
while the equation
\[
m (\ch_{X_j}) \rh (e_{j, k}) \xi
  = \rh (e_{j, j}) \rh (e_{j, k}) \xi
  = \rh (e_{j, k}) \xi
\]
for $\xi \in L^p (X_k, \, \mu)$
implies
${\mathrm{ran}} (\rh (e_{j, k})) \subset L^p (X_j, \, \mu).$
This completes the proof of~(\ref{T:SpatialRepsMd-7}).

That (\ref{T:SpatialRepsMd-7}) implies~(\ref{T:SpatialRepsMd-6})
is immediate from Lemma~\ref{L-PiIsSp}.

We prove that
(\ref{T:SpatialRepsMd-6}) implies~(\ref{T:SpatialRepsMd-1}).
From~(\ref{T:SpatialRepsMd-6}),
we see that
${\mathrm{ran}} (\rh (e_{j, 1})) = L^p (X_j, \mu)$
for $j = 1, 2, \ldots, d.$
Moreover, $\rh (e_{1, 1}) = m ( \ch_{X_1})$
by Lemma~\ref{L-SPIdem}.
Now fix~$j,$
and consider $\rh (e_{1, j}).$
For $k \neq j$ we have
$\rh (e_{1, j}) |_{L^p (X_k, \mu)} = 0$ since $e_{1, j} e_{k, 1} = 0.$
Also $\rh (e_{1, j}) \rh (e_{j, 1}) = \rh (e_{1, 1}) = m ( \ch_{X_1}).$
So Lemma~\ref{L-BasicSPI}(\ref{L-BasicSPI-5})
implies that $\rh (e_{1, j})$ is the reverse of $\rh (e_{j, 1}).$
In particular,
$\rh (e_{1, j})$ is a spatial partial isometry.

For $j, k = 1, 2, \ldots, d,$
it now follows from
Lemma~\ref{L-CompSPI}
that $\rh (e_{j, k}) = \rh (e_{j, 1}) \rh (e_{1, k})$
is a spatial partial isometry,
which is~(\ref{T:SpatialRepsMd-1}).

We next prove that
(\ref{T:SpatialRepsMd-2}) and~(\ref{T:SpatialRepsMd-8})
are equivalent to the conditions we have already considered.
We start with
(\ref{T:SpatialRepsMd-7}) implies~(\ref{T:SpatialRepsMd-8}).
Let the partition $X = \coprod_{j = 1}^d X_j$
be as in~(\ref{T:SpatialRepsMd-7}).
Define $Y = X_1$ and $\nu = \mu |_{X_1}.$
Identify $L^p ( N_d \times Y, \, \gm \times \nu)$
with the space of sequences
$( \et_1, \et_2, \ldots, \et_d ) \in L^p (Y, \nu)^d,$
with the norm
\[
\| ( \et_1, \et_2, \ldots, \et_d ) \|
  = \big( \| \et_1 \|_p^p + \| \et_2 \|_p^p
       + \cdots + \| \et_d \|_p^p \big)^{1 / p}.
\]
Define
$u \colon L^p ( N_d \times Y, \, \gm \times \nu) \to L^p (X, \mu)$
by
\[
u ( \et_1, \et_2, \ldots, \et_d )
  = \et_1 + \rh (e_{2, 1}) \et_2 + \rh (e_{3, 1}) \et_3
     + \cdots + \rh (e_{d, 1}) \et_d.
\]
(Note that $\et_1 = \rh (e_{1, 1}) \et_1.$)
Then $u$ is isometric because the summands
$\rh (e_{j, 1}) \et_j$ are supported in disjoint subsets
of~$X.$
It is easy to check that $u$ is bijective and that
$\rh (e_{j, k}) = u (e_{j, k} \otimes 1) u^{-1}$
for $j, k = 1, 2, \ldots, d.$
This proves~(\ref{T:SpatialRepsMd-8}).

Now assume~(\ref{T:SpatialRepsMd-8});
we prove~(\ref{T:SpatialRepsMd-2}).
It is trivial that the standard representation of
$M_d$ on $L^p ( N_d )$ satisfies~(\ref{T:SpatialRepsMd-2}).
It follows from Lemma~\ref{L-TensorSPI}
that the \rpn\  $\rh_0 (a) = a \otimes 1$
also satisfies~(\ref{T:SpatialRepsMd-2}).
The operator $u$ is a spatial isometry by Lemma~\ref{L-PiIsSp},
and Lemma~\ref{L-BasicSPI}(\ref{L-BasicSPI-5}) implies that its
reverse is~$u^{-1}.$
Lemma~\ref{L-CompSPI} now implies that $\rh$
satisfies~(\ref{T:SpatialRepsMd-2}).

That (\ref{T:SpatialRepsMd-2}) implies~(\ref{T:SpatialRepsMd-1})
is trivial.

We finish by proving that
(\ref{T:SpatialRepsMd-3}) and~(\ref{T:SpatialRepsMd-4})
are equivalent to the conditions we have already considered.
That (\ref{T:SpatialRepsMd-8}) implies~(\ref{T:SpatialRepsMd-3})
follows from the norm relation
$\| a \otimes 1 \| = \| a \|.$
(See Theorem~\ref{T-LpTP}(\ref{T-LpTP-3b}).)
That (\ref{T:SpatialRepsMd-3}) implies~(\ref{T:SpatialRepsMd-4})
is trivial.

Assume~(\ref{T:SpatialRepsMd-4});
we prove~(\ref{T:SpatialRepsMd-5}).
For $j = 1, 2, \ldots, d$ and $\zt \in S_1,$
set
\[
t_{j, \zt} = 1 - e_{j, j} + \zt e_{j, j} \in \MP{d}{p}.
\]
One checks immediately that $\| t_{j, \zt} \| = 1,$
and that $t_{j, \zt}^{-1} = t_{j, \zt^{-1}}.$
Since $\rh$ is contractive,
it follows that
$\rh (t_{j, \zt})$ is a bijective isometry for all $j$ and~$\zt.$
So $\rh (t_{j, \zt})$ is spatial by Lemma~\ref{L-PiIsSp}.
Since $\rh (t_{j, \zt})$ is bijective,
its spatial system has the form
$(X, X, S_{j, \zt}, g_{j, \zt}).$
Clearly $S_{j, 1} = \id_{\cB}.$
So Lemma~\ref{L:LampertiHomotopy}
implies that $S_{j, \, - 1} = \id_{\cB}.$
Therefore $\rh (t_{j, \, - 1}) = m (g_{j, \, - 1}).$
It follows that there is a unital algebra \hm\  %
\[
\ph \colon l^{\I} \big( \{ 1, 2, \ldots, d \} \big) \to L^{\I} (X, \mu)
\]
such that $\ph ( \ch_{\{ j \} } ) = \tfrac{1}{2} (1 - g_{j, \, - 1})$
for $j = 1, 2, \ldots, d.$
So there is a partition $X = \coprod_{j = 1}^d X_j$
such that for $j = 1, 2, \ldots, d$ we have $g_{j, \, - 1} = \ch_{X_j}$
almost everywhere~$[\mu].$
Condition~(\ref{T:SpatialRepsMd-5}) now follows.
\end{proof}

We now turn to representations of $L_d$ and~$C_d.$
In Definition~\ref{D:KindsOfReps1},
we defined what it means for a \rpn\  to be \cog,
\fis\   (on the $s_j$),
and \sfi\  (in addition, the linear combinations of the $s_j$ are
sent to scalar multiples of isometries).
We now introduce several further properties of a \rpn.

\begin{dfn}\label{D:SpatialRep}
Let $A$ be any of $L_d$ (Definition~\ref{D:Leavitt}),
$C_d$ (Definition~\ref{D:Cohn}),
or~$L_{\I}$ (Definition~\ref{D:LInfty}).
Let $p \in [1, \I],$
let $\XBM$ be a \sfm,
and let $\rh \colon A \to \LLp$ be a \rpn.
\begin{enumerate}
\item\label{D:SpatialRep-Disj}
We say that $\rh$ is {\emph{disjoint}} if
there exist disjoint sets $X_1, X_2, \ldots, X_d \subset X$
(if $A = L_{\I},$ disjoint sets $X_1, X_2, \ldots \subset X$)
such that ${\mathrm{ran}} (\rh (s_j)) \subset L^p (X_j, \mu)$
for all~$j.$
\item\label{D:SpatialRep-Sp}
We say that $\rh$ is {\emph{spatial}} if
for each~$j,$
the operators $\rh (s_j)$ and $\rh (t_j)$ are
spatial partial isometries,
with $\rh (t_j)$ being the reverse of $\rh (s_j)$
in the sense of Definition~\ref{D-Reverse}.
\end{enumerate}
\end{dfn}

The definition of a spatial \rpn\  is quite strong.
For $p \neq 2, \I,$
we will see that \rpn s satisfying some much weaker conditions
are necessarily spatial.

The \rpn s of $L_d$ in Examples \ref{E:LdRepOnlp} and~\ref{E:01RepLd}
are spatial and disjoint.
The \rpn s in Examples \ref{E-Mult} and~\ref{E-SumMult}
are disjoint, but not spatial;
in fact, they are neither \cog\  nor \fis.
The \rpn s in Examples \ref{E-LdSkew}, \ref{E-LdDblSkew},
\ref{E-L3PartSkew}, and~\ref{E-LdGpSkew}
are \cog\  but not disjoint and not spatial.
The \rpn s of $L_{\I}$ in Examples
\ref{E:LIRepOnlp}, \ref{E:01RepI2}, and~\ref{E:01RepI1}
are spatial and disjoint.
The one in Example~\ref{E:01RepID} is disjoint,
but it is not spatial.
(For $j \in \N,$ the operator $\rh (s_j)$
is a spatial partial isometry, but $\rh (t_j)$ is not.)
The \rpn\  of $L_{\I}$ of
Example~\ref{E-LIOpenSkew}
is disjoint but not spatial,
since the images of neither the $s_j$ nor the $t_j$ are spatial.

\begin{lem}\label{R-InjSpRpn}
Let $A$ be any of $L_d,$
$C_d,$
or~$L_{\I}.$
Let $p \in [1, \I].$
Then there is an injective spatial \rpn\  of~$A$
on $l^p (\N).$
\end{lem}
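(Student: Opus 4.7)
The plan is to invoke the explicit representations on $l^p(\N)$ already constructed in Section~\ref{Sec:ExsOrReps} and check they meet the requirements. For $A = L_d$ with $d$ finite, I would take the representation $\rh$ of Example~\ref{E:LdRepOnlp}; for $A = L_{\I},$ the representation of Example~\ref{E:LIRepOnlp}; and for $A = C_d,$ the representation of Example~\ref{E:CdRepOnlp}, which is identical to the composition of the injective embedding $\io_{d, d+1} \colon C_d \to L_{d+1}$ from Lemma~\ref{L-CdLdPlus1} with the $L_{d+1}$ representation of Example~\ref{E:LdRepOnlp}.

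To establish spatiality in the sense of Definition~\ref{D:SpatialRep}(\ref{D:SpatialRep-Sp}), I would explicitly exhibit the spatial system of each $\rh(s_j).$ In all three examples, $\rh(s_j)$ is a shift along an injection $f_j \colon \N \to \N$ with proper range. Letting $S_j$ be the measurable set transformation on counting measure induced by $f_j,$ the quadruple $(\N, \mathrm{ran}(f_j), S_j, 1)$ is a spatial system in the sense of Definition~\ref{D-WSpSys}. Because the pushforward of counting measure by $f_j$ is counting measure on $\mathrm{ran}(f_j),$ the Radon--Nikodym factor in Definition~\ref{D-WSPI} is identically~$1,$ so the resulting spatial partial isometry agrees with $\rh(s_j).$ Lemma~\ref{L-BasicSPI}(\ref{L-BasicSPI-4}) then provides a reverse, and matching its formula against the explicit $\rh(t_j)$ shows they coincide, giving spatiality.

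For injectivity, $L_d$ and $L_{\I}$ are simple (Theorem~2 of~\cite{Lv2} and Example~3.1(ii) of~\cite{AA2}, as cited after Definitions~\ref{D:Leavitt} and~\ref{D:LInfty}), so any unital---and therefore nonzero---representation is injective. The algebra $C_d$ is not simple, but $\io_{d, d+1}$ is injective by Lemma~\ref{L-CdLdPlus1}, and composing with the injective $L_{d+1}$ representation yields an injective representation of $C_d.$ Spatiality of the composition is automatic from the spatiality of the $L_{d+1}$ representation, since the images of the Cohn generators $s_1, \ldots, s_d, t_1, \ldots, t_d$ are among those of the Leavitt generators.

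The only point requiring care, and hence the main (minor) obstacle, is the bookkeeping that verifies the proposed spatial system reproduces the operators given by the examples. On counting measure this reduces to checking that $S_{j*}(\xi) = \xi \circ f_j^{-1}$ on $\mathrm{ran}(f_j),$ which follows from Proposition~\ref{P:SStar}(\ref{P:SStar-1}) applied to simple functions and extended by the pointwise convergence statement in Proposition~\ref{P:SStar}(\ref{P:SStar-2}).
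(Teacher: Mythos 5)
Your proposal is correct and follows essentially the same route as the paper: it uses the representations of Examples~\ref{E:LdRepOnlp}, \ref{E:LIRepOnlp}, and~\ref{E:CdRepOnlp}, gets injectivity for $L_d$ and $L_{\I}$ from simplicity, and for $C_d$ from the factorization $\pi = \rh \circ \io_{d,\,d+1}$ with Lemma~\ref{L-CdLdPlus1}. The only difference is that you spell out the spatial systems $(\N, \mathrm{ran}(f_j), S_j, 1)$ explicitly, a verification the paper leaves to its discussion of the examples in Section~\ref{Sec:SpatialReps}.
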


\begin{proof}
For $L_d$ (including $d = \I$),
we can use any spatial \rpn,
say that of Example~\ref{E:LdRepOnlp} for $d < \I$
and that of Example~\ref{E:LIRepOnlp} for $d = \I,$
because $L_d$ is simple
(Theorem~2 of~\cite{Lv2} for $d < \I$
and Example 3.1(ii) of~\cite{AA2} for $d = \I$).
For $C_d,$
we use the \rpn~$\pi$ of Example~\ref{E:CdRepOnlp}.
We check injectivity.
With $\rh$ as in Example~\ref{E:LdRepOnlp}
(for $d + 1$ in place of~$d$)
and $\io_{d, \, d + 1}$ as in Lemma~\ref{L-CdLdPlus1},
we have $\pi = \rh \circ \io_{d, \, d + 1}.$
Moreover, $\io_{d, \, d + 1}$ is injective by Lemma~\ref{L-CdLdPlus1}
and we saw above that $\rh$ is injective.
\end{proof}

We give two further conditions,
also motivated by Equation~(\ref{Eq:2Std})
(before Definition~\ref{D:KindsOfReps1})
for the \ca s
and by the analogous equation for the adjoints.

\begin{dfn}\label{D:StdRep}
Let $A$ be any of $L_d$ (Definition~\ref{D:Leavitt}),
$C_d$ (Definition~\ref{D:Cohn})
or~$L_{\I}$ (Definition~\ref{D:LInfty}).
Let $E$ be a nonzero Banach space,
and let $\rh \colon A \to L (E)$ be a  \rpn.
Let $p \in [1, \I].$
\begin{enumerate}
\item\label{D:StdRep-s}
We say that $\rh$
is {\emph{$p$-standard}} on $\spn (s_1, s_2, \ldots, s_d)$
if, following Definition~\ref{N-LComb},
the map $\ld \mapsto \rh (s_{\ld})$
from $\C^d$ to $L (E)$
is isometric from $l^p \big( \{ 1, 2, \ldots, d \} \big)$
to $L (E).$
(In the case $A = L_{\I}$ and $p \neq \I,$
we say that $\rh$
is {\emph{$p$-standard}} on $\spn (s_1, s_2, \ldots)$
if the map $\ld \mapsto \rh (s_{\ld})$
from $\C^{\I}$ to $L (E)$
extends to an isometric map from $l^p (\N)$ to $L (E).$
For $p = \I,$ we use $C_0 (\N)$ in place of $l^{\I}.$)
\item\label{D:StdRep-t}
Let $q \in [1, \I]$ be the conjugate exponent,
that is, $\frac{1}{p} + \frac{1}{q} = 1.$
We say that $\rh$
is {\emph{$p$-standard}} on $\spn (t_1, t_2, \ldots, t_d)$
if, following Definition~\ref{N-LComb},
the map $\ld \mapsto \rh (t_{\ld})$
from $\C^d$ to $L (E)$
is isometric from $l^q \big( \{ 1, 2, \ldots, d \} \big)$
to $L (E).$
(In the case $A = L_{\I}$ and $p \neq 1,$
we say that $\rh$
is {\emph{$p$-standard}} on $\spn (t_1, t_2, \ldots)$
if the map $\ld \mapsto \rh (t_{\ld})$
from $\C^{\I}$ to $L (E)$
extends to an isometric map from $l^q (\N)$ to $L (E).$
For $p = 1,$ we use $C_0 (\N)$ in place of $l^{\I}.$)
\end{enumerate}
\end{dfn}

We will see in Theorem~\ref{T-SpatialRepsL} below that a spatial
representation of $L_d$ on a space of the form $L^p (X, \mu)$
is necessarily $p$-standard on both
$\spn (s_1, s_2, \ldots, s_d)$
and $\spn (t_1, t_2, \ldots, t_d).$
Example~\ref{E-SumMult} shows that
a \rpn\  which is $p$-standard on
$\spn (s_1, s_2, \ldots, s_d)$
need not be spatial, or even \cog.
If in that example one instead defines
\[
\pi (s_j) = \rh (s_j) \oplus_p 2 \rh (s_j)
\andeqn
\pi (t_j) = \rh (t_j) \oplus_p \tfrac{1}{2} \rh (t_j),
\]
the resulting \rpn\  is $p$-standard on $\spn (t_1, t_2, \ldots, t_d)$
but not spatial.
One could fix the difficulty with
Example~\ref{E-SumMult} by incorporating \sfi\  in
the definition of $p$-standard on
$\spn (s_1, s_2, \ldots, s_d),$
but we don't know what the analogous fix
for $p$-standard on $\spn (t_1, t_2, \ldots, t_d)$ should be.

\begin{thm}\label{T-SpatialRepsL}
Let $d \in \N,$
let $L_d$ be as in Definition~\ref{D:Leavitt},
let $p \in (1, \I) \SM \{ 2 \},$
let $\XBM$ be a \sfm,
and let $\rh \colon L_d \to \LLp$ be a \rpn.
Then \tfae:
\begin{enumerate}
\item\label{T-SpatialRepsL-N1}
$\rh$ is spatial.
\item\label{T-SpatialRepsL-N2}
For $j = 1, 2, \ldots, d,$
the operator $\rh (s_j)$ is a spatial partial isometry.
\item\label{T-SpatialRepsL-N3}
For $j = 1, 2, \ldots, d,$
the operator $\rh (t_j)$ is a spatial partial isometry.
\item\label{T-SpatialRepsL-N4}
$\rh$ is \fis\  and the restriction of $\rh$ to
$\spn \big( ( s_j t_k )_{j, k = 1}^{d} \big) \cong M_d$
(see Lemma~\ref{L:mSum})
is a spatial \rpn\  of~$M_d$
in the sense of Definition~\ref{D:M_dSpatialRep}.
\item\label{T-SpatialRepsL-N5}
$\rh$ is \cog\  and the restriction of $\rh$ to
$\spn \big( ( s_j t_k )_{j, k = 1}^{d} \big)$
is a spatial \rpn\  of $M_d.$
\item\label{T-SpatialRepsL-N6}
$\rh$ is \fis\  and disjoint.
\item\label{T-SpatialRepsL-N7}
$\rh$ is \cog\  and disjoint.
\item\label{T-SpatialRepsL-N8}
$\rh$ is \sfi\  and disjoint.
\item\label{T-SpatialRepsL-N9}
$\rh$ is $p$-standard on $\spn (s_1, s_2, \ldots, s_d)$
and is \sfi.
\item\label{T-SpatialRepsL-N10}
$\rh$ is $p$-standard on $\spn (t_1, t_2, \ldots, t_d)$
and the \rpn\  $\rh'$ of Lemma~\ref{L:TransposeRep} is \sfi.
\item\label{T-SpatialRepsL-Row}
$\big( \rh (s_1) \,\,\, \rh (s_2) \,\,\, \cdots \,\,\, \rh (s_d) \big)$
is a row isometry,
in the sense that,
using the notation of Remark~\ref{R-pSumRmk},
it defines an isometric linear map
\[
L^p (X, \mu) \oplus_p L^p (X, \mu) \oplus_p
   \cdots \oplus_p L^p (X, \mu)
\to L^p (X, \mu).
\]
\item\label{T-SpatialRepsL-N11}
$\rh$ is \fis\  and for $j = 1, 2, \ldots, d$ there is $X_j \subset X$
such that ${\mathrm{ran}} (\rh (s_j)) = L^p (X_j, \mu).$
\item\label{T-SpatialRepsL-N12}
$\rh$ is \cog\  and for $j = 1, 2, \ldots, d$ there is $X_j \subset X$
such that ${\mathrm{ran}} (\rh (s_j)) = L^p (X_j, \mu).$
\item\label{T-SpatialRepsL-N13}
The \rpn\  $\rh'$ of Lemma~\ref{L:TransposeRep} is spatial.
\end{enumerate}
For $p = 1,$
all the conditions except (\ref{T-SpatialRepsL-N3}),
(\ref{T-SpatialRepsL-N10}), and~(\ref{T-SpatialRepsL-N13})
are equivalent,
and the other conditions imply these three.
\end{thm}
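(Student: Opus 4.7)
My plan is to organize the equivalences around condition (\ref{T-SpatialRepsL-N11}) (\fis\  together with $\mathrm{ran}(\rh(s_j)) = L^p(X_j,\mu)$ for some measurable $X_j$), and to invoke Lamperti's Theorem via Lemma~\ref{L-PiIsSp} to upgrade such isometric maps to spatial isometries. I will first show that (\ref{T-SpatialRepsL-N1}) implies everything, then show that each of the other conditions reduces to (\ref{T-SpatialRepsL-N11}) or (\ref{T-SpatialRepsL-N12}), and finally close the loop.

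For the forward step, assuming $\rh$ is spatial, the relation $t_j s_j = 1$ forces the domain support of $\rh(s_j)$ to be essentially all of $X$, so $\rh(s_j)$ is a spatial isometry with some range support~$X_j$. Lemma~\ref{L-BasicSPI} then gives $\rh(s_j)\rh(t_j) = m(\ch_{X_j})$, and the relations $t_j s_k = 0$ and $\sum_j s_j t_j = 1$ force the $X_j$ to be a measurable partition of~$X$ up to null sets. This at once delivers (\ref{T-SpatialRepsL-N2}), (\ref{T-SpatialRepsL-N11}), (\ref{T-SpatialRepsL-N6}), and \cog\  (spatial partial isometries have norm at most one), hence also (\ref{T-SpatialRepsL-N7}) and~(\ref{T-SpatialRepsL-N12}). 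The $M_d$-conditions (\ref{T-SpatialRepsL-N4}) and~(\ref{T-SpatialRepsL-N5}) follow from Lemma~\ref{L-CompSPI} applied to $s_j t_k,$ combined with Theorem~\ref{T:SpatialRepsMd}. The row isometry~(\ref{T-SpatialRepsL-Row}) and the $p$-standard condition~(\ref{T-SpatialRepsL-N9}) follow because disjointness of the $X_j$ gives $\|\sum_j \rh(s_j)\xi_j\|_p^p = \sum_j \|\xi_j\|_p^p$ by Remark~\ref{R:LpSuppNorm}. The dual conditions (\ref{T-SpatialRepsL-N3}), (\ref{T-SpatialRepsL-N10}), and~(\ref{T-SpatialRepsL-N13}) follow from Lemma~\ref{L-DualSPI}, with the identity $\|\rh(t_\ld)\| = \|\ld\|_q$ for~(\ref{T-SpatialRepsL-N10}) obtained by a H\"older computation exploiting that each $\rh(t_j)$ maps $L^p(X_j,\mu)$ isometrically onto $L^p(X,\mu).$

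For the reverse direction, the pivotal step is (\ref{T-SpatialRepsL-N11}) $\Rightarrow$ (\ref{T-SpatialRepsL-N1}): Lemma~\ref{L-PiIsSp} (which requires $p \ne 2$ and invokes Lamperti) makes each $\rh(s_j)$ a spatial isometry, and then Lemma~\ref{L-BasicSPI}(\ref{L-BasicSPI-5}) identifies $\rh(t_j)$ as its reverse, since $\rh(t_j)\rh(s_j) = 1$ and $\rh(t_j)$ vanishes on $L^p(X \SM X_j, \, \mu)$ (via $\rh(s_k)\rh(t_k) = m(\ch_{X_k})$ for $k \ne j$). The remaining conditions I would reduce to (\ref{T-SpatialRepsL-N11}) or~(\ref{T-SpatialRepsL-N12}). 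For (\ref{T-SpatialRepsL-N5}), (\ref{T-SpatialRepsL-N7}), and~(\ref{T-SpatialRepsL-N12}), the chain $\|\xi\| = \|\rh(t_j)\rh(s_j)\xi\| \le \|\rh(s_j)\xi\| \le \|\xi\|$ upgrades \cog\  to \fis. For (\ref{T-SpatialRepsL-N6}), (\ref{T-SpatialRepsL-N7}), (\ref{T-SpatialRepsL-N8}), and~(\ref{T-SpatialRepsL-Row}), disjoint sets $X_j \supset \mathrm{ran}(\rh(s_j))$ are available; decomposing $\xi \in L^p(X,\mu)$ as $\sum_j \rh(s_j)\rh(t_j)\xi$ with each summand in $L^p(X_j,\mu)$ forces $X = \bigsqcup X_j$ mod null sets and $\rh(s_j)\rh(t_j) = m(\ch_{X_j})$, whence $\mathrm{ran}(\rh(s_j)) = L^p(X_j,\mu)$. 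Condition~(\ref{T-SpatialRepsL-N4}) reduces similarly, using Theorem~\ref{T:SpatialRepsMd} to extract the partition from the spatial $M_d$-restriction. Conditions (\ref{T-SpatialRepsL-N2}), (\ref{T-SpatialRepsL-N3}), and~(\ref{T-SpatialRepsL-N13}) give (\ref{T-SpatialRepsL-N1}) directly, the latter two through Lemma~\ref{L-DualSPI}.

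The main obstacle will be conditions (\ref{T-SpatialRepsL-N9}) and~(\ref{T-SpatialRepsL-N10}), which do not assume any disjointness. From~(\ref{T-SpatialRepsL-N9}), combining \sfi\  with the $p$-standard condition gives $\|\rh(s_j)\xi + \ld \rh(s_k)\xi\|_p^p = (1 + |\ld|^p)\|\xi\|_p^p$ for all $\xi \in L^p(X,\mu)$, all $\ld \in \C$, and all $j \ne k$. Fixing $|\ld|$ and averaging over its phase, this reduces to the pointwise identity $\frac{1}{2\pi}\int_0^{2\pi} (|a|^2 + |b|^2 + 2|a||b|\cos\psi)^{p/2} \, d\psi = |a|^p + |b|^p$ at $a = \rh(s_j)\xi(x),$ $b = \rh(s_k)\xi(x),$ which for $p \ne 2$ forces $|a||b| = 0$ almost everywhere. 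Hence $\rh(s_j)\xi$ and $\rh(s_k)\xi$ have disjoint supports for every $\xi$, and a bilinearity argument (or an application of Lamperti to the isometry $\|\ld\|_p^{-1}\rh(s_\ld)$) promotes this to disjointness of the full ranges, giving~(\ref{T-SpatialRepsL-N8}). Condition~(\ref{T-SpatialRepsL-N10}) is handled dually. For $p = 1,$ the duality step via Lemma~\ref{L-DualSPI} breaks down because the pairing with $L^\infty$ is not of $L^p$-type, so (\ref{T-SpatialRepsL-N3}), (\ref{T-SpatialRepsL-N10}), and~(\ref{T-SpatialRepsL-N13}) can only be derived from, not used to derive, the other conditions.
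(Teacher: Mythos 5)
Your proposal follows essentially the same architecture as the paper's proof: show that spatiality implies everything, funnel the converse implications through a single hub condition of the form ``forward isometric with ranges equal to $L^p$ of disjoint subsets,'' and use Lamperti's Theorem (via Lemma~\ref{L-PiIsSp}) to upgrade to spatiality. The paper pivots on condition~(\ref{T-SpatialRepsL-N6}) and Lemma~\ref{L-FISAndDisjImpSp}, identifying $\rh(t_j)$ with the reverse of $\rh(s_j)$ by constructing an auxiliary spatial representation and invoking the rigidity Lemma~\ref{L:LdUniq}; you pivot on~(\ref{T-SpatialRepsL-N11}) and identify $\rh(t_j)$ directly via Lemma~\ref{L-BasicSPI}(\ref{L-BasicSPI-5}), a legitimate and slightly more direct variant (though you must first extract disjointness of the $X_j$ from the mutually annihilating idempotents $\rh(s_j t_j)$ before you can assert $\rh(t_j)|_{L^p(X \SM X_j, \, \mu)} = 0$; that step is routine but should be said). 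For the genuinely hard implication from~(\ref{T-SpatialRepsL-N9}), the paper cites the equality case of Corollary~2.1 of~\cite{Lp} applied to $\rh(s_j)\xi \pm \rh(s_k)\xi$ (Lemma~\ref{L-IfpStd}, split into $p > 2$ and $p < 2$ according to the direction of Clarkson's inequality), whereas you average over the phase of $\ld$ and exploit strict convexity or concavity of $t \mapsto t^{p/2}$; this is a correct, self-contained rederivation of the same disjoint-support dichotomy, and the subsequent ``three values of $t$'' bilinearity trick (or a direct appeal to Lamperti applied to each $\rh(s_j)$ together with a strictly positive $\xi$, which is what the paper does) does promote pointwise disjointness to disjointness of ranges.

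The one genuine slip is your treatment of condition~(\ref{T-SpatialRepsL-Row}): you group it with (\ref{T-SpatialRepsL-N6}), (\ref{T-SpatialRepsL-N7}), and~(\ref{T-SpatialRepsL-N8}) as a condition for which ``disjoint sets $X_j \supset \mathrm{ran}(\rh(s_j))$ are available,'' but disjointness is not a hypothesis of~(\ref{T-SpatialRepsL-Row}) and does not come for free. The repair is short and uses machinery you already have: taking $\xi_j = \ld_j \xi$ in the row-isometry identity gives $\| \rh(s_{\ld}) \xi \|_p = \| \ld \|_p \| \xi \|_p$ for all $\ld$ and~$\xi,$ which is exactly condition~(\ref{T-SpatialRepsL-N9}) (this is the paper's route), after which your phase-averaging argument supplies the disjointness. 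Also, your stated reason for the $p = 1$ caveat is slightly off target: the obstruction is not the $L^p$--$L^q$ pairing as such, but that for $p = 1$ the dual representation acts on $L^{\I}(X, \mu),$ where Lamperti's Theorem is unavailable, so one cannot pass back from~(\ref{T-SpatialRepsL-N13}) to~(\ref{T-SpatialRepsL-N1}).
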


Various remarks are in order.
First, when $p = 1,$
we do not know whether the conditions (\ref{T-SpatialRepsL-N3}),
(\ref{T-SpatialRepsL-N10}), and~(\ref{T-SpatialRepsL-N13})
imply the \rpn\  is spatial.
(The last two of these are the ones which for $p = 1$
involve \rpn s
on $L^{\I} (X, \mu).$)

Second, some of the equivalences fail for \rpn s of $L_{\I}$
and~$C_d.$
Assume $p \neq 1.$
Then the \rpn\  of $L_{\I}$
of Example~\ref{E:01RepID}
satisfies (\ref{T-SpatialRepsL-N2}), (\ref{T-SpatialRepsL-N6}),
(\ref{T-SpatialRepsL-N9}),
(\ref{T-SpatialRepsL-Row}),
and~(\ref{T-SpatialRepsL-N11}),
but not~(\ref{T-SpatialRepsL-N3})
and hence not~(\ref{T-SpatialRepsL-N1}).
The same is true for the restriction to~$C_d,$
using the map $\io_{d, \I}$ of Lemma~\ref{L-CdLdPlus1}.
For $p \in (1, \I) \SM \{ 2 \},$
the \rpn\  of $L_{\I}$ of
Example~\ref{E-LIOpenSkew}, and its dual,
both satisfy (\ref{T-SpatialRepsL-N6}),
(\ref{T-SpatialRepsL-N7}),
(\ref{T-SpatialRepsL-N8}),
(\ref{T-SpatialRepsL-N9}),
(\ref{T-SpatialRepsL-N10}),
and~(\ref{T-SpatialRepsL-Row}),
but none of (\ref{T-SpatialRepsL-N2}),
(\ref{T-SpatialRepsL-N3}),
(\ref{T-SpatialRepsL-N11}),
(\ref{T-SpatialRepsL-N12}).
or~(\ref{T-SpatialRepsL-N13}).
The same is true for the \rpn\  of $C_{d_0}$
of Example~\ref{E-CdOpenSkew}.

Third, various other implications one might hope for fail.
Example~\ref{E-LdSkew} shows that
contractive on generators does not imply that the
restriction to $M_d$ is spatial.
Example~\ref{E-Mult} shows that
the restriction to $M_d$ being spatial does not imply
that the whole \rpn\  is spatial.
If $\rh$ is $p$-standard on $\spn (s_1, s_2, \ldots, s_d),$
it does not follow that
$\rh$ is $p$-standard on $\spn (t_1, t_2, \ldots, t_d),$
or that $\rh'$ is $q$-standard on $\spn (s_1, s_2, \ldots, s_d),$
by Example~\ref{E-SumMult}.

A spatial \rpn\  of $L_d$ must be \sfi,
since that is part of
Theorem~\ref{T-SpatialRepsL}(\ref{T-SpatialRepsL-N8}).
The converse is not true;
Example~\ref{E-LdSkew} is a counterexample.

Next,
we recall from Theorem~\ref{T:SpatialRepsMd} that
conditions (\ref{T-SpatialRepsL-N4}) and~(\ref{T-SpatialRepsL-N5})
actually have many equivalent formulations.
Similarly,
condition~(\ref{T-SpatialRepsL-N13})
is equivalent to analogs on $L^q (X, \mu)$
of all the other conditions of Theorem~\ref{T-SpatialRepsL}.

If $p > 2$ then
$p$-standard on $\spn (s_1, s_2, \ldots, s_d)$
can be weakened to $\| \rh (s_{\ld}) \| \leq \| \ld \|_p$
in~(\ref{T-SpatialRepsL-N9}).
(See Lemma~\ref{L-IfpStd}(\ref{L-IfpStd>2}).)
If $p < 2,$
then
$p$-standard on $\spn (t_1, t_2, \ldots, t_d)$
can be weakened to $\| \rh (t_{\ld}) \| \leq \| \ld \|_q$
in~(\ref{T-SpatialRepsL-N10}).
(See Corollary~\ref{C-IfqStd}(\ref{C-IfqStd<2}).)

Finally, it is again essential that the representation be unital.
Many of the conditions in Theorem~\ref{T-SpatialRepsL}
actually do make sense for nonunital \rpn s,
but the following example shows that
they do not imply that the \rpn\  is spatial.

\begin{exa}\label{E-NonUnL}
Let the notation be as in Example~\ref{E-NonUMd},
except take $\rh_0$ to be a spatial \rpn\  of $L_d$
on $L^p (X_0, \mu_0).$
Set $\rh (a) = e \otimes \rh_0 (a)$
for $a \in L_d.$
Then $\rh$ is not spatial,
not even in a sense suitable for nonunital \rpn s.
However, it is disjoint, \sfi, \cog,
$p$-standard on $\spn (s_1, s_2, \ldots, s_d),$
and isometric (although not spatial)
on $\spn \big( ( s_j t_k )_{j, k = 1}^{d} \big).$
\end{exa}

It is convenient to break the proof of Theorem~\ref{T-SpatialRepsL}
into several lemmas.
Some of them hold in greater generality than needed.
The first, in effect,
shows that (\ref{T-SpatialRepsL-N1}) implies all the other
conditions.

\begin{lem}\label{L-CnsqOfSp}
Let $A$ be any of $L_d$ (Definition~\ref{D:Leavitt}),
$C_d$ (Definition~\ref{D:Cohn}),
or~$L_{\I}$ (Definition~\ref{D:LInfty}).
Let $p \in [1, \I],$
let $\XBM$ be a \sfm,
and let $\rh \colon L_d \to \LLp$ be a spatial \rpn.
Then:
\begin{enumerate}
\item\label{L-CnsqOfSp-1}
$\rh$ is \cog.
\item\label{L-CnsqOfSp-2}
$\rh$ is \sfi.
\item\label{L-CnsqOfSp-2b}
$\rh$ is disjoint.
\item\label{L-CnsqOfSp-3}
$\rh$ is $p$-standard on $\spn (s_1, s_2, \ldots, s_d)$
(Definition~\ref{D:StdRep}(\ref{D:StdRep-s})).
\item\label{L-CnsqOfSp-4}
$\rh$ is $p$-standard on $\spn (t_1, t_2, \ldots, t_d)$
(Definition~\ref{D:StdRep}(\ref{D:StdRep-t})).
\item\label{L-CnsqOfSp-Row}
$\big( \rh (s_1) \,\,\, \rh (s_2) \,\,\, \cdots \,\,\, \rh (s_d) \big)$
is a row isometry,
in the sense described in
Theorem~\ref{T-SpatialRepsL}(\ref{T-SpatialRepsL-Row}).
\item\label{L-CnsqOfSp-5}
For each $j$ there is $X_j \subset X$
such that ${\mathrm{ran}} (\rh (s_j)) = L^p (X_j, \mu).$
\item\label{L-CnsqOfSp-6}
If $A = L_d$ with $d \neq \I,$
the restriction of $\rh$ to
$\spn \big( ( s_j t_k )_{j, k = 1}^{d} \big) \cong M_d$
(see Lemma~\ref{L:mSum})
is a spatial \rpn\  of $M_d$
in the sense of Definition~\ref{D:M_dSpatialRep}.
\item\label{L-CnsqOfSp-7}
If $p \neq \I,$
the \rpn\  $\rh'$ of Lemma~\ref{L:TransposeRep} is spatial.
\end{enumerate}
\end{lem}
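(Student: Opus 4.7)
The plan is to exploit the defining relations $t_j s_j = 1$ and $t_j s_k = 0$ (for $j \neq k$) together with Lemma~\ref{L-BasicSPI} to pin down the spatial systems of $\rh(s_j)$ and $\rh(t_j)$ explicitly, after which most conclusions reduce to routine computations with functions of disjoint support.

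First I would show each $\rh(s_j)$ is in fact a spatial isometry. Let $(E_j, X_j, S_j, g_j)$ be the spatial system of $\rh(s_j)$. Since $\rh(t_j)$ is declared to be the reverse of $\rh(s_j)$, Lemma~\ref{L-BasicSPI}(\ref{L-BasicSPI-4}) gives $\rh(t_j)\rh(s_j) = m(\ch_{E_j})$, while the relation $t_j s_j = 1$ forces $m(\ch_{E_j}) = 1$, so $E_j = X$ up to a null set. Then Lemma~\ref{L-BasicSPI}(\ref{L-BasicSPI-3}) gives ${\mathrm{ran}}(\rh(s_j)) = L^p(X_j, \mu)$, which is~(\ref{L-CnsqOfSp-5}); further, $\rh(s_j)$ is a spatial isometry by Remark~\ref{R-SpatialCF} and $\rh(t_j)$ is a spatial partial isometry, so both have norm at most~$1,$ giving~(\ref{L-CnsqOfSp-1}). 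Next, for $j \neq k$ the relation $t_j s_k = 0$ says that $\rh(t_j)$ annihilates ${\mathrm{ran}}(\rh(s_k)) = L^p(X_k, \mu)$; but $\rh(t_j)$ has domain support $X_j$ and is isometric on $L^p(X_j, \mu)$, hence on $L^p(X_j \cap X_k, \mu)$. Therefore $\mu(X_j \cap X_k) = 0$, and after correcting by null sets the $X_j$ are pairwise disjoint, giving~(\ref{L-CnsqOfSp-2b}).

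With disjointness secured, (\ref{L-CnsqOfSp-2}), (\ref{L-CnsqOfSp-3}), and (\ref{L-CnsqOfSp-Row}) are all immediate: for $\ld \in \C^d$ and $\xi \in L^p(X, \mu)$, the summands of $\rh(s_\ld)\xi = \sum_j \ld_j \rh(s_j)\xi$ have pairwise disjoint supports inside the~$X_j$, so Remark~\ref{R:LpSuppNorm} for $p < \I$ (and an elementary sup-norm computation for $p = \I$) yields $\| \rh(s_\ld) \xi \|_p = \| \ld \|_p \| \xi \|_p$; the same computation applied to $\sum_j \rh(s_j) \xi_j$ gives the row-isometry property. For~(\ref{L-CnsqOfSp-6}), each $\rh(s_j t_k) = \rh(s_j)\rh(t_k)$ is a spatial partial isometry by Lemma~\ref{L-CompSPI}, whose reverse is $\rh(s_k)\rh(t_j) = \rh(s_k t_j)$; this is exactly condition~(\ref{T:SpatialRepsMd-2}) of Theorem~\ref{T:SpatialRepsMd}, so the restriction to the $M_d$-subalgebra is spatial.

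For~(\ref{L-CnsqOfSp-7}), I apply Lemma~\ref{L-DualSPI} to each $\rh(s_j)$: since $\rh(t_j)$ is the reverse of $\rh(s_j)$, the lemma yields that $\rh(s_j)'$ is a spatial partial isometry on $L^q(X, \mu)$ with reverse $\rh(t_j)'$. Using $s_j' = t_j$ and $\rh'(a) = \rh(a')'$ (Lemma~\ref{L:TransposeRep}), one identifies $\rh'(s_j) = \rh(t_j)'$ and $\rh'(t_j) = \rh(s_j)'$, so $\rh'$ is spatial. Finally, for~(\ref{L-CnsqOfSp-4}), the inequality $\| \rh(t_\ld) \xi \|_p \leq \| \ld \|_q \| \xi \|_p$ is H\"older applied to $\rh(t_\ld)\xi = \sum_j \ld_j \rh(t_j)(\xi|_{X_j})$, using that each $\rh(t_j)|_{L^p(X_j, \mu)}$ is an isometric bijection onto $L^p(X, \mu)$; the reverse inequality is obtained by choosing a test vector $\xi$ with $\| \xi|_{X_j} \|_p = | \ld_j |^{q - 1}$ arranged so that the functions $\ld_j \rh(t_j)(\xi|_{X_j})$ are non-negative real multiples of a single nonzero function, saturating both H\"older inequalities simultaneously. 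The main obstacle I anticipate is this last construction: one must produce the common underlying function so as to force equality in H\"older across all $d$ summands at once, and handle the endpoint cases $p = 1$ and $p = \I,$ where the optimization degenerates to a single index achieving the supremum and Remark~\ref{R:LpSuppNorm} is unavailable.
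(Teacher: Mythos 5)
Your proposal is correct, and for most parts it follows the same route as the paper: norm bounds from Lemma~\ref{L-BasicWSPI}(\ref{L-BasicWSPI-2}), ranges from Lemma~\ref{L-CndForSp}/Lemma~\ref{L-BasicSPI}(\ref{L-BasicSPI-3}), part~(\ref{L-CnsqOfSp-6}) from Lemma~\ref{L-CompSPI}, part~(\ref{L-CnsqOfSp-7}) from Lemma~\ref{L-DualSPI}, and the norm identity $\| \rh (s_{\ld}) \xi \|_p = \| \ld \|_p \| \xi \|_p$ from disjointness of supports via Remark~\ref{R:LpSuppNorm}, which delivers (\ref{L-CnsqOfSp-2}), (\ref{L-CnsqOfSp-3}), and~(\ref{L-CnsqOfSp-Row}) at once. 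Two points diverge. For disjointness, you argue directly that $\rh (t_j)$ is isometric on $L^p (X_j \cap X_k, \mu)$ yet kills it; the paper instead notes that $\rh (t_j) \rh (s_k)$ would be a nonzero spatial partial isometry by Lemma~\ref{L-CompSPI}. Both work. The real difference is the lower bound in~(\ref{L-CnsqOfSp-4}), which you flag as the main obstacle. Your construction does go through: choosing $\xi_j \in L^p (X_j, \mu)$ with $\rh (t_j) \xi_j = {\ov{\gm_j}} \, | \gm_j |^{q - 2} \, \et$ for a fixed unit vector $\et$ gives $\rh (t_{\gm}) \xi = \| \gm \|_q^q \, \et$ and $\| \xi \|_p = \| \gm \|_q^{q / p},$ so the quotient is exactly $\| \gm \|_q$ (with the obvious degenerate versions at $p = 1$ and $p = \I$). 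But the paper avoids the construction entirely: having already proved $\| \rh (s_{\ld}) \| = \| \ld \|_p,$ it picks $\ld$ with $\| \ld \|_p = 1$ and $\big| \sum_j \gm_j \ld_j \big| > \| \gm \|_q - \ep,$ and reads off
\[
\| \rh (t_{\gm}) \|
 \geq \| \rh (t_{\gm}) \rh (s_{\ld}) \|
 = \Big| \ssum{j} \gm_j \ld_j \Big|
 > \| \gm \|_q - \ep
\]
from Lemma~\ref{L-LCombProd}. That duality argument is shorter, sidesteps the phase bookkeeping, and handles $d = \I$ and the endpoint exponents uniformly; your version buys an explicit extremal vector, which is occasionally useful but not needed here.
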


\begin{proof}
Part~(\ref{L-CnsqOfSp-1})
follows from Lemma~\ref{L-BasicWSPI}(\ref{L-BasicWSPI-2}),
part~(\ref{L-CnsqOfSp-5})
follows from Lemma~\ref{L-CndForSp},
and part~(\ref{L-CnsqOfSp-7})
follows from Lemma~\ref{L-DualSPI}.
Part~(\ref{L-CnsqOfSp-6})
follows from the fact (Lemma~\ref{L-CompSPI})
that the product of spatial partial isometries is again
a spatial partial isometry.

To prove part~(\ref{L-CnsqOfSp-2b}),
let $X_j$ be as in part~(\ref{L-CnsqOfSp-5}).
Suppose $\mu (X_j \cap X_k) \neq 0.$
Then $\rh (t_j) \rh (s_k)$ is a nonzero
spatial partial isometry by Lemma~\ref{L-CompSPI},
contradicting the relation
(\ref{Eq:Leavitt2}) or~(\ref{Eq:ILeavitt2}),
as appropriate, in the definition of~$A.$
Part~(\ref{L-CnsqOfSp-2b}) follows.

We prove (\ref{L-CnsqOfSp-2}), (\ref{L-CnsqOfSp-3}),
and~(\ref{L-CnsqOfSp-Row}) together.
Let
\begin{align*}
s & = \big( \rh (s_1) \,\,\, \rh (s_2) \,\,\,
                \cdots \,\,\, \rh (s_d) \big)
             \\
  & \in L \big( L^p (X, \mu) \oplus_p L^p (X, \mu) \oplus_p
          \cdots \oplus_p L^p (X, \mu), \,
       L^p (X, \mu) \big).
\end{align*}
Thus
\[
s (\xi_1, \xi_2, \ldots, \xi_d) = \sum_{j = 1}^d \rh (s_j) \xi_j
\]
for $\xi_1, \xi_2, \ldots, \xi_d \in L^p (X, \mu).$
We already proved that $\rh$ is disjoint,
so
\[
\rh (s_1) \xi_1, \, \rh (s_2) \xi_2, \, \ldots, \, \rh (s_d) \xi_d
\]
have disjoint supports.
Since $\| \rh (s_j) \xi_j \| = \| \xi_j \|$ by
Lemma~\ref{L-BasicWSPI}(\ref{L-BasicWSPI-5}),
it follows from Remark~\ref{R:LpSuppNorm} that
\begin{equation}\label{Eq:CnsqRow}
\| s (\xi_1, \xi_2, \ldots, \xi_d) \|_p^p
  = \sum_j \| \rh (s_j) \xi_j \|_p^p
  = \sum_j \| \xi \|_p^p
  = \| (\xi_1, \xi_2, \ldots, \xi_d) \|_p^p.
\end{equation}
This proves~(\ref{L-CnsqOfSp-Row}).
Now let $\ld \in \C^d$
and let $\xi \in L^p (X, \mu).$
In~(\ref{Eq:CnsqRow}), take $\xi_j = \ld_j \xi$
for $j = 1, 2, \ldots, d,$
getting
\[
\| \rh ( s_{\ld} ) \xi \|_p^p
  = \sum_j | \ld_j |^p \| \xi \|_p^p
  = \| \ld \|_p^p \| \xi \|_p^p.
\]
This equation implies
both (\ref{L-CnsqOfSp-2}) and~(\ref{L-CnsqOfSp-3}).

It remains to prove~(\ref{L-CnsqOfSp-4}).
Let $q \in [1, \I]$ satisfy $\frac{1}{p} + \frac{1}{q} = 1.$
For $\gm \in \C^{d}$ (possibly $d = \I,$
in which case we follow the convention of Definition~\ref{N-LComb}),
define $\om_{\gm} \colon \C^d \to \C$ by
$\om_{\gm} (\ld) = \sum_{j = 1}^d \gm_j \ld_j.$

We show that for $\gm \in \C^d,$
we have $\| \rh (t_{\gm}) \| \geq \| \gm \|_q.$
Let $\ep > 0,$
and choose (using the usual pairing between $l^p$ and $l^q$)
an element $\ld \in \C^d$ such that $\| \ld \|_p = 1$
and $| \om_{\gm} (\ld) | > \| \gm \|_q - \ep.$
By part~(\ref{L-CnsqOfSp-3}) (already proved),
we have $\| \rh (s_{\ld}) \| = 1.$
Using Lemma~\ref{L-LCombProd} at the third step,
we then get
\[
\| \rh (t_{\gm}) \|
 = \| \rh (t_{\gm}) \| \cdot \| \rh (s_{\ld}) \|
 \geq \| \rh (t_{\gm}) \rh (s_{\ld}) \|
 = \| \om_{\gm} (\ld) \cdot 1 \|
 > \| \gm \|_q - \ep.
\]
Since $\ep > 0$ is arbitrary,
the desired conclusion follows.

We now show the reverse.
Let $\gm \in \C^d$ and let $\xi \in L^p (X, \mu).$
Let the disjoint sets $X_j$ be as in~(\ref{L-CnsqOfSp-2b})
(already proved).
Set $\xi_j = \xi |_{X_j}.$
Since $\rh (t_j)$ is a spatial partial isometry
with domain support~$X_j,$
Lemma~\ref{L-BasicWSPI}(\ref{L-BasicWSPI-2}) gives
$\| \rh (t_j) \xi \|_p = \| \xi_j \|_p.$
Now,
using H\"{o}lder's inequality at the third step and
Remark~\ref{R:LpSuppNorm} at the last step,
we get
\begin{align*}
\| \rh (t_{\gm}) \xi \|_p
& \leq \sum_{j = 1}^d | \gm_j | \cdot \| \rh (t_j) \xi \|_p
     \\
& = \sum_{j = 1}^d | \gm_j | \cdot \| \xi_j \|_p
  \leq \| \gm \|_q \left( \sssum{j = 1}{d} \| \xi_j \|_p^p \right)^{1/p}
  = \| \gm \|_q \| \xi \|_p.
\end{align*}
So $\| \rh (t_{\gm}) \| \leq \| \gm \|_q.$
\end{proof}

\begin{lem}\label{L-IfpStd}
Let $A$ be any of $L_d,$
$C_d,$
or~$L_{\I}.$
Let $p \in [1, \I) \SM \{ 2 \},$
let $\XBM$ be a \sfm,
and let $\rh \colon L_d \to \LLp$ be a \rpn.
Adopt the notational conventions of Definition~\ref{D:StdRep}
in case $A = L_{\I}.$
\begin{enumerate}
\item\label{L-IfpStd>2}
Suppose $p > 2.$
If $\rh$ is \fis\  and for all $\ld \in \C^d$
we have $\| \rh (s_{\ld}) \| \leq \| \ld \|_p,$
then $\rh$ is disjoint.
\item\label{L-IfpStd<2}
Suppose $p < 2.$
If $\rh$ is \sfi\  %
and is $p$-standard on $\spn (s_1, s_2, \ldots, s_d),$
then $\rh$ is disjoint.
\end{enumerate}
\end{lem}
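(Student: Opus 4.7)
The plan is to combine Clarkson's inequality with the hypotheses in each part to force equality in Clarkson, which then forces pointwise disjointness of the images of $\rh (s_j)$ and $\rh (s_k)$ on sufficiently many test functions. Lamperti's Theorem and the $\sm$-\hm\ structure of the underlying spatial realizations will then upgrade this to essential disjointness of the full range supports $F_1, \ldots, F_d$ of the operators $\rh (s_j).$ Both parts will be handled in parallel, differing only in which direction of the Clarkson inequality is used.

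First, since $\rh$ is \fis\ in both parts and $p \in [1, \I) \SM \{ 2 \},$ Lamperti's Theorem~\ref{T:Lamperti} realizes each $\rh (s_j)$ as a semispatial isometry with semispatial system $(X, F_j, S_j, g_j).$ By Lemma~\ref{L-BasicWSPI}(\ref{L-BasicWSPI-3}), ${\mathrm{ran}} (\rh (s_j)) \subset L^p (F_j, \mu),$ so it suffices to show that $\mu (F_j \cap F_k) = 0$ whenever $j \neq k.$ Moreover, the formula in Definition~\ref{D-WSPI} tells us that $\rh (s_j) \ch_B$ has essential support $S_j (B)$ for every $B \in \cB$ of finite measure.

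The heart of the argument is the pointwise Clarkson inequality, which for $a, b \in \C$ states
\[
| a + b |^p + | a - b |^p \geq 2 ( | a |^p + | b |^p )
\]
when $p \geq 2,$ with the reverse inequality for $1 \leq p \leq 2,$ and in either case equality \ifo\ $a b = 0.$ Integration yields the corresponding $L^p$-Clarkson inequality. Fix $j \neq k$ and a set $B \in \cB$ with $0 < \mu (B) < \I,$ and set $\xi = \mu (B)^{- 1 / p} \ch_B$ and $\et_i = \rh (s_i) \xi,$ which are unit vectors because $\rh$ is \fis. In part~(\ref{L-IfpStd>2}), the hypothesis $\| \rh (s_{\ld}) \| \leq \| \ld \|_p$ applied with $\ld = \dt_j \pm \dt_k$ gives $\| \et_j + \et_k \|_p^p + \| \et_j - \et_k \|_p^p \leq 4,$ matching the Clarkson lower bound $2 ( \| \et_j \|_p^p + \| \et_k \|_p^p ) = 4$ and forcing equality. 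In part~(\ref{L-IfpStd<2}), the \sfi\ hypothesis combined with $p$-standardness on $\spn (s_1, s_2, \ldots, s_d)$ makes $\rh (s_{\ld})$ a scalar multiple of an isometry of norm $\| \ld \|_p,$ so $\| \rh (s_{\ld}) \xi \|_p = \| \ld \|_p$ for every unit~$\xi,$ giving $\| \et_j + \et_k \|_p^p + \| \et_j - \et_k \|_p^p = 4$ and matching the Clarkson upper bound. In both cases the equality case of the pointwise Clarkson inequality forces $\et_j \et_k = 0$ almost everywhere, so $S_j (B) \cap S_k (B) = \E$ essentially.

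To handle the cross terms and conclude, I would rerun the same argument with $B$ replaced by a disjoint union $B_1 \amalg B_2$ of sets of finite measure. Since $S_j$ and $S_k$ are $\sm$-homomorphisms, $S_j (B_1 \amalg B_2) = S_j (B_1) \cup S_j (B_2),$ and similarly for~$S_k,$ so the resulting disjointness forces $S_j (B_1) \cap S_k (B_2) = \E$ essentially. Using $\sm$-finiteness of $\mu$ to write $X = \bigcup_n X_n$ with $\mu (X_n) < \I,$ and using $F_j = S_j (X) = \bigcup_n S_j (X_n)$ by the $\sm$-\hm\ property, one concludes $F_j \cap F_k = \bigcup_{m, n} \big( S_j (X_n) \cap S_k (X_m) \big) = \E$ essentially, which is the desired disjointness. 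The main obstacle, beyond the classical equality case of Clarkson, is the bookkeeping required to pass from pointwise disjointness of individual test images to essential disjointness of the full range supports; packaging each $\rh (s_j)$ as a semispatial isometry via Lamperti and exploiting the $\sm$-\hm\ property of the realizations $S_j$ is what keeps this step clean.
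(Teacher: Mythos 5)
Your proof is correct and takes essentially the same route as the paper's: apply Lamperti's Theorem to make each $\rho(s_j)$ semispatial, then use the equality case of the Clarkson-type inequality (this is exactly Corollary~2.1 of Lamperti's paper, which is what the paper cites) to force $\rho(s_j)\xi$ and $\rho(s_k)\xi$ to have essentially disjoint supports under either hypothesis. The only difference is cosmetic: where you test against normalized characteristic functions and patch together the range supports via the $\sigma$-homomorphism property of the realizations $S_j$, the paper tests against a single everywhere-positive $\xi$ supplied by $\sigma$-finiteness and reads off disjointness of the range supports in one step.
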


\begin{proof}
For each~$j,$
Theorem~\ref{T:Lamperti} implies that $\rh (s_j)$ is semispatial.
Let $S_j$ be its semispatial realization and let $X_j$ be its
range support.

Let $\xi \in L^p (X, \mu).$
We now claim that, under either set of hypotheses,
$\rh (s_j) \xi$ and $\rh (s_k) \xi$ have
essentially disjoint supports
for $j \neq k.$

Assume the hypotheses of~(\ref{L-IfpStd>2}).
Let $\dt_1, \dt_2, \ldots, \dt_d$ be the standard basis vectors
in~$\C^d.$
Use the hypotheses with the choices $\ld = \dt_j + \dt_k$
and $\ld = \dt_j - \dt_k$ at the first step,
and the fact that $\rh (s_j)$ and $\rh (s_k)$ are isometric
at the second step, to get
\begin{align}\label{Eq:IfpStd}
\| \rh (s_j) \xi + \rh (s_k) \xi \|_p^p
   + \| \rh (s_j) \xi - \rh (s_k) \xi \|_p^p
& \leq 2 \| \xi \|_p^p + 2 \| \xi \|_p^p
   \\
& = 2 \| \rh (s_j) \xi \|_p^p + 2 \| \rh (s_k) \xi \|_p^p.
\notag
\end{align}
By Corollary~2.1 of~\cite{Lp},
we must have in fact
\[
\| \rh (s_j) \xi + \rh (s_k) \xi \|_p^p
   + \| \rh (s_j) \xi - \rh (s_k) \xi \|_p^p
 = 2 \| \rh (s_j) \xi \|_p^p + 2 \| \rh (s_k) \xi \|_p^p,
\]
and it then follows,
by the condition for equality in Corollary~2.1 of~\cite{Lp},
that $\rh (s_j) \xi$ and $\rh (s_k) \xi$ have
essentially disjoint supports.

Now assume instead the hypotheses of~(\ref{L-IfpStd<2}).
In this case,
\[
\| \rh (s_j) \xi + \rh (s_k) \xi \|_p^p
   + \| \rh (s_j) \xi - \rh (s_k) \xi \|_p^p
 = \| \dt_j + \dt_k \|_p^p \| \xi \|_p^p
   + \| \dt_j - \dt_k \|_p^p \| \xi \|_p^p,
\]
so we have equality at the first step in~(\ref{Eq:IfpStd}).
The condition for equality in Corollary~2.1 of~\cite{Lp}
therefore implies that $\rh (s_j) \xi$ and $\rh (s_k) \xi$ have
essentially disjoint supports.
The claim is proved.

Since $\mu$ is \sft,
there is $\xi \in L^p (X, \mu)$
such that $\xi (x) > 0$ for all $x \in X.$
It follows from the definition of a semispatial partial isometry,
Proposition~\ref{P:SStar}(\ref{P:SStar-5}),
and Lemma~\ref{L-PrelimComp}(\ref{L-PrelimComp-0})
that $\rh (s_j) \xi$ is nonzero almost everywhere on~$X_j,$
and similarly
that $\rh (s_k) \xi$ is nonzero almost everywhere on~$X_k.$
Therefore $X_j$ and $X_k$ are essentially disjoint.
The conclusion of the lemma follows.
\end{proof}

\begin{cor}\label{C-IfqStd}
Let $A$ be any of $L_d,$
$C_d,$
or~$L_{\I}.$
Let $p \in (1, \I) \SM \{ 2 \},$
let $\XBM$ be a \sfm,
and let $\rh \colon L_d \to \LLp$ be a \rpn.
Adopt the notational conventions of Definition~\ref{D:StdRep}
in case $A = L_{\I}.$
Let $q \in (1, \I) \SM \{ 2 \}$
satisfy $\frac{1}{p} + \frac{1}{q} = 1,$
and let $\rh' \colon A \to L (L^q (X, \mu))$
be as in Lemma~\ref{L:TransposeRep}.
\begin{enumerate}
\item\label{C-IfqStd>2}
Suppose $p > 2.$
If $\rh$ is $p$-standard on $\spn (t_1, t_2, \ldots, t_d)$
and $\rh'$ is \sfi,
then $\rh'$ is disjoint.
\item\label{C-IfqStd<2}
Suppose $p < 2.$
If for all $\ld \in \C^d$
we have $\| \rh (t_{\ld}) \| \leq \| \ld \|_q,$
and $\rh'$ is \fis,
then $\rh'$ is disjoint.
\end{enumerate}
\end{cor}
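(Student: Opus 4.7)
The plan is to reduce this to Lemma~\ref{L-IfpStd} by applying that lemma to the dual \rpn\ $\rh'$ on $L^q (X, \mu),$ using $q$ in place of $p.$ The key translation is the observation that the dual of an operator has the same norm as the operator: since $\rh' (s_j) = \rh (t_j)'$ and $\rh' (t_j) = \rh (s_j)',$ we get
\[
\| \rh' (s_{\ld}) \| = \| \rh (t_{\ld})' \| = \| \rh (t_{\ld}) \|
\andeqn
\| \rh' (t_{\ld}) \| = \| \rh (s_{\ld}) \|
\]
for every $\ld \in \C^d.$ Thus the hypothesis on $\rh (t_{\ld})$ in Corollary~\ref{C-IfqStd} becomes exactly the hypothesis on $\rh' (s_{\ld})$ needed in Lemma~\ref{L-IfpStd} applied to~$\rh'.$

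For part~(\ref{C-IfqStd>2}), I would argue as follows. Since $p > 2,$ the conjugate exponent satisfies $q \in (1, 2).$ The hypothesis that $\rh$ is $p$-standard on $\spn (t_1, t_2, \ldots, t_d)$ means, by Definition~\ref{D:StdRep}(\ref{D:StdRep-t}), that $\ld \mapsto \rh (t_{\ld})$ is isometric from $l^q_d,$ so by the duality computation above, $\ld \mapsto \rh' (s_{\ld})$ is isometric from $l^q_d$ to $L (L^q (X, \mu)).$ This is exactly the statement that $\rh'$ is $q$-standard on $\spn (s_1, s_2, \ldots, s_d),$ in the sense of Definition~\ref{D:StdRep}(\ref{D:StdRep-s}) applied to $\rh'$ with exponent~$q.$ Together with the hypothesis that $\rh'$ is \sfi, this puts us in the situation of Lemma~\ref{L-IfpStd}(\ref{L-IfpStd<2}) for~$\rh',$ whose conclusion is that $\rh'$ is disjoint.

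For part~(\ref{C-IfqStd<2}), I would argue similarly. Since $p < 2,$ we have $q > 2.$ The hypothesis $\| \rh (t_{\ld}) \| \leq \| \ld \|_q$ gives $\| \rh' (s_{\ld}) \| \leq \| \ld \|_q$ by the duality computation, and $\rh'$ is \fis\ by hypothesis. These are exactly the hypotheses of Lemma~\ref{L-IfpStd}(\ref{L-IfpStd>2}) applied to $\rh'$ on $L^q (X, \mu)$ with exponent $q > 2,$ yielding that $\rh'$ is disjoint.

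I do not anticipate any real obstacle: the whole argument is a dualization of Lemma~\ref{L-IfpStd}, and the only content is to verify that the norm-preserving property of passing to the dual operator converts the hypotheses stated in terms of $\rh$ and $\rh'$ into the form needed by that lemma. One minor bookkeeping item to watch is to ensure the conjugate-exponent conventions match: specifically, in Definition~\ref{D:StdRep}(\ref{D:StdRep-s}) the exponent used on $\spn (s_1, \ldots, s_d)$ is the same as the base exponent of the representation, while in~(\ref{D:StdRep-t}) it is the conjugate. Thus the statement ``$\rh$ is $p$-standard on the $t_j$'s'' (using $l^q_d$) is the mirror of ``$\rh'$ is $q$-standard on the $s_j$'s'' (also using $l^q_d$), which is what makes the reduction clean.
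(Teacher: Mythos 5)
Your proposal is correct and is essentially the paper's own argument: the paper likewise observes that $\| \rh' (s_{\ld}) \| = \| \rh (t_{\ld}) \|$ by duality and then applies Lemma~\ref{L-IfpStd}(\ref{L-IfpStd<2}) (respectively~(\ref{L-IfpStd>2})) to $\rh'$ with $q$ in place of~$p.$ Your extra care about which exponent appears in Definition~\ref{D:StdRep} is exactly the right bookkeeping point, and nothing further is needed.
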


\begin{proof}
Under the hypotheses of~(\ref{C-IfqStd>2}),
we have $q < 2.$
Also, by duality,
for $\ld \in \C^d$ we have
\[
\| \rh' (s_{\ld}) \| = \| \rh (t_{\ld}) \| = \| \ld \|_q.
\]
Therefore $\rh'$ satisfies the hypotheses of
Lemma~\ref{L-IfpStd}(\ref{L-IfpStd<2}), with $q$ in place of~$p,$
so is disjoint.

A similar argument shows that if $\rh$
satisfies the hypotheses of~(\ref{C-IfqStd<2}),
then $\rh'$ is disjoint by
Lemma~\ref{L-IfpStd}(\ref{L-IfpStd>2}).
\end{proof}

\begin{lem}\label{L-FISAndDisjImpSp}
Let $d \in \N,$
let $p \in [1, \I) \SM \{ 2 \},$
let $\XBM$ be a \sfm,
and let $\rh \colon L_d \to \LLp$ be a \rpn\  %
which is disjoint
(Definition~\ref{D:SpatialRep}(\ref{D:SpatialRep-Disj}))
and \fis\  %
(Definition~\ref{D:KindsOfReps1}(\ref{D:Repn-FI})).
Then $\rh$ is spatial
(Definition~\ref{D:SpatialRep}(\ref{D:SpatialRep-Sp})).
\end{lem}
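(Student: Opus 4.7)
The plan is to show each $\rh(s_j)$ is a spatial isometry and each $\rh(t_j)$ is its reverse, so that the \rpn\ is spatial by Definition~\ref{D:SpatialRep}(\ref{D:SpatialRep-Sp}). First, applying Lamperti's Theorem~\ref{T:Lamperti} to the isometry $\rh(s_j)$ produces a semispatial system for it, in particular a range support $Y_j \subset X$. By the disjointness hypothesis there exist disjoint measurable sets $X_1, \ldots, X_d$ with ${\mathrm{ran}}(\rh(s_j)) \subset L^p(X_j, \mu)$; Lemma~\ref{L-BasicWSPI}(\ref{L-BasicWSPI-3}) then forces $Y_j \subset X_j$ up to null sets, so after replacing $X_j$ by $Y_j$ (and discarding null sets) we may assume the $X_j$ are pairwise disjoint and each $X_j$ is the range support of the semispatial isometry $\rh(s_j)$.

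The heart of the argument, where the Leavitt relation $\sum_{j=1}^d s_j t_j = 1$ enters, is to upgrade ``semispatial'' to ``spatial.'' For any $\xi \in L^p(X, \mu)$ we have
\[
\xi = \sum_{j=1}^d \rh(s_j) \rh(t_j) \xi,
\]
and the $j$-th summand lies in ${\mathrm{ran}}(\rh(s_j)) \subset L^p(X_j, \mu)$. Hence $\xi$ is supported in $\bigcup_j X_j$ for every $\xi$, so the $X_j$ partition $X$ up to a null set and $L^p(X, \mu) = \bigoplus_p L^p(X_j, \mu)$. Now fix $\xi \in L^p(X_j, \mu)$ and multiply the identity above by $\ch_{X_j}$: since $\rh(s_k)\rh(t_k)\xi$ is supported in $X_k$ and the $X_k$ are disjoint, all $k \neq j$ terms vanish on $X_j$, leaving $\xi = \rh(s_j)\rh(t_j)\xi$. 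Thus ${\mathrm{ran}}(\rh(s_j)) = L^p(X_j, \mu)$, and Lemma~\ref{L-CndForSp} shows $\rh(s_j)$ is a spatial isometry with range support $X_j$.

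Finally, to identify $\rh(t_j)$ with the reverse $r_j$ of $\rh(s_j)$ (which is itself a spatial partial isometry by Lemma~\ref{L-BasicSPI}(\ref{L-BasicSPI-4})), I apply Lemma~\ref{L-BasicSPI}(\ref{L-BasicSPI-5}) with $u = \rh(t_j)$, $s = \rh(s_j)$, $E = X$, and $F = X_j$. The required identity $us = m(\ch_X) = 1$ holds by the relation $t_j s_j = 1$, and the vanishing $\rh(t_j)|_{L^p(X \SM X_j, \mu)} = 0$ follows from $t_j s_k = 0$ for $k \neq j$ combined with ${\mathrm{ran}}(\rh(s_k)) = L^p(X_k, \mu)$ and the decomposition $X \SM X_j = \coprod_{k \neq j} X_k$ modulo null sets. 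Hence $\rh(t_j) = r_j$ is spatial and is the reverse of $\rh(s_j)$.

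The main obstacle is the upgrade from semispatial to spatial in the middle paragraph; this requires extracting surjectivity of the semispatial realization onto the full $\sm$-algebra on $X_j$ purely from the Leavitt relation $\sum_j s_j t_j = 1$, and it is precisely here that the algebraic structure of $L_d$ (as opposed to $C_d$) is used.
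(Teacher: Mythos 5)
Your proof is correct, and its first two steps are essentially the paper's argument: use disjointness plus the relation $\sum_{j=1}^d s_j t_j = 1$ to conclude that the $X_j$ partition $X$ up to a null set and that ${\mathrm{ran}} (\rh (s_j)) = L^p (X_j, \mu),$ then invoke Lamperti's Theorem together with Lemma~\ref{L-CndForSp} (the paper packages this as Lemma~\ref{L-PiIsSp}) to see that each $\rh (s_j)$ is a spatial isometry. You even spell out a detail the paper leaves implicit, namely why $\xi = \rh(s_j) \rh(t_j) \xi$ for $\xi \in L^p(X_j, \mu).$ The one place you genuinely diverge is the identification of $\rh (t_j)$: the paper builds an auxiliary spatial representation $\sm$ with $\sm (s_j) = \rh (s_j)$ and $\sm (t_j)$ the reverse of $\rh (s_j),$ and then concludes $\rh = \sm$ from the rigidity statement Lemma~\ref{L:LdUniq}, whereas you verify directly that $\rh (t_j)$ satisfies the two conditions of Lemma~\ref{L-BasicSPI}(\ref{L-BasicSPI-5}) characterizing the reverse (namely $\rh(t_j)\rh(s_j) = 1 = m(\ch_X)$ and vanishing on $L^p(X \SM X_j, \mu),$ the latter from $t_j s_k = 0$ and ${\mathrm{ran}}(\rh(s_k)) = L^p(X_k,\mu)$). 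Your route avoids both the uniqueness lemma and the need to check that the reverses themselves satisfy the Leavitt relations; the paper's route is slightly shorter on the page because that checking is delegated to previously established lemmas. Both are valid.
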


\begin{proof}
Let $X_1, X_2, \ldots, X_d \subset X$ be the sets of
Definition~\ref{D:SpatialRep}(\ref{D:SpatialRep-Disj}),
so that
${\mathrm{ran}} (\rh (s_j)) \subset L^p (X_j, \mu)$
for all~$j.$
Since $\sum_{j = 1}^d \rh (s_j) \rh (t_j) = 1,$
the closed linear span of the ranges of the $\rh (s_j)$
is all of $L^p (X, \mu),$
so (up to a set of measure zero, which we may ignore)
$\bigcup_{j = 1}^d X_j = X$
and ${\mathrm{ran}} (\rh (s_j)) = L^p (X_j, \mu).$
It follows from Lemma~\ref{L-PiIsSp}
that $\rh (s_j)$ is a spatial isometry for $j = 1, 2, \ldots, d.$
Using Lemma~\ref{L-BasicSPI}(\ref{L-BasicSPI-4}),
Definition~\ref{D-Reverse},
and $\coprod_{j = 1}^d X_j = X,$
we see that there is a \rpn\  $\sm \colon L_d \to \LLp$
such that, for $j = 1, 2, \ldots, d,$
we have $\sm (s_j) = \rh (s_j)$ and $\sm (t_j)$ is
the reverse of $\rh (s_j).$
Clearly $\sm$ is spatial.
Lemma~\ref{L:LdUniq} implies that $\rh = \sm.$
\end{proof}

\begin{proof}[Proof of Theorem~\ref{T-SpatialRepsL}]
We begin by observing that (\ref{T-SpatialRepsL-N1}) implies
all the other conditions.
For (\ref{T-SpatialRepsL-N2}) and~(\ref{T-SpatialRepsL-N3}),
this is trivial.
For all the others except~(\ref{T-SpatialRepsL-N10}),
this follows from the various conclusions
of Lemma~\ref{L-CnsqOfSp}.
To get~(\ref{T-SpatialRepsL-N10}),
we must also use Lemma~\ref{L-CnsqOfSp}(\ref{L-CnsqOfSp-7})
to see that $\rh'$ is spatial,
and then apply Lemma~\ref{L-CnsqOfSp}(\ref{L-CnsqOfSp-2}) to~$\rh'.$

We now prove that all the other conditions
imply~(\ref{T-SpatialRepsL-N1})
(omitting (\ref{T-SpatialRepsL-N3}), (\ref{T-SpatialRepsL-N10}),
and~(\ref{T-SpatialRepsL-N13}) when $p = 1$),
mostly via one of (\ref{T-SpatialRepsL-N2}),
(\ref{T-SpatialRepsL-N6}), or (\ref{T-SpatialRepsL-N13}).
That (\ref{T-SpatialRepsL-N6}) implies~(\ref{T-SpatialRepsL-N1})
is Lemma~\ref{L-FISAndDisjImpSp}.
To see that (\ref{T-SpatialRepsL-N13})
implies~(\ref{T-SpatialRepsL-N1})
for $p \neq 1,$
we apply Lemma~\ref{L-CnsqOfSp}(\ref{L-CnsqOfSp-7}) to~$\rh'$
and use $(\rh')' = \rh.$
We prove that (\ref{T-SpatialRepsL-N2})
implies~(\ref{T-SpatialRepsL-N6})
(and hence also implies~(\ref{T-SpatialRepsL-N1})).
For $j = 1, 2, \ldots, d,$
let $X_j \subset X$ be the range support of $\rh (s_j).$
Then ${\mathrm{ran}} (\rh (s_j)) = L^p (X_j, \mu)$
because $\rh (s_j)$ is spatial.
Therefore ${\mathrm{ran}} (\rh (s_j t_j)) = L^p (X_j, \mu).$
If now $j \neq k,$
then $\rh (s_j t_j)$ and $\rh (s_k t_k)$
are idempotents whose product is zero,
so $L^p (X_j, \mu) \cap L^p (X_k, \mu) = \{ 0 \}.$
Disjointness of $\rh$ follows,
and the other past of~(\ref{T-SpatialRepsL-N6}) is immediate.

For $p \neq 1,$ we prove that
(\ref{T-SpatialRepsL-N3}) implies~(\ref{T-SpatialRepsL-N13}).
It follow from Lemma~\ref{L-DualSPI}
that $\rh'$ satisfies~(\ref{T-SpatialRepsL-N2}).
We have already proved that
(\ref{T-SpatialRepsL-N2})
implies~(\ref{T-SpatialRepsL-N1}),
so we conclude that $\rh'$ is spatial,
which is~(\ref{T-SpatialRepsL-N13}).

To prove that (\ref{T-SpatialRepsL-N4})
and~(\ref{T-SpatialRepsL-N5})
imply~(\ref{T-SpatialRepsL-N6}),
we use the implication from
(\ref{T:SpatialRepsMd-1}) to (\ref{T:SpatialRepsMd-5})
in Theorem~\ref{T:SpatialRepsMd}
to conclude that $\rh$ is disjoint.
If we start with~(\ref{T-SpatialRepsL-N5}),
we also use Remark~\ref{R-cogImpfis}.

That (\ref{T-SpatialRepsL-N7})
implies~(\ref{T-SpatialRepsL-N6})
is Remark~\ref{R-cogImpfis},
and that (\ref{T-SpatialRepsL-N8})
implies~(\ref{T-SpatialRepsL-N6}) is clear.

That (\ref{T-SpatialRepsL-N9})
implies~(\ref{T-SpatialRepsL-N6})
follows from Lemma~\ref{L-IfpStd}.
We prove that if $p \neq 1$ then (\ref{T-SpatialRepsL-N10})
implies~(\ref{T-SpatialRepsL-N13}).
It follows from Corollary~\ref{C-IfqStd}
that $\rh'$ is disjoint.
We have already proved that
(\ref{T-SpatialRepsL-N6})
implies~(\ref{T-SpatialRepsL-N1}),
so we conclude that $\rh'$ is spatial,
which is~(\ref{T-SpatialRepsL-N13}).

We now prove that (\ref{T-SpatialRepsL-Row})
implies~(\ref{T-SpatialRepsL-N9}).
Since we already proved that (\ref{T-SpatialRepsL-N9})
implies~(\ref{T-SpatialRepsL-N6}),
this will show that (\ref{T-SpatialRepsL-Row})
implies~(\ref{T-SpatialRepsL-N6}).
Let $\ld \in \C^d$ and let $\xi \in L^p (X, \mu).$
Using the assumption at the second step, we get
\begin{align*}
\| \rh (s_{\ld} ) \xi \|_p
& = \left\| \big( \rh (s_1) \,\,\, \rh (s_2) \,\,\,
                \cdots \,\,\, \rh (s_d) \big)
            \big( \ld_1 \xi, \, \ld_2 \xi, \, \ldots, \, \ld_d \xi \big)
                        \right\|_p
             \\
& = \left\| \big( \ld_1 \xi, \, \ld_2 \xi, \, \ldots, \, \ld_d \xi \big)
                        \right\|_p
  = \| \ld \|_p \| \xi \|_p.
\end{align*}
This equation implies both parts of condition~(\ref{T-SpatialRepsL-N9}).

That (\ref{T-SpatialRepsL-N11})
implies~(\ref{T-SpatialRepsL-N2})
is immediate from Lemma~\ref{L-PiIsSp}.
To see that (\ref{T-SpatialRepsL-N12})
implies~(\ref{T-SpatialRepsL-N2}),
use in addition Remark~\ref{R-cogImpfis}.
\end{proof}

\section{Spatial representations give isometric
 algebras}\label{Sec:SpatialIsSame}

\indent
In this section, we prove that for $d < \I,$
any two spatial \rpn s of $L_d$
(in the sense of Definition~\ref{D:SpatialRep}(\ref{D:SpatialRep-Sp}))
on $L^p (X, \mu),$
for the same value of~$p,$
give isometrically isomorphic Banach algebras.
The main technical tools are the notion of a free \rpn\  %
(Definition~\ref{D:FreeRep})
and the spatial realizations of spatial isometries.

\begin{dfn}\label{D:FreeRep}
Let $A$ be any of $L_d$ (Definition~\ref{D:Leavitt}),
$C_d$ (Definition~\ref{D:Cohn}),
or~$L_{\I}$ (Definition~\ref{D:LInfty}).
Let $\XBM$ be a \sfm,
let $p \in [1, \I],$
and let $\rh \colon A \to \LLp$
be a \rpn.
\begin{enumerate}
\item\label{D:FreeRep-Free}
We say that $\rh$ is {\emph{free}} if there is a partition
$X = \coprod_{m \in \Z} E_m$
such that for all $m \in \Z$ and all~$j,$
we have
\[
\rh (s_j) ( L^p (E_m, \mu)) \subset L^p (E_{m + 1}, \mu)
\andeqn
\rh (t_j) ( L^p (E_m, \mu)) \subset L^p (E_{m - 1}, \mu).
\]
\item\label{D:FreeRep-Approx}
We say that $\rh$ is {\emph{approximately free}} if for every $N \in \N$
there is $n \geq N$
and a partition
$X = \coprod_{m = 0}^{n - 1} E_m$
such that for $m = 0, 1, \ldots, n - 1$ and all~$j,$
taking $E_n = E_0$ and $E_{-1} = E_{n - 1},$
we have
\[
\rh (s_j) ( L^p (E_m, \mu)) \subset L^p (E_{m + 1}, \mu)
\andeqn
\rh (t_j) ( L^p (E_m, \mu)) \subset L^p (E_{m - 1}, \mu).
\]
\end{enumerate}
When dealing with approximately free \rpn s,
we always take the index $m$ in $E_m$ mod~$n.$
\end{dfn}

To produce free \rpn s,
we follow Lemmas \ref{L:MultLd}, \ref{L:MultCd}, and~\ref{L:MultLI},
producing \rpn s of the form $(\rh ( \cdot ) \otimes 1)^{1 \otimes u}$
for suitable~$u.$
To simplify the notation and avoid conflict,
we will abbreviate this \rpn\  to~$\rh_u.$

\begin{lem}\label{L:TensU}
Let $A$ be any of $L_d$ (Definition~\ref{D:Leavitt}),
$C_d$ (Definition~\ref{D:Cohn}),
or~$L_{\I}$ (Definition~\ref{D:LInfty}).
Let $p \in [1, \I).$
Let $\XBM$ and $\YCN$ be \sfm s.
Let $\rh \colon A \to \LLp$ be a \rpn,
and let $u \in L (L^p (Y, \nu))$ be invertible.
Then there exists a unique \rpn\  %
$\rh_u \colon A \to L \big( L^p (X \times Y, \, \mu \times \nu) \big)$
such that for all $j$ we have
(using the notation of Theorem~\ref{T-LpTP}(\ref{T-LpTP-3a}))
\[
\rh_u (s_j) = \rh (s_j) \otimes u
\andeqn
\rh_u (t_j) = \rh (t_j) \otimes u^{-1}.
\]
This construction has the following properties:
\begin{enumerate}
\item\label{L:TensU-0}
If $a \in A$ is homogeneous of degree~$k$
(with respect to the $\Z$-grading of Proposition~\ref{P:ZGrading}),
then $\rh_u (a) = \rh (a) \otimes u^k.$
\item\label{L:TensU-1}
If $u$ is isometric, $p \neq 2,$ and $\rh$ is spatial
(Definition~\ref{D:SpatialRep}(\ref{D:SpatialRep-Sp})),
then $\rh_u$ is spatial.
\item\label{L:TensU-2}
If there is a partition $Y = \coprod_{m \in \Z} F_m$
such that $u ( L^p (F_m, \nu)) = L^p (F_{m + 1}, \nu)$
for all $m \in \Z,$
then $\rh_u$ is free
in the sense of Definition~\ref{D:FreeRep}(\ref{D:FreeRep-Free}).
\end{enumerate}
\end{lem}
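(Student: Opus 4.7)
The plan is to construct $\rh_u$ as a homomorphism by verifying the defining relations for $A$, then derive the three additional properties in order.

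First, for existence and uniqueness of $\rh_u$: uniqueness is immediate from Lemma~\ref{L:AUniq} (for $L_d$ and $L_{\I}$) or the analogous argument for $C_d$, since $\rh_u$ is prescribed on the generators. For existence, I would check the relations (\ref{Eq:Leavitt1})--(\ref{Eq:Leavitt3}) (or their Cohn / $L_{\I}$ analogues) directly, using Theorem~\ref{T-LpTP}(\ref{T-LpTP-4}): for instance, $[\rh(t_j) \otimes u^{-1}][\rh(s_k) \otimes u] = \rh(t_j)\rh(s_k) \otimes 1$, which is $1$ if $j = k$ and $0$ otherwise, and $\sum_j [\rh(s_j) \otimes u][\rh(t_j) \otimes u^{-1}] = \sum_j \rh(s_j)\rh(t_j) \otimes 1 = 1 \otimes 1 = 1$.

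For part~(\ref{L:TensU-0}), I would apply Lemma~\ref{L:SameLength} together with Lemma~\ref{L:PropOfWords}(\ref{L:PropOfWords-2}) to reduce to the case $a = s_{\af} t_{\bt}$ with $l(\af) - l(\bt) = k$ (in the $L_d$ case; the other cases are similar). Then
\[
\rh_u(s_{\af} t_{\bt})
 = \rh_u(s_{\af(1)}) \cdots \rh_u(s_{\af(l(\af))}) \rh_u(t_{\bt(l(\bt))}) \cdots \rh_u(t_{\bt(1)})
 = \rh(s_{\af} t_{\bt}) \otimes u^{l(\af) - l(\bt)},
\]
by Theorem~\ref{T-LpTP}(\ref{T-LpTP-4}) and the definition of $\rh_u$ on generators.

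For part~(\ref{L:TensU-1}), the key observation is that an invertible isometry $u$ on $L^p(Y,\nu)$ with $p \in [1,\I) \SM \{2\}$ is, by Lamperti's theorem (Theorem~\ref{T:Lamperti}) together with Remark~\ref{R-SpatialCF}, a spatial isometry, and because $u u^{-1} = u^{-1} u = 1$, Lemma~\ref{L-BasicSPI}(\ref{L-BasicSPI-5}) identifies its reverse as $u^{-1}$. Since $\rh$ is spatial, each $\rh(s_j)$ is a spatial partial isometry with reverse $\rh(t_j)$. Then Lemma~\ref{L-TensorSPI} shows that $\rh_u(s_j) = \rh(s_j) \otimes u$ is a spatial partial isometry whose reverse is $\rh(t_j) \otimes u^{-1} = \rh_u(t_j)$, verifying Definition~\ref{D:SpatialRep}(\ref{D:SpatialRep-Sp}).

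For part~(\ref{L:TensU-2}), I would use the identification of $L^p(X \times Y, \mu \times \nu)$ with $L^p(X,\mu) \otimes_p L^p(Y,\nu)$ from Theorem~\ref{T-LpTP} to set $E_m = X \times F_m$. These partition $X \times Y$, and under the tensor product identification, $L^p(E_m, \mu \times \nu)$ corresponds to the closure of $L^p(X,\mu) \otimes L^p(F_m,\nu)$. Since $u(L^p(F_m,\nu)) = L^p(F_{m+1},\nu)$, part~(\ref{L:TensU-0}) together with Theorem~\ref{T-LpTP}(\ref{T-LpTP-3a}) yields $\rh_u(s_j)(L^p(E_m)) \subset L^p(E_{m+1})$ and $\rh_u(t_j)(L^p(E_m)) \subset L^p(E_{m-1})$. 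The main obstacle is primarily part~(\ref{L:TensU-1}): one must be careful that ``reverse'' agrees on both tensor factors simultaneously, but this is handled cleanly by Lemma~\ref{L-TensorSPI} once one has verified that the reverse of $u$ in the sense of Definition~\ref{D-Reverse} is indeed $u^{-1}$.
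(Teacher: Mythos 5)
Your proposal is correct and follows essentially the same route as the paper: reduce part~(1) to the facts that a surjective isometry is spatial with reverse $u^{-1}$ and then apply Lemma~\ref{L-TensorSPI}, reduce part~(0) to monomials $s_{\af} t_{\bt}$ via Lemma~\ref{L:PropOfWords}(\ref{L:PropOfWords-2}), and use the partition $X \times Y = \coprod_m X \times F_m$ for part~(2). The only cosmetic difference is that for existence you verify the relations directly via Theorem~\ref{T-LpTP}(\ref{T-LpTP-4}), whereas the paper obtains $\rh_u$ as $(\rh \otimes_p 1)^{1 \otimes u}$ by composing Lemma~\ref{L-TPRep} with Lemma~\ref{L:MultLd} (or \ref{L:MultCd}, \ref{L:MultLI}); the underlying computation is the same.
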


\begin{proof}
Existence and uniqueness of $\rh_u$ follow from
Lemma~\ref{L-TPRep},
combined with the appropriate one of
Lemmas \ref{L:MultLd}, \ref{L:MultCd}, and~\ref{L:MultLI},
taking $v = u^{-1}$ if $A = C_d$ or~$L_{\I}.$

It suffices to prove part~(\ref{L:TensU-0})
when $a$ is a product of generators $s_j$ and $t_j.$
By Lemma~\ref{L:PropOfWords}(\ref{L:PropOfWords-2}),
we may assume that there are words $\af, \bt \in W_{\I}^d$
such that $a = s_{\af} t_{\bt}.$
Then $\deg (a) = l (\af) - l (\bt)$
(by Lemma~\ref{L:PropOfWords}(\ref{L:PropOfWords-1})),
and one checks directly that
\[
\rh_u (s_{\af} t_{\bt})
  = \rh (s_{\af} t_{\bt}) \otimes u^{l (\af) - l (\bt)}.
\]

We prove~(\ref{L:TensU-1}).
Lemma~\ref{L-PiIsSp}
implies that $u$ is spatial,
and Lemma~\ref{L-BasicSPI}(\ref{L-BasicSPI-5})
implies that $u^{-1}$ is the reverse of~$u.$
Now apply Lemma~\ref{L-TensorSPI}
to conclude that $\rh (s_j \otimes u)$ is spatial
with reverse $\rh (t_j) \otimes u^{-1}.$

To prove~(\ref{L:TensU-2}),
we use the partition
\[
X \times Y = \coprod_{m \in \Z} X \times Y_m.
\]
The sets $X \times Y_m$ clearly have the required properties.
\end{proof}

The construction of Lemma~\ref{L:TensU}
preserves various other properties of \rpn s,
but we will not need this information.

\begin{prp}\label{P:FreeBigger}
Let $A$ be any of $L_d$ (Definition~\ref{D:Leavitt}),
$C_d$ (Definition~\ref{D:Cohn}),
or~$L_{\I}$ (Definition~\ref{D:LInfty}).
Let $p \in [1, \I),$
let $\XBM$ be a \sfm,
and let $\rh \colon A \to \LLp$ be a \rpn.
Let $u \in L (l^p (\Z))$ be the bilateral shift,
$(u \et) (m) = \et (m - 1)$ for $\et \in l^p (\Z).$
Let $\rh_u$ be as in Lemma~\ref{L:TensU}.
Then for every $a \in A$ we have $\| \rh_u (a) \| \geq \| \rh (a) \|.$
\end{prp}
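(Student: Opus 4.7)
The strategy is to combine the $\Z$-grading on~$A$ with the observation that the bilateral shift $u$ on $l^p (\Z)$ admits F\o lner-type approximately invariant vectors; this lets us reduce $\rh_u$ back to~$\rh$ on a carefully chosen test vector. Using Proposition~\ref{P:ZGrading}, I would write a given $a \in A$ as a finite grading sum
\[
a = \sum_{k = - K}^{K} a_k,
\]
with each $a_k$ homogeneous of degree~$k.$ This sum is finite because $A$ is purely algebraic and every element is a finite linear combination of words in the generators, all of which have degree~$\pm 1.$ By Lemma~\ref{L:TensU}(\ref{L:TensU-0}),
\[
\rh_u (a) = \sum_{k = - K}^{K} \rh (a_k) \otimes u^k.
\]

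Next, take the approximately invariant vectors
\[
\et_N = N^{- 1 / p} \ch_{\{ 0, 1, \ldots, N - 1 \}}
  \in l^p (\Z),
\]
which satisfy $\| \et_N \|_p = 1$ and
$\| u^k \et_N - \et_N \|_p^p = 2 |k| / N$
for $|k| < N.$
Given $\xi \in L^p (X, \mu),$ I would then compute inside
$L^p (X, \mu) \otimes_p l^p (\Z)$ (cf.\ Theorem~\ref{T-LpTP})
\[
\rh_u (a) ( \xi \otimes \et_N )
   - \big( \rh (a) \xi \big) \otimes \et_N
 = \sum_{k = - K}^{K} \rh (a_k) \xi \otimes ( u^k \et_N - \et_N ),
\]
whose norm is, by Theorem~\ref{T-LpTP}(\ref{T-LpTP-1}), bounded above by
$\sum_{k} \| \rh (a_k) \xi \|_p \cdot \| u^k \et_N - \et_N \|_p,$
which tends to $0$ as $N \to \I$ since there are only finitely many summands and each tends to zero.

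Finally, since $\| \xi \otimes \et_N \|_p = \| \xi \|_p$ by Theorem~\ref{T-LpTP}(\ref{T-LpTP-1}), and
$\big\| \big( \rh (a) \xi \big) \otimes \et_N \big\|_p
 = \| \rh (a) \xi \|_p,$
passing to the limit $N \to \I$ gives $\| \rh_u (a) \| \geq \| \rh (a) \xi \|_p / \| \xi \|_p$ for every nonzero $\xi \in L^p (X, \mu),$ and taking the supremum over $\xi$ completes the proof. There is no serious obstacle: the only subtle point is that the grading decomposition must be genuinely finite, which is automatic from the algebraic nature of~$A.$ The argument is in essence a F\o lner-type averaging over the grading shift.
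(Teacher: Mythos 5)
Your proof is correct and is essentially the paper's argument: both decompose $a$ into its finitely many homogeneous components via Lemma~\ref{L:TensU}(\ref{L:TensU-0}) and test $\rh_u(a)$ against vectors of the form $\zeta \otimes \eta_N$ with $\eta_N$ a normalized indicator of a long interval in~$\Z.$ The only cosmetic difference is that the paper bounds $\| \rh_u(a)(\zeta \otimes \eta_N) \|_p$ from below directly by counting the coordinates in the middle of the window where the value equals the full $\rh(a)\zeta,$ whereas you bound the deviation from $(\rh(a)\zeta) \otimes \eta_N$ by the triangle inequality; both estimates are valid and yield the same conclusion.
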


\begin{proof}
Let $a \in A.$
Let $\ep > 0$;
we show that
$\| \rh_u (a) \| \geq \| \rh (a) \| - \ep.$

Recall the $\Z$-grading of Proposition~\ref{P:ZGrading}.
Choose $N_0 \in \N$ such that
there are homogeneous elements
\[
a_{- N_0}, \, a_{- N_0 + 1}, \, \ldots, a_{N_0 - 1}, a_{N_0} \in A
\]
such that $\deg (a_k) = k$ for all~$k$ and
$a = \sum_{k = - N_0}^{N_0} a_k.$
Choose $\zt \in L^p (X, \mu)$
such that
\[
\| \zt \|_p = 1
\andeqn
\| \rh (a) \zt \| > \| \rh (a) \| - \tfrac{1}{2} \ep.
\]
Set $r = \| \rh (a) \zt \|.$
If $r \leq \tfrac{1}{2} \ep,$ then $\| \rh (a) \| < \ep,$
and we are done.
Otherwise, choose $N \in \N$ such that
\[
N > \frac{N_0 r^p}{r^p - \left( r - \tfrac{1}{2} \ep \right)^p}.
\]

Let $\nu$ be counting measure on~$\Z.$
We identify $L^p (X \times \Z, \, \mu \times \nu)$
with the space of all sequences
$\xi = (\xi_m)_{m \in \Z}$ with $\xi_m  \in L^p (X, \mu)$ for
all~$m$ and such that
$\sum_{m \in \Z} \| \xi_m \|_p^p < \I,$
with
\[
\| \xi \|_p = \left( \sum_{m \in \Z} \| \xi_m \|_p^p \right)^{1/p}.
\]

Now define $\xi \in L^p (X \times \Z, \, \mu \times \nu)$ by
\[
\xi_m = \begin{cases}
  0 & | n | > N
        \\
  (2 N + 1)^{-1/p} \zt & | n | \leq N.
\end{cases}
\]
Then $\| \xi \|_p^p = 1.$

Set $\et = \rh_u (a) \xi.$
We have $\big[ (1 \otimes u) \xi \big]_m = \xi_{m - 1}$
for all $m \in \Z.$
Using Lemma~\ref{L:TensU}(\ref{L:TensU-0}),
for all $m \in \Z$ with $- N + N_0 \leq m \leq N - N_0$ we get
\begin{align*}
\et_m
 & = \sum_{k = - N_0}^{N_0} (2 N + 1)^{-1/p} \rh (a_k) \xi_{m - k}
      \\
 & = (2 N + 1)^{-1/p} \sum_{k = - N_0}^{N_0} \rh (a_k) \zt
   = (2 N + 1)^{-1/p} \rh (a) \zt.
\end{align*}
There are $2 N - 2 N_0 + 1$ such values of~$m.$
It follows that
\begin{align*}
\| \et \|_p^p
 & \geq (2 N - 2 N_0 + 1) (2 N + 1)^{-1} \| \rh (a_k) \zt \|_p^p
   = \left( \frac{2 N - 2 N_0 + 1}{2 N + 1} \right) r^p
 \\
 & > \left( 1 - \frac{N_0}{N} \right) r^p
   > \left( 1 - \frac{r^p - \left( r
                    - \tfrac{1}{2} \ep \right)^p}{r^p} \right) r^p
 = \left( r - \tfrac{1}{2} \ep \right)^p.
\end{align*}
So
\[
\| \et \| > r - \tfrac{1}{2} \ep
          = \| \rh (a) \zt \| - \tfrac{1}{2} \ep
          > \| \rh (a) \| - \ep.
\]
This shows that $\| \rh_u (a) \| > \| \rh (a) \| - \ep.$
\end{proof}

\begin{cor}\label{C:FreeRepsExist}
Let $A$ be any of $L_d,$ $C_d,$ or~$L_{\I}.$
Let $p \in [1, \I) \SM \{ 2 \}.$
Then there exists
an injective free spatial \rpn\  of~$A$ on $l^p (\N).$
\end{cor}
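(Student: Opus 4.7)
The plan is to bootstrap an injective spatial representation of~$A$ on $l^p(\N)$, guaranteed by Lemma~\ref{R-InjSpRpn}, into a free one via the construction of Lemma~\ref{L:TensU}, and then transport the result back to $l^p(\N)$ using the fact that $\N \times \Z$ is a countable set.

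Let $\rh_0 \colon A \to L(l^p(\N))$ be an injective spatial \rpn\ as provided by Lemma~\ref{R-InjSpRpn}. Let $\nu$ be counting measure on~$\Z$, so that $L^p(\Z, \nu) = l^p(\Z)$, and let $u \in L(l^p(\Z))$ be the bilateral shift, $(u \et)(m) = \et(m-1)$, which is a bijective isometry. Apply Lemma~\ref{L:TensU} to form the \rpn\ $(\rh_0)_u$ on $L^p(\N \times \Z)$ with respect to counting measure on $\N \times \Z$. Since $p \neq 2$, Lemma~\ref{L:TensU}(\ref{L:TensU-1}) implies that $(\rh_0)_u$ is spatial. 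Applying Lemma~\ref{L:TensU}(\ref{L:TensU-2}) to the partition $\Z = \coprod_{m \in \Z} \{m\}$, on which $u$ acts by $L^p(\{m\}) \mapsto L^p(\{m+1\})$, yields that $(\rh_0)_u$ is free with respect to the partition $\N \times \Z = \coprod_{m \in \Z} \N \times \{m\}$. Proposition~\ref{P:FreeBigger} then gives $\|(\rh_0)_u(a)\| \geq \|\rh_0(a)\|$ for all $a \in A$, so injectivity of $\rh_0$ passes to $(\rh_0)_u$.

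To land on $l^p(\N)$ itself, fix any bijection $\ph \colon \N \times \Z \to \N$ and let $w \colon l^p(\N \times \Z) \to l^p(\N)$ be the induced bijective isometry, $(w \xi)(n) = \xi(\ph^{-1}(n))$. By Lemma~\ref{L-PiIsSp}, $w$ is a spatial isometry, and by Lemma~\ref{L-BasicSPI}(\ref{L-BasicSPI-5}) its reverse is $w^{-1}$. Define $\rh \colon A \to L(l^p(\N))$ by $\rh(a) = w \cdot (\rh_0)_u(a) \cdot w^{-1}$. Injectivity is immediate; freeness with respect to the partition $\N = \coprod_{m \in \Z} \ph(\N \times \{m\})$ is immediate from that of $(\rh_0)_u$. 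Spatiality of $\rh$ follows from Lemma~\ref{L-CompSPI}: each $\rh(s_j)$ is a threefold composition of spatial partial isometries, and that same lemma identifies its reverse as $w \cdot (\rh_0)_u(t_j) \cdot w^{-1} = \rh(t_j)$, so the criterion of Definition~\ref{D:SpatialRep}(\ref{D:SpatialRep-Sp}) is verified.

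The entire argument is a bookkeeping assembly of the technology already developed, so there is no substantive obstacle; the only point requiring a little care is verifying that spatiality transports through conjugation by the isomorphism~$w$ and that reverses behave correctly, both of which are consequences of Lemma~\ref{L-CompSPI} together with the uniqueness statement of Lemma~\ref{L-BasicSPI}(\ref{L-BasicSPI-5}).
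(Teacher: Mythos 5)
Your proof is correct and follows essentially the same route as the paper: take the injective spatial representation from Lemma~\ref{R-InjSpRpn}, tensor against the bilateral shift via Lemma~\ref{L:TensU} to gain freeness and preserve spatiality, and use Proposition~\ref{P:FreeBigger} to preserve injectivity. The only difference is that you spell out the transport through the identification $l^p(\N \times \Z) \cong l^p(\N)$, which the paper leaves implicit; your verification that spatiality, reverses, and freeness survive conjugation by the induced spatial isometry is accurate.
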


\begin{proof}
By Lemma~\ref{R-InjSpRpn},
there is an injective spatial \rpn~$\rh$ of~$A$ on $l^p (\N).$
Let $\rh_u$ be
the \rpn\  of~$A$
on $l^p (\N) \otimes_p l^p (\Z) \cong l^p (\N)$
of Proposition~\ref{P:FreeBigger}.
The inequality $\| \rh_u (a) \| \geq \| \rh (a) \|$
for all $a \in A$ implies that $\rh_u$ is also injective.
Moreover,
$\rh_u$ is spatial by Lemma~\ref{L:TensU}(\ref{L:TensU-1})
and free by Lemma~\ref{L:TensU}(\ref{L:TensU-2}).
\end{proof}

We want to prove an inequality in the opposite direction from
that of Proposition~\ref{P:FreeBigger}.
We need a lemma.

\begin{lem}\label{L:Combinatorial}
Let $\XBM$ be a \sfm, with $\mu \neq 0.$
Let $d \in \{ 2, 3, 4, \ldots, \I \},$
let $X_1, X_2, \ldots, X_d \subset X$
($X_1, X_2, \ldots \subset X$ if $d = \I$)
be disjoint \mb\  sets,
and for each~$j$ let $S_j$ be an injective \mst\  %
(Definition~\ref{D:SetTrans})
from $\XBM$ to $\big( X_j, \cB |_{X_j}, \mu |_{X_j} \big).$
Then for every $n \in \N$ there exists $E \in \cB$
with $\mu (E) \neq 0$ such that the elements
\[
S_{\af (1)} \circ S_{\af (2)} \circ \cdots \circ S_{\af (m)} ([E])
  \in \cB / {\mathcal{N}} (\mu),
\]
for all $m \in \{ 0, 1, 2, \ldots, d \}$
and all words $\af = (\af (1), \af (2), \ldots \af (m)) \in W_m^d$
(see Notation~\ref{N:Words})
are disjoint in the sense of~Definition~\ref{D:BooleanOrder}.
\end{lem}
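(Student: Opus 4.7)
The plan is to take $E$ to be a representative of $S_{\af_0}([X])$ for a carefully chosen word $\af_0$ of length $n + 1$, and then reduce the required pairwise disjointness of the images $S_{\af}([E])$ (for $l(\af) \leq n$) to a non-periodicity property of $\af_0$. (I read the ``$m \in \{0,1,\ldots,d\}$'' in the statement as a typo for ``$m \in \{0,1,\ldots,n\}$''.)

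The key auxiliary observation is: if $\gm, \dt \in W_{\I}^d$ are two words such that neither is a prefix of the other, then $S_{\gm}([X]) \wedge S_{\dt}([X]) = 0$ in $\cB / {\mathcal{N}}(\mu)$. I would prove this by induction on $\min (l(\gm), l(\dt))$. If $\gm(1) \neq \dt(1)$, then $S_{\gm}([X]) \leq [X_{\gm(1)}]$ and $S_{\dt}([X]) \leq [X_{\dt(1)}]$, and disjointness follows from $X_{\gm(1)} \cap X_{\dt(1)} = \varnothing$. If $\gm(1) = \dt(1) = j$, write $\gm = (j)\gm'$ and $\dt = (j)\dt'$; then $\gm',\dt'$ still have no prefix relation, so by induction $S_{\gm'}([X]) \wedge S_{\dt'}([X]) = 0$, and applying $S_j$, which is an injective $\sm$-homomorphism (hence preserves meets and sends $0$ to $0$ by Lemma~\ref{L:ShmProp}(\ref{L:ShmProp-3})), yields the conclusion.

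Now put $\af_0 = (1, 2, 2, \ldots, 2) \in W_{n + 1}^d$ (possible because $d \geq 2$), and let $E \in \cB$ be any representative of $S_{\af_0}([X])$. Since $S_{\af_0}$ is a composition of injective $\sm$-homomorphisms and $[X] \neq 0$, we have $[E] \neq 0$, so $\mu(E) \neq 0$. For any word $\af$ of length at most $n$, multiplicativity of composition (as in Notation~\ref{N:WordsInGens}) gives $S_{\af}([E]) = S_{\af \af_0}([X])$, so the problem reduces to controlling $S_{\af \af_0}([X]) \wedge S_{\bt \af_0}([X])$ for distinct $\af, \bt$ of length $\leq n$.

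It remains to check that neither $\af \af_0$ nor $\bt \af_0$ is a prefix of the other. If $l(\af) = l(\bt)$ this is automatic since the two concatenations have the same length and are distinct. If $l(\af) < l(\bt) \leq n$, a direct matching of letters shows that $\af \af_0$ being a prefix of $\bt \af_0$ would force $\af_0(i) = \af_0(i + k)$ for $i = 1, 2, \ldots, n + 1 - k$, where $k = l(\bt) - l(\af) \in \{1, \ldots, n\}$; taking $i = 1$ gives $1 = \af_0(1) = \af_0(1 + k) = 2$, a contradiction. The auxiliary observation then delivers the required disjointness. The only real work is this little periodicity computation, and the sole (mild) obstacle is choosing $\af_0$ with no period $k \in \{1, \ldots, n\}$; the pattern $(1, 2, 2, \ldots, 2)$ is designed so that this fails at the very first position.
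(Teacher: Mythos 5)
Your proof is correct, and it takes a genuinely different (and shorter) route than the paper's. The paper builds $E$ by a measure-theoretic refinement: it first shows that \emph{distinct words of the same length} send $X$ to disjoint sets, then proves that $S_1^{n!}(X) \SM S_1^{2 \cdot n!}(X)$ has positive measure and can be shrunk to a set $E$ satisfying $E \cap S_1^m(E) = \E$ for $m = 1, \ldots, n$ (killing all short ``periods'' of $S_1$), and finally runs a case analysis on pairs of words using that aperiodic set. You instead prove the stronger combinatorial statement that $S_{\gm}([X])$ and $S_{\dt}([X])$ are disjoint whenever neither word is a prefix of the other, and then take $E$ to be a representative of $S_{\af_0}([X])$ for the aperiodic word $\af_0 = (1, 2, 2, \ldots, 2)$ of length $n + 1$; the whole problem then collapses to checking that no two words $\af \af_0,$ $\bt \af_0$ with $l(\af), l(\bt) \leq n$ and $\af \neq \bt$ are prefix-comparable, which your periodicity computation handles correctly (the range $j = 1, \ldots, n+1-k$ is nonempty precisely because $k \leq n$). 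Both arguments use $d \geq 2$ in the same essential way --- two distinct letters are needed, in your case to break periodicity and in the paper's case to produce $S_2(X) \subset X \SM S_1(X)$. Your reading of ``$m \in \{0, 1, \ldots, d\}$'' as a typo for ``$m \in \{0, 1, \ldots, n\}$'' matches both the paper's own proof and the way the lemma is applied in Proposition~\ref{P:FreeIsSmaller}. One small point worth making explicit: the auxiliary observation needs only that \shm s preserve meets and order and that the $S_j$ have ranges inside the pairwise disjoint $\cB |_{X_j}$; injectivity of the $S_j$ enters only to guarantee $\mu(E) \neq 0.$ Your approach trades the paper's measure-theoretic bookkeeping for a cleaner piece of word combinatorics, at no loss of generality.
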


\begin{proof}
In this proof, we will write expressions
like $S_j (E)$ for \mb\  subsets $E \subset X,$
meaning that $S_j (E)$ is taken to be some \mb\  subset of~$X$
whose image in $\cB / {\mathcal{N}} (\mu)$ is $S_j ([E])$
in the sense of Definition~\ref{D:Quot}.
We remember that such a set is only defined up to sets of measure
zero.
Also, disjointness and containment of subsets of~$X$
will only be up to sets of measure zero.
No problem will arise,
because we only deal with countably many subsets of~$X,$
and we can therefore make the conclusion exact at the end
by deleting a set of measure zero.

By analogy with Notation~\ref{N:WordsInGens},
for a word $\af = (\af (1), \af (2), \ldots, \af (m)) \in W_m^d,$
we define
\[
S_{\af} = S_{\af (1)} \circ S_{\af (2)} \circ \cdots \circ S_{\af (m)},
\]
which is a \mst\  from $\XBM$ to a suitable
subset of~$X.$
We take $S_{\E} = \id_{\cB / {\mathcal{N}} (\mu)}.$
As with products of generators of $L_d,$
if $\af, \bt \in W_{\I}^d$ and $\af \bt$ is their concatenation,
then $S_{\af} \circ S_{\bt} = S_{\af \bt}.$

We first claim that for all $m \in \N$ and all
$\af, \bt \in W_m^d,$
we have $S_{\af} (X) \cap S_{\bt} (X) = \E.$
To see this,
let $k$ be the least integer such that $\af (k) \neq \bt (k).$
Set
\[
\af_0 = (\af (k), \, \af (k + 1), \, \ldots, \af (m)),
\,\,\,\,\,\,
\bt_0 = (\bt (k), \, \bt (k + 1), \, \ldots, \bt (m)),
\]
and
\[
\gm = (\af (1), \af (2), \ldots, \af (k - 1)).
\]
Then $\gm \af_0 = \af$ and $\gm \bt_0 = \bt.$
The sets $S_{\af_0} (X)$ and $S_{\bt_0} (X)$ are disjoint
because they are contained
in the disjoint sets $X_{\af (k)}$ and $X_{\bt (k)}.$
It now follows from
Lemma~\ref{L:ShmProp}(\ref{L:ShmProp-2})
that $(S_{\gm} \circ S_{\af_0}) (X)$
and $(S_{\gm} \circ S_{\bt_0}) (X)$
are disjoint,
which implies the claim.

Our second claim is that if $D \subset X$ and $n \in \N$
satisfy $\mu (D) > 0$ and $D \cap S_1^n (D) = \E,$
then there exists a subset $F \subset D$
such that $\mu (F) \neq 0$ and such that for all $m \in \N$
such that $m$ divides~$n,$
we have $F \cap S_1^m (F) = \E.$
To prove the claim,
first observe that if for some fixed $m$ we have
$F \cap S_1^m (F) = \E,$
then for every subset $G \subset F$
we also have $G \cap S_1^m (G) = \E.$
Thus, if we prove the claim for just one divisor $m$ of~$n,$
an induction argument will yield the claim as stated.

Define $F = D \SM (D \cap S_1^m (D)).$
Clearly $F \cap S_1^m (F) = \E.$
We need only show that $\mu (F) > 0.$
Suppose not.
Then (as usual, up to a set of measure zero) we have
$D \subset S_1^m (D).$
By induction, we get $D \subset S_1^{k m} (D)$ for all~$k \in \N.$
In particular,
$D \subset S_1^{n} (D),$
which contradicts $D \cap S_1^n (D) = \E$ and $\mu (D) > 0.$
The claim is proved.

Our third claim is that for all $n \in \N,$
we have $\mu \big( S^{n} (X) \SM S^{2 n} (X) \big) \neq 0.$
Indeed,
$X \SM S_1^n (X)$ contains $X \SM S_1 (X),$
which contains $S_2 (X),$
and $\mu (S_2 (X)) > 0$ because $S_2$
is an injective \mst.
Since $S_1^n$ is an injective \mst,
it follows that $\mu \big( S_1^n (X \SM S_1^n (X) \big) \neq 0.$
This proves the claim.

Set $N = n!.$
Define $E_0 = S_1^{N} (X) \SM S_1^{2 N} (X).$
Then $\mu (E_0) > 0$ by the third claim,
and also $E_0 \cap S_1^N (E_0) = \E.$
The second claim therefore provides a subset $E \subset E_0$
such that $\mu (E) \neq 0$ and such that
$E \cap S_1^m (E) = \E$ for $m = 1, 2, \ldots, n.$

We show that $E$ satisfies the conclusion of the lemma.
So let $\af$ and $\bt$ be distinct words with length at most~$n.$
We have to prove that $S_{\af} (E) \cap S_{\bt} (E) = \E.$
\Wolog\  $l (\af) \geq l (\bt).$

Suppose $l (\af) = l (\bt).$
Then, using the first claim at the second step,
\[
S_{\af} (E) \cap S_{\bt} (E)
  \subset S_{\af} (X) \cap S_{\bt} (X)
  = \E.
\]

Suppose now $l (\af) > l (\bt).$
Set $m = l (\af)$ and $r = l (\bt).$
Define a new word $\gm$ with $l (\gm) = m$ by
\[
\gm = (\bt (1), \bt (2), \ldots, \bt (r), 1, 1, \ldots, 1).
\]
Thus $S_{\gm} = S_{\bt} \circ S_1^{m - r}.$

We consider two cases, the first of which is $\gm \neq \af.$
Then, since $m - r \leq n \leq N,$
\[
S_{\af} (E) \subset S_{\af} (X)
\andeqn
S_{\bt} (E) \subset S_{\bt} (S_1^N (X)) \subset S_{\gm} (X).
\]
So $S_{\af} (E) \cap S_{\bt} (E) = \E$ by the first claim.

It remains to consider the case $\gm = \af.$
Then
\[
S_{\af} (E) \cap S_{\bt} (E)
  = S_{\bt} (S_1^{m - r} (E)) \cap S_{\bt} (E).
\]
Since $S_1^{m - r} (E) \cap E = \E,$
it follows from
Lemma~\ref{L:ShmProp}(\ref{L:ShmProp-2})
that $S_{\af} (E) \cap S_{\bt} (E) = \E.$
\end{proof}

\begin{prp}\label{P:FreeIsSmaller}
Let $d \in \{ 2, 3, 4, \ldots \},$
and let $\XBM$ and $\YCN$ be \sfm s.
Let $p \in [1, \I) \SM \{ 2 \}.$
Let $\rh \colon L_d \to \LLp$ be an approximately free spatial \rpn,
and let $\ph \colon L_d \to L (L^p (Y, \nu))$ be a spatial \rpn.
Then for all $a \in L_d,$
we have $\| \rh (a) \| \leq \| \ph (a) \|.$
\end{prp}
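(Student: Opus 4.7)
The plan is to show $\|\rh(a)\| \leq \|\ph(a)\|$ by constructing, for each $\xi \in L^p(X, \mu)$ that nearly realizes $\|\rh(a)\|$, a vector $\et \in L^p(Y, \nu)$ of comparable norm with $\|\ph(a)\et\|_p$ essentially at least as large as $\|\rh(a)\xi\|_p$. The construction proceeds by approximating $\xi$ by a simple function compatible with the approximately free cyclic structure of $\rh$, and then transferring this simple function to $L^p(Y, \nu)$ via Lemma~\ref{L:Combinatorial} applied to the spatial rep~$\ph$.

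First, apply Lemma~\ref{L:SameLength} to write $a = \sum_{\af \in F_0, \, \bt \in W_n^d} \lambda_{\af,\bt} s_\af t_\bt$ with $F_0$ a finite set of words; set $N = \max\{ l(\af) : \af \in F_0 \}$. Fix $\ep > 0$ and choose $\xi \in L^p(X, \mu)$ with $\|\xi\|_p = 1$ and $\|\rh(a)\xi\|_p \geq \|\rh(a)\| - \ep$. Invoke approximate freeness of $\rh$ with cycle length $M$ much larger than $N + n$ to obtain a cyclic partition $X = \coprod_{m=0}^{M-1} E_m$ and spatial realizations $S_\gm$ of $\rh(s_\gm)$. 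Using that $\sum_{l(\gm) = k} s_\gm t_\gm = 1$ for each $k$, together with the cyclic structure, $\xi$ may be approximated (in $L^p$) by a simple function $\xi' = \sum_{\gm, \, l(\gm) < M} \rh(s_\gm)(\zt_\gm)$ with $\zt_\gm \in L^p(E_0, \mu)$ all taking values in a common refinement of finitely many measurable subsets of some finite-measure $E_0^{\ast} \subset E_0$, and with $\|\rh(a)\xi'\|_p \geq \|\rh(a)\xi\|_p - \ep \|\rh(a)\|$.

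Next, apply Lemma~\ref{L:Combinatorial} to $\ph$ (whose spatial realizations $T_\gm$ and range partition $Y = \coprod Y_j$ satisfy its hypotheses) with parameter $M$: obtain $F_Y \subset Y$ with $0 < \nu(F_Y) < \infty$ such that $T_\gm(F_Y)$ are pairwise disjoint for all $l(\gm) \leq M$. Construct a measure-theoretic correspondence between the finite sub-$\sm$-algebra of $E_0^{\ast}$ used to define the $\zt_\gm$ and a suitable finite sub-$\sm$-algebra of $F_Y$, matching atom-by-atom with positive measures (rescaling overall if necessary). Define $\et' = \sum_{\gm} \ph(s_\gm)(\zt'_\gm) \in L^p(Y, \nu)$ with $\zt'_\gm \in L^p(F_Y, \nu)$ the counterparts of $\zt_\gm$ under the correspondence, inserting Radon-Nikodym weights so that $\|\et'\|_p = \|\xi'\|_p$. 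The crucial identity $t_\bt s_\gm = \delta_{\bt, \gm}$ for $l(\bt) = l(\gm) = n$ (Lemma~\ref{L-LCombProd} and Lemma~\ref{L:PropOfWords}(\ref{L:PropOfWords-X0})) shows that the computations of $\rh(a)\xi'$ and $\ph(a)\et'$ proceed through identical formal expressions in the coefficients $\lambda_{\af,\bt}$ and the $\zt$-data, and the disjoint-support structure supplied by approximate freeness on the $\rh$-side and by Lemma~\ref{L:Combinatorial} on the $\ph$-side guarantees that $\|\ph(a)\et'\|_p = \|\rh(a)\xi'\|_p$. Then
\[
\|\ph(a)\| \geq \frac{\|\ph(a)\et'\|_p}{\|\et'\|_p} = \frac{\|\rh(a)\xi'\|_p}{\|\xi'\|_p} \geq \|\rh(a)\| - O(\ep),
\]
and letting $\ep \to 0$ completes the proof.

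The main obstacle is the construction in the third paragraph of a faithful measure-theoretic correspondence between the orbit structure of $E_0^{\ast}$ under $\rh$ and that of $F_Y$ under $\ph$: this correspondence must intertwine not just the set-level action of $s_\af$ and $t_\bt$, but also the Radon-Nikodym factors that enter the definition of the spatial partial isometries $\rh(s_\af)$ and $\ph(s_\af)$. The technical crux is that approximate freeness provides enough ``linear'' (non-cyclic) disjointness in $\rh$'s orbit structure to match what Lemma~\ref{L:Combinatorial} supplies on the $\ph$-side; without approximate freeness, $\rh$'s orbits could wrap and produce extra identifications with no counterpart in $\ph$.
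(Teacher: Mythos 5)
Your overall strategy---transfer a near-maximizing vector for $\rh(a)$ into $L^p(Y,\nu)$ so that the formal computation of $\ph(a)$ on the transferred vector mirrors that of $\rh(a)$---is in the right spirit, and your use of Lemma~\ref{L:SameLength}, of approximate freeness to linearize the orbit structure, and of Lemma~\ref{L:Combinatorial} on the $\ph$-side all match the paper's proof. But the step you yourself flag as ``the main obstacle'' is a genuine gap, not a technicality. You need, inside the set $F_Y$ produced by Lemma~\ref{L:Combinatorial}, a family of disjoint sets of positive finite measure to receive the atoms of the finite subalgebra of $E_0^{\ast}$ on which the $\zt_\gm$ live. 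Lemma~\ref{L:Combinatorial} only guarantees $\nu(F_Y)\neq 0$; in the case most relevant to the paper, $Y=\N$ with counting measure, $F_Y$ may be a single atom, and then no two disjoint positive-measure subsets of $F_Y$ exist, so the ``atom-by-atom'' correspondence cannot be built at all. (The rescaling weights do fix the problem of mismatched measure \emph{values}, since $\ch_{A_i}\mapsto(\mu(A_i)/\nu(B_i))^{1/p}\ch_{B_i}$ is isometric on the span whenever the $B_i$ are disjoint with $0<\nu(B_i)<\I$; what can fail is the existence of enough disjoint $B_i$.) A repair is possible---park the $k$ atoms in $k$ distinct sets of the form $T_{\dt}(F_Y)$ for words $\dt$ of a common length, after enlarging the word-length parameter in Lemma~\ref{L:Combinatorial} so that all the resulting sets $T_{\gm\dt}(F_Y)$ remain disjoint---but that is an additional construction your proposal does not contain.

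The paper sidesteps this entirely with a tensor-product device: it represents $\ph$ as $\ps(c)=1\otimes\ph(c)$ on $L^p(X\times Y,\,\mu\times\nu)$ and builds an explicit isometry $u\xi=\sum_{\gm}\rh(t_\gm)e_\gm\xi\otimes\ph(s_\gm)\et_0$ intertwining $\rh(b)$ and $\ps(b)$ on suitably supported~$\xi$. Only a single unit vector $\et_0\in L^p(E,\nu)$ is needed on the $Y$-side; all the fine measure-theoretic structure of $\xi$ rides along in the $X$-factor, so no correspondence between the measure algebras of $X$ and $Y$ is ever required. Two further points in your writeup are only gestured at and do need an argument: (i) the truncation of $\xi$ away from the ``seam'' of the cyclic partition (the paper does this with a pigeonhole argument over $r$ blocks of $N_0+N_1$ consecutive indices, which is where the requirement that the cycle length be large enters quantitatively); and (ii) the reduction, via $b=s_\ta a$ and contractivity on generators, to the case where every $s_\af$-word is at least as long as the $t_\bt$-words, which is what prevents $\rh(b)$ from mapping the good region back across the seam. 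As written, your proposal is a plan whose hardest step is left unexecuted rather than a proof.
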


\begin{proof}
We adopt the same conventions with regard to
\mst s as described at the beginning of the
proof of Lemma~\ref{L:Combinatorial}.
In particular, set operations and relations are only up to
sets of measure zero.
Also, as there, for the \mst s $S_j$ and $R_j$ defined below,
we write
\[
S_{\af} = S_{\af (1)} \circ S_{\af (2)} \circ \cdots \circ S_{\af (m)},
\]
and define $R_{\af}$ similarly.

By definition,
the operators $\ph (s_j)$ are spatial isometries.
Therefore they have spatial systems
$(Y, Y_j, S_j, g_j),$
in which $S_j$ is a bijective \mst.
In particular, $\ph (s_j) (L^p (E, \mu)) = L^p (S_j (E), \mu).$
By Lemma~\ref{L-CnsqOfSp}(\ref{L-CnsqOfSp-2b}),
the sets $Y_j$ are disjoint.
Since $d < \I,$ we get $Y = \coprod_{j = 1}^d Y_j.$

Similarly,
the operators $\rh (s_j)$ have range supports,
say, $X_j,$ and $X = \coprod_{j = 1}^d X_j.$
Moreover, the spatial realization $R_j$ of $\rh (s_j)$
is a bijective \mst\  from $X$ to~$X_j.$

Let $a \in L_d.$
We want to prove that $\| \rh (a) \| \leq \| \ph (a) \|.$
By scaling,
\wolog\  $\| \rh (a) \| = 1.$
Apply Lemma~\ref{L:SameLength},
obtaining~$N_0 \in \Nz,$
a finite set $F_0 \subset W_{\I}^d,$
and numbers $\ld^{(0)}_{\af, \bt} \in \C$
for $\af \in F_0$ and $\bt \in W_{N_0}^d,$
such that
\[
a = \sum_{\af \in F_0} \sum_{\bt \in W_{N_0}^d}
        \ld^{(0)}_{\af, \bt} s_{\af} t_{\bt}.
\]
Set $N_1 = \max \big( \{ l (\af) \colon \af \in F_0 \} \big).$
Let $\ta$ be any fixed word of length~$N_0.$
Set $b = s_{\ta} a.$
Set $F = \{ \ta \af_0 \colon \af_0 \in F_0 \}.$
Thus, for all $\af \in F,$
we have $N_0 \leq l (\af) \leq N_0 + N_1.$
For $\af \in F$ and $\bt \in W_{N_0}^d,$
write $\af = \ta \af_0$ with $\af_0 \in F_0,$
and set $\ld_{\af, \bt} = \ld^{(0)}_{\af_0, \bt}.$
Then
\[
b = \sum_{\af \in F} \sum_{\bt \in W_{N_0}^d}
                 \ld_{\af, \bt} s_{\af} t_{\bt}.
\]

Since $\rh$ and $\ph$ are both \cog\  %
(by Lemma~\ref{L-CnsqOfSp}(\ref{L-CnsqOfSp-1})),
we have
\begin{align*}
\| \rh ( a) \|
& = \| \rh (t_{\ta}) \rh (s_{\ta} a) \|
  \leq \| \rh (t_{\ta}) \| \cdot \| \rh (s_{\ta} a) \|
     \\
& \leq \| \rh (b) \|
  \leq \| \rh (s_{\ta}) \| \cdot \| \rh ( a) \|
  \leq \| \rh (a) \|,
\end{align*}
so $\| \rh (b) \| = \| \rh (a) \| = 1,$
and similarly $\| \ph (b) \| = \| \ph (a) \|.$
It therefore suffices to prove that
$\| \rh (b) \| \leq \| \ph (b) \|.$

Let $\ep > 0.$
We prove that $\| \ph (b) \| > 1 - \ep.$
If $N_0 = N_1 = 0,$
then $b$ is a scalar,
and this inequality is immediate.
Otherwise, $N_0 + N_1 > 0.$
Choose $r \in \N$ such that
\[
r > (N_0 + N_1) \left( \frac{2}{\ep} \right)^p.
\]
Choose $\xi^{(0)} \in L^p (X, \mu)$ such that
\[
\big\| \xi^{(0)} \big\|_p = 1
\andeqn
\big\| \rh (b) \xi^{(0)} \big\|_p > 1 - \tfrac{1}{2} \ep.
\]
Since $\rh$ is approximately free,
there is $N \geq (N_0 + N_1) r$
and a partition
$X = \coprod_{m = 0}^{N - 1} D_m$
such that for $m = 0, 1, \ldots, N - 1$ and all~$j,$
taking $D_N = D_0$ and $D_{-1} = D_{N - 1},$
we have
\[
\rh (s_j) ( L^p ( D_m, \mu ) ) \subset L^p ( D_{m + 1}, \, \mu )
\andeqn
\rh (t_j) ( L^p ( D_m, \mu ) ) \subset L^p ( D_{m - 1}, \, \mu ).
\]
Write
\[
\xi^{(0)} = \sum_{m = 0}^{N - 1} \xi^{(0)}_m
\]
with $\xi^{(0)}_m \in L^p ( D_m, \mu )$
for $m = 0, 1, \ldots, N - 1.$

We claim that there is a set $T$
of $N_0 + N_1$ consecutive values of~$m$
such that
\[
\left\| \ssum{m \in T} \xi^{(0)}_m \right\|_p < \frac{\ep}{2}.
\]
Since the sets $D_m$ are disjoint,
Remark~\ref{R:LpSuppNorm} gives
\[
\left\| \ssum{m \in T} \xi^{(0)}_m \right\|_p^p
  = \sum_{m \in T} \big\| \xi^{(0)}_m \big\|_p^p.
\]
It is therefore enough to prove that there is a set $T$
of $N_0 + N_1$ consecutive values of~$m$
such that for all $n \in T$ we have
\[
\big\| \xi^{(0)}_m \big\|_p
 < \frac{\ep}{2} \left( \frac{1}{N_0 + N_1} \right)^{1/p}.
\]

Suppose that there is no such set~$T.$
Then, in particular,
for $k = 0, 1, \ldots, r - 1$ there is
\[
n_k \in \big[ k (N_0 + N_1),
              \,  (k + 1) (N_0 + N_1)  \big) \cap \Z
\]
such that
\[
\big\| \xi^{(0)}_{n_k} \big\|_p \geq
   \frac{\ep}{2} \left( \frac{1}{N_0 + N_1} \right)^{1/p}.
\]
Then
\[
\big\| \xi^{(0)} \big\|_p^p
  \geq \sum_{k = 0}^{r - 1} \big\| \xi^{(0)}_{n_k} \big\|_p^p
  \geq r \left( \frac{\ep}{2} \right)^p
        \left( \frac{1}{N_0 + N_1} \right)
  > 1,
\]
contradicting $\big\| \xi^{(0)}_{n_k} \big\|_p = 1.$
This proves the claim.

By cyclically permuting the indices of the sets~$D_m,$
we may assume that
\[
\left\| \sssum{m = 0}{N_0 - 1} \xi^{(0)}_m
  + \sssum{m = N - N_1}{N - 1} \xi^{(0)}_m \right\| < \frac{\ep}{2}.
\]
Define
\[
\xi_m = \begin{cases}
  \xi^{(0)}_m & {\mbox{$N_0 \leq m \leq N - N_1 - 1$}}
        \\
  0 & {\mbox{$0 \leq m \leq N_0 - 1$ and
                    $N - N_1 \leq m \leq N - 1$}}
\end{cases}
\]
and
\[
\xi = \sum_{m = 0}^{N - 1} \xi_m
    = \xi^{(0)} - \sum_{m = 0}^{N_0 - 1} \xi^{(0)}_m
      - \sum_{m = N - N_1}^{N - 1} \xi^{(0)}_m.
\]
Then $\big\| \xi - \xi^{(0)} \big\| < \tfrac{1}{2} \ep,$
so $\| \rh (b) \xi \| > 1 - \ep.$
Also clearly $\| \xi \| \leq \big\| \xi^{(0)} \big\| = 1.$

Following Lemma~\ref{L-TPRep},
except with the factors in the other order,
define a \rpn\  %
$\ps \colon L_d \to L \big( L^p (X \times Y, \, \mu \times \nu) \big)$
by $\ps (c) = 1 \otimes \ph (c)$ for all $c \in L_d.$
It follows from Lemma~\ref{L-TPRep} that
$\| \ps (b) \| = \| \ph (b) \|.$
We are now going to define an isometry (not necessarily surjective)
\[
u \colon L^p (X, \mu) \to L^p (X \times Y, \, \mu \times \nu)
\]
which will partially intertwine $\rh$ and~$\ps.$

The bijective \mst s $R_j$ at the
beginning of the proof preserve
disjointness and finite intersections and unions.
Since $X = \coprod_{j = 1}^d X_j,$
and identifying, as usual, sets with their images
in $\cB / {\mathcal{N}} (\mu),$
we get
\[
D_{m + 1} = \coprod_{j = 1}^{d} R_j (D_m)
\]
for $m = 0, 1, \ldots, N - 2.$
Define
\[
W = \bigcup_{m = 0}^{N - 1} W_m^d.
\]
Set $D_{\gm} = R_{\gm} (D_0)$ for $\gm \in W.$
Then
\[
X = \coprod_{\gm \in W} D_{\gm}.
\]
For $\gm \in W,$ define
$e_{\gm} = m (\ch_{D_{\gm}}) \in \LLp$
(following Notation~\ref{N:MultOps}).
Then the $e_{\gm}$ are idempotents of norm~$1$
and $\sum_{\gm \in W} e_{\gm} = 1.$

Apply Lemma~\ref{L:Combinatorial}
to the injective \mst s $S_j$
at the beginning of the proof,
obtaining a set $E \subset Y$ with $\nu (E) \neq 0$
such that the sets
$E_{\gm} = S_{\gm} (E),$ for $\gm \in W,$
are disjoint.
Then $\ph (s_{\gm}) (L^p (E, \nu)) = L^p ( E_{\gm}, \nu)$
for all~$\gm.$
Choose $\et_0 \in L^p (E, \nu)$
such that $\| \et_0 \|_p = 1.$

As in Theorem~\ref{T-LpTP},
identify $L^p (X \times Y, \, \mu \times \nu)$
with $L^p (X, \mu) \otimes_p L^p (Y, \nu).$
For any $\xi \in L^p (X, \mu)$
(not just the specific element $\xi$ considered above),
define
\[
u \xi = \sum_{\gm \in W}
     \rh (t_{\gm}) e_{\gm} \xi \otimes \ph (s_{\gm}) \et_0.
\]
Then
$u \in
 L \big( L^p (X, \mu), \, L^p (X \times Y, \, \mu \times \nu) \big).$

We claim that $u$ is isometric.
Let $\xi \in L^p (X, \mu)$ be arbitrary.
Since the sets $D_{\gm}$ are disjoint,
Remark~\ref{R:LpSuppNorm} gives
\[
\| \xi \|_p^p = \sum_{\gm \in W} \| e_{\gm} \xi \|_p^p.
\]
On the other hand,
for $\gm \in W,$
we have $L^p (D_{\gm}, \mu) \S {\mathrm{ran}} (\rh (s_{\gm})),$
so $\| \rh (t_{\gm}) e_{\gm} \xi \|_p = \| e_{\gm} \xi \|_p.$
Also,
the elements
$\rh (t_{\gm}) e_{\gm} \xi \otimes \ph (s_{\gm}) \et_0$
are supported in the disjoint sets
$X \times E_{\gm}$
(in fact, in $D_{\varnothing} \times E_{\gm}$),
so (using $\| \ph (s_{\gm}) \et_0 \|_p = \| \et_0 \|_p = 1$
at the third step)
\begin{align*}
\| u \xi \|_p^p
& = \left\| \ssum{\gm \in W}
     \rh (t_{\gm}) e_{\gm} \xi \otimes \ph (s_{\gm}) \et_0 \right\|_p^p
   \\
& = \sum_{\gm \in W}
        \| \rh (t_{\gm}) e_{\gm} \xi \|_p^p
            \cdot \| \ph (s_{\gm}) \et_0 \|_p^p
  = \sum_{\gm \in W} \| e_{\gm} \xi \|_p^p
  = \| \xi \|_p^p.
\end{align*}
This completes the proof that $u$ is isometric.

Now set
\[
G_0 = \bigcup_{m = 0}^{N_1} W_m^d
\andeqn
G = \bigcup_{m = N_0}^{N_0 + N_1} W_m^d
  = \big\{ \af_0 \af_1 \colon
        {\mbox{$\af_0 \in G_0$ and $\af_1 \in W_{N_0}^d$}} \big\}.
\]
Thus, $G$ is the set of all words with lengths from $N_0$
through $N_0 + N_1,$
and $F \S G.$
We can therefore write
\begin{equation}\label{Eq:bFormula}
b = \sum_{\af \in G} \sum_{\bt \in W_{N_0}^d}
   \ld_{\af, \bt} s_{\af} t_{\bt},
\end{equation}
by taking $\ld_{\af, \bt} = 0$ for $\af \in G \SM F.$

We claim that for any $\xi \in L^p (X, \mu)$
which is supported in
\[
\bigcup_{m = N_0}^{N - N_1 - 1} \bigcup_{\gm \in W_m^d} D_{\gm},
\]
and any $b \in L_d$ of the form~(\ref{Eq:bFormula})
(not just the particular $b$ used above),
we have
\begin{equation}\label{Eq:bInterTw}
u \rh (b) \xi = \ps (b) u \xi.
\end{equation}

By linearity, it suffices to prove
that for each $\gm \in W$
with $N_0 \leq l (\gm) \leq N - N_1 - 1,$
the claim holds for all $\xi$ which are supported in $D_{\gm}.$
Write $\gm = \gm_0 \gm_1$ with
\[
\gm_0 \in W_{N_0}^d
\andeqn
0 \leq l (\gm_1) \leq N - N_0 - N_1 - 1.
\]
Then
$\xi \in {\mathrm{ran}} (\rh (s_{\gm}))
    \S {\mathrm{ran}} (\rh (s_{\gm_0})),$
so 
$\rh (s_{\gm_0} ) \rh (t_{\gm_0}) \xi = \xi.$
For $\bt \in W_{N_0}^d,$
we have $t_{\bt} s_{\gm_0} = \dt_{\bt, \gm_0} \cdot 1,$
so
\[
\rh (b) \xi
 = \sum_{\af \in G} \sum_{\bt \in W_{N_0}^d}
   \ld_{\af, \bt}
    \rh ( s_{\af} ) \rh ( t_{\bt} ) \rh (s_{\gm_0} ) \rh (t_{\gm_0}) \xi
 = \sum_{\af \in G}
   \ld_{\af, \gm_0}
    \rh ( s_{\af} ) \rh (t_{\gm_0}) \xi.
\]
Since $\gm = \gm_0 \gm_1,$
the element $\rh (t_{\gm_0}) \xi$ is supported in $D_{\gm_1}.$
Let $\af \in G.$
Since
$l (\af) + l (\gm_1) \leq  N - 1,$
it follows that $\rh ( s_{\af} ) \rh (t_{\gm_0}) \xi$
is supported in $D_{\af \gm_1}.$
Therefore
(recalling that
Notation~\ref{N:WordsInGens} gives $t_{\af \gm_1} = t_{\gm_1} t_{\af}$),
\begin{align*}
u \rh ( s_{\af} ) \rh (t_{\gm_0}) \xi
& = \rh ( t_{\af \gm_1} ) \rh ( s_{\af} ) \rh (t_{\gm_0}) \xi
           \otimes \ph ( s_{\af \gm_1} ) \et_0
        \\
& = \rh ( t_{\gm_1} t_{\af} s_{\af} t_{\gm_0}) \xi
           \otimes \ph ( s_{\af \gm_1} ) \et_0
  = \rh ( t_{\gm}) \xi
           \otimes \ph ( s_{\af \gm_1} ) \et_0.
\end{align*}
Thus
\[
u \rh (b) \xi
 = \sum_{\af \in G}
   \ld_{\af, \gm_0}
    \rh ( t_{\gm}) \xi
           \otimes \ph ( s_{\af \gm_1} ) \et_0.
\]

On the other hand,
using $t_{\bt} s_{\gm_0} = \dt_{\bt, \gm_0}$ for $\bt \in W_{N_0}^d$
at the third step,
\begin{align*}
\ps (b) u \xi
& = (1 \otimes \ph (b))
     \big( \rh (t_{\gm}) \xi \otimes \ph (s_{\gm}) \et_0 \big)
   \\
& = \sum_{\af \in G} \sum_{\bt \in W_{N_0}^d}
   \ld_{\af, \bt}
     \rh (t_{\gm}) \xi
        \otimes \ph ( s_{\af} ) \ph ( t_{\bt} )
                \ph (s_{\gm_0} ) \ph (s_{\gm_1}) \et_0
   \\
& = \sum_{\af \in G}
   \ld_{\af, \gm_0}
     \rh (t_{\gm}) \xi
        \otimes \ph ( s_{\af} ) \ph (s_{\gm_1}) \et_0
  = u \rh (b) \xi.
\end{align*}
This completes the proof of the claim.

We now return to our specific choices of $\xi$ and~$b.$
They satisfy the hypotheses in the claim,
so we have, using~(\ref{Eq:bInterTw}) in the second calculation,
\[
\| u \xi \|_p = \| \xi \|_p \leq 1
\andeqn
\| \ps (b) u \xi \|_p
  = \| u \rh (b) \xi \|_p
  = \| \rh (b) \xi \|_p
  > 1 - \ep.
\]
Therefore
$\| \ph (b) \| = \| \ps (b) \| > 1 - \ep.$
This completes the proof.
\end{proof}

\begin{thm}\label{T:SpatialIsSame}
Let $d \in \{ 2, 3, 4, \ldots \},$
let $\XBM$ and $\YCN$ be \sfm s,
and let $\rh \colon L_d \to \LLp$
and $\ph \colon L_d \to L (L^p (Y, \nu))$ be spatial \rpn s
(Definition~\ref{D:SpatialRep}(\ref{D:SpatialRep-Sp})).
Then the map $\rh (s_j) \mapsto \ph (s_j)$
and $\rh (t_j) \mapsto \ph (t_j),$
for $j = 1, 2, \ldots, d,$
extends to an isometric isomorphism
${\ov{\rh (L_d)}} \to {\ov{\ph (L_d)}}.$
\end{thm}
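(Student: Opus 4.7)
The plan is to reduce the theorem to the statement that any two spatial representations of~$L_d$ on $L^p$~spaces (for the same~$p$) induce the same seminorm on~$L_d$. Once this equality of seminorms is in hand, the assignment $\rh(a) \mapsto \ph(a)$ is well defined, linear, multiplicative, and isometric on $\rh(L_d)$, and extends by continuity to the desired isometric isomorphism ${\ov{\rh(L_d)}} \to {\ov{\ph(L_d)}}$. That it sends $\rh(s_j) \mapsto \ph(s_j)$ and $\rh(t_j) \mapsto \ph(t_j)$ is immediate. For $p = 2$ this is the classical uniqueness of the Cuntz algebra, so I focus on $p \in [1, \I) \SM \{2\}$.

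The central observation is a self-improvement trick: for any spatial representation $\rh$, the representation $\rh_u$ built from $\rh$ and the bilateral shift $u$ on $l^p(\Z)$ via Lemma~\ref{L:TensU} is again spatial by Lemma~\ref{L:TensU}(\ref{L:TensU-1}) and free by Lemma~\ref{L:TensU}(\ref{L:TensU-2}); in particular it is approximately free. Proposition~\ref{P:FreeBigger} yields $\| \rh_u(a) \| \geq \| \rh(a) \|$ for every $a \in L_d$, while Proposition~\ref{P:FreeIsSmaller}, applied to the approximately free spatial representation $\rh_u$ and to the spatial representation $\rh$, yields the reverse inequality. Hence
\[
\| \rh_u(a) \| = \| \rh(a) \| \quad \text{for all } a \in L_d.
\]

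Now, given two spatial representations $\rh$ and $\ph$ on $L^p$~spaces, I apply Proposition~\ref{P:FreeIsSmaller} to $\rh_u$ (approximately free spatial) and $\ph$ (spatial) to obtain $\| \rh_u(a) \| \leq \| \ph(a) \|$. Combined with the identity of the previous paragraph, this gives $\| \rh(a) \| \leq \| \ph(a) \|$, and by symmetry $\| \rh(a) \| = \| \ph(a) \|$ for all $a \in L_d$.

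To finish, define $\ps_0 \colon \rh(L_d) \to \ph(L_d)$ by $\ps_0(\rh(a)) = \ph(a)$. The norm equality just proved shows $\ps_0$ is well defined (if $\rh(a) = \rh(b),$ then $\rh(a - b) = 0,$ so $\ph(a - b) = 0$) and isometric; it is clearly a unital algebra homomorphism with dense range, so it extends uniquely to an isometric isomorphism ${\ov{\rh(L_d)}} \to {\ov{\ph(L_d)}}$ which by construction sends $\rh(s_j)$ to $\ph(s_j)$ and $\rh(t_j)$ to $\ph(t_j)$ (the uniqueness of such an extension of a map defined on a dense subalgebra also follows from Lemma~\ref{L:LdUniq}). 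The main obstacle in this plan has already been overcome in Propositions \ref{P:FreeBigger} and~\ref{P:FreeIsSmaller}; the proof above is a short combinatorial combination of those inequalities with the bilateral-shift construction from Lemma~\ref{L:TensU}.
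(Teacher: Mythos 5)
Your proof is correct and follows essentially the same route as the paper's: tensor with the bilateral shift via Lemma~\ref{L:TensU} to get a free (hence approximately free) spatial representation, then combine Proposition~\ref{P:FreeBigger} with Proposition~\ref{P:FreeIsSmaller}. The only cosmetic difference is that you first establish $\| \rh_u (a) \| = \| \rh (a) \|$ before comparing with~$\ph$, whereas the paper chains the two inequalities directly; your explicit treatment of $p = 2$ via the classical uniqueness of $\OA{d}$ is a sensible addition.
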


\begin{proof}
The statement is symmetric in $\rh$ and~$\ph,$
so it suffices to prove that for all $a \in L_d,$
we have $\| \ph (a) \| \leq \| \rh (a) \|.$

Let $u \in L (l^p (\Z))$ be the bilateral shift,
and let
$\ph_u \colon L_d \to L \big( L^p (Y, \nu) \otimes l^p (\Z) \big)$
be as in Lemma~\ref{L:TensU}.
Proposition~\ref{P:FreeBigger} implies that
$\| \ph (a) \| \leq \| \ph_u (a) \|.$
Since $\ph_u$ is free
(by Lemma~\ref{L:TensU}(\ref{L:TensU-2})),
it is clearly essentially free.
Since $\ph_u$ is spatial (by Lemma~\ref{L:TensU}(\ref{L:TensU-1})),
Proposition~\ref{P:FreeIsSmaller} therefore implies that
$\| \ph_u (a) \| \leq \| \rh (a) \|.$
\end{proof}

Theorem~\ref{T:SpatialIsSame}
justifies the following definition.

\begin{dfn}\label{D:LPCuntzAlg}
Let $d \in \{ 2, 3, 4, \ldots \}$ and let $p \in [1, \I).$
We define $\OP{d}{p}$
to be the completion of $L_d$ in the norm
$a \mapsto \| \rh (a) \|$ for any spatial \rpn\  $\rh$
of $L_d$ on a space of the form $L^p (X, \mu)$
for a \sfm\  $\XBM.$
We write $s_j$ and $t_j$ for the elements in $\OP{d}{p}$
obtained as the images of the elements with the same names
in~$L_d,$
as in Definition~\ref{D:Leavitt}.
\end{dfn}

When $p = 2,$
we get the usual Cuntz algebra~${\mathcal{O}}_d.$
Indeed, if $\rh$ is a spatial \rpn\  %
on $L^2 (X, \mu),$
then, by Remark~\ref{R-RevAdj},
we have $\rh (t_j) = \rh (s_j)^*$ for $j = 1, 2, \ldots, d.$
Thus,
$\rh$ is a *-\rpn.

\begin{prp}\label{P-ContrImpIsom}
Let $d \in \{ 2, 3, 4, \ldots \},$ let $p \in [1, \I) \SM \{ 2 \},$
let $\XBM$ be a \sfm,
and let $\rh \colon \OP{d}{p} \to \LLp$
be a unital contractive \hm.
Then $\rh$ is isometric.
\end{prp}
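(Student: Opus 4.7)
The plan is to show that the composition $L_d \hookrightarrow \OP{d}{p} \xrightarrow{\rh} \LLp$ is a spatial representation of~$L_d$. Once this is done, Theorem~\ref{T:SpatialIsSame} (applied with any injective spatial \rpn\  of $L_d$ on $l^p(\N)$, which exists by Lemma~\ref{R-InjSpRpn}) forces $\| \rh (a) \| = \| a \|_{\OP{d}{p}}$ for every $a \in L_d$, and density of $L_d$ in $\OP{d}{p}$ together with continuity of $\rh$ extends this equality to all of $\OP{d}{p}$.

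To carry this out, I would first identify the norm of an element of the subalgebra $\spn \big( ( s_j t_k )_{j,k=1}^{d} \big) \cong M_d$ (in the sense of Lemma~\ref{L:mSum}) inside $\OP{d}{p}$. Fixing any spatial \rpn\ $\pi_0$ of $L_d$, Lemma~\ref{L-CnsqOfSp}(\ref{L-CnsqOfSp-6}) says that its restriction to this subalgebra is a spatial \rpn\ of~$M_d$, and then Theorem~\ref{T:SpatialRepsMd}(\ref{T:SpatialRepsMd-3}) yields that this restriction is isometric as a map $\MP{d}{p} \to L (l^p (\N))$. Thus the $\OP{d}{p}$-norm agrees with the $\MP{d}{p}$-norm on this copy of~$M_d$. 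The same argument applied to the generators themselves shows $\| s_j \|_{\OP{d}{p}} = 1$ (since $\pi_0 (s_j)$ is a spatial isometry) and $\| t_j \|_{\OP{d}{p}} \leq 1$ (by Lemma~\ref{L-BasicWSPI}(\ref{L-BasicWSPI-2})).

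Contractivity of $\rh$ on $\OP{d}{p}$ now supplies two things about $\rh |_{L_d}$. First, $\| \rh (s_j) \|, \| \rh (t_j) \| \leq 1,$ so $\rh |_{L_d}$ is contractive on generators in the sense of Definition~\ref{D:KindsOfReps1}(\ref{D:Repn-CG}). Second, its restriction to the copy of $M_d$ inside~$L_d$ is a contractive map from $\MP{d}{p}$ to $\LLp$; the implication \mbox{(\ref{T:SpatialRepsMd-4})$\Rightarrow$(\ref{T:SpatialRepsMd-1})} of Theorem~\ref{T:SpatialRepsMd} (valid for all $p \in [1, \I) \SM \{ 2 \}$) then says this restriction is a spatial \rpn\ of~$M_d$. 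So $\rh |_{L_d}$ satisfies condition~(\ref{T-SpatialRepsL-N5}) of Theorem~\ref{T-SpatialRepsL}, and that theorem (noting the final sentence, which keeps condition~(\ref{T-SpatialRepsL-N5}) equivalent to spatiality even at $p = 1$) yields that $\rh |_{L_d}$ is spatial.

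There is no real obstacle in this argument; the work has already been absorbed into the structural theorems of the previous sections. The only point requiring a little care is the case $p = 1$, where one has to verify that the route through Theorem~\ref{T-SpatialRepsL}(\ref{T-SpatialRepsL-N5}) still lands inside the list of conditions that remain mutually equivalent (avoiding the three exceptional conditions (\ref{T-SpatialRepsL-N3}), (\ref{T-SpatialRepsL-N10}), and~(\ref{T-SpatialRepsL-N13})); condition~(\ref{T-SpatialRepsL-N5}) is indeed safe, so the argument is uniform across $p \in [1, \I) \SM \{ 2 \}$.
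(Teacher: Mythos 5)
Your proposal is correct and follows essentially the same route as the paper: reduce to showing that the induced representation of $L_d$ is spatial, verify condition~(\ref{T-SpatialRepsL-N5}) of Theorem~\ref{T-SpatialRepsL} via Lemma~\ref{L-CnsqOfSp}(\ref{L-CnsqOfSp-6}) and the equivalences (\ref{T:SpatialRepsMd-1})$\Leftrightarrow$(\ref{T:SpatialRepsMd-3}) and (\ref{T:SpatialRepsMd-4})$\Rightarrow$(\ref{T:SpatialRepsMd-1}) of Theorem~\ref{T:SpatialRepsMd}, and note that condition~(\ref{T-SpatialRepsL-N5}) survives the $p=1$ caveat. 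Your observation about the $p = 1$ case matches the parenthetical remark in the paper's proof exactly.
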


\begin{proof}
Let $\rh_0$ be the composition of $\rh$ with the obvious map
$L_d \to \OP{d}{p}.$
By Theorem~\ref{T:SpatialIsSame},
it suffices to prove that $\rh_0$ is spatial.
We prove this by verifying
condition~(\ref{T-SpatialRepsL-N5}) of Theorem~\ref{T-SpatialRepsL}.
(By the last part of Theorem~\ref{T-SpatialRepsL},
this suffices when $p = 1$
as well as when $p \in (1, \I) \SM \{ 2 \}.$)
That $\rh_0$ is \cog\  is immediate.
Also, the obvious map sending
the standard matrix unit $e_{j, k} \in M_d$
to $s_j t_k \in \OP{d}{p}$
is isometric from $\MP{d}{p}$ to~$\OP{d}{p},$
by Lemma~\ref{L-CnsqOfSp}(\ref{L-CnsqOfSp-6})
and the equivalence of conditions
(\ref{T:SpatialRepsMd-1}) and~(\ref{T:SpatialRepsMd-3})
in Theorem~\ref{T:SpatialRepsMd}.
Therefore the restriction of $\rh_0$ to
$\spn \big( ( s_j t_k )_{j, k = 1}^{d} \big)$
is contractive as a map from $\MP{d}{p}$ to $\LLp.$
The equivalence of conditions
(\ref{T:SpatialRepsMd-1}) and~(\ref{T:SpatialRepsMd-4})
in Theorem~\ref{T:SpatialRepsMd}
therefore shows that this restriction is spatial.
This completes the verification of
condition~(\ref{T-SpatialRepsL-N5}) of Theorem~\ref{T-SpatialRepsL}.
\end{proof}

\begin{cor}\label{C-NonUContrImpIsom}
Let $d \in \{ 2, 3, 4, \ldots \},$ let $p \in [1, \I) \SM \{ 2 \},$
let $\XBM$ be a \sfm\  such that $L^p (X, \mu)$ is separable,
and let $\rh \colon \OP{d}{p} \to \LLp$
be a not necessarily unital contractive \hm.
Then $\rh$ is isometric.
\end{cor}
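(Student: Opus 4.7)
The plan is to reduce the problem to the unital case handled by Proposition~\ref{P-ContrImpIsom}. Assuming $\rh \neq 0$ (otherwise there is nothing to prove), set $e = \rh (1) \in \LLp.$ Since $\rh$ is a homomorphism, $e^2 = e$; since $\rh$ is contractive, $\| e \| \leq 1$; and since $e$ is a nonzero idempotent, $\| e \| = 1.$ Observe that $\rh (a) = \rh (1) \rh (a) \rh (1) = e \rh (a) e$ for every $a \in \OP{d}{p},$ so in fact $\rh (a) e = e \rh (a) = \rh (a).$ Consequently $\rh (a)$ vanishes on $\ker (e)$ and takes values in the closed subspace $F = e \big( L^p (X, \mu) \big),$ so $\rh (a)$ restricts to a bounded operator $\rh (a) |_F \colon F \to F.$

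The key technical input will be the structure of contractive projections on separable $L^p$-spaces for $p \neq 2$. By classical results (Ando for $p = 1,$ and Bernau--Lacey/Tzafriri for $p \in (1, \I) \SM \{ 2 \}$), the range $F$ of the contractive projection $e$ is isometrically isomorphic, as a Banach space, to $L^p (Y, \nu)$ for some \sfm\  $\YCN$; separability of $L^p (X, \mu)$ passes to~$F,$ and then the target $L^p (Y, \nu)$ can be chosen separable, so that $\YCN$ may be taken $\sm$-finite. Let $v \colon F \to L^p (Y, \nu)$ be such a surjective isometry, and define $\ph \colon \OP{d}{p} \to L \big( L^p (Y, \nu) \big)$ by $\ph (a) = v \rh (a) |_F v^{-1}.$ Then $\ph$ is a \hm, it satisfies $\ph (1) = v e |_F v^{-1} = \id_{L^p (Y, \nu)}$ so $\ph$ is unital, and $\| \ph (a) \| = \| \rh (a) |_F \| \leq \| \rh (a) \| \leq \| a \|,$ so $\ph$ is contractive.

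Applying Proposition~\ref{P-ContrImpIsom} to~$\ph$ then yields $\| \ph (a) \| = \| a \|$ for every $a \in \OP{d}{p},$ hence $\| \rh (a) |_F \| = \| a \|.$ To finish, use $\rh (a) = \rh (a) e$: for any $\xi \in L^p (X, \mu),$
\[
\| \rh (a) \xi \|_p = \| \rh (a) |_F (e \xi) \|_p
   \leq \| \rh (a) |_F \| \cdot \| e \xi \|_p
   \leq \| \rh (a) |_F \| \cdot \| \xi \|_p,
\]
so $\| \rh (a) \| \leq \| \rh (a) |_F \| = \| a \|,$ while the reverse inequality is contractivity of~$\rh.$ The main obstacle will be invoking, with appropriate care, the classification of ranges of contractive projections on separable $L^p$-spaces for $p \neq 2$; in particular, one must verify that the target $L^p (Y, \nu)$ really can be taken on a $\sm$-finite (separable) measure space so that Proposition~\ref{P-ContrImpIsom} applies to~$\ph$ directly.
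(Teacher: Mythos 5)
Your proof is correct and takes essentially the same route as the paper's: both pass to the range of the contractive idempotent $\rh(1),$ identify that range isometrically with some $L^p(Y,\nu)$ via the classical structure theorem for contractively complemented subspaces of $L^p$~spaces (the paper cites Theorem~3 in Section~17 of~\cite{Lc}, which packages the Ando and Bernau--Lacey/Tzafriri results you invoke), use separability to take $\nu$ \sft, and then apply Proposition~\ref{P-ContrImpIsom}. The only difference is that you spell out the elementary verification that $\| \rh(a) \| = \| \rh(a)|_F \|,$ which the paper leaves implicit.
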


\begin{proof}
It is clear that $e = \rh (1)$ is an idempotent in $\LLp.$
Set $E = {\mathrm{ran}} (e).$
Then $\rh$ defines a contractive unital \hm\  %
from $\OP{d}{p}$ to $L (E).$

The hypotheses imply that $\| \rh (1) \| = 1.$
It follows from Theorem~3 in Section~17 of~\cite{Lc}
that there is a \msp\  $\YCN$
such that $E$ is isometrically isomorphic to $L^p (Y, \nu).$
Since $L^p (X, \mu)$ is separable, so is~$E,$
and therefore we may take $\nu$ to be \sft.
Now apply Proposition~\ref{P-ContrImpIsom}.
\end{proof}

Unfortunately,
unlike the case of \ca s ($p = 2$),
we know of no result which allows us to deduce
simplicity of $\OP{d}{p}$
from Proposition~\ref{P-ContrImpIsom}
or Corollary~\ref{C-NonUContrImpIsom},
or the other way around.
We will prove simplicity of $\OP{d}{p}$ in~\cite{Ph6},
using methods much closer to those used in the \ca\  case
in~\cite{Cu1}.

\section{Nonisomorphism of algebras generated by
  spatial representations for different~$p$}\label{Sec:NonIso}

\indent
To what extent do the various algebras $\OP{d}{p}$
differ from each other?
Taking $p = 2,$
the K-theory computation in~\cite{Cu2}
shows that, for $d_1 \neq d_2,$
we have $\OP{d_1}{2} \not\cong \OP{d_2}{2}.$
We will show in~\cite{Ph7}
that the K-theory is the same for $p \neq 2$ as for $p = 2,$
giving the analogous nonisomorphism result.
Here, we address what happens when one instead varies~$p.$
Here, K-theory is of no help.
Instead,
we show by more direct methods that for $p_1 \neq p_2$
and for $d_1$ and $d_2$ arbitrary,
there is no nonzero \ct\  \hm\  %
from $\OP{d_1}{p_1}$ to~$\OP{d_2}{p_2}.$
In fact,
there is no nonzero \ct\  \hm\  %
from $\OP{d_1}{p_1}$ to $L (l^{p_2} (\N)).$

This result gives a different proof of the fact
(Corollary~6.15 of~\cite{BP})
that for distinct $p_1, p_2 \in (1, \I),$
there is no nonzero \ct\  \hm\  %
from $L (l^{p_1} (\N))$ to $L (l^{p_2} (\N)).$
(We are grateful to Volker Runde for providing this reference.
In this connection, we note that
Corollary~2.18 of~\cite{BP}
implies that if $E$ and $F$ are Banach spaces
such that $L (E)$ is isomorphic to $L (F)$ as Banach algebras,
then $E$ is isomorphic to~$F$ as Banach spaces.)

\begin{lem}\label{L:MainNonIso}
Let $p \in [1, \I).$
Let $(X, {\mathcal{B}}, \mu)$
be a \sfm.
Let $\rh \colon L_{\I} \to L (l^{p} (X, \mu))$
be a spatial \rpn.
Let $E$ be a Banach space.
Suppose there is a nonzero \ct\  \hm\  %
$\ph \colon {\ov{\rh ( L_{\I} )}} \to L (E).$
Then $l^{p} (\N)$ is isomorphic as a Banach space
(recall the conventions in Definition~\ref{D:BSpTerm})
to a subspace of~$E.$
\end{lem}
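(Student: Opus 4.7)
The plan is to construct an embedding $T \colon l^p(\N) \to E$ that uses a single ``vacuum vector'' in~$E$ together with the images of the generators~$s_j$ of~$L_{\I}.$ First I would pick a nonzero $\et_0 \in \ph(1) E$; this is possible because $\ph$ is a nonzero \hm, so $\ph(1)$ is a nonzero idempotent in~$L(E).$ Then $\ph(1) \et_0 = \et_0.$ For $j \in \N,$ set $\et_j = \ph(\rh(s_j)) \et_0 \in E.$ The relation $t_j s_j = 1$ gives $\ph(\rh(t_j)) \et_j = \et_0 \neq 0,$ so each $\et_j$ is nonzero.

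The core of the plan is to show that for every finitely supported $\ld = (\ld_1, \ld_2, \ldots),$
\[
\frac{\| \et_0 \|}{\| \ph \|} \| \ld \|_p
 \leq \left\| \ssum{j} \ld_j \et_j \right\|_E
 \leq \| \ph \| \cdot \| \et_0 \| \cdot \| \ld \|_p.
\]
The upper bound follows by writing $\ssum{j} \ld_j \et_j = \ph(\rh(s_{\ld})) \et_0$ and invoking the identity $\| \rh(s_{\ld}) \| = \| \ld \|_p,$ which is Lemma~\ref{L-CnsqOfSp}(\ref{L-CnsqOfSp-3}) applied to the spatial \rpn~$\rh$ of~$L_{\I}.$ For the lower bound, for any finitely supported $\mu$ use Lemma~\ref{L-LCombProd} to get $t_{\mu} s_{\ld} = \big( \ssum{j} \mu_j \ld_j \big) \cdot 1$ in~$L_{\I},$ hence
\[
\ph(\rh(t_{\mu})) \left( \ssum{j} \ld_j \et_j \right)
 = \left( \ssum{j} \mu_j \ld_j \right) \et_0.
\]
Combining this with the bound $\| \rh(t_{\mu}) \| = \| \mu \|_q$ from Lemma~\ref{L-CnsqOfSp}(\ref{L-CnsqOfSp-4}), where $q$ is the conjugate exponent of~$p,$ and then taking the supremum over finitely supported $\mu$ with $\| \mu \|_q \leq 1,$ one reaches the lower bound by standard $l^p$--$l^q$~duality restricted to finitely supported sequences.

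Once both bounds are in hand, the linear map $\ld \mapsto \ssum{j} \ld_j \et_j$ on finitely supported sequences is bi-Lipschitz with respect to the $l^p$~norm, so extends by density and continuity to a bounded linear map $T \colon l^p(\N) \to E$ satisfying the same bounds on all of $l^p(\N).$ The lower bound forces $T$ to be injective with closed range, so $T$ is an isomorphism of Banach spaces from $l^p(\N)$ onto the closed subspace $T(l^p(\N)) \S E,$ as required. I do not expect any major obstacle: the whole argument is essentially driven by the $p$-standardness of spatial \rpn s of $L_{\I}$ on both $\spn(s_1, s_2, \ldots)$ and $\spn(t_1, t_2, \ldots).$ The only mildly subtle point is that in Definition~\ref{D:StdRep} the $p$-standard condition on the $t$-span is only asserted via $l^q(\N)$ (or $C_0(\N)$ when $p = 1$), so the lower-bound argument must stay within finitely supported~$\mu$; this is harmless, since the $l^p$--$l^q$ duality is already witnessed by finitely supported functionals.
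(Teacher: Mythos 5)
Your proposal is correct and follows essentially the same route as the paper: both construct the embedding $\ld \mapsto \ph(\rh(s_{\ld}))\et_0$, get the upper bound from $p$-standardness on the $s$-span, and get the lower bound by pairing with $\rh(t_{\mu})$ and using $t_{\mu}s_{\ld} = \big(\sum_j \mu_j\ld_j\big)\cdot 1$ together with $\|\rh(t_{\mu})\| = \|\mu\|_q$. The only cosmetic difference is that you take a supremum over finitely supported norming functionals $\mu$ (which also handles $p=1$ uniformly), whereas the paper picks a single norming $\gm \in l^q$ and treats $p=1$ by a separate $\ep$-argument.
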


\begin{proof}
The element $\ph (1) \in L (E)$ is an idempotent.
Replacing $E$ by $\ph (1) E,$
we may assume that $\ph$ is unital
(and $E$ is nonzero).

Let $q \in (1, \I]$ satisfy
$\frac{1}{p} + \frac{1}{q} = 1.$

Let $\ld \mapsto s_{\ld}$ and $\ld \mapsto t_{\ld}$
be as in Definition~\ref{N-LComb} for $L_{\I}.$
For $p \in (1, \I),$
it follows from Lemma~\ref{L-CnsqOfSp}(\ref{L-CnsqOfSp-3})
and Lemma~\ref{L-CnsqOfSp}(\ref{L-CnsqOfSp-4}) that the maps
$\ld \mapsto \rh (s_{\ld})$ and $\ld \mapsto \rh (t_{\ld})$
extend to isometric maps
$s^{\rh} \colon l^{p} (\N) \to L (L^{p} (X, \mu))$
and $t^{\rh} \colon l^{q} (\N) \to L (L^{p} (X, \mu)).$
For $p = 1,$
we similarly get $s^{\rh}$ as above and
$t^{\rh} \colon C_0 (\N) \to L (L^{p} (X, \mu)).$

Fix $\et_0 \in E$ with $\| \et_0 \| = 1.$
Define $v \colon l^{p} (\N) \to E$
as follows.
For $\ld = (\ld_1, \ld_2, \ldots) \in l^{p} (\N),$
set $v (\ld) = \ph ( s^{\rh} (\ld)) \et_0.$
Then $v$ is bounded,
because
\[
\| v (\ld) \|
 \leq \| \ph \| \cdot \| s^{\rh} (\ld) \| \cdot \| \et_0 \|
 = \| \ph \| \cdot \| \ld \|_{p}.
\]

We claim that for all $\ld \in l^{p} (\N),$
we have $\| v (\ld) \| \geq \| \ph \|^{-1} \| \ld \|_{p}.$
The claim will imply that $v$ is an isomorphism of $l^{p} (\N)$
with some closed subspace of~$E,$
completing the proof of the lemma.

We prove the claim.
Let $\ld \in l^{p} (\N).$
\Wolog\  $\ld \neq 0.$
First suppose $p \neq 1.$
It is well known
that there exists $\gm \in l^{q} (\N) \SM \{ 0 \}$
such that
\begin{equation}\label{Eq:EvTo1}
\sum_{j = 1}^{\I} \gm_j \ld_j = 1
\end{equation}
and
\begin{equation}\label{Eq:NormGm}
\| \gm \|_{q} = \| \ld \|_{p}^{-1}.
\end{equation}
(The method of proof can be found,
for example, at the beginning of Section~6.5 of~\cite{Ry}.)

By Lemma~\ref{L-LCombProd}, for every $n \in \N$ we have
\[
\left( \sssum{j = 1}{n} \gm_j \rh (t_j) \right)
    \left( \sssum{j = 1}{n} \ld_j \rh (s_j) \right)
  = \left( \sssum{j = 1}{n} \gm_j \ld_j \right) \cdot 1.
\]
Letting $n \to \I$ and applying~(\ref{Eq:EvTo1}),
we get $t^{\rh} (\gm) s^{\rh} (\ld) = 1.$
Therefore
\[
\et_0 = \ph (t^{\rh} (\gm)) \ph (s^{\rh} (\ld)) \et_0
  = \ph (t^{\rh} (\gm)) v (\ld).
\]
So, using~(\ref{Eq:NormGm}) at the third step,
\[
1 = \| \et_0 \|
  \leq \| \ph \| \cdot \| t^{\rh} (\gm) \| \cdot \| v (\ld) \|
  = \| \ph \| \cdot \| \ld \|_{p}^{-1} \cdot \| v (\ld) \|.
\]
It follows that
$\| v (\ld) \| \geq \| \ph \|^{-1} \| \ld \|_{p},$
as desired.

Now suppose $p = 1.$
Let $\ep > 0.$
Choose $\gm \in c_0 (\N)$
with finite support and such that
$\gm_j \ld_j \geq 0$ for $j \in \N,$
\[
\sum_{j = 1}^{\I} \gm_j \ld_j > 1 - \ep,
\andeqn
\| \gm \|_{q} = \| \ld \|_{p}^{-1}.
\]
Then $t^{\rh} (\gm) s^{\rh} (\ld) = \af \cdot 1$
with $\af > 1 - \ep.$
We get
\[
\af \et_0 = \ph (t^{\rh} (\gm)) v (\ld)
\andeqn
1 - \ep \leq \| \ph \| \cdot \| \ld \|_{p}^{-1} \cdot \| v (\ld) \|.
\]
Since $\ep > 0$ is arbitrary,
we again get
$\| v (\ld) \| \geq \| \ph \|^{-1} \| \ld \|_{p},$
as desired.
\end{proof}

\begin{thm}\label{T-NoNonZH}
Let $p_1, p_2 \in [1, \I)$ be distinct.
Let $A$ be any of $L_d$ (Definition~\ref{D:Leavitt}),
$C_d$ (Definition~\ref{D:Cohn}),
or~$L_{\I}$ (Definition~\ref{D:LInfty}).
Let $\rh \colon A \to L (l^{p_1} (\N))$ be a spatial \rpn.
Then there is no nonzero \ct\  \hm\  %
from ${\overline{\rh (A)}}$ to $L (l^{p_2} (\N)).$
\end{thm}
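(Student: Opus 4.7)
The plan is to reduce to Lemma~\ref{L:MainNonIso}, which handles $A = L_\I,$ and then invoke a standard Banach space fact on non-embeddability of $l^p$ spaces.

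If $A = L_\I$ no reduction is needed. For $A = L_d$ or $A = C_d$ (so $d \geq 2$), I would construct a unital algebra \hm\ $\io \colon L_\I \to A$ by setting $v_j = s_1^{j - 1} s_2$ and $w_j = t_2 t_1^{j - 1}$ for $j \in \N,$ and sending the generators of $L_\I$ via $s_j \mapsto v_j$ and $t_j \mapsto w_j.$ A short check using $t_j s_j = 1$ together with the off-diagonal relations $t_1 s_2 = t_2 s_1 = 0$ yields $w_j v_k = \dt_{j, k} \cdot 1$ (for $j > k$ the internal $t_1^{j-k}$ meets $s_2$ producing zero; for $j < k$ the factor $t_2 s_1$ appears), and these are the only defining relations for $L_\I,$ so $\io$ exists. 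Then $\rh \circ \io \colon L_\I \to L (l^{p_1} (\N))$ is spatial: each $(\rh \circ \io)(s_j)$ is a product of spatial partial isometries and hence spatial by Lemma~\ref{L-CompSPI}, and the same lemma identifies $(\rh \circ \io)(t_j) = \rh (t_2) \rh(t_1)^{j - 1}$ as the reverse of $(\rh \circ \io)(s_j).$

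Next, suppose a nonzero \ct\ \hm\ $\ph \colon \ov{\rh (A)} \to L (l^{p_2} (\N))$ exists. Since $\ph (a) = \ph (a) \ph (1)$ for all $a \in \ov{\rh (A)},$ the assumption $\ph \neq 0$ forces $\ph (1) \neq 0.$ Restricting $\ph$ to the closed unital subalgebra $B = \ov{(\rh \circ \io) (L_\I)} \subset \ov{\rh (A)}$ then yields a nonzero \ct\ \hm\ $\ph |_B \colon B \to L (l^{p_2} (\N)),$ because $1 \in B.$ Applying Lemma~\ref{L:MainNonIso} with $\rh \circ \io$ in place of $\rh$ and $E = l^{p_2} (\N)$ produces an isomorphism of $l^{p_1} (\N)$ onto a closed subspace of $l^{p_2} (\N).$

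The proof would finish by citing the classical Banach space fact that, for distinct $p_1, p_2 \in [1, \I),$ the space $l^{p_1}$ is not isomorphic to any closed subspace of $l^{p_2}.$ When $p_1 > p_2$ this follows at once from Pitt's theorem (every bounded operator $l^{p_1} \to l^{p_2}$ is compact, so cannot be an isomorphism onto an infinite-dimensional subspace), and when $p_1 < p_2$ a standard Bessaga-Pe\l czy\'{n}ski argument extracts a subsequence of the image of the unit vector basis that is equivalent both to the $l^{p_1}$ basis and to a block basis of $l^{p_2}$ (hence to the $l^{p_2}$ basis), a contradiction. The only genuine new ingredient beyond Lemma~\ref{L:MainNonIso} is the embedding $\io,$ and the main (minor) obstacle is just verifying that $\rh \circ \io$ remains spatial; once that is done via Lemma~\ref{L-CompSPI}, the rest is routine.
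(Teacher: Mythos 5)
Your proposal is correct and follows essentially the same route as the paper: the paper also reduces to Lemma~\ref{L:MainNonIso} by composing $\rh$ with a unital embedding of $L_\I$ into $A$ built from products of the generators (it uses $s_2^j s_1$ and $t_1 t_2^j$ where you use $s_1^{j-1}s_2$ and $t_2 t_1^{j-1}$, an immaterial difference), notes that the composite is still spatial, and then cites the non-embeddability of $l^{p_1}$ into $l^{p_2}$ (via the remark after Proposition 2.a.2 of~\cite{LT1}, for which you supply the standard Pitt/Bessaga--Pe{\l}czy\'{n}ski argument). Your explicit verification that the composite is spatial via Lemma~\ref{L-CompSPI} is exactly the "one easily checks" step the paper leaves to the reader.
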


\begin{proof}
Suppose that $\ph \colon {\ov{\rh (A_1)}} \to L (l^{p_2} (\N))$
is a \ct\  \hm.

Regardless of what $A_1$ is,
there is a unital \hm\  $\ps \colon L_{\I} \to A_1$
such that (in the notation of Definition~\ref{D:LInfty})
\[
\ps \big( s_j^{(\infty)} \big) = s_2^j s_1
\andeqn
\ps \big( t_j^{(\infty)} \big) = t_1 t_2^j
\]
for all $j \in \N.$
Since $\rh$ is spatial,
one easily checks that $\rh \circ \ps$ is a spatial \rpn\  %
of $L_{\I}$ on $l^{p_1} (\N).$
Set $B = {\ov{(\rh \circ \ps) (L_{\I}) }}.$
Then Lemma~\ref{L:MainNonIso},
applied to $\ph |_{ B },$
provides an isomorphism of $l^{p_1} (\N)$
with a subspace of $l^{p_2} (\N).$
The remark after Proposition 2.a.2
of~\cite{LT1} (on page~54)
therefore implies that $p_1 = p_2.$
\end{proof}

\begin{cor}\label{T:NonIsoP}
Let $p_1, p_2 \in [1, \I)$ be distinct.
Let $A_1$ and $A_2$ be any two of $L_d$ (Definition~\ref{D:Leavitt}),
$C_d$ (Definition~\ref{D:Cohn}),
or~$L_{\I}$ (Definition~\ref{D:LInfty}).
Let $\rh_1 \colon A_1 \to L (l^{p_1} (\N))$
be a spatial \rpn\  %
(Definition~\ref{D:SpatialRep}(\ref{D:SpatialRep-Sp})),
and let $\rh_2 \colon A_2 \to L (l^{p_2} (\N))$
be an arbitrary \rpn.
Then there is no nonzero \ct\  \hm\  from ${\ov{\rh (A_1)}}$
to ${\ov{\rh (A_2)}}.$
\end{cor}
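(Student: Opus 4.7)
The plan is to obtain this corollary as an essentially formal consequence of Theorem~\ref{T-NoNonZH}. Suppose for contradiction that there is a nonzero continuous homomorphism $\ph \colon {\ov{\rh_1 (A_1)}} \to {\ov{\rh_2 (A_2)}}.$ The Banach algebra ${\ov{\rh_2 (A_2)}}$ sits isometrically inside $L (l^{p_2} (\N))$ by construction, so composing $\ph$ with this inclusion yields a continuous homomorphism ${\widetilde{\ph}} \colon {\ov{\rh_1 (A_1)}} \to L (l^{p_2} (\N)).$ Since isometric embedding preserves nonvanishing, ${\widetilde{\ph}}$ is still nonzero. Because $\rh_1$ is spatial on $l^{p_1} (\N)$ and $p_1 \neq p_2,$ Theorem~\ref{T-NoNonZH} applies directly and forces ${\widetilde{\ph}} = 0,$ a contradiction.

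The only thing to verify carefully is that the inclusion ${\ov{\rh_2 (A_2)}} \hookrightarrow L (l^{p_2} (\N))$ is a continuous algebra \hm, and this is immediate: by definition ${\ov{\rh_2 (A_2)}}$ is equipped with the operator norm inherited from $L (l^{p_2} (\N)),$ so the inclusion has norm~$1$ and is multiplicative and linear. No hypothesis on $\rh_2$ beyond its being a \rpn\  on $l^{p_2} (\N)$ is used; in particular, $\rh_2$ need not be spatial, continuous, or even contractive.

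There is no real obstacle here---everything is absorbed into Theorem~\ref{T-NoNonZH}, whose proof via Lemma~\ref{L:MainNonIso} already handles the genuine analytic content by producing an isomorphic copy of $l^{p_1} (\N)$ inside $l^{p_2} (\N)$ and invoking the result from~\cite{LT1} that this forces $p_1 = p_2.$ The only mild subtlety worth noting is that $A_1$ is allowed to be any of $L_d,$ $C_d,$ or $L_{\I},$ but this causes no difficulty since Theorem~\ref{T-NoNonZH} is already stated at that level of generality (the reduction to $L_{\I}$ via the map $\ps$ in its proof covers all three cases uniformly).
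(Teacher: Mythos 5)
Your argument is correct and is essentially the paper's own proof: the paper disposes of the corollary by citing Theorem~\ref{T-NoNonZH} (together with Lemma~\ref{R-InjSpRpn}, which is only needed to produce the spatial representations in the first place), and the content of that citation is exactly your observation that composing a putative nonzero continuous homomorphism with the isometric inclusion ${\ov{\rh_2 (A_2)}} \hookrightarrow L (l^{p_2} (\N))$ contradicts the theorem. Nothing further is required.
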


\begin{proof}
Combine Theorem~\ref{T-NoNonZH}
and Lemma~\ref{R-InjSpRpn}.
\end{proof}

In particular, there is no nonzero \ct\  \hm\  from $\OP{d_1}{p_1}$
to $\OP{d_2}{p_2}.$

We recover part of Corollary~6.15 of~\cite{BP}.

\begin{cor}\label{C-NoNZHLlp}
Let $p_1, p_2 \in [1, \I)$ be distinct.
Then there is no nonzero \ct\  \hm\  from $L (l^{p_1} (\N))$
to $L (l^{p_2} (\N)).$
\end{cor}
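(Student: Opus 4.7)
The plan is to reduce directly to Theorem~\ref{T-NoNonZH} by producing, inside $L(l^{p_1}(\N))$, a subalgebra generated by a spatial representation of one of the Leavitt algebras. Suppose for contradiction that $\Phi \colon L(l^{p_1}(\N)) \to L(l^{p_2}(\N))$ is a nonzero \ct\ \hm. First I would observe that $\Phi$ is necessarily unital on its image in the weak sense that $\Phi(1) \neq 0$: indeed, $\Phi(1)$ is an idempotent, and if it vanished then for every $a$ we would have $\Phi(a) = \Phi(1 \cdot a) = \Phi(1)\Phi(a) = 0,$ contradicting $\Phi \neq 0.$

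Next, by Lemma~\ref{R-InjSpRpn} (or directly by Example~\ref{E:LdRepOnlp} with $d=2$), there exists a spatial \rpn\ $\rh \colon L_2 \to L(l^{p_1}(\N)).$ Let $B = {\ov{\rh (L_2)}} \subset L(l^{p_1}(\N)),$ and consider the restriction $\Phi|_B \colon B \to L(l^{p_2}(\N)).$ This is a \ct\ \hm, and it is nonzero because $1 \in B$ and $\Phi|_B (1) = \Phi(1) \neq 0.$ But Theorem~\ref{T-NoNonZH}, applied with $A = L_2$ and the spatial \rpn~$\rh,$ states exactly that no nonzero \ct\ \hm\ from ${\ov{\rh (L_2)}}$ to $L(l^{p_2}(\N))$ can exist (since $p_1 \neq p_2$). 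This contradiction proves the corollary.

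The argument is essentially one line given the machinery already built, and there is no real obstacle: the only thing to verify is that the restriction $\Phi|_B$ remains nonzero, which follows from the elementary observation about $\Phi(1)$ above together with the fact that spatial \rpn s are unital by convention (Definition~\ref{D:Repn}). The substantive content is entirely contained in Theorem~\ref{T-NoNonZH}, whose proof in turn rests on Lemma~\ref{L:MainNonIso} and the Banach space fact (from~\cite{LT1}) that $l^{p_1}(\N)$ does not embed isomorphically into $l^{p_2}(\N)$ when $p_1 \neq p_2.$
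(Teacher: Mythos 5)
Your proof is correct and follows essentially the same route as the paper: restrict the hypothetical homomorphism to the closure of the image of a spatial representation of a Leavitt algebra and invoke Theorem~\ref{T-NoNonZH}, noting that the restriction stays nonzero because $\Phi(1)$ is a nonzero idempotent. The only cosmetic difference is that you use $L_2$ where the paper uses $L_{\I}$ (both are covered by Theorem~\ref{T-NoNonZH}), and you spell out the elementary observation that $\Phi \neq 0$ forces $\Phi(1) \neq 0,$ which the paper leaves implicit.
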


\begin{proof}
Suppose $\ph \colon L (l^{p_1} (\N)) \to L (l^{p_2} (\N))$
is a nonzero \ct\  \hm.
Use Lemma~\ref{R-InjSpRpn} to choose an injective
spatial \rpn\  $\rh \colon L_{\I} \to L (l^{p_1} (\N)).$
Then $1 \in {\ov{ \rh (L_{\I}) }},$
so $\ph |_{ {\ov{ \rh (L_{\I}) }} }$ is nonzero,
contradicting Theorem~\ref{T-NoNonZH}.
\end{proof}

Corollary~\ref{T:NonIsoP}
does not rule out isomorphism as Banach spaces.
In fact, we have the following result.

\begin{prp}\label{P:AntiIso}
Let $p \in [1, \I),$
and suppose $\frac{1}{p} + \frac{1}{q} = 1.$
Let $A$ be any of $L_d$ (Definition~\ref{D:Leavitt}),
$C_d$ (Definition~\ref{D:Cohn}),
or~$L_{\I}$ (Definition~\ref{D:LInfty}),
let $\XBM$ be a \sfm,
and let $\rh \colon A \to \LLp$
be any \rpn.
Then ${\ov{\rh (A)}}$ is isometrically antiisomorphic
to a subalgebra of $L (L^q (X, \mu)),$
namely ${\ov{\rh' (A)}}$ with $\rh'$ as in Lemma~\ref{L:TransposeRep}.
\end{prp}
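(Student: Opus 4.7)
The plan is to realize the claimed antiisomorphism as the restriction of the Banach-space transpose operation $\Phi \colon L(L^p(X,\mu)) \to L(L^q(X,\mu))$ sending $b \mapsto b'$. For any Banach space $E$, the transpose $b \mapsto b'$ from $L(E)$ to $L(E')$ is linear, isometric, and antimultiplicative, with $(b_1 b_2)' = b_2' b_1'$; the isometry assertion is standard (one inequality is immediate, the other is Hahn--Banach). Since $\mu$ is $\sm$-finite, we have the standard identification $L^p(X,\mu)' = L^q(X,\mu)$ (including when $p = 1$ and $q = \I$), so $\Phi$ is a well-defined isometric antihomomorphism of Banach algebras.

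Next I would connect $\Phi$ to $\rh'$ using the linear antimultiplicative involution $a \mapsto a'$ on $A$ furnished by Lemma~\ref{L:Invol}(\ref{L:Invol-Prime}). By the definition in Lemma~\ref{L:TransposeRep}, for every $a \in A$ we have $\rh'(a) = \rh(a')' = \Phi(\rh(a'))$. Because $a \mapsto a'$ is a bijection of~$A$, substituting $b = a'$ yields
\[
\rh'(A) = \big\{ \Phi(\rh(b)) : b \in A \big\} = \Phi(\rh(A)).
\]
So the restriction $\Phi|_{\rh(A)} \colon \rh(A) \to \rh'(A)$ is surjective, and inherits from $\Phi$ the properties of being linear, isometric, and antimultiplicative; that is, it is an isometric antiisomorphism of the (not necessarily closed) subalgebras $\rh(A)$ and $\rh'(A)$.

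Finally, the extension to closures is routine. Since $\Phi$ is continuous and isometric on all of $L(L^p(X,\mu))$, it sends $\ov{\rh(A)}$ isometrically into $L(L^q(X,\mu))$; the image is complete in the operator norm and therefore closed, and it contains $\Phi(\rh(A)) = \rh'(A)$, so it contains $\ov{\rh'(A)}$. Conversely, by continuity it is contained in $\ov{\Phi(\rh(A))} = \ov{\rh'(A)}$. Thus $\Phi(\ov{\rh(A)}) = \ov{\rh'(A)},$ giving the desired isometric antiisomorphism. There is no serious obstacle here; the only point that requires a moment's care is the use of $\sm$-finiteness to identify $L^p(X,\mu)'$ with $L^q(X,\mu)$ in the boundary case $p = 1$.
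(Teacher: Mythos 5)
Your proof is correct and takes essentially the same route as the paper, whose entire argument is the one-line observation that Lemma~\ref{L:TransposeRep} together with $\| a' \| = \| a \|$ gives the result; you have simply spelled out the details (isometry and antimultiplicativity of the transpose, the identification $L^p (X, \mu)' = L^q (X, \mu)$ for \sft\ measures including $p = 1$, and the passage to closures). Nothing further is needed.
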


\begin{proof}
Use Lemma~\ref{L:TransposeRep}
and the fact that (following Notation~\ref{N:Dual})
one always has $\| a' \| = \| a \|.$
\end{proof}

In particular,
when $\rh$ is spatial,
${\ov{\rh (A)}}$ and ${\ov{\rh' (A)}}$
are isometrically isomorphic as Banach spaces,
even though (for $p \neq 2$)
they are not even isomorphic as Banach algebras.

\end{document}